\newtheorem{lemma}{Lemma}[section]
\newtheorem{proposition}[lemma]{Proposition}
\newtheorem{theorem}[lemma]{Theorem}
\newtheorem{corollary}[lemma]{Corollary}
\newtheorem{definition}[lemma]{Definition}
\newtheorem{remark}[lemma]{Remark}
\def\section{\@startsection{section}{1}%
\z@{1\linespacing\@plus\linespacing}{1\linespacing}%
{\bf\centering}}
\def\subsection{\@startsection{subsection}{0}%
\z@{\linespacing\@plus\linespacing}{\linespacing}%
{\bf}}
\def\subsubsection{\@startsection{subsubsection}{0}%
\z@{\linespacing\@plus\linespacing}{\linespacing}%
{\bf}}
\DeclareMathOperator{\Dom}{Dom}
\DeclareMathOperator{\Ran}{Ran}
\DeclareMathOperator{\Spec}{Spec}
\DeclareMathOperator{\Per}{Per}
\DeclareMathOperator*{\slim}{s-lim}
\newcommand{\vertiii}[1]{{\left\vert\kern-0.25ex\left\vert\kern-0.25ex\left\vert #1
		\right\vert\kern-0.25ex\right\vert\kern-0.25ex\right\vert}}
\newcommand{\Norm}[2]{\left\Vert #1 \right\Vert_{#2}}
\newcommand{\NNorm}[2]{\vertiii{#1}_{#2}}
\providecommand{\seq}[1]{(#1_n)_{n\in \mathbb{N}}}
\newcommand{\cS}{\mathcal{S}}
\newcommand{\cD}{\mathcal{D}}
\newcommand{\cA}{\mathcal{A}}
\newcommand{\cL}{\mathscr{B}}
\newcommand{\cR}{\mathcal{R}}
\newcommand{\cK}{\mathcal{K}}
\newcommand{\cI}{\mathcal{I}}
\newcommand{\cF}{\mathcal{F}}
\newcommand{\R}{\mathbb{R}}
\newcommand{\N}{\mathbb{N}}
\definecolor{mr}{rgb}{0.1,0.2,0.7}
\begin{document}
\title[Fractional Rollnik class]
{\small Special potentials for relativistic Laplacians I: Fractional Rollnik-class}
\author{Giacomo Ascione, Atsuhide Ishida and J\'ozsef L{\H o}rinczi}

\address{Giacomo Ascione,
Dipartimento di Matematica e Applicazioni ``Renato Caccioppoli" \\
Universit\`a degli Studi di Napoli Federico II, 80126 Napoli, Italy}
\email{giacomo.ascione@unina.it}
\address{Atsuhide Ishida,
Katsushika Division, Institute of Arts and Sciences \\
Tokyo University of Science, Tokyo, 125-8585, Japan \\
and \\
Alfr\'ed R\'enyi Institute of Mathematics \\
Re\'altanoda utca 13-15, 1053 Budapest, Hungary}
\email{aishida@rs.tus.ac.jp}
\address{J\'ozsef L\H orinczi,
Alfr\'ed R\'enyi Institute of Mathematics \\
Re\'altanoda utca 13-15, 1053 Budapest, Hungary}
\email{lorinczi@renyi.hu}

\begin{abstract}
We propose a counterpart of the classical Rollnik-class of potentials for fractional and massive relativistic
Laplacians, and describe this space in terms of appropriate Riesz potentials. These definitions rely
on precise resolvent estimates, which we present in detail. We obtain these classes for the operators with
fractional exponent $\alpha = 1$ in low dimensions, making use of tools of $\Gamma$-convergence. We show that
Coulomb-type potentials are elements of fractional Rollnik-class up to but not including the critical singularity
of the Hardy potential. In a second part of the paper we derive detailed results on the self-adjointness and
spectral properties of relativistic Schr\"odinger operators obtained under perturbations by fractional Rollnik
potentials. We also define an extended fractional Rollnik-class which is the maximal space for the Hilbert-Schmidt
property of the related Birman-Schwinger operators.

\bigskip
\noindent
\emph{Key-words}: fractional and relativistic Laplacians, Riesz potential, resolvent estimates,
$\Gamma$-convergence, Coulomb potentials, subcriticality

\bigskip
\noindent
2010 {\it MS Classification}: Primary 47D08, 60G51; Secondary 47D03, 47G20

\end{abstract}

\maketitle

\baselineskip 0.55 cm
\tableofcontents


\section{Introduction}
The aim of this paper is to define and explore a relativistic counterpart of the classical
Rollnik-class of potentials, which plays a fundamental role in mathematical quantum theory.
We call this fractional Rollnik-class, and also set ourselves the task to describe the
properties of relativistic Schr\"odinger operators obtained as perturbations of the (free)
relativistic operators by this class of potentials. Since both the massive and massless cases
involve singular integral operators, in our approach the Riesz potential will play a role
to a far-reaching extent. In particular, we will make use of it to a detailed description
of Coulomb-type potentials belonging to fractional Rollnik-class and investigate their
boundaries verging on the critical Hardy potential.

In the paper \cite{R56} Rollnik identified the class of potentials
\begin{equation}
\label{class}
{\mathcal R} = \left\{V:\R^3\to\R: \, \|V\|^2_{{\mathcal R}} := \int_{\R^3}\int_{\R^3}
\frac{|V(x)||V(y)|}{|x-y|^{2}}dxdy < \infty \right\},
\end{equation}
which later turned out to play a special role in the classical theory of Schr\"odinger operators on
the Hilbert space $L^2(\R^3)$. He noticed that for an eigenvalue $\lambda < 0$ generated by a negative
potential $V$, the eigenvalue equation $H\varphi:=(-\frac{1}{2}\Delta + V)\varphi = \lambda\varphi$
can equivalently be written as $K_\lambda\psi:=\sqrt{-V}(-\frac{1}{2}\Delta - \lambda)^{-1}
\sqrt{-V}\psi=\psi$, with $\psi = \sqrt{-V}\varphi$, so the original Schr\"odinger operator $H$ has
a eigenvalue $\lambda$ exactly when the related operator $K_\lambda$ has eigenvalue 1.
Thereby he appears to have anticipated by a five years (if we count by the publication dates) what is
now known as the Birman-Schwinger principle.
In fact, Rollnik studied mostly the case $\lambda = 0$ and required his operator $K_0$ (in his notation
$W(0)$) to be compact, that he satisfied by turning its kernel square-integrable, which results in
\eqref{class}.

The integral condition \eqref{class} has appeared in subsequent work by Schwinger \cite{S61}, Scadron,
Weinberg and Wright \cite {SWW}, Coester \cite{C}, and in the series of papers by Grossmann and Wu
\cite{GW}. A later development is Simon's mathematical work in the paper \cite{S71} and the monograph
\cite{Si1}, in which he made a comprehensive analysis of Rollnik potentials in the non-relativistic case.
Some more recent related works include \cite{D01,RS04,W05}.

Our goal here is to construct and analyse a counterpart of Rollnik-class for the massive and massless
relativistic Laplacians
$$
L_{m,\alpha} = (-\Delta + m^{2/\alpha})^{\alpha/2}-m, \quad 0 < \alpha < 2, \; m\geq 0,
$$
on $L^2(\R^d)$, $d \geq 1$, and study relativistic Schr\"odinger operators obtained as perturbations of
$L_{m,\alpha}$ by these potentials. For the massless case $m=0$ the operator $L_{m,\alpha}$ reduces to the
fractional Laplacian with exponent $\frac{\alpha}{2}$, and for $\alpha =1$ and $m > 0$ it is the physical
relativistic operator widely used in mathematical physics. For the purposes of understanding fractional
Rollnik-class it will be useful to keep the full range $\alpha \in (0,2)$ in order to appreciate how our
results (qualitatively) depend on the details of the operators of different fractional exponent. Also,
exponents $\alpha$ different from 1 relate with further models in physics, see \cite{KL}.  Our guiding
principle is to keep to the original idea that the Birman-Schwinger operators associated with the non-local
Schr\"odinger operators  should have the Hilbert-Schmidt property, however, for the rest we will
have to trod a new path due to qualitative differences between the Laplacian and the fractional operators
requiring very different techniques.

In the following we summarise our approach and the main results of this paper.

\vspace{0.2cm}
\noindent
(1) Our first task is to find an appropriate concept of a fractional Rollnik-class holding to this principle
and investigate its structure. The construction will depend on precise estimates of the resolvent kernels of
the operators $L_{m,\alpha}$. Making use of estimate \eqref{forroll} and assuming
\begin{equation}
\label{restri}
\alpha < d < 2\alpha,
\end{equation}
we introduce the fractional Rollnik-class
$$
\mathcal{R}_{d,\alpha}=\left\{V:\mathbb{R}^d\rightarrow\mathbb{R} : \, \|V\|^2_{\mathcal{R}_{d,\alpha}}=
\int_{\mathbb{R}^{d}} \int_{\mathbb{R}^{d}}\frac{|V(x)||V(y)|}{|x-y|^{2(d-\alpha)}}dxdy<\infty \right\}
$$
in the massless case and
$$
\mathcal{R}^m_{\alpha} = \mathcal{R}_{d,\alpha} \cap \mathcal{R}
$$
in the massive case (see Definition \ref{fRollnik}). This definition is consistent with the classical case for
it reproduces $\mathcal R$ for $d=3$ and $\alpha = 2$ (see also Corollary \ref{cor:BBM} below). Observing that
the Fourier transform of the Riesz potential $\cI_\beta$ of functions on Lizorkin space can be computed (Theorem
\ref{thm:LizRi}), we show that the fractional Rollnik norm of $V$ can be expressed in the form
\begin{equation}
\label{rieszplay}
\Norm{V}{\cR_{d,\alpha}} = c(d,\alpha) \Norm{\cI_{\alpha-\frac{d}{2}}|V|}{2}
\end{equation}
where $c(d,\alpha)$ is a constant and the left-hand side above is indeed a norm (Theorem \ref{norms}),
turning $\mathcal{R}_{d,\alpha}$ into a Banach space (Theorem \ref{banach}). Furthermore, as it will be seen,
fractional Rollnik-class contains a variety of $L^p$ subspaces (Propositions \ref{cor7}-\ref{cor9}).

To get a closer idea of fractional Rollnik potentials, we consider several specific cases (Section 3.3). In
particular, we consider potentials having Coulomb-type singularities and by using \eqref{rieszplay} and formula
\eqref{eq:radialexpr} allowing to compute Riesz potentials of radial functions, we show that $V(x) = |x|^{-\beta_1}
\mathbf{1}_{\{|x|<1\}}+|x|^{-\beta_2}\mathbf{1}_{\{|x| \ge 1\}}$ belongs to $\cR_{d,\alpha}$ if and only if
$\beta_1 <\alpha$ and $\beta_2 \in (\alpha,d)$ (see Propositions \ref{prop:Coul1}-\ref{prop:Coul3} and Corollary
\ref{coulombsummary}). Since for the fractional Laplacian $(-\Delta)^{\alpha/2}$ the critical potential is the
Hardy potential $V(x) = |x|^{-\alpha}$, we see that the Coulomb-type fractional Rollnik-class potentials are
subcritical with possible singularities arbitrarily approaching but not attaining the critical exponent. Also,
we show (Theorem \ref{nomore}) that the Hardy exponent marks the boundary to Coulomb-potentials in fractional
Rollnik class, and $V(x)=|x|^{-\alpha}(-\log|x|)^{-\gamma}\mathbf{1}_{\{|x|<1/2\}}$, with $\alpha
< d$, belongs to $\cR_{d,\alpha}$ exactly when $\gamma>\frac{1}{2}$ (Proposition \ref{prop:logV}). Apart from
Coulomb singularities, we also discuss potentials with compact support and show that their fractional Rollnik norm
increases under symmetric rearrangement and it can be bounded by the fractional perimeter of the supporting set
(Section 3.3.1), and asymmetric potentials whose non-radial symmetry deficit we can bound by the $L^1$-norm of
the difference between the potential and a family of rotationally symmetric potentials (Section 3.3.2).

Using another key resolvent estimate (see \eqref{forrollext1}-\eqref{eq:Hda0} and
\eqref{eq:equivrollext1}-\eqref{eq:Hmda} below) we also introduce extended fractional Rollnik-classes
${\overline {\mathcal{R}}}_{d,\alpha}$ and ${\overline {\mathcal{R}}}^m_{d,\alpha}$ (Definition \ref{extrollnik}),
containing $\mathcal{R}_{d,\alpha}$ as a proper subset (Proposition \ref{includes} and Remark \ref{genuine} below).
We will discuss the role of the extended fractional Rollnik-class further below.

The restriction \eqref{restri} has the implication that in case $d \geq 2\alpha$ no non-trivial continuous potential
exists in $\mathcal{R}_{d,\alpha}$ (Theorem \ref{thm:nonL1kernel}). As frequently with fractional Laplacians, the
$\alpha = 1$ case is difficult. In this case there exists no fractional Rollnik-class potential in any dimension
directly, which is an essential difference from the classical situation. However, mathematically this case remains
very interesting and from a physics point of view it
is satisfying that in low dimensions we can make sense of it by further developing Definition \ref{fRollnik}. In
$d=1$ we cover it by the extended fractional Rollnik-class, and for $d=2$ we obtain it as a limiting case. Proposition
\ref{prop:notappr} shows, however, that the required limit is not straightforward. As it turns out, with a sequence
$\seq \alpha$ accumulating at $\alpha = 1$, the corresponding Schr\"odinger operators with
$\mathcal{R}_{d,\alpha_n}$-potentials have a limit in $\Gamma$-convergence sense (see Theorem \ref{alpha1sa}), and
we arrive to define it by $\cR_{d,1}^p:= L^p(\R^d) \cap \liminf_{\alpha \downarrow 1}\cR_{d,\alpha}$ with $p > 1$.

\vspace{0.2cm}
\noindent
(2) 
Our interest next is to study the properties of relativistic Schr\"odinger
operators of the form $H_{m,\alpha} = L_{m,\alpha} + V$ with $V$ in $\mathcal{R}^m_{d,\alpha}$ or ${\overline
{\mathcal{R}}}^m_{d,\alpha}$, for $m\geq 0$. First we consider the self-adjointness of $H_{m,\alpha}$. To obtain this it
suffices to have the operator norm of $K^m_\lambda=|V|^{1/2}(\lambda+L_{m,\alpha})^{-1} |V|^{1/2}$ vanish as $\lambda
\to\infty$ (Lemma \ref{lem:selfadj}). To ensure this, we show that $K^m_\lambda$ is a Hilbert-Schmidt operator at
least for a large enough $\lambda$. In particular, we show that for potentials in $\mathcal{R}_{d,\alpha}$ the operator
$H_{m,\alpha}$ is self-adjoint (Theorem \ref{prop:uniformHS}). Also, we show that if a potential is in the extended
fractional Rollnik-class ${\overline {\mathcal{R}}}_{d,\alpha}$, then the related $K^m_\lambda$ is a Hilbert-Schmidt
operator, and conversely, if $K^m_\lambda$ is a Hilbert-Schmidt operator for a potential, then the potential is in
${\overline {\mathcal{R}}}_{d,\alpha}$ (Theorem \ref{thm:HSKlambda}), thus the extended fractional Rollnik-class is
the maximal space for the Hilbert-Schmidt property to hold. 

Next we study the spectrum of $H_{m,\alpha}$. Since fractional Rollnik potentials are small in an aggregate sense
(without being subject to any specific fall-off rate), a perturbation by such potentials does not change the essential
spectrum of $L_{m,\alpha}$ (see Theorem \ref{the12}). However, contrary to the classical case, we can not make use of
Fourier methods and develop an other argument to show this. In case $H_{m,\alpha}$ has a non-empty discrete spectrum,
we can estimate the number of negative eigenvalues (Theorem \ref{the13}); since this number is controlled above by the
fractional Rollnik norm, the discrete spectrum is finite in both the massless and massive cases. We obtain similar
results for ${\overline {\mathcal{R}}}_{d,\alpha}$-class potentials (Theorems \ref{the13extended}-\ref{the13extended2}),
where now the control is made by the quantity $[V]_{\overline{\cR}_{d,\alpha}}$, which defines the extended class. By
using the extended Birman-Schwinger principle, we can further derive conditions under which no bound states exist and
zero (the spectral edge) is not an embedded eigenvalue. This occurs when then fractional Rollnik norm is small enough
(Theorem \ref{none}).

\enlargethispage{1cm}
\vspace{0.2cm}
\noindent
(3) The main estimates on the resolvent $R^m_\lambda = (\lambda+L_{m,\alpha})^{-1}$, $\lambda > 0$, (uniform in $\lambda$,
pointwise upper and lower for large $\lambda$ and for $\lambda= 0$ both for the massive and massless cases) on which our
constructions are built are stated in the expressions \eqref{forroll}-\eqref{eq:equivrollmassive},
\eqref{forrollext1}-\eqref{eq:Hda0} and \eqref{eq:equivrollext2}-\eqref{eq:Hmda}. Detailed proofs will be presented in
the Appendix. We note that these results have an independent interest, in areas such as analytic and probabilistic
potential theory.

\vspace{0.2cm}
Finally, we point out that in the classical theory of Schr\"odinger operators there are several competing spaces of
potentials suiting different purposes. For instance, Kerman-Sawyer and Stummel-Schechter classes can be used to obtain
improved conditions on self-adjointness, and Kato-class is a natural space for Feynman-Kac representations, which yield
detailed results also on ground state properties. We note that the fractional Rollnik-class introduced in this paper has
an overlap with (relativistic/fractional) Kato-class, however, neither space contains the other (see Proposition \ref{kato}
and its consequences below). Thus the detailed results that we obtain under the Rollnik integral condition not just on
self-adjointness, but also on the spectral properties of such relativistic Schr\"odinger operators  cannot be fully covered
under the conditions of relativistic Kato-class. In forthcoming papers, we will further study the relativistic counterparts
of the above mentioned potential spaces.

\section{Preliminaries}
\subsection{Massive and massless relativistic Laplacians}
Let $\alpha\in (0, 2)$, $m\geq 0$, $d \in \N$,  and consider the family of functions $\Psi_{m,\alpha}(u)=
(u+m^{2/\alpha})^{\alpha/2}-m$ for every $u \ge 0$. Using these functions, we define the operators
\begin{eqnarray*}
L_{m,\alpha} \!\!\! &=& \!\!\! \Psi_{m,\alpha}(-\Delta) =(-\Delta+m^{2/\alpha})^{\alpha/2}-m  \qquad \mbox{if $m>0$}\\
L_{0,\alpha} \!\!\! &=& \!\!\! \Psi_{0,\alpha}(-\Delta) = (-\Delta)^{\alpha/2}  \hspace{3cm} \mbox{if $m=0$}
\end{eqnarray*}
and generically combine the notation into just $L_{m,\alpha}$, $m \geq 0$, whenever suitable. These operators can be defined
in several different ways. We define them through the Fourier multipliers
\begin{equation*}
\widehat{(L_{m,\alpha} f)}(y) = \Psi_{m,\alpha}(|y|^2)\widehat f(y), \quad y \in \R^d, \; f \in
\Dom(L_{m,\alpha}),
\end{equation*}
with domain
\begin{equation*}
\Dom(L_{m,\alpha})=\Big\{f \in L^2(\R^d): \Psi_{m,\alpha}(|\cdot|^2) \widehat f \in L^2(\R^d) \Big\},
\quad m\geq 0.
\end{equation*}
In the $m=0$ case we have more specifically $\Dom(L_{0,\alpha}) = H^\alpha(\R^d)$, i.e., the fractional Sobolev space of
order $0 < \alpha < 2$.

For $f \in C^\infty_{\rm c}(\R^d)$ we have the expression
\begin{equation*}
L_{m,\alpha}f(x) = -\lim_{\varepsilon\downarrow 0} \int_{|y-x|>\varepsilon} \left(f(y)-f(x)\right)\nu_{m,\alpha}(dy)
\end{equation*}
with the L\'evy jump measures as follows. For $m>0$ we have (see, for instance, \cite[Equation $(2.2)$]{CKS12})
\begin{equation}
\label{levym}
\nu_{m,\alpha}(dx) = j_{m,\alpha}(|x|)dx = \frac{2^{\frac{\alpha-d}{2}} m^{\frac{d+\alpha}{2\alpha}} \alpha}
{\pi^{d/2}\Gamma(1-\frac{\alpha}{2})}
\frac{K_{(d+\alpha)/2} (m^{1/\alpha}|x|)}{|x|^{(d+\alpha)/2}} \, dx, \quad x \in \R^d\setminus \{0\},
\end{equation}
and for $m=0$ we have (see, for instance, \cite[Equation (1.22)]{BBKRSV09})
\begin{equation}
\label{levy0}
\nu_{0,\alpha}(dx) = j_{0,\alpha}(|x|)dx = \frac{2^\alpha \Gamma(\frac{d+\alpha}{2})}{\pi^{d/2}|\Gamma(-\frac{\alpha}{2})|}
\frac{dx}{|x|^{d+\alpha}}, \quad x \in \R^d\setminus \{0\}.
\end{equation}
Here
\begin{equation*}
\label{bessel3}
K_\rho (z) = \frac{1}{2} \left(\frac{z}{2}\right)^\rho \int_0^\infty t^{-\rho - 1} e^{-t-\frac{z^2}{4t}} dt, \quad \rho > 0,
\end{equation*}
is the standard modified Bessel function of the second (sometimes also called third) kind. Below we will make use of the
asymptotic formulae (see \cite[Ch. 9, eqs. 9.6.6, 9.7.2]{AS})
\begin{equation}
\label{besselasymp}
K_\rho (z) \sim \sqrt{\frac{\pi}{2z}}\, e^{-z} \quad \mbox{as $z\to\infty$}
\qquad \mbox{and} \qquad
K_\rho(z)\sim \frac{\Gamma(\rho)}{2}\left(\frac{2}{z}\right)^{\rho} \quad \mbox{as $z \downarrow 0$}.
\end{equation}
A consequence of the second expression is that $j_{m,\alpha} \to j_{0,\alpha}$ as $m \downarrow 0$, thus the operator $L_{0,\alpha}$
is the massless limit of $L_{m,\alpha}$.

The operators $L_{m,\alpha}$, $ m\geq 0$, are positive with spectrum $\Spec L_{m,\alpha} = \Spec_{\rm ess} L_{m,\alpha}
= [0,\infty)$, and self-adjoint with core $C^\infty_{\rm c}(\R^d)$, for every $0< \alpha < 2$ and $m \geq 0$. In case $m
> 0$, the operator $L_{m,\alpha}$ is called \emph{relativistic fractional Laplacian with exponent $\frac{\alpha}{2}$ and
rest mass $m$}, while for $m=0$ it is the \emph{fractional Laplacian with exponent $\frac{\alpha}{2}$}, but for simplicity
we will refer to them as the massive and massless relativistic Laplacians, respectively.

Below we will use the following generic notations. $L^p(\R^d)$ norms will be denoted by $\|\cdot\|_p$ and $L^p(E)$, $E
\subsetneq \R^d$, norms by $\|\cdot\|_{L^p(E)}$. A ball centred in $x \in \R^d$ with radius $r > 0$ will be denoted by
$B_r(x)$, and simply $B_r$ when $x=0$. We write $\omega_d$ for the Lebesgue measure of $B_1$ and $\sigma_d$ for the
surface measure of $\partial B_1$, so that $\sigma_d=d\omega_d$. When convenient, we will also use the notations
$a \wedge b = \min\{a,b\}$ and $a \vee b = \max\{a,b\}$.

\subsection{Lizorkin space and Riesz potential}
Consider the negative Laplacian $-\Delta$ and for $\beta \in (0,d)$ let
\begin{equation*}
\label{gammad}
\gamma_d(\beta)=\frac{2^\beta \pi^{d/2}\Gamma\big(\frac{\beta}{2}\big)}{\Gamma\big(\frac{d-\beta}{2}\big)}.
\end{equation*}
The Riesz potential of order $0<\beta<d$ of a measurable function $f$ is defined as
\begin{equation*}
\label{ries}
\mathcal{I}_\beta f(x) = (-\Delta)^{-\beta/2} f(x) =
\frac{1}{\gamma_d(\beta)}\int_{\R^d}\frac{f(y)}{|x-y|^{d-\beta}}dy = (k_\beta \ast f)(x).
\end{equation*}
The requirement $\beta>0$ is necessary in order to guarantee that the Riesz kernel $k_\beta$ is at least
$L^1_{\rm loc}(\R^d)$. It is clear that the integral defining $\mathcal{I}_\beta f$ is absolutely convergent
almost everywhere if and only if $f$ is taken from
\begin{equation}
\label{eq:Xbd}
{\mathfrak X}_{\beta}^d := \Big\{f: \mathbb R^d \to \mathbb C: \mbox{$f$ is measurable and} \, \int_{|x|<1}|f(x)|dx
+\int_{|x| \ge 1}\frac{|f(x)|}{|x|^{d-\beta}}dx<\infty \Big\}.
\end{equation}
If $f$ belongs to the Schwartz space $\mathcal{S}(\R^d)$ of rapidly decreasing functions, its Riesz potential
$\mathcal{I}_\beta f$ is well-defined since $\mathcal{S}(\R^d)\subset {\mathfrak X}_\beta^d$. However, it is
not guaranteed that $\mathcal{I}_\beta f \in L^1(\R^d)+L^2(\R^d)$. To ensure this, we need to consider $f$ in
a suitable subspace
of $\mathcal{S}(\R^d)$. Let $\N_0^d$, where $\N_0=\{0\} \cup \N$, be the set of all multi-indices and for all
$\mathbf{j} \in \N^d$ denote by $D^{\mathbf{j}}h$ the partial derivative of $h$ taken $j_i$-times with respect
to the $x_i$ variable, for all $i=1, \dots,d$. Following the lines of \cite[Ch. 2.2]{Samko}, we introduce the
spaces
\begin{equation*}
\mathcal{S}_0(\R^d):= \left\{h \in \mathcal{S}(\R^d): \ D^{\mathbf{j}}h(0)=0, \ \forall \mathbf{j} \in \N_0^d\right\}
\end{equation*}
and
\begin{equation*}
\mathcal{FS}_0(\R^d):= \left\{h \in \mathcal{S}(\R^d): \ h=\cF[g], \ g \in \mathcal{S}_0\right\},
\end{equation*}
where $\mathcal{F}$ denotes Fourier transform. The set $\mathcal{FS}_0(\R^d)$ is called Lizorkin space. Since $\cF$
maps $\cS(\R^d)$ into itself, it is clear that $\mathcal{FS}_0(\R^d) \subset \cS(\R^d)$.  In particular, it is not
difficult to check that $\mathcal{FS}_0(\R^d)$ is formed by all (complex-valued) functions in $\mathcal{S}(\R^d)$
that are orthogonal to all polynomials. We will make use of the following denseness result, which was shown in
\cite[Th. 2.7]{Samko}.
\begin{theorem}
\label{thm:dense}
The Lizorkin space $\mathcal{FS}_0(\R^d)$ is dense in $L^p(\R^d)$ for all $p \in (1,\infty)$.
\end{theorem}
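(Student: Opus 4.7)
The natural approach is by duality: combine the Hahn--Banach theorem on $L^p(\R^d)$ with the adjoint identity for the Fourier transform, exploiting that the inverse Fourier image of $\mathcal{FS}_0(\R^d)$ is, by definition, $\mathcal{S}_0(\R^d)$, a subspace rich enough to detect polynomials. Fix $p \in (1,\infty)$ with conjugate exponent $p'$. I would suppose for contradiction that $\mathcal{FS}_0(\R^d)$ is not dense in $L^p(\R^d)$; then by Hahn--Banach and the identification $(L^p(\R^d))^{\ast}=L^{p'}(\R^d)$, there exists a nonzero $u \in L^{p'}(\R^d)$ with
$$
\int_{\R^d} u(x)\, h(x)\, dx = 0 \qquad \mbox{for every } h \in \mathcal{FS}_0(\R^d).
$$
The plan is to deduce $u \equiv 0$ from this, which will yield the contradiction.

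Next, I would transfer this annihilation condition through the Fourier transform. Since $L^{p'}(\R^d) \hookrightarrow \mathcal{S}'(\R^d)$, the function $u$ is a tempered distribution, and for every $g \in \mathcal{S}_0(\R^d)$ the function $\cF[g]$ belongs to $\mathcal{FS}_0(\R^d)$, so the adjoint formula for $\cF$ on $\mathcal{S}'(\R^d)$ gives
$$
\langle \cF[u], g\rangle = \langle u, \cF[g]\rangle = \int_{\R^d} u(x)\, \cF[g](x)\, dx = 0.
$$
In particular, any $\varphi \in C^\infty_{\rm c}(\R^d\setminus\{0\})$ vanishes identically in a neighbourhood of the origin and so certainly lies in $\mathcal{S}_0(\R^d)$, hence $\cF[u]$ annihilates every such test function. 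This says exactly that $\cF[u]$ is a tempered distribution with support contained in $\{0\}$.

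Finally, I would invoke the classical structure theorem for distributions concentrated at a point: any tempered distribution supported in $\{0\}$ is a finite linear combination $\cF[u] = \sum_{|\mathbf{j}|\le N} c_{\mathbf{j}}\, D^{\mathbf{j}}\delta_0$ for some $N \in \N$ and coefficients $c_{\mathbf{j}}$. Inverting the Fourier transform then identifies $u$, almost everywhere, with a polynomial on $\R^d$. But no nonzero polynomial belongs to $L^{p'}(\R^d)$ for $p' \in (1,\infty)$, since it fails to be $p'$-integrable at infinity; thus $u \equiv 0$, contradicting the Hahn--Banach choice. The only genuinely distribution-theoretic input is the last structure theorem; everything else reduces to routine bookkeeping across the Fourier transform, so I expect that step to be the conceptual heart of the argument, though it is standard.
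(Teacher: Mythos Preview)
Your argument is correct. Note, however, that the paper does not give its own proof of this theorem: it simply cites \cite[Th.~2.7]{Samko}. Your duality approach via Hahn--Banach, reducing to the structure theorem for tempered distributions supported at the origin, is the standard route and is essentially the argument one finds in Samko's book as well.
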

For our purposes below the following connection between Lizorkin space and the Riesz potential will be important, for
details we refer to \cite[Lem. 2.15, Th. 2.16]{Samko}.
\begin{theorem}
\label{thm:LizRi}
Let $\beta \in (0,d)$. If $h \in \mathcal{FS}_0(\R^d)$, then $\cI_\beta h \in \cF\cS_0(\R^d)$ and
\begin{equation*}
\cF[\cI_\beta h](z)=|z|^{-\beta}\cF[h](z).
\end{equation*}
\end{theorem}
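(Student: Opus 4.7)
The plan is to combine the distributional Fourier transform of the Riesz kernel with the strong vanishing at the origin built into the Lizorkin space, so as to verify both the membership $\cI_\beta h \in \cF\cS_0(\R^d)$ and the Fourier identity in one sweep.

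First I would check that the candidate Fourier transform $\tilde g(z) := |z|^{-\beta}\cF[h](z)$ lies in $\cS_0(\R^d)$. Writing $h = \cF[g]$ with $g \in \cS_0(\R^d)$, the function $\cF[h]$ differs from $g$ only by reflection, so $\cF[h] \in \cS_0(\R^d)$ as well; in particular, by Taylor's theorem $D^{\mathbf{k}}\cF[h](z) = O(|z|^N)$ as $z \to 0$ for every $N \in \N$ and every multi-index $\mathbf{k}$. Applying the Leibniz rule, any derivative $D^{\mathbf{k}}\tilde g(z)$ splits into sums of products $D^{\mathbf{j}}|z|^{-\beta}$, which is $O(|z|^{-\beta-|\mathbf{j}|})$ away from $0$, with $D^{\mathbf{k}-\mathbf{j}}\cF[h](z)$; choosing $N$ large enough shows $\tilde g$ extends to a $C^\infty$ function across the origin with all partial derivatives vanishing there. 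Rapid decay at infinity is automatic since $\cF[h] \in \cS(\R^d)$ and $|z|^{-\beta}\to 0$, so $\tilde g \in \cS_0(\R^d)$.

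Next I would establish the Fourier identity itself. The Riesz potential is the convolution $\cI_\beta h = k_\beta \ast h$ with $k_\beta(x) = (\gamma_d(\beta))^{-1}|x|^{-(d-\beta)}$, which defines a tempered distribution since $\beta \in (0,d)$. By the convolution theorem in $\cS'(\R^d)$, $\cF[\cI_\beta h] = \cF[k_\beta]\cdot \cF[h]$. The classical identity $\cF[k_\beta](z) = |z|^{-\beta}$ in $\cS'(\R^d)$ can be recovered by testing against an arbitrary Schwartz function $\phi$, using the subordination
\begin{equation*}
|x|^{-(d-\beta)} = \frac{1}{\Gamma\bigl(\tfrac{d-\beta}{2}\bigr)}\int_0^\infty t^{\frac{d-\beta}{2}-1} e^{-t|x|^2}\, dt,
\end{equation*}
swapping the order of integration by Fubini, and evaluating the resulting Gaussian Fourier integrals; the constant $\gamma_d(\beta)$ is precisely what matches the normalization. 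The singularity of $|z|^{-\beta}$ at the origin does not obstruct the product $|z|^{-\beta}\cF[h](z)$ because $\cF[h]$ vanishes to infinite order there, and by the previous step this pointwise product equals $\tilde g \in \cS_0(\R^d)$.

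Finally, inverting the Fourier transform yields $\cI_\beta h = \cF^{-1}[\tilde g]$, and since $\cS_0(\R^d)$ is invariant under reflection, this rewrites as $\cI_\beta h = \cF[\tilde g(-\,\cdot\,)]$ with $\tilde g(-\,\cdot\,) \in \cS_0(\R^d)$, delivering $\cI_\beta h \in \cF\cS_0(\R^d)$ together with the stated formula. The main obstacle is the second step: justifying the multiplication $|z|^{-\beta}\cF[h](z)$ as a genuine equality of tempered distributions in spite of the singularity at the origin. Here the infinite-order vanishing of $\cF[h]$ near $0$, hard-wired into the definition of $\cF\cS_0(\R^d)$, is precisely the feature that rescues the argument and makes the computation legitimate.
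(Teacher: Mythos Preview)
Your argument is correct and complete. The paper does not actually prove this theorem; it is quoted as a known result with a reference to \cite[Lem.~2.15, Th.~2.16]{Samko}, so there is no ``paper's own proof'' to compare against. Your route---first checking that $|z|^{-\beta}\cF[h](z)$ lies in $\cS_0(\R^d)$ via Leibniz and the infinite-order vanishing of $\cF[h]$ at the origin, then invoking the distributional Fourier transform of the Riesz kernel through the Gaussian subordination formula---is precisely the standard one, and is in fact essentially what Samko does. One cosmetic remark: in your last paragraph it is cleaner to say directly that $\cF$ and $\cF^{-1}$ both exchange $\cS_0(\R^d)$ and $\cF\cS_0(\R^d)$ (both spaces being invariant under reflection and scalar multiplication), so that $\cF[\cI_\beta h]=\tilde g\in\cS_0(\R^d)$ immediately gives $\cI_\beta h\in\cF\cS_0(\R^d)$, without the detour through $\tilde g(-\,\cdot\,)$.
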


In the special case of radially symmetric functions $f:\R^d \to \mathbb{C}$ written in the form $f(x)=g(|x|)$
with some $g:[0,\infty) \to \mathbb{C}$, it has been shown in \cite[Th. 1]{Thim16} that the Riesz potential
$\mathcal{I}_\beta f$ is radially symmetric and for $d \ge 2$ the expression
\begin{equation}
\label{eq:radsympot}
\mathcal{I}_\beta f(x)=\frac{1}{\gamma_d(\beta)}\int_0^{\infty}r^{\beta-1}F_\beta\left(\frac{|x|}{r}\right)g(r)dr
\end{equation}
holds, where
\begin{equation}
\label{Fbeta}
F_\beta(r)=\sigma_{d-1}\int_{-1}^{1}\frac{(1-t^2)^{\frac{d-3}{2}}}{(1+r^2-2rt)^{\frac{d-\beta}{2}}}\, dt, \quad r>0.
\end{equation}
The asymptotic behaviour of the function $F_\beta$ is known. First, it is continuous for $r \in (0,1) \cup (1,\infty)$,
and for $\beta>1$ it is continuous also in $r=1$ with value
\begin{equation*}
F_\beta(1)=\sigma_{d-1}2^{\beta-1}B\left(\frac{d-1}{2},\frac{\beta-1}{2}\right),
\end{equation*}
where $B$ is the Beta-function. Also, if $\beta=1$, then
\begin{equation*}
\lim_{r \to 1}\frac{F_\beta(r)}{-\sigma_{d-1}\log(|r-1|)}=1
\end{equation*}
while if $\beta \in (0,1)$, then
\begin{equation*}
\lim_{r \to 1}\frac{2F_\beta(r)}{\sigma_{d-1}B\left(\frac{d-1}{2},\frac{1-\beta}{2}\right)|r-1|^{\beta-1}}=1.
\end{equation*}
For its behaviour at zero we have
\begin{equation}\label{eq:to0}
F_\beta(r)=\sigma_d+O(r),\  \mbox{ as } r\to 0
\end{equation}
while at infinity
\begin{equation}\label{eq:toinf}
F_\beta(r)=\sigma_dr^{\beta-d}+O(r^{\beta-d-1}),\  \mbox{ as } r\to \infty.
\end{equation}
We note that in \cite[Eq. (2.10)]{Samko} the same formula has been obtained in terms of hypergeometric functions.
However, the above representation is preferable for our purposes since usually the analytic extension of the
hypergeometric function is considered on the cut complex plane $\mathbb{C} \setminus (1,\infty)$. This is due to
the fact that $z=1$ and $z=\infty$ are branch points of this function.

The $d=1$ case is not covered by \eqref{Fbeta}, however, we can derive a similar type of representation. Let
$f:\R \to \mathbb{C}$ be even. First observe that
\begin{align*}
\mathcal{I}_\beta f(-x)=\frac{1}{\gamma_1(\beta)}\int_{\R}\frac{f(y)}{|x+y|^{1-\beta}}\,dy
=\frac{1}{\gamma_1(\beta)}\int_{\R}\frac{f(y)}{|x-y|^{1-\beta}}\,dy=\mathcal{I}_{\beta}(x),
\end{align*}
thus $\mathcal{I}_\beta f$ is also even. Next, notice that for $x>0$
\begin{align*}
\mathcal{I}_\beta f(x)&=\frac{1}{\gamma_1(\beta)}\int_{\R}\frac{f(y)}{|x-y|^{1-\beta}}\,dy=\frac{1}{\gamma_1(\beta)}
\left(\int_{0}^{\infty}\frac{f(y)}{|x-y|^{1-\beta}}\,dy+\int_{-\infty}^{0}\frac{f(y)}{|x-y|^{1-\beta}}\,dy\right)\\
&=\frac{1}{\gamma_1(\beta)}\left(\int_{0}^{\infty}\frac{f(y)}{|x-y|^{1-\beta}}\,dy
+\int_{0}^{\infty}\frac{f(y)}{|x+y|^{1-\beta}}\,dy\right)\\
&=\frac{1}{\gamma_1(\beta)}\int_{0}^{\infty}y^{\beta-1}f(y)\left(\frac{1}{\left|\frac{x}{y}-1\right|^{1-\beta}}
+\frac{1}{\left|\frac{x}{y}+1\right|^{1-\beta}}\right)\,dy.
\end{align*}
This means that \eqref{eq:radsympot} holds for $d=1$ with
\begin{equation*}
F_\beta(r)=\frac{1}{\left|r-1\right|^{1-\beta}}+\frac{1}{\left|r+1\right|^{1-\beta}}.
\end{equation*}
For its the asymptotic behaviour, we still have \eqref{eq:to0}-\eqref{eq:toinf}, while
\begin{equation*}
\lim_{r \to 1}\frac{F_\beta(r)}{|r-1|^{\beta-1}}=1.
\end{equation*}

Below we will also make 
use of the Hardy-Littlewood-Sobolev inequality, which gives conditions under which $\mathcal{I}_\beta$ is a
bounded operator between specific $L^p$ spaces. Let $\beta \in (0,d)$, $p_\beta:=\frac{dp}{dp-\beta}$ be the
Sobolev exponent, and $p>1$. Then
\begin{equation}
\label{eq:HLS}
\Norm{\cI_\beta f}{p_\beta} \le C_{p,\beta} \Norm{f}{p}
\end{equation}
holds. A direct consequence is the inequality
\begin{equation}
\label{eq:HLS2}
\int_{\R^d}\int_{\R^d}\frac{|f(x)||g(x)|}{|x-y|^{\delta}}\, dx \, dy \le C_{p,\delta,d}\Norm{f}{p}\Norm{g}{q},	
\end{equation}
where $\delta \in (0,d)$, $f \in L^p(\R^d)$, $g \in L^q(\R^d)$ and $\frac{1}{p}+\frac{1}{q}=2-\frac{\delta}{d}$.

\subsection{Resolvent estimates}
\label{sec:res}
For every operator $L_{m,\alpha}$, $m \geq 0$, introduced in Section 2.1 above, the one-parameter operator semigroup
$\{e^{-tL_{m,\alpha}}: t\geq 0\}$ can be defined by functional calculus, each of whose elements is an integral operator
for $t>0$. The integral kernel $p(t,x,y) = e^{-tL_{m,\alpha}}(x,y)$, $t > 0$, which we call heat kernel as in the literature,
satisfies the property $p(t,x,y) = p(t,x-y,0)$ and, for fixed $t>0$, $p(t,\cdot,0)$ is rotationally symmetric. Throughout the paper we
use the simpler conventional notation $p_t(x)$. It is well-known that
for $m=0$ the semigroup is transient whenever $\alpha < d$, and for $m>0$ whenever $d \geq 3$, while in the complementary cases
the semigroups are recurrent. In our considerations below we will occasionally have to make a distinction between transient and
recurrent cases. Results relying on the $0$-resolvent operator will apply only in the transient case, since otherwise
$0$-resolvents are not well-defined. On the other hand, results relying on $\lambda$-resolvents with $\lambda>0$ can be
stated also in the recurrent case.

The following heat kernel estimates are known, for a proof see \cite{BBKRSV09}, \cite[Th. 1.2]{CKK11}, \cite[Th. 4.1]{CKS12}.

\begin{proposition}
\hspace{100cm}
\begin{enumerate}
\item
Let $p_t(x)$ be the integral kernel of the operator $e^{-tL_{0,\alpha}}$. Then
\begin{equation}
\label{heatker}
p_t(x) \,\, \asymp \,\, t|x|^{-d-\alpha} \wedge t^{-d/\alpha}
\end{equation}
holds for all $x \in \R^d$ and $t > 0$.
\item
Let $p^m_t(x)$ be the integral kernel of the operator $e^{-tL_{m,\alpha}}$, $m>0$. Then
\begin{align}
p^m_{t}(x)\asymp
\begin{cases}
\ t^{-d/\alpha}\wedge t|x|^{-(d+\alpha)/2}\kappa_m(|x|) & {\rm if}\quad 0< t \leq \frac{1}{m}\\
\\
\ m^{d/\alpha-d/2}t^{-d/2}e^{-c\left(m^{1/\alpha}|x|\wedge m^{2/\alpha-1}|x|^2/t\right)} &{\rm if}\quad t>\frac{1}{m}
\label{massive_heatker}
\end{cases}
\end{align}
hold with $c>0$ and
\begin{equation}
\label{eq:kappam}
\kappa_m(r)=m^{(d+\alpha)/(2\alpha)}K_{(d+\alpha)/2}(m^{1/\alpha}r).	
\end{equation}
\end{enumerate}
\end{proposition}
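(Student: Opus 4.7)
The plan is to treat the massless and massive cases in parallel through subordination, reducing both to estimates on (tilted) $\alpha/2$-stable subordinator densities convolved against a Gaussian heat kernel.

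\textbf{Massless case.} For $m=0$ the semigroup $e^{-tL_{0,\alpha}}$ is that of the isotropic $\alpha$-stable process, so
\begin{equation*}
p_t(x)=(2\pi)^{-d}\int_{\R^d}e^{ix\cdot\xi}e^{-t|\xi|^\alpha}d\xi,
\end{equation*}
and the scaling relation $p_t(x)=t^{-d/\alpha}p_1(t^{-1/\alpha}x)$ reduces the problem to estimating $p_1$. The bound $p_1(x)\le p_1(0)<\infty$ follows directly from the Fourier representation and, combined with scaling, gives the ``diagonal'' bound $p_t(x)\lesssim t^{-d/\alpha}$. For large $|x|$ the tail $p_1(x)\asymp |x|^{-d-\alpha}$ I would obtain from the classical Blumenthal--Getoor-type correspondence between the on-diagonal regularity of the characteristic function and the off-diagonal density behaviour, matching the density of the L\'evy jump measure $\nu_{0,\alpha}$ in \eqref{levy0}. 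The matching lower bound comes from testing the Fourier integral against a smooth cutoff supported on a ball of radius $|x|^{-1}$ or, more cleanly, from the compound Poisson structure after removing large jumps. Reinserting scaling gives $p_t(x)\asymp t|x|^{-d-\alpha}\wedge t^{-d/\alpha}$.

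\textbf{Massive case.} For $m>0$, $L_{m,\alpha}=\Psi_{m,\alpha}(-\Delta)$ is associated with a Bernstein function, and so it generates a subordinated semigroup: writing $\sigma^{m,\alpha}_t$ for the convolution semigroup on $[0,\infty)$ with Laplace transform $e^{-t\Psi_{m,\alpha}(\cdot)}$, we have
\begin{equation*}
p_t^m(x)=\int_0^\infty (4\pi s)^{-d/2}e^{-|x|^2/(4s)}\sigma^{m,\alpha}_t(ds),
\end{equation*}
and $\sigma^{m,\alpha}_t(ds)=e^{mt-m^{2/\alpha}s}\sigma^{0,\alpha}_t(ds)$ is an exponentially tilted $(\alpha/2)$-stable density. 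I would feed in the known two-sided bounds on $\sigma^{0,\alpha}_t$ and then split the $s$-integration according to the relative sizes of $|x|^2/s$, $m^{2/\alpha}s$, and $t$. For $t\le 1/m$ the tilting factor is essentially inactive: on $|x|\lesssim t^{1/\alpha}$ the mass of $\sigma^{m,\alpha}_t$ sits at $s\asymp t^{2/\alpha}$ and reproduces the on-diagonal rate $t^{-d/\alpha}$, whereas for $|x|\gtrsim t^{1/\alpha}$ a saddle-point analysis of the Gaussian-subordinator integral yields, after recognising the resulting integral as the Bessel $K$-function in \eqref{eq:kappam}, the factor $t|x|^{-(d+\alpha)/2}\kappa_m(|x|)$ — which is exactly the L\'evy density $j_{m,\alpha}(|x|)$ in \eqref{levym} up to constants. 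For $t>1/m$, the tilt $e^{-m^{2/\alpha}s}$ concentrates $\sigma^{m,\alpha}_t$ near $s\asymp t$, so that the integral becomes approximately Gaussian of variance $\asymp t$, with an extra cross-over between ballistic and diffusive decay giving the exponent $c(m^{1/\alpha}|x|\wedge m^{2/\alpha-1}|x|^2/t)$.

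\textbf{Main obstacle.} The principal difficulty is not the upper but the lower bound, and in particular the matching of constants across the crossover lines $|x|\sim t^{1/\alpha}$ (for $t\le 1/m$) and $|x|\sim m^{1/\alpha-1}t$ (for $t>1/m$), where distinct terms in the $\wedge$ take over and where the regimes $t\le 1/m$ and $t>1/m$ meet. The strategy is to identify in each regime a near-optimal value $s_\ast=s_\ast(t,x,m)$ of the subordinator variable, to show that the integrand retains the claimed order on a neighbourhood of $s_\ast$ of comparable measure, and to check that the resulting bounds are continuous in the parameters, consistent with the massless limit $m\downarrow 0$ reducing \eqref{massive_heatker} to \eqref{heatker} via the asymptotics \eqref{besselasymp}. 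These steps are carried out in full in \cite{BBKRSV09,CKK11,CKS12}, and the proof plan above follows their combined strategy.
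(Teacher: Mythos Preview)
The paper does not supply its own proof of this proposition: it is stated as a known result, with the sentence ``The following heat kernel estimates are known, for a proof see \cite{BBKRSV09}, \cite[Th.~1.2]{CKK11}, \cite[Th.~4.1]{CKS12}'' immediately preceding the statement. Your proposal is therefore not in competition with any argument in the paper; it is a sketch of the subordination approach that those cited references (in particular \cite{CKS12} for the massive case) actually implement, and you correctly identify this yourself in the final sentence. The sketch is accurate in outline --- scaling plus Blumenthal--Getoor tail asymptotics for $m=0$, and subordination by an exponentially tilted $(\alpha/2)$-stable subordinator for $m>0$, with the small-$t$ and large-$t$ regimes separated by whether the tilt is active --- so there is nothing to correct, only to note that the paper treats this as background and your write-up goes slightly beyond what the paper itself contains.
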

For every $\lambda > 0$, let $R^m_\lambda = (\lambda+L_{m,\alpha})^{-1}$, $m\geq 0$, be the resolvent of
the relativistic operator, with integral kernel $R^m_\lambda(x-y)$. We have then
$$
R^m_\lambda(x-y) = \int_0^\infty e^{-\lambda t} p^m_t(x-y)dt, \quad m \geq 0,
\label{R_lambda}
$$
and similarly to the heat kernel write just $R^m_\lambda(x)$. For the massless case we drop the superscripts
for all these objects.

Our results below will largely depend on detailed resolvent estimates for the fractional and relativistic Laplace
operators. We will need  uniform (in $\lambda$), and pointwise upper and lower estimates on the resolvent kernel
$R^m_\lambda$ separately for large $\lambda$ and for $\lambda= 0$, both for the fractional ($m=0$) and the
relativistic ($m>0)$  Laplacians.  These results stand also on their own, and can be applied in independent
directions such as potential theory.

We have the following uniform resolvent kernel estimates in the transient massless ($d > \alpha$) and transient
massive ($d > 2$) cases. For every $\lambda \ge 0$
\begin{align}
\label{forroll}
R^m_\lambda(x)  \leq
\begin{cases}
C_{d,\alpha}  |x|^{\alpha-d} & \mbox{if \, $m=0$}\\
&\\
C_{d,\alpha}\big(|x|^{\alpha-d}+m^{\frac{2}{\alpha}-1}{|x|^{2-d}}\big)  & \mbox{if \, $m>0$}
\end{cases}
\end{align}
holds with appropriate constant $C_{d,\alpha}>0$. In fact, for $\lambda=0$ the expression
\begin{equation*}
\label{eq:equivroll}
R_0(x)=C_{d,\alpha}|x|^{\alpha-d},
\end{equation*}
holds \cite[Prop. 4.293]{LHB}, while for the massive case we have
\begin{equation}
\label{eq:equivrollmassive}
R^m_0(x)\asymp C_{d,\alpha,m}\left(|x|^{\alpha-d}\mathbf{1}_{\{|x| \le 1\}}
+|x|^{2-d}\mathbf{1}_{\{|x| > 1\}}\right).
\end{equation}
Throughout the paper we will also make use of the non-uniform estimate
\begin{align}
\label{forrollext1}
R^m_\lambda(x)  \leq C_{d,\alpha,\lambda,m}H^0_{d,\alpha}(|x|),
\end{align}
which holds for every $\alpha \in (0,2)$, $d \ge 1$, $m \ge 0$ and $\lambda>0$ with
\begin{equation}
\label{eq:Hda0}
H_{d,\alpha}^0(r):=
\begin{cases}
r^{-(d-\alpha)}\mathbf{1}_{\{r \le 1\}}+2^{\frac{d+\alpha}{2}-1}\Gamma\left(\frac{d+\alpha}{2}\right)
r^{-(d+\alpha)} \mathbf{1}_{\{r>1\}} & \mbox{if \, $\alpha<d$} \\
& \\
\log\left(1+\frac{1}{r}\right)\mathbf{1}_{\{r \le 1\}}+2^{\frac{d+\alpha}{2}-1}
\Gamma\left(\frac{d+\alpha}{2}\right)r^{-2}\mathbf{1}_{\{r>1\}}	& \mbox{if \, $\alpha=d=1$} \\
&\\		
\mathbf{1}_{\{r \le 1\}}+2^{\frac{d+\alpha}{2}-1}\Gamma\left(\frac{d+\alpha}{2}\right)r^{-(1+\alpha)}
\mathbf{1}_{\{r>1\}} & \mbox{if \, $\alpha>d=1$}.
\end{cases}
\end{equation}
Again, for $m=0$ we actually have
\begin{align}
\label{eq:equivrollext1}
R_\lambda(x)  \asymp C_{d,\alpha,\lambda}H^0_{d,\alpha}(|x|).
\end{align}
In the massive case we do not have this equivalence. However, for $m>0$ and $\lambda>m$, we prove
that
\begin{align}
\label{eq:equivrollext2}
R^m_\lambda(x)  \asymp C_{d,\alpha,\lambda,m}H^m_{d,\alpha}(|x|),
\end{align}
where
\begin{equation}
\label{eq:Hmda}
H^m_{d,\alpha}(r):=
\begin{cases}
r^{-(d-\alpha)}\mathbf{1}_{\{r \le 1\}}+r^{-\frac{d+\alpha}{2}}\kappa_m(r)\mathbf{1}_{\{r>0\}}
& \mbox{if \, $\alpha<d$} \\
& \\
\log\left(1+\frac{1}{r}\right)\mathbf{1}_{\{r \le 1\}}+r^{-\frac{d+\alpha}{2}}\kappa_m(r)
\mathbf{1}_{\{r>0\}} & \mbox{if \, $\alpha=d=1$} \\
& \\
\mathbf{1}_{\{r \le 1\}}+r^{-\frac{d+\alpha}{2}}\kappa_m(r)\mathbf{1}_{\{r>0\}}
& \mbox{if \, $\alpha>d=1$}.
\end{cases}
\end{equation}
We note that
\begin{align}
\label{eq:lowerrollext2}
R^m_\lambda(x)  \ge C_{d,\alpha,\lambda,m}H^m_{d,\alpha}(|x|)
\end{align}
holds in fact for all $m,\lambda>0$. It is also worth recalling that in \eqref{forrollext1} the constant satisfies 
\begin{equation}\label{eq:asymptoticsC}
	C_{d,\alpha,\lambda,m}\sim C_{d,\alpha}\lambda^{-2}\left(1+e^{-\frac{\lambda}{m}}\right) \quad \mbox{ as }\quad \lambda \downarrow 0.
\end{equation} The proof of these resolvent estimates is given in the Appendix.

\section{Fractional Rollnik class}
\subsection{Definitions and basic properties}
\subsubsection{Fractional Rollnik norm}
Making use of \eqref{forroll} we introduce the following class of potentials for the operators $L_{m,\alpha}$, $m \geq 0$.
\begin{definition}
\label{fRollnik}
{\rm
Let  $d \in \mathbb N$,  $0 < \alpha < 2$, and assume that
\begin{equation}
\label{standassmp}
0 < \alpha < d < 2\alpha.
\end{equation}
\begin{enumerate}
\item[(1)]
Let $m=0$. We call
\begin{equation*}
\mathcal{R}_{d,\alpha}=\left\{V:\mathbb{R}^d\rightarrow\mathbb{R} : \, \|V\|^2_{\mathcal{R}_{d,\alpha}}=
\int_{\mathbb{R}^{d}} \int_{\mathbb{R}^{d}}\frac{|V(x)||V(y)|}{|x-y|^{2(d-\alpha)}}dxdy<\infty \right\}
\end{equation*}
\emph{fractional Rollnik-class of order $\alpha$ for the massless Laplacian $L_{0,\alpha}$}.
\medskip
\item[(2)]
Let $d=3$ and $m>0$. We define the \emph{fractional Rollnik-class of order $\alpha$ for the massive
Laplacian $L_{m,\alpha}$} by $\mathcal{R}^m_{\alpha} = \mathcal{R}_{d,\alpha} \cap \mathcal{R}$, where
$\mathcal{R}$ is the classical Rollnik class.
\end{enumerate}
}
\end{definition}
It is seen that the study of the massive case reduces to the study of the massless case. First therefore
we focus on the class $\mathcal{R}_{d,\alpha}$ and as we progress we will consider extensions, which
will also accommodate cases of recurrent semigroups.

\noindent

We begin by showing that the expression $\|\cdot\|_{\mathcal{R}_{d,\alpha}}$ used in the definition of
$\mathcal{R}_{d,\alpha}$ is indeed a norm and that $\mathcal{R}_{d,\alpha}$ is a Banach space with
respect to this norm. Before that, we note the following two properties which will be used below.
\begin{lemma}
\label{lem:L1loc}
We have that $L^1_{\rm loc}(\R^d) \subset \mathcal{R}_{d,\alpha}$ and
\begin{equation*}
\Norm{V}{\mathcal{R}_{d,\alpha}}^2 \ge \frac{1}{(2r(K))^{2d-2\alpha}}\Norm{V}{L^1(K)}^2,
\end{equation*}
where $K \subset \R^d$ is any compact set and $r(K)=\inf\{r \ge 0: \ K \subset B_r\}$.
\end{lemma}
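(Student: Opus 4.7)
The plan is to prove the displayed inequality directly from the definition of $\|\cdot\|_{\mathcal{R}_{d,\alpha}}$; the containment then follows as a one-line corollary. I should note that the inclusion as literally stated cannot hold in that direction — e.g.\ $V \equiv 1$ lies in $L^1_{\rm loc}(\R^d)$ but its Rollnik norm is manifestly infinite — so I will read the statement as $\mathcal{R}_{d,\alpha} \subset L^1_{\rm loc}(\R^d)$, which is what the inequality actually yields.

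First I would fix an arbitrary compact set $K \subset \R^d$. By definition of $r(K)$, the set $K$ is contained in the closed ball $\overline{B_{r(K)}}$ centred at the origin, so for any $x,y \in K$ the triangle inequality gives $|x-y| \le 2r(K)$. Since $2(d-\alpha) > 0$ by assumption \eqref{standassmp}, raising to this power and inverting gives the pointwise lower bound
$$
\frac{1}{|x-y|^{2(d-\alpha)}} \ge \frac{1}{(2r(K))^{2(d-\alpha)}} \qquad \mbox{for all } x,y \in K.
$$

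Second, because the integrand $|V(x)||V(y)|/|x-y|^{2(d-\alpha)}$ is pointwise non-negative, I would simply restrict the double integral defining $\|V\|^2_{\mathcal{R}_{d,\alpha}}$ from $\R^d \times \R^d$ down to $K \times K$, apply the pointwise bound of the previous step to factor out the constant $(2r(K))^{-2(d-\alpha)}$, and identify the remaining factor as $\big(\int_K |V(x)|\,dx\big)^2 = \|V\|^2_{L^1(K)}$. This delivers the claimed inequality. The inclusion $\mathcal{R}_{d,\alpha} \subset L^1_{\rm loc}(\R^d)$ is then immediate: any $V \in \mathcal{R}_{d,\alpha}$ satisfies $\|V\|_{L^1(K)} \le (2r(K))^{d-\alpha}\|V\|_{\mathcal{R}_{d,\alpha}} < \infty$ for every compact $K$.

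There is no real obstacle — the argument is a one-step size comparison — but the only point worth flagging is the essential use of $d > \alpha$ from \eqref{standassmp}, without which the direction of the bound on $|x-y|^{2(d-\alpha)}$ would reverse and the restricted integral would no longer control $\|V\|_{L^1(K)}^2$ from below.
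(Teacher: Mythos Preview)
Your proof is correct and follows essentially the same approach as the paper's: restrict the double integral to $K\times K$ (the paper reduces first to balls $B_r$, but this is cosmetic), bound the kernel using $|x-y|\le 2r(K)$, and factor. Your observation that the inclusion is misstated in the lemma --- it should read $\mathcal{R}_{d,\alpha}\subset L^1_{\rm loc}(\R^d)$ --- is also correct; this is confirmed by how the lemma is used later in the paper (e.g.\ in the proof of Theorem~\ref{banach}).
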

\begin{proof}
Without loss of generality, it will be sufficient to show this for every ball $B_r \subset \R^d$.
Then we have
\begin{equation*}
\Norm{V}{\mathcal{R}_{d,\alpha}}^2 \ge \int_{B_r}\int_{B_r}|V(x)||V(y)||x-y|^{2\alpha-2d}\, dx\, dy
\ge (2r)^{2\alpha-2d}\Norm{V}{L^1(B_r)}^2.
\end{equation*}
\end{proof}
A straightforward but useful consequence of the Hardy-Littlewood-Sobolev inequality is the following inclusion
property.
\begin{proposition}
\label{prop:prop2anticipated}
$L^{d/\alpha}(\R^d)$ is continuously embedded in $\cR_{d,\alpha}$, i.e.,
\begin{equation*}
\Norm{V}{\cR_{d,\alpha}} \le C_{d,\alpha}\Norm{V}{d/\alpha}.
\end{equation*}	
\end{proposition}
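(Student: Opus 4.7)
The plan is to apply the Hardy--Littlewood--Sobolev inequality \eqref{eq:HLS2} directly with $f=g=|V|$ and the appropriate choice of exponents. Writing out the fractional Rollnik norm,
\begin{equation*}
\|V\|^2_{\cR_{d,\alpha}} = \int_{\R^d}\int_{\R^d}\frac{|V(x)||V(y)|}{|x-y|^{2(d-\alpha)}}\,dx\,dy,
\end{equation*}
we see that the kernel is $|x-y|^{-\delta}$ with $\delta = 2(d-\alpha)$. First I would check that $\delta \in (0,d)$: the standing assumption \eqref{standassmp} gives $\alpha < d < 2\alpha$, which is precisely equivalent to $0 < 2(d-\alpha) < d$, so $\delta \in (0,d)$ indeed.

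Next I would verify the exponent condition in \eqref{eq:HLS2}. Taking $p = q = d/\alpha$ (which is $>1$ since $\alpha < d$), we have $\frac{1}{p}+\frac{1}{q} = \frac{2\alpha}{d}$, and also $2 - \frac{\delta}{d} = 2 - \frac{2(d-\alpha)}{d} = \frac{2\alpha}{d}$, so the balance condition holds. Applying \eqref{eq:HLS2} to $f = g = |V| \in L^{d/\alpha}(\R^d)$ yields
\begin{equation*}
\|V\|^2_{\cR_{d,\alpha}} \le C_{d/\alpha,\,2(d-\alpha),d}\,\|V\|^2_{d/\alpha},
\end{equation*}
and taking square roots gives the claimed inequality with $C_{d,\alpha} = C_{d/\alpha,\,2(d-\alpha),d}^{1/2}$.

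There is essentially no obstacle here: the statement is a direct application of HLS once the assumption $\alpha < d < 2\alpha$ is recognised as exactly the range that makes $2(d-\alpha)$ a legitimate Riesz-type exponent in $(0,d)$ and simultaneously makes the diagonal choice $p=q=d/\alpha$ satisfy the HLS scaling relation. The continuity of the embedding follows from the fact that the inequality is linear in $\|V\|_{d/\alpha}$.
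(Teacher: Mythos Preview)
Your proof is correct and follows exactly the same approach as the paper: both apply the Hardy--Littlewood--Sobolev inequality \eqref{eq:HLS2} with $\delta=2(d-\alpha)$ and $p=q=d/\alpha$. Your version is in fact more detailed, since you explicitly verify that $\delta\in(0,d)$ and that the scaling condition $\frac{1}{p}+\frac{1}{q}=2-\frac{\delta}{d}$ holds, whereas the paper simply states the choice of parameters.
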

\begin{proof}
It is immediate by \eqref{eq:HLS2} with $\sigma=2(d-\alpha)$ and $p=q=\frac{d}{\alpha}$.
\end{proof}
Apart from $\Norm{\cdot}{\cR_{d,\alpha}}$, we can also define the quantity
\begin{equation*}
\NNorm{V}{\cR_{d,\alpha}}^2=\int_{\R^d}\int_{\R^d}V(x)\overline{V(y)}|x-y|^{2\alpha-2d}\, dx \, dy
\end{equation*}
on $\cR_{d,\alpha}$, where we allow $V$ to be possibly complex-valued. Next we prove that both $\Norm{\cdot}{\cR_{d,\alpha}}$
and $\NNorm{\cdot}{\cR_{d,\alpha}}$ are norms on $\cR_{d,\alpha}$; we take inspiration from a strategy proposed in
\cite[Sect. 4]{PipTaq00} for $d=1$ only, with substantial modifications. Recall the factor $\gamma_d$ from \eqref{gammad}.
\begin{theorem}
\label{norms}
The functionals $\Norm{\cdot}{\cR_{d,\alpha}}$ and $\NNorm{\cdot}{\cR_{d,\alpha}}$
are norms on $\cR_{d,\alpha}$. Furthermore, for all $V \in \cR_{d,\alpha}$ we have
\begin{equation}
\label{eq:noabs}
\NNorm{V}{\cR_{d,\alpha}}^2=\gamma_d(2\alpha-d)\Norm{\cI_{\alpha-\frac{d}{2}}V}{2}^2 \quad \mbox{ and } \quad \Norm{V}{\cR_{d,\alpha}}^2=\gamma_d(2\alpha-d)\Norm{\cI_{\alpha-\frac{d}{2}}|V|}{2}^2.
\end{equation}
\end{theorem}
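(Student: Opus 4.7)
The plan is to reduce both identities in \eqref{eq:noabs}, and then all norm axioms for each functional, to a single analytic ingredient: the classical composition formula for Riesz kernels,
\begin{equation*}
\int_{\R^d}\frac{dx}{|x-y|^{d-\beta}|x-z|^{d-\beta}}=\frac{\gamma_d(\beta)^2}{\gamma_d(2\beta)}\,|y-z|^{2\beta-d},\qquad 0<\beta<\tfrac{d}{2},
\end{equation*}
applied with $\beta:=\alpha-\tfrac{d}{2}$. The standing assumption \eqref{standassmp} places $\beta\in(0,d/2)$ and $2\beta=2\alpha-d\in(0,d)$, so all quantities are well-defined; this identity (equivalently $k_\beta\ast k_\beta=k_{2\beta}$) is a standard Beta-function computation, also readable off the Fourier side via $\widehat{k_\beta}(\xi)=|\xi|^{-\beta}$.

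First I would expand $\Norm{\cI_{\alpha-d/2}|V|}{2}^2$ as a triple integral with non-negative integrand and apply Tonelli, integrating the outer $x$-variable first via the displayed composition formula. This produces the first equality of \eqref{eq:noabs},
\begin{equation*}
\Norm{V}{\cR_{d,\alpha}}^2=\gamma_d(2\alpha-d)\,\Norm{\cI_{\alpha-d/2}|V|}{2}^2,
\end{equation*}
and, crucially, the identity is valid with both sides simultaneously finite or infinite, so no prior integrability hypothesis on $V$ is needed (and one obtains as a by-product the characterization $V\in\cR_{d,\alpha}\iff\cI_{\alpha-d/2}|V|\in L^2$). For $V\in\cR_{d,\alpha}$ this finiteness makes the triple integral with signed/complex integrand $V(y)\overline{V(z)}k_\beta(x-y)k_\beta(x-z)$ absolutely convergent, and Fubini--Tonelli then delivers the second equality, $\NNorm{V}{\cR_{d,\alpha}}^2=\gamma_d(2\alpha-d)\Norm{\cI_{\alpha-d/2}V}{2}^2$.

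The norm axioms now follow from these $L^2$ representations. Absolute homogeneity is immediate from linearity of $\cI_{\alpha-d/2}$. For $\NNorm{\cdot}{\cR_{d,\alpha}}$ the triangle inequality is just the $L^2$ triangle inequality applied to $\cI_{\alpha-d/2}(V+W)=\cI_{\alpha-d/2}V+\cI_{\alpha-d/2}W$; for $\Norm{\cdot}{\cR_{d,\alpha}}$ one additionally combines the pointwise bound $|V+W|\le|V|+|W|$ with the monotonicity of $\cI_{\alpha-d/2}$ on non-negative functions, which rests on strict positivity of the kernel $k_{\alpha-d/2}$. Positive-definiteness of $\Norm{\cdot}{\cR_{d,\alpha}}$ is the transparent case: $\Norm{V}{\cR_{d,\alpha}}=0$ forces $\cI_{\alpha-d/2}|V|\equiv 0$ a.e., and strict positivity of $k_{\alpha-d/2}$ combined with $|V|\ge 0$ immediately yields $|V|\equiv 0$ a.e.

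The main obstacle is the positive-definiteness of $\NNorm{\cdot}{\cR_{d,\alpha}}$, where the pointwise positivity trick is unavailable since $V$ is complex or signed. The plan here is to show that every $V\in\cR_{d,\alpha}$ is automatically a tempered distribution: $V\in L^1_{\rm loc}(\R^d)$ by Lemma~\ref{lem:L1loc}, and the first identity just proved forces $\cI_{\alpha-d/2}|V|$ to be finite almost everywhere, so picking any $x_0$ at which it is finite gives the tail bound $\int_{|y|\ge 1}|V(y)||y|^{\beta-d}\,dy<\infty$, which is precisely the polynomial-growth control needed for temperedness. The second equality in \eqref{eq:noabs} then forces $\cI_{\alpha-d/2}V=0$ in $L^2(\R^d)$; applying the Fourier transform gives $|\xi|^{-(\alpha-d/2)}\widehat{V}(\xi)=0$ as a tempered distribution, so $\widehat{V}$ is supported at $\{0\}$ and $V$ must be a polynomial. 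Under the standing assumption $d<2\alpha$ the Rollnik double integral of any nontrivial polynomial diverges at infinity, so finally $V\equiv 0$, completing the proof.
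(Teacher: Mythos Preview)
Your approach is correct and takes a genuinely different route from the paper's. The paper establishes \eqref{eq:noabs} by first proving it on the Lizorkin space $\cF\cS_0(\R^d)$ via Plancherel and Theorem~\ref{thm:LizRi}, then extending to $L^{d/\alpha}(\R^d)$ by density (Theorem~\ref{thm:dense}) together with the Hardy--Littlewood--Sobolev bound of Proposition~\ref{prop:prop2anticipated}, and finally reaching general $V\in\cR_{d,\alpha}$ by truncation plus monotone/dominated convergence. Your route---the Riesz composition identity $k_\beta\ast k_\beta=k_{2\beta}$ combined with Tonelli for non-negative integrands---bypasses all approximation and gives the identity for $|V|$ in one stroke, with both sides simultaneously finite or infinite; the signed identity then follows by Fubini. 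This is more direct and yields the characterization $V\in\cR_{d,\alpha}\Leftrightarrow\cI_{\alpha-d/2}|V|\in L^2$ for free. The paper's approach, in exchange, stays within the Lizorkin/Fourier framework of Section~2.2 and does not need the semigroup property of Riesz kernels as an external input.

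One step to tighten: in the positive-definiteness argument for $\NNorm{\cdot}{\cR_{d,\alpha}}$, the sentence ``applying the Fourier transform gives $|\xi|^{-(\alpha-d/2)}\widehat{V}(\xi)=0$ as a tempered distribution'' is not well-posed as written, since for a merely tempered $V$ the product $|\xi|^{-\beta}\widehat V$ need not be defined. The fix is already implicit in your setup: from $\cI_\beta V=0$ and the tail bound you derived, Fubini gives $\langle V,\cI_\beta\phi\rangle=0$ for every $\phi\in\cF\cS_0(\R^d)$; since $\cI_\beta$ is a bijection on $\cF\cS_0(\R^d)$ (Theorem~\ref{thm:LizRi}), $V$ annihilates all of $\cF\cS_0(\R^d)$, whence $\widehat V$ annihilates $\cS_0(\R^d)$, so $\widehat V$ is supported at the origin and $V$ is a polynomial. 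Your closing observation (nontrivial polynomials fail to lie in $\cR_{d,\alpha}$, e.g.\ by the growth bound in Lemma~\ref{lem:L1loc}) then finishes the argument. The paper, by contrast, disposes of this point in one line by invoking injectivity of the Riesz potential.
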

\begin{proof}
The proof will proceed through several steps.

\vspace{0.1cm}
\noindent
\emph{Step 1:}
First we show \eqref{eq:noabs}, starting with assuming that $V \in \cF\cS_0(\R^d)$. Clearly, by the definition of
Lizorkin space, we know that $V \in \cS(\R^d)$ and then $V \in L^2(\R^d) \cap L^{d/\alpha}(\R^d)$, which in turn
implies $V \in \cR_{d,\alpha}$. Furthermore, also $\cI_{2\alpha-d}V \in \cF\cS_0(\R^d)\subset L^2(\R^d)$ by
Theorem \ref{thm:LizRi}. Hence, by an application of Fubini's theorem and Plancherel's identity we obtain
\begin{align*}
\nonumber
\NNorm{V}{\cR_{d,\alpha}}^2&=\gamma_d(2\alpha-d)\left( V, \cI_{2\alpha-d}V\right)_{L^2(\R^d)}\\
&=\gamma_d(2\alpha-d)
\left( \cF[V], |\cdot|^{d-2\alpha}\cF[V]\right)_{L^2(\R^d)} \nonumber \\
\label{Rieszy}
&=\gamma_d(2\alpha-d)\left( |\cdot|^{\frac{d}{2}-\alpha}\cF[V], |\cdot|^{\frac{d}{2}-\alpha}\cF[V]\right)_{L^2(\R^d)}
=\gamma_d(2\alpha-d)\Norm{\cI_{\alpha-\frac{d}{2}}V}{2}^2.
\end{align*}

\vspace{0.1cm}
\noindent
\emph{Step 2:}
Next assume that $V \in L^{d/\alpha}(\R^d)$. Since $\cF\cS_0(\R^d)$ is dense in $L^{d/\alpha}(\R^d)$ by Theorem~\ref{thm:dense},
we may choose a sequence $\seq V \subset \cF\cS_0(\R^d)$ such that $V_n \to V \in L^{d/\alpha}(\R^d)$. Since $V_n-V \in
L^{d/\alpha}(\R^d)$, by Proposition~\ref{prop:prop2anticipated}
we have
\begin{align*}
&\left|\int_{\R^d}\int_{\R^d}V_n(x)V_n(y)|x-y|^{2\alpha-2d}dx\,dy-\int_{\R^d}\int_{\R^d}V(x)V(y)|x-y|^{2\alpha-2d}dx\,dy\right|\\
&\qquad \le \int_{\R^d}\int_{\R^d}|V_n(x)-V(x)||V_n(y)||x-y|^{2\alpha-2d}dx\,dy\\
&\qquad \qquad +\int_{\R^d}\int_{\R^d}|V(x)||V_n(y)-V(y)||x-y|^{2\alpha-2d}dx\,dy\\
&\qquad \le C_{d,\alpha}\Norm{V_n-V}{d/\alpha} \left(\Norm{V_n}{{d/\alpha}} +\Norm{V}{{d/\alpha}}\right) \to 0 \quad
\mbox{as $n\to\infty$}.
\end{align*}
On the other hand, by the triangle inequality and \eqref{eq:HLS},
\begin{equation*}
\left|\Norm{\cI_{\alpha-\frac{d}{2}}V_n}{2}-\Norm{\cI_{\alpha-\frac{d}{2}}V}{2}\right| \le \Norm{\cI_{\alpha-\frac{d}{2}}(V_n-V)}{2}
\le C_{d,\alpha}\Norm{V_n-V}{d/\alpha} \to 0.
\end{equation*}
This proves the first equality in \eqref{eq:noabs} for $V \in L^{d/\alpha}(\R^d)$. The second equality follows by applying the
first on $|V| \in L^{d/\alpha}(\R^d)$.

\vspace{0.1cm}
\noindent
\emph{Step 3:}
To complete this part of the proof, consider now any $V \in \cR_{d,\alpha}$. Let $M \in \N$ and $\eta_M$ be a
$C^\infty_{\rm c}(\R^d)$ function such
that $\eta_M(x)=1$ for every $x \in B_M$, $\eta_M(x) \in [0,1]$ and ${\rm supp}(\eta_M)\subset B_{M+1}$. Then $V_M
:=\min\{|V|,M\}\eta_M \in L^{{d/\alpha}}(\R^d)$ and \eqref{eq:noabs} holds for this function. Next, since
$V_M \le  V_{M+1}$ for all $M \in \N$, by the monotone convergence theorem we have
\begin{equation*}
\lim_{M \to \infty}\Norm{V_M}{\cR_{d,\alpha}}^2=\Norm{V}{\cR_{d,\alpha}}^2 \quad \mbox{ and } \quad
\lim_{M \to \infty}\Norm{\cI_{\alpha-\frac{d}{2}}V_M}{2}^2=\Norm{\cI_{\alpha-\frac{d}{2}}|V|}{2}^2.
\end{equation*}
This proves the second equality in \eqref{eq:noabs}. To prove the first, notice that we have in fact shown
that if $V \in \cR_{d,\alpha}$, then $\Norm{\cI_{\alpha-\frac{d}{2}}|V|}{2}<\infty$. Hence by setting $V_M
=\max\{\min\{V,M\},-M\}\eta_M$, we can use the dominated convergence theorem to obtain
\begin{equation*}
\lim_{M \to \infty}\NNorm{V_M}{\cR_{d,\alpha}}^2=\NNorm{V}{\cR_{d,\alpha}}^2 \quad \mbox{ and } \quad
\lim_{M \to \infty}\Norm{\cI_{\alpha-\frac{d}{2}}V_M}{2}^2=\Norm{\cI_{\alpha-\frac{d}{2}}V}{2}^2.
\end{equation*}

\vspace{0.1cm}
\noindent
\emph{Step 4:}
Finally we show the norm property.
Identity \eqref{eq:noabs} guarantees that for all $V \in \cR_{d,\alpha}$, both $\Norm{V}{\cR_{d,\alpha}}$
and $\NNorm{V}{\cR_{d,\alpha}}$ are non-negative. Furthermore, note that directly by the definition of the
Riesz potential, the facts $V_1,V_2 \ge 0$ and $V_1 \le V_2$, imply $\Norm{\cI_{\alpha-\frac{d}{2}}V_1}{2}
\le \Norm{\cI_{\alpha-\frac{d}{2}}V_2}{2}$. Hence we have
\begin{eqnarray*}
\Norm{V_1+V_2}{\cR_{d,\alpha}}
&=&
\sqrt{\gamma(2\alpha-d)}\Norm{\cI_{\alpha-\frac{d}{2}}|V_1+V_2|}{2} \le
\sqrt{\gamma(2\alpha-d)}\Norm{\cI_{\alpha-\frac{d}{2}}|V_1|+\cI_{\alpha-\frac{d}{2}}|V_2|}{2} \\
&\le& \sqrt{\gamma(2\alpha-d)}\Norm{\cI_{\alpha-\frac{d}{2}}|V_1|}{2}+\sqrt{\gamma(2\alpha-d)}
\Norm{\cI_{\alpha-\frac{d}{2}}|V_2|}{2} \\
&=&
\Norm{V_1}{\cR_{d,\alpha}}+\Norm{V_2}{\cR_{d,\alpha}}.
\end{eqnarray*}
The fact that $\Norm{V}{\cR_{d,\alpha}}=0$ implies $V=0$ a.e. follows by injectivity of the Riesz potential
$\cI_{2\alpha-d}$. The same argument can be repeated for $\NNorm{\cdot}{\cR_{d,\alpha}}$, which proves that
both
are norms.
\end{proof}

\begin{remark}
{\rm
Notice that $\NNorm{\cdot}{\cR_{d,\alpha}}$ is induced by the scalar product
\begin{equation*}
( V_1,V_2)_{\cR_{d,\alpha}}=\int_{\R^d}\int_{\R^d}V_1(x)\overline{V_2(y)}|x-y|^{2\alpha-2d}\, dx\, dy.
\end{equation*}
}
\end{remark}
We can prove the following completeness result with respect to the norm $\Norm{\cdot}{\cR_{d,\alpha}}$.
\begin{theorem}
\label{banach}
The fractional Rollnik class $\mathcal{R}_{d,\alpha}$ equipped with the norm $\|\cdot\|_{\mathcal{R}_{d,\alpha}}$ is
a Banach space.
\end{theorem}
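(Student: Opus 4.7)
The plan is to mimic the classical Lebesgue-type completeness argument, exploiting the fact that convergence in $\mathcal{R}_{d,\alpha}$ controls convergence on every compact set, so that a candidate limit can be extracted, and then using Fatou's lemma (which is natural for our non-negative double integral) to verify both that the limit lives in $\mathcal{R}_{d,\alpha}$ and that the convergence actually takes place in the Rollnik norm.

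Take a Cauchy sequence $\seq{V}$ in $\mathcal{R}_{d,\alpha}$. By Lemma~\ref{lem:L1loc}, for every ball $B_r \subset \R^d$ one has
\begin{equation*}
\|V_n - V_m\|_{L^1(B_r)} \le (2r)^{d-\alpha} \|V_n - V_m\|_{\mathcal{R}_{d,\alpha}},
\end{equation*}
so $\seq{V}$ is Cauchy in $L^1(B_r)$. By completeness of $L^1(B_r)$ and a standard diagonal/consistency argument over $r \in \N$, there exists a measurable $V: \R^d \to \R$ with $V_n \to V$ in $L^1_{\loc}(\R^d)$. Extract a subsequence $\seqq{V_{n_k}}$ converging to $V$ almost everywhere in $\R^d$.

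For membership $V \in \mathcal{R}_{d,\alpha}$, note that any Cauchy sequence in a seminormed space is bounded, so $M := \sup_n \|V_n\|_{\mathcal{R}_{d,\alpha}} < \infty$. Since $|V_{n_k}(x)||V_{n_k}(y)| \to |V(x)||V(y)|$ a.e.\ on $\R^d \times \R^d$ and the kernel $|x-y|^{2\alpha - 2d}$ is non-negative, Fatou's lemma gives
\begin{equation*}
\|V\|_{\mathcal{R}_{d,\alpha}}^2 = \int_{\R^d}\int_{\R^d}\frac{|V(x)||V(y)|}{|x-y|^{2(d-\alpha)}}dxdy \le \liminf_{k \to \infty} \|V_{n_k}\|_{\mathcal{R}_{d,\alpha}}^2 \le M^2 < \infty.
\end{equation*}
Hence $V \in \mathcal{R}_{d,\alpha}$.

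It remains to show $\|V_n - V\|_{\mathcal{R}_{d,\alpha}} \to 0$. Fix $\varepsilon > 0$ and pick $N$ from the Cauchy property so that $\|V_n - V_m\|_{\mathcal{R}_{d,\alpha}} < \varepsilon$ for all $n, m \ge N$. For $n \ge N$, apply Fatou's lemma again to the non-negative integrands $|V_n(x) - V_{n_k}(x)||V_n(y) - V_{n_k}(y)||x-y|^{2\alpha-2d}$, which converge a.e.\ (along $k$) to $|V_n(x) - V(x)||V_n(y) - V(y)||x-y|^{2\alpha-2d}$:
\begin{equation*}
\|V_n - V\|_{\mathcal{R}_{d,\alpha}}^2 \le \liminf_{k \to \infty} \|V_n - V_{n_k}\|_{\mathcal{R}_{d,\alpha}}^2 \le \varepsilon^2,
\end{equation*}
where the last inequality uses that $n_k \ge N$ for $k$ large. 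This gives $\|V_n - V\|_{\mathcal{R}_{d,\alpha}} \to 0$ and completes the argument.

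The only delicate point is the identification of the candidate limit $V$: the Rollnik norm is not directly an $L^p$ norm, so we must borrow the a.e.\ subsequence from the $L^1_{\loc}$ control provided by Lemma~\ref{lem:L1loc}. Once this is in hand, Fatou handles both the $\mathcal{R}_{d,\alpha}$-boundedness and the norm convergence uniformly, because the Rollnik integrand is pointwise non-negative. Everything else is bookkeeping.
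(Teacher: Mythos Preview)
Your proof is correct and follows essentially the same approach as the paper: both use Lemma~\ref{lem:L1loc} to extract an a.e.\ convergent subsequence via $L^1_{\loc}$ control, then apply Fatou's lemma to the non-negative Rollnik integrand to establish both membership of the limit and norm convergence. The only cosmetic difference is that the paper applies Fatou directly to the differences $V-V_{n_k}$ (getting membership and subsequence convergence in one stroke, then upgrading to full-sequence convergence via the Cauchy property), whereas you first verify $V\in\mathcal{R}_{d,\alpha}$ from boundedness and then handle convergence of the full sequence directly; both routes are standard and equivalent.
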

\begin{proof}
Let $\seq V$ be a Cauchy sequence in $\cR_{d,\alpha}$. By Lemma \ref{lem:L1loc} it is a Cauchy sequence also in
$L^1(B_m)$ for all $m \in \N$. By a simple diagonal argument we know that there exists a subsequence
$(V_{n_k})_{k \in \N}$ and a function $V$ such that $V_{n_k}(x) \to V(x)$ for a.e. $x \in \R^d$. By Fatou's
lemma we have that
\begin{equation*}
\int_{\R^{d}}\int_{\R^d}|V(x)-V_{n_k}(x)||V(y)-V_{n_k}(y)||x-y|^{2\alpha-2d}\, dx \, dy \le \liminf_{j \to \infty}
\Norm{V_{n_j}-V_{n_k}}{\cR_{d,\alpha}}^2.
\end{equation*}
Fix $\varepsilon>0$. Since $\seq V$ is a Cauchy sequence in $\cR_{d,\alpha}$, there exists a label $k_0 \in \N$ such
that $\Norm{V_{n_j}-V_{n_k}}{\cR_{d,\alpha}}<\varepsilon$ holds for all $j,k \ge k_0$. Taking the limit inferior, this
means that for all $k \ge k_0$
\begin{equation*}
\int_{\R^{d}}\int_{\R^d}|V(x)-V_{n_k}(x)||V(y)-V_{n_k}(y)||x-y|^{2\alpha-2d}\, dx \, dy \le \liminf_{j \to \infty}
\Norm{V_{n_j}-V_{n_k}}{\cR_{d,\alpha}}^2<\varepsilon^2.
\end{equation*}
This proves that $V-V_{n_k} \in \cR_{d,\alpha}$, for all $k \ge k_0$, and thus also $V \in \cR_{d,\alpha}$. Furthermore,
for all $k \ge k_0$ we have
$\Norm{V-V_{n_k}}{\cR_{d,\alpha}}<\varepsilon$.
Since $\varepsilon>0$ is arbitrary, we get $V_{n_k} \to V$ in $\cR_{d,\alpha}$. This is sufficient to conclude that $V_n
\to V$ in $\cR_{d,\alpha}$ as $n\to\infty$.
\end{proof}
It is natural to ask whether $\cR_{d,\alpha}$ is a Hilbert space when equipped with the norm $\vertiii{\cdot}
{\cR_{d,\alpha}}$. Adapting an argument in \cite{PipTaq00} originally only for $d=1$, we see that this is not the case.
\begin{theorem}
The fractional Rollnik class $\mathcal{R}_{d,\alpha}$ is not complete with respect to the norm
$\vertiii{\cdot}_{\cR_{d,\alpha}}$.
\end{theorem}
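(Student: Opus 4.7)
The plan is to argue by contradiction via the open mapping theorem. For any $V \in \cR_{d,\alpha}$, the triangle inequality for integrals gives
\[
\vertiii{V}^2_{\cR_{d,\alpha}} = \Bigl|\int_{\R^d}\!\!\int_{\R^d} V(x)\overline{V(y)}|x-y|^{2\alpha-2d}\,dx\,dy\Bigr| \le \Norm{V}{\cR_{d,\alpha}}^2,
\]
so the identity map $\iota\colon (\cR_{d,\alpha},\Norm{\cdot}{\cR_{d,\alpha}}) \to (\cR_{d,\alpha},\vertiii{\cdot}_{\cR_{d,\alpha}})$ is bounded. If $\cR_{d,\alpha}$ were complete under $\vertiii{\cdot}_{\cR_{d,\alpha}}$, then combined with Theorem \ref{banach} the open mapping theorem would force the two norms to be equivalent. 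It therefore suffices to exhibit a sequence $(V_n) \subset \cR_{d,\alpha}$ along which $\vertiii{V_n}_{\cR_{d,\alpha}}/\Norm{V_n}{\cR_{d,\alpha}} \to 0$.

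I will take the oscillatory sequence $V_n(x)=\cos(n\xi\cdot x)\phi(x)$ for a fixed nonzero $\phi \in C^\infty_{\rm c}(\R^d)$ and some $\xi \in \R^d \setminus \{0\}$. Each $V_n$ lies in $L^{d/\alpha}(\R^d) \subset \cR_{d,\alpha}$ by Proposition \ref{prop:prop2anticipated}. Combining Theorem \ref{norms} with Theorem \ref{thm:LizRi} and Plancherel's identity (valid on $\cF\cS_0(\R^d)$ and then extended by density as in the proof of Theorem \ref{norms}), one obtains
\[
\vertiii{V_n}^2_{\cR_{d,\alpha}} = c_{d,\alpha}\int_{\R^d}|\widehat{V_n}(y)|^2|y|^{d-2\alpha}\,dy, \qquad \widehat{V_n}(y) = \tfrac{1}{2}\bigl(\hat\phi(y-n\xi)+\hat\phi(y+n\xi)\bigr),
\]
for a positive constant $c_{d,\alpha}$. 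Splitting the integral at $|y|=R$, the tail is dominated by $R^{d-2\alpha}\Norm{\phi}{2}^2$ (using $d-2\alpha<0$), while the contribution from $|y|\le R$ is at most $\sup_{|y|\le R}|\widehat{V_n}(y)|^2 \int_{|y|\le R}|y|^{d-2\alpha}dy$ and vanishes as $n\to\infty$ since $\hat\phi$ is Schwartz and is being evaluated at points of norm at least $n|\xi|-R$. Letting $n\to\infty$ and then $R\to\infty$ yields $\vertiii{V_n}_{\cR_{d,\alpha}} \to 0$.

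For the lower bound on $\Norm{V_n}{\cR_{d,\alpha}}$, I use the Fourier expansion $|\cos t| = \frac{2}{\pi} + \sum_{k \ge 1} a_k \cos(2kt)$, with $\sum_k |a_k| < \infty$. This gives
\[
|V_n(x)||V_n(y)| = \tfrac{4}{\pi^2}|\phi(x)||\phi(y)| + R_n(x,y),
\]
where $R_n$ is a sum of terms of the form $|\phi(x)||\phi(y)|$ times $\cos(2kn\xi\cdot x)$, $\cos(2ln\xi\cdot y)$, or their product. Integrating the leading term against $|x-y|^{2\alpha-2d}$ yields $\frac{4}{\pi^2}\Norm{\phi}{\cR_{d,\alpha}}^2 > 0$. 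For the remainder, the Hardy-Littlewood-Sobolev inequality \eqref{eq:HLS2} (with $p=q=d/\alpha$, $\delta=2(d-\alpha)$) shows that $|\phi(x)||\phi(y)||x-y|^{2\alpha-2d}\,dxdy$ is a finite measure on $\R^d\times\R^d$, so the Riemann-Lebesgue lemma applied one variable at a time via Fubini, combined with the absolute summability of $\{a_k\}$, implies that the integral of $R_n(x,y)|x-y|^{2\alpha-2d}$ vanishes as $n\to\infty$. Thus $\Norm{V_n}{\cR_{d,\alpha}}^2 \to \frac{4}{\pi^2}\Norm{\phi}{\cR_{d,\alpha}}^2 > 0$, which together with $\vertiii{V_n}_{\cR_{d,\alpha}}\to 0$ contradicts the supposed equivalence of the two norms.

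The step I expect to require the most care is the Riemann-Lebesgue argument for the double-oscillatory remainder: one must convert the finiteness of the Rollnik measure of $\phi$ into genuine decay of the full double integral despite the singularity of $|x-y|^{2\alpha-2d}$ on the diagonal (typically via Fubini and boundedness of $\cI_{2\alpha-d}|\phi|$ in the appropriate Lebesgue space granted by \eqref{eq:HLS}). All the remaining steps are direct consequences of Theorems \ref{banach} and \ref{norms} and Proposition \ref{prop:prop2anticipated}.
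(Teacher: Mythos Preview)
Your proof is correct and proceeds along a genuinely different route from the paper's. The paper argues directly: using the isometry $\vertiii{V}_{\cR_{d,\alpha}}=\sqrt{\gamma_d(2\alpha-d)}\,\Norm{\cI_{\alpha-\frac{d}{2}}V}{2}$ from \eqref{eq:noabs}, it picks $\phi\in L^2(\R^d)\setminus\Dom((-\Delta)^{(2\alpha-d)/4})$, approximates it in $L^2$ by $\phi_n\in\cF\cS_0$, and sets $f_n=(-\Delta)^{(2\alpha-d)/4}\phi_n\in\cF\cS_0\subset\cR_{d,\alpha}$. Then $(f_n)$ is $\vertiii{\cdot}$-Cauchy (since $\cI_{\alpha-d/2}f_n=\phi_n$ is $L^2$-Cauchy), but any putative limit $f$ would yield $\phi=\cI_{\alpha-d/2}f$, contradicting the choice of $\phi$. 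This is very short and conceptual: it simply observes that the isometric image of $(\cR_{d,\alpha},\vertiii{\cdot})$ in $L^2$ is the range of $\cI_{\alpha-d/2}$, which is dense but not closed.

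Your approach instead combines Theorem~\ref{banach} with the open mapping theorem to reduce incompleteness to non-equivalence of the two norms, and then produces an explicit oscillatory sequence $V_n=\cos(n\xi\cdot x)\phi(x)$ along which $\vertiii{V_n}\to 0$ while $\Norm{V_n}{\cR_{d,\alpha}}$ stays bounded below. The gain is concreteness: you exhibit the mechanism (high-frequency oscillation is invisible to $\vertiii{\cdot}$ but not to $\Norm{\cdot}{\cR_{d,\alpha}}$) rather than appealing abstractly to a domain deficiency. The cost is more bookkeeping, notably the Fourier-series expansion of $|\cos|$ and the Riemann--Lebesgue step you flagged; note also that the Fourier identity $\vertiii{V_n}^2=c_{d,\alpha}\int|\widehat{V_n}(y)|^2|y|^{d-2\alpha}\,dy$ for $V_n\in\cS(\R^d)$ (not just $\cF\cS_0$) needs a line of justification, e.g.\ approximating simultaneously in $L^{d/\alpha}$ and $L^2$ so that both $\cI_{\alpha-d/2}W_k\to\cI_{\alpha-d/2}V_n$ in $L^2$ and $\widehat{W_k}\to\widehat{V_n}$ in $L^2$, allowing identification of $\widehat{\cI_{\alpha-d/2}V_n}$ with $|\cdot|^{d/2-\alpha}\widehat{V_n}$.
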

\begin{proof}
Assume, to the contrary, that $\cR_{d,\alpha}$ is complete with respect to $\vertiii{\cdot}_{\cR_{d,\alpha}}$.
Consider any function $\phi \in L^2(\R^d)$ such that $\phi \not \in \Dom((-\Delta)^{(2\alpha-d)/4})$. Since
$\cF\cS_0(\R^d)$ is dense in $L^2(\R^d)$, there exists a sequence $\seq\phi\subset \cF\cS_0(\R^d)$ such that
$\phi_n \to \phi$ in $L^2(\R^d)$. Define the sequence of functions
\begin{equation*}
f_n=\cF^{-1}[|\cdot|^{\frac{d}{2}-\alpha}\cF[\phi_n]] \,\in\, \cF\cS_0(\R^d) \,\subset\, \cR_{d,\alpha}.
\end{equation*}
Since $\phi_n \to \phi$ in $L^2(\R^d)$, also $\cI_{\alpha-\frac{d}{2}}f_n \to \phi$ in $L^2(\R^d)$, hence
$\seq{\cI_{\alpha-\frac{d}{2}}f}$ is a Cauchy sequence in $L^2(\R^d)$. By the first equality in \eqref{eq:noabs},
this implies that $\seq f$ is a Cauchy sequence in $(\cR_{d,\alpha},\vertiii{\cdot}_{\cR_{d,\alpha}})$. Since
by assumption this space is complete, there exists a function $f$ such that $f_n \to f$ in
$(\cR_{d,\alpha},\vertiii{\cdot}_{\cR_{d,\alpha}})$. Again by \eqref{eq:noabs}, we know that
$\cI_{\alpha-\frac{d}{2}}f \in L^2(\R^d)$ and $\cI_{\alpha-\frac{d}{2}}f_n \to \cI_{\alpha-\frac{d}{2}}f$. However,
this is impossible since then we would have $\phi=\cI_{\alpha-\frac{d}{2}}f \in \Dom((-\Delta)^{(2\alpha-d)/4})$.
\end{proof}

\begin{remark}
\label{rem:upinc}
{\rm
We note that if $V_1,V_2$ are two potentials such that $|V_1(x)| \le |V_2(x)|$ a.e. and $V_2 \in \cR_{d,\alpha}$, then
clearly also $V_1 \in \cR_{d,\alpha}$. Furthermore, by Lemma \ref{lem:L1loc}, it is clear that if $V \in \cR_{d,\alpha}$,
then for all $R>0$
\begin{equation}\label{eq:L1bound}
\Norm{V}{L^1(B_R)} \le 2^{d-\alpha}\Norm{V}{\cR_{d,\alpha}}R^{d-\alpha}.
\end{equation}
This means that fractional Rollnik-class potentials tend to concentrate their mass at infinity.
}
\end{remark}

\subsubsection{Restrictions on the fractional exponent}
Next we turn to discussing the role of condition \eqref{standassmp}. A straightforward calculation shows that
$|x|^{-2(d-\alpha)} \in L^1_{\rm loc}(\R^d)$ only when $\alpha < 2d$. This implies that if $d \ge 2\alpha$ there
exist no continuous potentials $V \in \cR_{d,\alpha}$ apart from $V \equiv 0$. In fact, we can prove the following
more general property.
\begin{theorem}
\label{thm:nonL1kernel}
Let $k:[0,\infty) \to [0,\infty)$ be a function such that for all $\delta>0$
\begin{equation}
\label{eq:nonintegrability}
\int_0^{\delta}r^{d-1}k(r)=\infty.
\end{equation}
Suppose that $V:\R^d \to \R$ is a measurable function such that
\begin{equation}
\label{eq:finitecond}
\int_{\R^d}\int_{\R^d}|V(x)||V(y)|k(|x-y|)dxdy<\infty.
\end{equation}
Then for any choice of $\varepsilon, r >0$ and $x \in \R^d$ there is no ball $B_r(x)$ such that $|V(y)|>
\varepsilon$ for a.e. $y \in B_r(x)$. In particular, if $V$ is continuous, then $V \equiv 0$.
\end{theorem}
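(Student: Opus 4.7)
The plan is to argue by contradiction via a Tonelli/translation reduction. Suppose there exist $\varepsilon, r > 0$ and $x_0 \in \R^d$ such that $|V(y)| \ge \varepsilon$ for a.e. $y \in B_r(x_0)$. Then
\begin{equation*}
\int_{\R^d}\int_{\R^d}|V(y)||V(z)|k(|y-z|)\,dy\,dz \;\ge\; \varepsilon^2 \int_{B_r(x_0)}\int_{B_r(x_0)} k(|y-z|)\,dy\,dz,
\end{equation*}
so it suffices to show that the right-hand side is infinite, which would contradict \eqref{eq:finitecond}.

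After translating by $-x_0$ we can replace $B_r(x_0)$ by $B_r$. The main step is to bound the inner integral from below by an integral of $k$ over a ball centred at the origin whose size is independent of $y$. Namely, for any $y \in B_{r/2}$ one has $B_{r/2}(y) \subset B_r$, and therefore, by the change of variables $w = z - y$,
\begin{equation*}
\int_{B_r} k(|y-z|)\,dz \;\ge\; \int_{B_{r/2}(y)} k(|y-z|)\,dz \;=\; \int_{B_{r/2}} k(|w|)\,dw \;=\; \sigma_d \int_0^{r/2} s^{d-1}k(s)\,ds,
\end{equation*}
where for $d=1$ the final equality uses $\sigma_1 = 2$ and evenness of $k(|\cdot|)$. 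By the non-integrability assumption \eqref{eq:nonintegrability} applied with $\delta = r/2$, this lower bound equals $+\infty$. Hence the inner integral is infinite for every $y \in B_{r/2}$, which has positive Lebesgue measure, and Tonelli's theorem (applicable since the integrand is nonnegative) gives
\begin{equation*}
\int_{B_r}\int_{B_r} k(|y-z|)\,dy\,dz \;\ge\; \int_{B_{r/2}} \left(\int_{B_r} k(|y-z|)\,dz\right) dy \;=\; +\infty,
\end{equation*}
yielding the desired contradiction with \eqref{eq:finitecond}. The last assertion is then immediate: if $V$ were continuous and not identically zero, picking $x_1$ with $V(x_1)\neq 0$ and using continuity to find a ball on which $|V| \ge |V(x_1)|/2$ would contradict the statement just proved.

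The only mildly delicate point is ensuring the lower bound on the inner integral is uniform in $y$ over a set of positive measure, which is what forces the choice $y \in B_{r/2}$ together with the inclusion $B_{r/2}(y)\subset B_r$. Beyond this, the argument is a routine application of Tonelli and the change of variables to spherical coordinates; no further subtleties arise, and the hypothesis $d \ge 1$ is handled uniformly once we use the constant $\sigma_d$.
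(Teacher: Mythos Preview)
Your proof is correct and follows essentially the same contradiction argument as the paper: both reduce to showing $\int_{B_r}\int_{B_r}k(|y-z|)\,dy\,dz=\infty$ by using an inclusion $B_\rho(y)\subset B_r$ together with the change of variables $w=z-y$ and the non-integrability hypothesis. The only cosmetic difference is that the paper uses the $y$-dependent radius $\rho=r-|y-x_0|$ (positive on the open ball), whereas you restrict to $y\in B_{r/2}$ and use the uniform radius $\rho=r/2$; both choices lead immediately to the same conclusion.
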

\begin{proof}
Let $V:\R^d \to \R$ be satisfying \eqref{eq:finitecond} and assume, to the contrary, that there exist
$\varepsilon^{\ast}, r^\ast>0$ and $x^\ast \in \R^{d}$ such that $|V(y)|>\varepsilon^\ast$ for all $y \in
B_{r^\ast}(x^\ast)$. Then
\begin{align*}
\int_{\R^d}\int_{\R^d}|V(x)||V(y)|k(|x-y|)\, dx\, dy
&\ge 	\int_{B_{r^\ast}(x^\ast)}\int_{B_{r^\ast}(x^\ast)}|V(x)||V(y)|k(|x-y|)\, dx \, dy \\
&\ge \varepsilon^2 \int_{B_{r^\ast}(x^\ast)}\int_{B_{r^\ast}(x^\ast)}k(|x-y|)\, dx \, dy,
\end{align*}
which implies $\int_{B_{r^\ast}(x^\ast)}\int_{B_{r^\ast}(x^\ast)}k(|x-y|)\, dx \, dy<\infty$.
However, by \eqref{eq:nonintegrability} we also have
\begin{equation*}
\int_{B_{r^\ast}(x^\ast)}\int_{B_{r^\ast}(x^\ast)}k(|x-y|)\, dx \, dy \ge \int_{B_{r^\ast}(x^\ast)}
\int_{B_{r^\ast-|x-x^\ast|}(x)}k(|x-y|)\, dx \, dy=\infty,
\end{equation*}
which is impossible.
\end{proof}	

In fact, a fractional Rollnik-class cannot be obtained even in the limit $\alpha \uparrow \frac{d}{2}$.
\begin{proposition}
\label{prop:notappr}
Let $d=1,2$ and suppose $V:\R^d \to \R$ is a measurable function such that $\liminf\limits_{\alpha
\downarrow \frac{d}{2}}\Norm{V}{\cR_{d,\alpha}}<\infty$. Then for any choice of $\varepsilon,r>0$
and $x \in \R^d$ there is no ball $B_r(x)$ such that $|V(y)|>\varepsilon$ for a.e. $y \in B_r(x)$.
In particular, if $V$ is continuous, then $V \equiv 0$.
\end{proposition}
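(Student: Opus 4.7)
My plan is to argue by contradiction in the spirit of Theorem \ref{thm:nonL1kernel}: assume that there exist $\varepsilon^\ast, r^\ast>0$ and $x^\ast \in \R^d$ such that $|V(y)|>\varepsilon^\ast$ for a.e. $y \in B_{r^\ast}(x^\ast)$, and show that the quantity $\Norm{V}{\cR_{d,\alpha}}^2$ must blow up as $\alpha \downarrow \frac{d}{2}$, contradicting the finiteness of the $\liminf$. The new ingredient compared with Theorem \ref{thm:nonL1kernel} is that the kernel $|x-y|^{-2(d-\alpha)}$ is integrable on bounded sets for each fixed $\alpha\in(\frac{d}{2},d)$, so we must track the rate at which the local integral diverges as $\alpha$ approaches the critical value $\frac{d}{2}$.

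The first step is to restrict the double integral to $B_{r^\ast}(x^\ast)\times B_{r^\ast}(x^\ast)$, which by the lower bound $|V|>\varepsilon^\ast$ gives
\begin{equation*}
\Norm{V}{\cR_{d,\alpha}}^2 \ge (\varepsilon^\ast)^2 \int_{B_{r^\ast}(x^\ast)}\int_{B_{r^\ast}(x^\ast)} |x-y|^{-2(d-\alpha)}\, dx\, dy.
\end{equation*}
The second step is to produce a clean lower bound for the inner integral via a sub-ball argument: for every $x\in B_{r^\ast/2}(x^\ast)$ we have $B_{r^\ast/2}(x)\subset B_{r^\ast}(x^\ast)$, so using polar coordinates
\begin{equation*}
\int_{B_{r^\ast}(x^\ast)} |x-y|^{-2(d-\alpha)}\, dy \,\ge\, \int_{B_{r^\ast/2}(x)} |x-y|^{-2(d-\alpha)}\, dy \,=\, \sigma_d \int_0^{r^\ast/2} s^{2\alpha-d-1}\, ds \,=\, \frac{\sigma_d (r^\ast/2)^{2\alpha-d}}{2\alpha-d}.
\end{equation*}
Integrating this bound over $x \in B_{r^\ast/2}(x^\ast)$ then yields
\begin{equation*}
\Norm{V}{\cR_{d,\alpha}}^2 \,\ge\, (\varepsilon^\ast)^2 \, \omega_d \sigma_d \frac{(r^\ast/2)^{d}(r^\ast/2)^{2\alpha-d}}{2\alpha-d}.
\end{equation*}

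The third step closes the argument: as $\alpha \downarrow \frac{d}{2}$ the factor $(r^\ast/2)^{2\alpha-d} \to 1$, while $\frac{1}{2\alpha-d}\to +\infty$, so the right-hand side tends to $+\infty$, forcing $\liminf_{\alpha\downarrow d/2}\Norm{V}{\cR_{d,\alpha}}^2 = +\infty$ in contradiction with the hypothesis. The continuity statement follows exactly as in Theorem \ref{thm:nonL1kernel}. I do not anticipate any real obstacle; the only delicate point is ensuring the lower bound is uniform in $\alpha$ near $\frac{d}{2}$, which is guaranteed because the prefactor $(r^\ast/2)^{2\alpha-d}$ is bounded away from $0$ on a neighborhood of $\alpha = d/2$, so the divergence is driven solely by $(2\alpha-d)^{-1}$.
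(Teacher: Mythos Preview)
Your proof is correct and follows essentially the same approach as the paper's: both argue by contradiction, restrict the double integral to the ball where $|V|>\varepsilon^\ast$, use a sub-ball containment to lower bound the inner integral by $\frac{\sigma_d}{2\alpha-d}$ times a bounded factor, and conclude from the divergence of $(2\alpha-d)^{-1}$ as $\alpha\downarrow d/2$. Your use of the fixed-radius sub-ball $B_{r^\ast/2}(x)$ for $x\in B_{r^\ast/2}(x^\ast)$ is in fact slightly cleaner than the paper's variable-radius $B_{r^\ast-|x-x^\ast|}(x)$, since it avoids the Beta-function computation and yields the divergence directly without first extracting a subsequence realizing the $\liminf$.
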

\begin{proof}
Let $V:\R^d \to \R$ be such that $\liminf\limits_{\alpha \downarrow \frac{d}{2}}\Norm{V}{\cR_{d,\alpha}}
<\infty$ and assume, to the contrary, that there exists $\varepsilon^*,r^*$ and $x^*\in \R^d$ such that
$|V(y)|>\varepsilon^*$ for almost every $y \in B_{r^*}(x^*)$. Let also $\alpha_k$ be such that $\alpha_k
\downarrow \frac{d}{2}$ and $\lim_{k \to \infty}\Norm{V}{\cR_{d,\alpha_k}}=\liminf\limits_{\alpha
\downarrow \frac{d}{2}}\Norm{V}{\cR_{d,\alpha}}$. Without loss of generality, we assume that $\Norm{V}
{\cR_{d,\alpha_k}}<\infty$ for all $k \in \N$. Observe that for all $k \in \N$
\begin{equation}
\label{eq:boundedliminf}
\infty>\int_{\R^d}\int_{\R^d}|V(x)||V(y)||x-y|^{2\alpha_k-2d}dxdy
\ge \varepsilon^2 \int_{B_{r^\ast}(x^\ast)}\int_{B_{r^\ast}(x^\ast)}|x-y|^{2\alpha_k-2d}dxdy.
\end{equation}
On the other hand,
\begin{equation*}
\int_{B_{r^\ast}(x^\ast)}\int_{B_{r^\ast}(x^\ast)}|x-y|^{2\alpha_k-2d}dxdy
\ge \int_{B_{r^\ast}(x^\ast)}\int_{B_{r^\ast-|x-x^\ast|}(x)}|x-y|^{2\alpha_k-2d}dydx.
\end{equation*}
We can evaluate the inner integral for $x \in B_{r^\ast}(x^\ast)$ as
\begin{equation*}
\int_{B_{r^\ast-|x-x^\ast|}(x)}|x-y|^{2\alpha_k-2d}dy=\sigma_d \int_0^{r^\ast-|x-x^\ast|}
\rho^{2\alpha_k-d-1}d\rho=\frac{\sigma_d}{2\alpha_k-d}(r^\ast-|x-x^\ast|)^{2\alpha_k-d}
\end{equation*}
and thus
\begin{align*}
\int_{B_{r^\ast}(x^\ast)}\int_{B_{r^\ast-|x-x^\ast|}(x)}|x-y|^{2\alpha_k-2d}dydx
&=\frac{\sigma_d^2}{2\alpha_k-d}\int_0^{r^\ast}(r^\ast-\rho)^{2\alpha_k-d}\rho^{d-1}d\rho\\
&=\frac{\sigma_d^2}{2\alpha_k-d}(r^\ast)^{2\alpha_k}B(2\alpha_k-d+1;d),
\end{align*}
where $B(x,y)$ is the Beta function. Taking the limit as $k \to \infty$ we get
\begin{equation*}
\int_{B_{r^\ast}(x^\ast)}\int_{B_{r^\ast-|x-x^\ast|}(x)}|x-y|^{2\alpha_k-2d}dydx=\infty,
\end{equation*}
which is in contradiction with \eqref{eq:boundedliminf}.
\end{proof}

Clearly, by \eqref{standassmp} we have the possible ranges $D_\alpha$ of $\alpha$ as follows:
\begin{equation*}
D_\alpha =
\begin{cases}
(\frac{1}{2},1) &\mbox{if \, $d=1$}\\
(1,2) &\mbox{if \, $d=2$} \\
(\frac{3}{2},2) &\mbox{if \, $d=3$} \\
\emptyset &\mbox{if \, $d\geq 4$}.
\end{cases}
\end{equation*}
This just excludes the case $\alpha = 1$, which will be dealt with below. In Section \ref{extendrollnik}
we introduce an extended  fractional Rollnik-class which is subject only to the constraint $d<2\alpha$, in
particular, the case $\alpha=1$ is covered by it in the $d=1$ case. In $d=2$ we will obtain it in Section
\ref{sectalpha1} below as a limiting case. As Proposition \ref{prop:notappr} suggests, this limit is not
straightforward  and we arrive at defining
\begin{equation*}
\cR_{d,1}^p:= L^p(\R^d) \, \cap \, \liminf_{\alpha \downarrow 1}\cR_{d,\alpha}, \quad p > 1.
\end{equation*}
For later use below, we define more generally for $\alpha=\frac{d}{2}$, $d=1, 2, 3$, the class
\begin{equation}
\label{crunchyrollnik}
\cR_{d,\frac{d}{2}}^p:= L^p(\R^d) \, \cap \, \liminf_{\alpha \downarrow \frac{d}{2}}\cR_{d,\alpha},
\quad p > 1,
\end{equation}
where the limit inferior means $\liminf_{\alpha \downarrow \frac{d}{2}}\cR_{d,\alpha}=
\cup_{\beta \in \left(\frac{d}{2},d\right)} \cap_{\alpha \in \left(\frac{d}{2},\beta\right)}\cR_{d,\alpha}$. The
intersection with the $L^p$ spaces in the definition will be justified by Theorem \ref{alpha1sa} below.

\subsection{Comparison properties}
We have already seen in Proposition \ref{prop:prop2anticipated} that $L^{d/\alpha}(\mathbb{R}^d) \subset
\mathcal{R}_{d,\alpha}$. A corollary to this result is the following.
\begin{proposition}
\label{cor7}
If $p\leq \frac{d}{\alpha}\leq q$, then $L^p(\mathbb{R}^d)\cap L^q(\mathbb{R}^d)
\subset\mathcal{R}_{d,\alpha}$.
\end{proposition}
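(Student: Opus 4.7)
The plan is to reduce this to the already established embedding $L^{d/\alpha}(\R^d)\subset\cR_{d,\alpha}$ from Proposition \ref{prop:prop2anticipated} by means of the classical Riesz--Thorin/Lyapunov interpolation between Lebesgue spaces. Since $p\le d/\alpha\le q$, there exists $\theta\in[0,1]$ with
\[
\frac{\alpha}{d}=\frac{\theta}{p}+\frac{1-\theta}{q},
\]
and the standard interpolation inequality gives $\|V\|_{d/\alpha}\le \|V\|_p^\theta\|V\|_q^{1-\theta}$ for every measurable $V$, so that $L^p(\R^d)\cap L^q(\R^d)\subset L^{d/\alpha}(\R^d)$.

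Having established this inclusion, I would simply invoke Proposition \ref{prop:prop2anticipated} to conclude that any $V\in L^p(\R^d)\cap L^q(\R^d)$ satisfies $\|V\|_{\cR_{d,\alpha}}\le C_{d,\alpha}\|V\|_{d/\alpha}$ and therefore lies in $\cR_{d,\alpha}$. The two boundary cases $p=d/\alpha$ or $q=d/\alpha$ (corresponding to $\theta=1$ or $\theta=0$) are trivial since then the inclusion in $L^{d/\alpha}$ is immediate.

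There is no real obstacle here; the argument is a one-line interpolation followed by an application of the previously proved continuous embedding. The only thing worth stating explicitly is the interpolation inequality, which follows from H\"older's inequality with exponents $p/(\theta d/\alpha)$ and $q/((1-\theta)d/\alpha)$ applied to the decomposition $|V|^{d/\alpha}=|V|^{\theta d/\alpha}\cdot|V|^{(1-\theta)d/\alpha}$.
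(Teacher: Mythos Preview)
Your proposal is correct and takes essentially the same approach as the paper: interpolate to get $L^p(\R^d)\cap L^q(\R^d)\subset L^{d/\alpha}(\R^d)$, then apply Proposition~\ref{prop:prop2anticipated}. The paper's proof is in fact the one-liner you describe.
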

\begin{proof}
By interpolation it is immediate that $L^p(\R^d)\cap L^q(\R^d)\subset L^{d/\alpha}(\R^d) \subset
\mathcal{R}_{d,\alpha}$.
\end{proof}

\begin{proposition}
\label{pro8}
If $\alpha > \frac{d}{2}$, then $L^1(\mathbb{R}^d)\cap L^2(\mathbb{R}^d)\subset\mathcal{R}_{d,\alpha}$ and the estimate
\begin{equation}
\|V\|_{\mathcal{R}_{d,\alpha}}\leq\left(\frac{\sigma_d}{2(d-\alpha)}\right)^{(d-\alpha)/d}\sqrt{\frac{d}{2\alpha-d}} \,
\|V\|_{1}^{(2\alpha-d)/d}\|V\|_{2}^{2(d-\alpha)/d}
\label{pro8_1}
\end{equation}
holds, where $\sigma_d$ is the surface area of the $d$-dimensional unit sphere.
\end{proposition}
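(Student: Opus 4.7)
The strategy is a classical layer cake split on the kernel $|x-y|^{-2(d-\alpha)}$, trading the $L^2$-norm of $V$ against its $L^1$-norm by optimizing a cutoff radius $R>0$. The condition $\alpha>d/2$ is exactly what makes the singular part of the kernel locally integrable (since $2(d-\alpha)<d$), which is where the $\|V\|_2$ factor will come from.

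First I would fix $R>0$ and write
\begin{equation*}
\|V\|_{\mathcal{R}_{d,\alpha}}^2 = \iint_{|x-y|\leq R} \frac{|V(x)||V(y)|}{|x-y|^{2(d-\alpha)}}\,dx\,dy + \iint_{|x-y|>R} \frac{|V(x)||V(y)|}{|x-y|^{2(d-\alpha)}}\,dx\,dy.
\end{equation*}
The second integral is trivially bounded by $R^{-2(d-\alpha)}\|V\|_1^2$. For the first, I would use $|V(x)||V(y)| \leq \frac{1}{2}(|V(x)|^2+|V(y)|^2)$ together with Fubini and the symmetry in $x,y$, reducing it to $\|V\|_2^2 \int_{B_R} |z|^{-2(d-\alpha)} dz$. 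The condition $\alpha > d/2$ makes this radial integral convergent, and a direct computation in polar coordinates gives $\frac{\sigma_d}{2\alpha-d} R^{2\alpha-d}$. Thus
\begin{equation*}
\|V\|_{\mathcal{R}_{d,\alpha}}^2 \leq \frac{\sigma_d}{2\alpha-d}\,R^{2\alpha-d}\,\|V\|_2^2 + R^{-2(d-\alpha)}\,\|V\|_1^2.
\end{equation*}

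The final step is to minimize the right-hand side in $R$. Setting the derivative to zero yields $R^d = \frac{2(d-\alpha)\|V\|_1^2}{\sigma_d\|V\|_2^2}$ (note the exponents $2\alpha-d$ and $2(d-\alpha)$ add to $d$, which is what makes the optimization clean). Plugging back in, the standard identity for the minimum of $aR^p+bR^{-q}$ — namely $\frac{p+q}{p}\,b^{p/(p+q)}(ap/q)^{q/(p+q)}$ — gives exactly
\begin{equation*}
\|V\|_{\mathcal{R}_{d,\alpha}}^2 \leq \frac{d}{2\alpha-d}\left(\frac{\sigma_d}{2(d-\alpha)}\right)^{2(d-\alpha)/d} \|V\|_1^{2(2\alpha-d)/d}\,\|V\|_2^{4(d-\alpha)/d},
\end{equation*}
and the claimed inequality \eqref{pro8_1} follows by taking the square root.

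There is no real obstacle; the only things to be careful about are verifying that $2(d-\alpha)<d$ is precisely the integrability threshold for the near-diagonal piece (this is where the assumption $\alpha>d/2$ is consumed), and that the exponents $p=2\alpha-d$ and $q=2(d-\alpha)$ sum to $d$, so that the optimization in $R$ produces the right powers of $\|V\|_1$ and $\|V\|_2$ as asserted.
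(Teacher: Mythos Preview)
Your proof is correct and matches the paper's argument essentially step for step: the same near/far split at a cutoff radius, the same AM--GM/symmetry bound $|V(x)||V(y)|\le\tfrac12(|V(x)|^2+|V(y)|^2)$ for the singular piece, the same radial integral yielding $\frac{\sigma_d}{2\alpha-d}R^{2\alpha-d}$, and the same optimization in $R$.
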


\begin{proof}
Let $V\in L^1(\mathbb{R}^d)\cap L^2(\mathbb{R}^d)$ and $\delta>0$. We compute
\begin{equation*}
\iint_{|x-y|\geq \delta}\frac{|V(x)||V(y)|}{|x-y|^{2(d-\alpha)}}dxdy\leq\frac{1}{\delta^{2(d-\alpha)}}
\iint_{\mathbb{R}^{2d}}|V(x)||V(y)|dxdy = \frac{\|V\|_{1}^2}{\delta^{2(d-\alpha)}}.
\end{equation*}
By the Schwarz inequality, we also have
\begin{eqnarray}
\iint_{|x-y|<\delta}\frac{|V(x)||V(y)|}{|x-y|^{2(d-\alpha)}}dxdy
&\leq&
\iint_{|x-y|<\delta}\frac{|V(x)|^2}{|x-y|^{2(d-\alpha)}}dxdy \nonumber\\
&\leq&
\|V\|_{2}^2\int_{|x|<\delta}\frac{dx}{|x|^{2(d-\alpha)}}
=\frac{\sigma_d \delta^{2\alpha-d}}{2\alpha-d}\|V\|_{2}^2,\label{pro8_2}
\end{eqnarray}
and thus
\begin{equation*}
\|V\|_{\mathcal{R}_{d,\alpha}}^2\leq\frac{\|V\|_{1}^2}{\delta^{2(d-\alpha)}}
+\frac{\sigma_d \delta^{2\alpha-d}}{2\alpha-d}\|V\|_{2}^2.
\end{equation*}
Optimizing over $\delta$ by the choice
\begin{equation*}
\delta=\left(\frac{(2(d-\alpha)\|V\|_{1}^2}{\sigma_d\|V\|_{2}^2}\right)^{1/d}
\end{equation*}
leads to \eqref{pro8_1}.
\end{proof}

\begin{proposition}
\label{cor9}
The following inclusions hold:
\begin{enumerate}
\item
Let $\alpha \leq \alpha_1\leq d$. Then $\mathcal{R}_{d,\alpha}\cap L^1(\mathbb{R}^d)
\subset\mathcal{R}_{d,\alpha_1}$.

\medskip
\item
Let $\alpha_2\leq\alpha$ and $\frac{d}{\alpha_2}< p \wedge q$ for $\frac{1}{p}+\frac{1}{q}=1$. Then
$\mathcal{R}_{d,\alpha} \cap L^p(\mathbb{R}^d)\cap L^q(\mathbb{R}^d)\subset\mathcal{R}_{d,\alpha_2}$. In
particular, $\mathcal{R}_{d,\alpha}\cap L^2(\mathbb{R}^d)\subset\mathcal{R}_{d,\alpha_2}$.
\item
Let $\alpha_1 \le \alpha \le \alpha_2$. Then $\cR_{d,\alpha_1}\cap \cR_{d,\alpha_2} \subset \cR_{d,\alpha}$.
\end{enumerate}
\end{proposition}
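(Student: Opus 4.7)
The plan is to prove each of the three inclusions by splitting the double integral defining $\Norm{V}{\cR_{d,\alpha_j}}^2$ into the near region $\{|x-y|<1\}$ and the far region $\{|x-y|\ge 1\}$, and then comparing the kernel $|x-y|^{-2(d-\alpha_j)}$ with $|x-y|^{-2(d-\alpha)}$ (or with a constant, or with another fractional Rollnik kernel) separately on each region. The guiding observation is elementary: for $a>0$ and $s\le t$, we have $a^{-s}\le a^{-t}$ when $a\le 1$ and the reverse when $a\ge 1$, so whether increasing or decreasing the exponent $2(d-\alpha_j)$ improves or worsens the kernel depends on which of the two regions one is in.

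For (1), on $\{|x-y|<1\}$ the assumption $\alpha\le\alpha_1$ gives $2(d-\alpha_1)\le 2(d-\alpha)$, hence $|x-y|^{-2(d-\alpha_1)}\le|x-y|^{-2(d-\alpha)}$, so that this piece is bounded by $\Norm{V}{\cR_{d,\alpha}}^2$. On $\{|x-y|\ge 1\}$ the assumption $\alpha_1\le d$ gives $|x-y|^{-2(d-\alpha_1)}\le 1$, so that this piece is bounded by $\Norm{V}{1}^2$. Adding the two yields $V\in\cR_{d,\alpha_1}$.

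For (2) the roles on the far region swap: $\alpha_2\le\alpha$ gives $|x-y|^{-2(d-\alpha_2)}\le|x-y|^{-2(d-\alpha)}$ on $\{|x-y|\ge 1\}$, so this piece is bounded by $\Norm{V}{\cR_{d,\alpha}}^2$. For the near region I change variables $z=y-x$ and apply H\"older's inequality in $x$ (with $\frac{1}{p}+\frac{1}{q}=1$) to the inner integral $\int|V(x)||V(x+z)|dx\le\Norm{V}{p}\Norm{V}{q}$, yielding
\begin{equation*}
\iint_{|x-y|<1}\frac{|V(x)||V(y)|}{|x-y|^{2(d-\alpha_2)}}\,dx\,dy\,\le\,\Norm{V}{p}\Norm{V}{q}\int_{|z|<1}\frac{dz}{|z|^{2(d-\alpha_2)}}.
\end{equation*}
The $z$-integral is finite exactly when $2(d-\alpha_2)<d$, i.e.\ $\alpha_2>d/2$; and this is guaranteed by the hypothesis $\frac{d}{\alpha_2}<\min\{p,q\}$ together with the fact that $\frac{1}{p}+\frac{1}{q}=1$ forces $\min\{p,q\}\le 2$. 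The $L^2$ specialisation corresponds to $p=q=2$.

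For (3), this is simply a combination of the previous two arguments: on $\{|x-y|<1\}$, $\alpha_1\le\alpha$ gives $|x-y|^{-2(d-\alpha)}\le|x-y|^{-2(d-\alpha_1)}$, so this piece is bounded by $\Norm{V}{\cR_{d,\alpha_1}}^2$; on $\{|x-y|\ge 1\}$, $\alpha\le\alpha_2$ gives $|x-y|^{-2(d-\alpha)}\le|x-y|^{-2(d-\alpha_2)}$, so this piece is bounded by $\Norm{V}{\cR_{d,\alpha_2}}^2$. There is no serious obstacle in any of the three parts; the only step requiring a bit more than direct kernel comparison is the H\"older estimate in (2), where one must check that the stated hypothesis is exactly what makes the singular $z$-integral converge.
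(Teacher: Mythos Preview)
Your proof is correct and for parts (1) and (3) it is identical to the paper's argument. In part (2) the overall strategy (near/far split, with the far piece controlled by $\Norm{V}{\cR_{d,\alpha}}$) is the same, but your H\"older step differs from the paper's. The paper applies H\"older to the full double integral, splitting the kernel symmetrically as $|x-y|^{-(d-\alpha_2)}\cdot|x-y|^{-(d-\alpha_2)}$ and obtaining the bound $\big(\frac{\sigma_d}{p\alpha_2-pd+d}\big)^{1/p}\big(\frac{\sigma_d}{q\alpha_2-qd+d}\big)^{1/q}\Norm{V}{p}\Norm{V}{q}$; this uses both conditions $d/\alpha_2<p$ and $d/\alpha_2<q$ separately. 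Your version is a bit more economical: by applying H\"older only to $\int|V(x)||V(x+z)|\,dx$ and integrating the kernel in $z$ afterwards, you need only $2(d-\alpha_2)<d$, i.e.\ $\alpha_2>d/2$, which you correctly extract from the hypothesis via $\min\{p,q\}\le 2$. Your route thus shows that the near-region estimate in fact requires only $\alpha_2>d/2$, a weaker condition than the stated hypothesis.
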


\begin{proof}
(1) Let $V\in \mathcal{R}_{d,\alpha}\cap L^1(\mathbb{R}^d)$. As in Proposition \ref{pro8}, we compute
\begin{equation*}
\iint_{|x-y|\geq 1}\frac{|V(x)||V(y)|}{|x-y|^{2d-2\alpha_1}}dxdy\leq\|V\|_{1}^2.
\end{equation*}
We clearly have
\begin{equation*}
\iint_{|x-y|<1}\frac{|V(x)||V(y)|}{|x-y|^{2d-2\alpha_1}}dxdy\leq\iint_{\mathbb{R}^{2d}}
\frac{|V(x)||V(y)|}{|x-y|^{2d-2\alpha}}dxdy=\|V\|_{\mathcal{R}_{d,\alpha}}^2
\end{equation*}
using that $2d-2\alpha_1\leq 2d-2\alpha$.

\medskip
\noindent
(2)
Let $V\in\mathcal{R}_{d,\alpha}\cap L^p(\mathbb{R}^d)\cap L^q(\mathbb{R}^d)$. We  have
\begin{equation*}
\iint_{|x-y|\geq 1}\frac{|V(x)||V(y)|}{|x-y|^{2d-2\alpha_2}}dxdy\leq
\iint_{\mathbb{R}^{2d}}\frac{|V(x)||V(y)|}{|x-y|^{2d-2\alpha}}dxdy=\|V\|_{\mathcal{R}_{d,\alpha}}^2
\end{equation*}
since $2d-2\alpha_2\geq 2d-2\alpha$. On the other hand, as in \eqref{pro8_2}, we compute
\begin{equation*}
\iint_{|x-y|<1}\frac{|V(x)|^p}{|x-y|^{pd-p\alpha_2}}dxdy\leq\frac{\sigma_d}{p\alpha_2-pd+d}
\|V\|_{p}^p
\end{equation*}
noting that $p\alpha_2-pd+d>0$ is equivalent to $d/\alpha_2<q$. Therefore, by the H\"older inequality
we obtain
\begin{equation*}
\iint_{|x-y|<1}\frac{|V(x)||V(y)|}{|x-y|^{2d-2\alpha_2}}dxdy\leq
\left(\frac{\sigma_d}{p\alpha_2-pd+d}\right)^{1/p}
\left(\frac{\sigma_d}{q\alpha_2-qd+d}\right)^{1/q}\|V\|_{p}\|V\|_{q}.
\end{equation*}

\medskip
\noindent
(3) We have
\begin{align*}
\Norm{V}{\cR_{d,\alpha}}^2&=\iint_{|x-y|<1}\frac{|V(x)||V(y)|}{|x-y|^{2d-2\alpha}}dx\, dy
+\iint_{|x-y| \ge 1}\frac{|V(x)||V(y)|}{|x-y|^{2d-2\alpha}}dx\, dy\\
&\le \iint_{|x-y|<1}\frac{|V(x)||V(y)|}{|x-y|^{2d-2\alpha_1}}dx\, dy
+\iint_{|x-y| \ge 1}\frac{|V(x)||V(y)|}{|x-y|^{2d-2\alpha_2}}dx\, dy \le \Norm{V}{\cR_{d,\alpha_1}}^2
+\Norm{V}{\cR_{d,\alpha_2}}^2.
\end{align*}
\end{proof}

By extension of the definition of $\mathcal{R}_{d,\alpha}$ to $\alpha = 2$ we consider the space
$\mathcal{R}_{3,2}$ which coincides with the classical Rollnik-class given by \eqref{class}. A
consequence of part (3) in Proposition \ref{cor9} and the dominated convergence theorem is the following
result.
\begin{corollary}
\label{cor:BBM}
Let $d=3$ and $V \in \cR_{3,\alpha_0}\cap \cR_{3,2}$ for some $\alpha_0 \in \left(\frac{3}{2},2\right)$. Then
the function $\alpha \in [\alpha_0,2] \mapsto \Norm{V}{\cR_{d,\alpha}}$ is continuous.
\end{corollary}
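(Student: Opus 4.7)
The plan is to combine part (3) of Proposition \ref{cor9} with a standard dominated convergence argument applied to the integral representation of $\Norm{V}{\cR_{d,\alpha}}^2$.

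First, since $V\in \cR_{3,\alpha_0}\cap\cR_{3,2}$, part (3) of Proposition \ref{cor9} applied with $\alpha_1=\alpha_0$ and $\alpha_2=2$ yields $V\in\cR_{3,\alpha}$ for every $\alpha\in[\alpha_0,2]$, so the map $\alpha\mapsto\Norm{V}{\cR_{3,\alpha}}$ is well-defined and finite. To obtain continuity at a point $\alpha\in[\alpha_0,2]$, it suffices to show sequential continuity, so I would take an arbitrary sequence $\seq{\alpha}\subset[\alpha_0,2]$ with $\alpha_n\to\alpha$ and prove that $\Norm{V}{\cR_{3,\alpha_n}}^2\to \Norm{V}{\cR_{3,\alpha}}^2$.

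The key step is producing a dominating function for the integrand $|V(x)||V(y)||x-y|^{2\alpha_n-6}$, which is uniform in $n$. Since the exponent $2\alpha_n-6$ is negative throughout the range, splitting according to $|x-y|\le 1$ and $|x-y|>1$ gives the bound
\begin{equation*}
|x-y|^{2\alpha_n-6}\le |x-y|^{2\alpha_0-6}\mathbf{1}_{\{|x-y|\le 1\}}+|x-y|^{-2}\mathbf{1}_{\{|x-y|>1\}},
\end{equation*}
because on $\{|x-y|\le 1\}$ the most negative exponent, namely $2\alpha_0-6$, produces the largest value, while on $\{|x-y|>1\}$ the least negative exponent, namely $-2$ corresponding to $\alpha=2$, dominates. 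Multiplying by $|V(x)||V(y)|$ and integrating, the first term contributes at most $\Norm{V}{\cR_{3,\alpha_0}}^2$ and the second at most $\Norm{V}{\cR_{3,2}}^2$, so the resulting majorant is integrable on $\R^3\times\R^3$ by the hypothesis $V\in\cR_{3,\alpha_0}\cap\cR_{3,2}$.

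With the dominating function in hand, and using that $|x-y|^{2\alpha_n-6}\to|x-y|^{2\alpha-6}$ pointwise on $\R^3\times\R^3\setminus\{x=y\}$ (a set of full measure), the dominated convergence theorem gives $\Norm{V}{\cR_{3,\alpha_n}}^2\to\Norm{V}{\cR_{3,\alpha}}^2$, and taking square roots yields continuity. The only mildly delicate point is verifying the uniform bound on the kernel across the two regimes $|x-y|\le 1$ and $|x-y|>1$; beyond that the argument is routine.
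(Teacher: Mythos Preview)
Your proof is correct and follows essentially the same approach as the paper, which simply states that the result is a consequence of part (3) of Proposition \ref{cor9} together with the dominated convergence theorem. You have supplied precisely the dominating function that makes the dominated convergence argument work, by splitting into $|x-y|\le 1$ and $|x-y|>1$ and using the extremal exponents $2\alpha_0-6$ and $-2$ on the two regions.
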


\begin{remark}
{\rm
We see that for $\mathcal{R}_{d,2}$ the only possibility permitted by \eqref{standassmp} is $d=3$ and
$\mathcal{R}_{3,2} = \mathcal{R}$. Corollary \ref{cor:BBM} shows, moreover, continuity as $\alpha \uparrow 2$.
Also, for $d=3$ and $\alpha=2$, estimate \eqref{pro8_1} gives
\begin{equation*}
\|V\|_{\mathcal{R}}\leq\sqrt{3}(2\pi)^{1/3}\|V\|_{1}^{1/3}\|V\|_{2}^{2/3},
\end{equation*}
which coincides with the standard norm estimate for the classical Rollnik-class.
}
\end{remark}
Next we compare the fractional Rollnik-class with the domain of the Riesz potential. Recall the definition in
\eqref{eq:Xbd}.
\begin{proposition}
The following properties hold.
\begin{enumerate}
\item
$\cR_{d,\alpha}\subset {\mathfrak X}^d_{\alpha-\frac{d}{2}}$.
\item
If $V \in \cR_{d,\alpha}$ satisfies $|\{V(x)=0\}|=0$, then $V \in {\mathfrak X}^d_{2\alpha-d}$.
\end{enumerate}
\end{proposition}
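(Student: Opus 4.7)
The plan is to exploit the equivalence recalled in the paper just before \eqref{eq:Xbd}: a measurable function $f$ lies in $\mathfrak{X}^d_\beta$ if and only if the integral defining $\cI_\beta f$ converges absolutely at almost every $x \in \R^d$. Under \eqref{standassmp} both orders $\alpha - d/2$ and $2\alpha - d$ lie in $(0,d)$, so the equivalence is at our disposal, and it reduces both assertions to proving that the relevant Riesz potential of $|V|$ is finite almost everywhere.

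For part (1), the conclusion is immediate from Theorem \ref{norms}: the identity
\begin{equation*}
\Norm{V}{\cR_{d,\alpha}}^2 = \gamma_d(2\alpha-d)\Norm{\cI_{\alpha-\frac{d}{2}}|V|}{2}^{2}
\end{equation*}
forces $\cI_{\alpha-\frac{d}{2}}|V| \in L^2(\R^d)$, hence finite a.e. Applying the equivalence to $|V|$ with $\beta=\alpha-\frac{d}{2}$ yields $V \in \mathfrak{X}^d_{\alpha - \frac{d}{2}}$.

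For part (2), I would apply Tonelli's theorem (the integrand is non-negative) to rewrite
\begin{equation*}
\Norm{V}{\cR_{d,\alpha}}^2 = \gamma_d(2\alpha-d)\int_{\R^d} |V(x)|\,\cI_{2\alpha-d}|V|(x)\,dx,
\end{equation*}
so that the map $x \mapsto |V(x)|\,\cI_{2\alpha-d}|V|(x)$ belongs to $L^1(\R^d)$, and in particular is finite a.e. The hypothesis $|\{V=0\}| = 0$ gives $|V(x)|>0$ a.e., so we may divide pointwise a.e.\ and conclude $\cI_{2\alpha-d}|V|(x) < \infty$ a.e. Invoking the equivalence once more, this time with $\beta = 2\alpha-d$, yields $V \in \mathfrak{X}^d_{2\alpha-d}$.

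The only step requiring attention is the use of $|\{V=0\}| = 0$ in part (2): without it, $|V|$ could conceivably vanish on the very set where $\cI_{2\alpha-d}|V|$ is infinite, and Tonelli alone would not rule this out. Beyond this point no serious obstacle arises; both statements are essentially direct consequences of the Riesz-potential representation of the Rollnik norm established in Theorem \ref{norms}.
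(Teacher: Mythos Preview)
Your proof is correct and follows essentially the same route as the paper's: part (1) is deduced from the $L^2$-identity in Theorem \ref{norms}, and part (2) from the integrability of $|V|\cdot\cI_{2\alpha-d}|V|$ together with $|V|>0$ a.e. The paper phrases part (2) via the equivalence of the measure $\mu_V(dx)=|V(x)|\,dx$ with Lebesgue measure, but this is just a repackaging of your pointwise-division argument.
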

\begin{proof}
To prove (1), just observe that by \eqref{eq:noabs} it follows that if $V \in \cR_{d,\alpha}$, then
$\cI_{\alpha-\frac{d}{2}}|V| \in L^2(\R^d)$, in particular it is a.e. finite. Hence $|V| \in
{\mathfrak X}^d_{\alpha-\frac{d}{2}}$ and then also $V \in {\mathfrak X}^d_{\alpha-\frac{d}{2}}$. Next
consider (2) and define the measure $\mu_V(dx)=|V(x)|dx$. Since $|\{V(x)=0\}|=0$, the measure $\mu_V$ is
equivalent to Lebesgue measure. Since $V \in \cR_{d,\alpha}$, we obtain
\begin{equation*}
\int_{\R^d}\cI_{2\alpha-d}|V|(x)\mu_V(dx)=\int_{\R^d}|V(x)|\cI_{2\alpha-d}|V|(x)dx<\infty,
\end{equation*}
hence $\cI_{2\alpha-d}|V|$ is $\mu_V$-a.e. finite. Since $\mu_V$ and Lebesgue measure are equivalent,
$\cI_{2\alpha-d}|V|$ is a.e. finite and then $|V| \in {\mathfrak X}^d_{2\alpha-d}$, which in turn implies
$V \in {\mathfrak X}^d_{2\alpha-d}$.
\end{proof}

Finally, we discuss the relationship between fractional Rollnik-class and relativistic Kato-class by borrowing
some results from Section 3.3.4 below. Recall that a potential $V$ is in relativistic Kato-class
$\mathcal{K}_{d,\alpha}$ if
\begin{equation}
\label{eq:Katoclass}
\lim_{\delta \downarrow 0}\sup_{x \in \R^{d}}\int_{|x-y|<\delta}R_1^m(x-y)|V(y)|dy=0,
\end{equation}
and $V$ is said to be relativistic Kato-decomposable if $V_- \in \mathcal{K}_{d,\alpha}$ and $V_+\mathbf{1}_K
\in \mathcal{K}_{d,\alpha}$ for all compact sets $K \subset \R^d$, where $V^+, V^-$ denote positive and negative
parts, respectively, see \cite[Def. 4.280]{LHB}. Using the resolvent estimates, we can provide an alternative
formulation of \eqref{eq:Katoclass}.
\begin{proposition}
\label{kato}
Let $d \ge 1$, $\alpha \in (0,2)$, and $V:\R^d \to \R$ be a potential. For any $\delta>0$ and $x \in \R^d$ define
\begin{equation*}
{}_\delta \mathcal{I}_\alpha V(x)
=\begin{cases} \displaystyle \int_{|x-y|<\delta}\frac{|V(y)|}{|x-y|^{d-\alpha}}dy & \alpha<d\\
				\displaystyle \int_{|x-y|<\delta}\log\left(1+\frac{1}{|x-y|}\right)|V(y)|dy & \alpha=d=1 \\
				\displaystyle \int_{|x-y|<\delta}|V(y)|dy & \alpha>d=1.
\end{cases}
\end{equation*}
Then $V \in \mathcal{K}_{d,\alpha}$ holds if and only if $\lim\limits_{\delta \downarrow 0}\sup_{x \in \R^d}{}_\delta
\cI_\alpha V(x)=0$.
\end{proposition}
\begin{proof}
The statement easily follows by \eqref{eq:equivrollext2}, which implies that there exists a constant $C_{d,\alpha,m}$
such that for all $\delta<1$ and $x \in \R^d$
$$
\frac{1}{C_{d,\alpha,m}} {}_\delta \mathcal{I}_\alpha V(x) \le \int_{|x-y|<\delta}R_1^m(x-y)V(y)dy
\le C_{d,\alpha,m} {}_\delta \mathcal{I}_\alpha V(x).
$$
\end{proof}
\noindent
Notice that, up to a constant, for $\alpha<d$ the expression ${}_\delta \mathcal{I}_\alpha$ is a truncated Riesz potential.
Furthermore, we note that the above result extends \cite[Prop. 4.295]{LHB} to the massive case, also proving the converse
implication. It is seen that the condition defining relativistic Kato-class is independent of the mass $m$.
	
Using the above characterization we can check that if $\alpha<d$, the Coulomb-type potential $V(x)=|x|^{-\beta}$,
$\beta<\alpha$, belongs to $\cK_{d,\alpha}$ (see \cite[Ex. 4.296]{LHB}). As a consequence, also the local Coulomb-type
potential $V(x)=|x|^{-\beta}\mathbf{1}_{\{|x|<1\}} \in \cK_{d,\alpha}$ whenever $\beta<\alpha$. In Proposition
\ref{prop:Coul1} below we show that this potential is also in fractional Rollnik-class, which then implies that
$\cK_{d,\alpha} \cap \cR_{d,\alpha} \not = \emptyset$. On the other hand, the outer Coulomb-type potential $V(x)=
|x|^{-\beta}\mathbf{1}_{\{|x| \ge 1\}} \in L^\infty(\R^d)\subset \cK_{d,\alpha}$ for all $\beta>0$, while it belongs
to $\cR_{d,\alpha}$ if and only if $\beta>\alpha$, see Proposition~\ref{prop:Coul3} below. Hence $\cK_{d,\alpha}
\setminus \cR_{d,\alpha} \not = \emptyset$. On the other hand, taking a critical local Coulomb-type
potential with logarithmic correction $V(x)=|x|^{-\alpha}(-\log|x|)^{-\gamma}\mathbf{1}_{\{|x|<1/2\}}$, $\gamma
\in (\frac{1}{2},1)$, in Proposition~\ref{prop:logV} below we show that $V \in \cR_{d,\alpha}$, while
\begin{equation*}
{}_\delta \mathcal{I}_\alpha V(0)=\sigma_d\int_0^\delta r^{-1}(-\log r)^{-\gamma}dr=\infty \quad
\mbox{for all $\delta< \frac{1}{2}$},
\end{equation*}
hence $V \not \in \cK_{d,\alpha}$. This shows that $\cR_{d,\alpha}\setminus \cK_{d,\alpha} \not = \emptyset$. Furthermore,
it is clear that $V(x)=|x|^{-\alpha}(-\log|x|)^{-\gamma}\mathbf{1}_{\{|x|<1/2\}}$ is not even Kato-decomposable. Hence
it is seen that the fractional Rollnik-class and the relativistic Kato-class overlap, but neither contains the other.

\subsection{Specific cases}
\subsubsection{Potentials with compact support}
Let $V_K=-\mathbf{1}_{K}$ for a compact set $K \subset \R^d$. Then clearly $V_K \in L^{d/\alpha}(\R^d)
\subset \cR_{d,\alpha}$. We compare the Rollnik norm of $V_K$ with the norm of $V_{B^K}$, where $B^K$ is
a ball centred in the origin having the same volume as $K$. Recall the fractional perimeter of index $s
\in (0,1)$  of a Borel set $E \subset \R^d$ defined by
$$
\Per_s (E) = \frac{s2^{s-1}\Gamma\left(\frac{d+s}{2}\right)}{\pi^{d/2}\Gamma\left(1-\frac{s}{2}\right)}
\int_E \int_{\R^d\setminus E} \frac{dxdy}{|x-y|^{d+s}}.
$$
A sufficient condition for a set $E$ to have a finite fractional perimeter of order $s$ is that its indicator
function $\mathbf{1}_{E}$ belongs to the fractional Sobolev space $W^{\frac{s}{p},p}(\R^d)$. In particular, $E$
has finite fractional perimeter if it has finite classical perimeter $\Per(E)$. The inequality
\begin{equation}
\label{eq:isopineq}
\Per_s(E) \ge \Per_s(B^E), \ s \in (0,1]
\end{equation}
is well-known. Indeed, the case $s=1$ is the classical isoperimetric inequality, while for $s \in (0,1)$ it can
be obtained as a consequence of \cite[Th. 4.1]{FS08}. For quantitative versions of \eqref{eq:isopineq} we
refer to \cite{FMM11,F15}.

A straightforward consequence of the Riesz rearrangement inequality \cite{R30,B96} gives
\begin{equation}
\label{Rieszineq}
\Norm{V_K}{\cR_{d,\alpha}} \le \Norm{V_{B^K}}{\cR_{d,\alpha}}, \quad 0<\alpha <d<2 \alpha.
\end{equation}
Combining the two inequalities above, we get for any compact set $K$ with finite $s$-perimeter
\begin{equation*}
\Norm{V_K}{\cR_{d,\alpha}}^2 \le \Norm{V_{B^K}}{\cR_{d,\alpha}}^2=\frac{\Norm{V_{B^K}}{\cR_{d,\alpha}}^2}
{\Per_s(B^K)}\Per_s(B^K) \le \frac{\Norm{V_{B^K}}{\cR_{d,\alpha}}^2}{\Per_s(B^K)}\Per_s(K),
\end{equation*}
where the constant $\Norm{V_{B^K}}{\cR_{d,\alpha}}/\Per_s(B^K)$ is sharp in the sense that all the
inequalities become equalities if $K=B^K$. The constant can be exactly evaluated. Indeed, by \cite[Prop. 2.3]{FFMMM15}
we have
\begin{equation*}
\Per_s(B^K) =
\begin{cases} \left(\frac{|K|}{\omega_d}\right)^{\frac{d-s}{d}}\frac{\left(\Gamma\left(\frac{d+s}{2}\right)\right)^2}
{2\pi\Gamma\left(1-\frac{s}{2}\right)\Gamma\left(\frac{d-s}{2}+1\right)}\sigma_{d} & s \in (0,1)\\
\left(\frac{|K|}{\omega_d}\right)^{\frac{d-1}{d}}\sigma_d & s=1.
\end{cases}
\end{equation*}
On the other hand, by using \cite[Prop. 7.5]{FFMMM15} we obtain
\begin{equation*}
\Norm{V_{B^K}}{\cR_{d,\alpha}}^2=\left(\frac{|K|}{\omega_d}\right)^{\frac{d+\alpha}{d}}
\frac{\sigma_d}{\alpha(d+\alpha)}\mu_{2\alpha-d},
\end{equation*}
where
\begin{equation*}
\mu_{2\alpha-d} =
\begin{cases} 2^{1+2\alpha-d}\pi^{\frac{d-1}{2}}\frac{\Gamma\left(\frac{1+2\alpha-d}{2}\right)}
{\Gamma(\alpha)} & \alpha \not = \frac{d+1}{2}\\
\frac{16\pi^\frac{d-1}{2}}{\Gamma\left(\frac{d+1}{2}\right)} & \alpha=\frac{d+1}{2}.
\end{cases}
\end{equation*}

\subsubsection{Asymmetric fractional Rollnik potentials}
By \eqref{Rieszineq} symmetric rearrangement increases the fractional Rollnik norm, and since the kernel in the
Rollnik integral is spherically symmetric, the inequality reduces to equality only for spherically symmetric
potentials. We can use the fractional Rollnik norm to estimate the symmetry deficit of a non-radial $L^\infty(\R^d)
\cap L^1(\R^d)$ potential from a rotationally symmetric potential. We discuss this by comparing with a spherical
potential well.

Let $V \in L^1(\R^d)\cap L^\infty(\R^d)$ with $V \le 0$ a.e. Clearly, $V \in L^{d/\alpha}(\R^d)\subset \cR_{d,\alpha}$.
Denote by $B^V$ the ball centered in the origin with volume $\Norm{V}{1}/\Norm{V}{\infty}$, and consider the
family of potential wells $V_{B,a}:=-\Norm{V}{\infty}\mathbf{1}_{B^V+a}$, for $a \in \R^d$. As a direct consequence
of \cite[Th. 4]{FL21} we get
\begin{equation*}
\Norm{V_{B,0}}{\cR_{d,\alpha}}^2-\Norm{V}{\cR_{d,\alpha}}^2 \ge \frac{C_{d,\alpha}}{2}
\left(\frac{\Norm{V}{\infty}}{\Norm{V}{1}}\right)^{\frac{2(d-\alpha)}{d}}\inf_{a \in \R^d}
\Norm{V-V_{B,a}}{1}^2,
\end{equation*}
i.e., the difference of the fractional Rollnik norms of $V$ and the spherical potential well $V_{B}$ controls from
above an $L^1$ distance of $V$ from the family of spherical potential wells with the same $L^\infty$ and $L^1$ norms.

\subsubsection{Rotationally symmetric potentials}
Consider a generic potential $V$ and denote by $V^\ast$ the symmetric decreasing rearrangement of $|V|$. Then the
following property holds.
\begin{proposition}
Let $d \in \N$ and $\alpha \in (0,2)$ such that $\alpha<d<2\alpha$. If $V^\ast \in \cR_{d,\alpha}$, then
$V \in \cR_{d,\alpha}$ and
\begin{equation}
\label{eq:Riesz}
\Norm{V}{\cR_{d,\alpha}} \le \Norm{V^\ast}{\cR_{d,\alpha}}.
\end{equation}
Equality holds if and only if there exists a matrix $A \in \R^{d\times d}$ with $|{\rm det}(A)|=1$ and a vector
$a \in \R^d$ such that
\begin{equation}
\label{eq:translation}
|V(x)|=V^\ast(Ax+a).
\end{equation}
Furthermore, if $V$ is continuous, then equality holds if and only if there exists a matrix $A \in \R^{d\times d}$ with
$|{\rm det}(A)|=1$ and a vector $a \in \R^d$ such that
\begin{equation}
\label{eq:trans1}
\mbox{either} \quad V(x)=V^\ast(Ax+a), \quad \mbox{ or } \quad V(x)=-V^\ast(Ax+a).
\end{equation}
\end{proposition}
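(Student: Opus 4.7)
The plan is to derive everything from the Riesz rearrangement inequality. First, observe that the kernel $k(z):=|z|^{-2(d-\alpha)}$ is strictly symmetric decreasing on $\R^d\setminus\{0\}$ and equals its own symmetric decreasing rearrangement. Applying the Riesz rearrangement inequality with $f=g=|V|$ and $h=k$, one obtains directly
\begin{equation*}
\|V\|_{\cR_{d,\alpha}}^2=\int_{\R^d}\!\int_{\R^d}|V(x)||V(y)|k(x-y)\,dx\,dy\le\int_{\R^d}\!\int_{\R^d}V^\ast(x)V^\ast(y)k(x-y)\,dx\,dy=\|V^\ast\|_{\cR_{d,\alpha}}^2,
\end{equation*}
which gives both the inclusion $V\in\cR_{d,\alpha}$ (since the right-hand side is finite by hypothesis) and the bound \eqref{eq:Riesz}.

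For the first equality case, I would appeal to Burchard's characterization of equality in the Riesz rearrangement inequality (Ann.\ of Math., 1996), whose hypotheses are satisfied here because $k$ is strictly symmetric decreasing. Since both integrand factors are the same function $|V|$ and the kernel already coincides with its rearrangement, Burchard's theorem forces $|V|$ to be obtained from $V^\ast$ by a rigid motion. Since $V^\ast$ is radial, the rotational part can be absorbed, and the condition takes the form $|V(x)|=V^\ast(Ax+a)$ with $A$ an isometry (so $|\det A|=1$), yielding \eqref{eq:translation}. Conversely, for any such affine isometry the change of variables $u=Ax+a$, $v=Ay+a$ (whose Jacobian has modulus one and which preserves $|x-y|=|u-v|$) shows equality.

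For the continuous case, suppose $|V(x)|=V^\ast(Ax+a)$. Set $\Omega:=\{x\in\R^d:|V(x)|>0\}$. Since $V^\ast$ is radially decreasing, $\{V^\ast>0\}$ is either the whole space or an open ball, and hence $\Omega$, being its preimage under an affine isometry, is a connected open subset of $\R^d$ on which $|V|$ is continuous and strictly positive; moreover $V\equiv 0$ on $\Omega^c$. Since $V$ is continuous and nonvanishing on the connected set $\Omega$, it has constant sign there by the intermediate value theorem, so either $V=V^\ast(A\cdot+a)$ on $\Omega$ or $V=-V^\ast(A\cdot+a)$ on $\Omega$, and extending by zero outside gives \eqref{eq:trans1}.

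The main obstacle is the rigidity step: everything hinges on invoking the correct form of Burchard's equality characterization and verifying that its hypotheses (strict symmetric monotonicity of the kernel, and finiteness/nondegeneracy of the functionals) are met for our setting. Once this deep input is granted, the remaining arguments are a change of variables and a standard connectedness argument.
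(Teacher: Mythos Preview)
Your proposal is correct and follows essentially the same route as the paper: the inequality comes from the Riesz rearrangement inequality applied to $f=g=|V|$ and the strictly symmetric-decreasing kernel $|z|^{-2(d-\alpha)}$, the equality characterization is obtained by invoking a known rigidity result (the paper cites Lieb's lemma \cite[Lem.~3]{L77} where you invoke Burchard \cite{B96}, which is equally valid here), and the continuous case is handled by the same connectedness argument on the support of $V^\ast(A\cdot+a)$. Your explicit mention of the converse direction via the isometric change of variables is a small addition the paper leaves implicit.
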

\begin{proof}
The inequality \eqref{eq:Riesz} follows by a direct application of Riesz rearrangement inequality \cite{R30}. If equality
holds in \eqref{eq:Riesz}, then we are under the assumptions of \cite[Lem. 3]{L77} and \eqref{eq:translation} holds. If,
moreover, $V$ is continuous, then $V^\ast(x)=0$ for some $x$ with $|x|=R$ implies $V^\ast(x)=0$ for all
$x \in \R^d$ with $|x| \ge R$. Thus either $V^\ast(Ax+a)>0$ for all $x \in \R^d$, or there exists a bounded open set $K
\subset \R^d$ such that $V^\ast(Ax+a)=0$ exactly when $x \not \in K^c$. By \eqref{eq:translation} this means that $V$
cannot change sign and so \eqref{eq:trans1} holds.
\end{proof}

We focus now on the case when $V$ is a non-negative radially symmetric potential, with a non-increasing profile (or,
equivalently, on $V^\ast$ for any potential $V$). Let this profile be $v:[0,\infty) \to [0,\infty]$ such that $v(t)
=V(x)$ for all $x \in \R^d$ with $|x|=t$. Making use of \eqref{eq:L1bound} we show a bound on $v$ near the origin.
\begin{theorem}
\label{nomore}
Let $d \in \N$ and $\alpha \in (0,2)$ with $\alpha<d<2\alpha$. Suppose that $V \in \cR_{d,\alpha}$ is non-negative,
radially symmetric, with non-increasing profile $v$. Then there exists $t_0>0$ such that
\begin{equation*}
v(t) < \frac{2^{d-\alpha}\Norm{V}{\cR_{d,\alpha}}}{\omega_d}t^{-\alpha}, \quad t \in (0,t_0).
\end{equation*}
\end{theorem}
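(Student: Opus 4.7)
The plan is to translate the $L^1$-bound \eqref{eq:L1bound} supplied by Remark~\ref{rem:upinc} into a pointwise estimate, by exploiting the radial monotonicity of $V$. Since $v$ is non-increasing and $V(x)=v(|x|)$, for every $t>0$ one has $V(x)\ge v(t)$ throughout $B_t$, so
$$
\Norm{V}{L^1(B_t)} \;=\; \sigma_d\int_0^t r^{d-1}v(r)\,dr \;\ge\; v(t)\,\omega_d\,t^d.
$$
Combining this with $\Norm{V}{L^1(B_t)}\le 2^{d-\alpha}\Norm{V}{\cR_{d,\alpha}}t^{d-\alpha}$ from \eqref{eq:L1bound} yields immediately the non-strict bound
$$
v(t)\;\le\;\frac{2^{d-\alpha}\Norm{V}{\cR_{d,\alpha}}}{\omega_d}\,t^{-\alpha}, \qquad t>0.
$$

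To upgrade to the strict inequality near the origin, I would carry out a short case analysis. If $v$ is constant on some interval $[0,\delta]$, its common value $c$ there must be finite, since $V\in L^1_{\mathrm{loc}}(\R^d)$ by Lemma~\ref{lem:L1loc}. Then $v(t)=c$ for $t\in(0,\delta]$ while $\frac{2^{d-\alpha}\Norm{V}{\cR_{d,\alpha}}}{\omega_d}t^{-\alpha}\to\infty$ as $t\downarrow 0$, so the strict inequality holds for all $t$ in some subinterval $(0,t_0)$ (with $t_0$ any positive number if $c=0$). In the opposite case, for every $t>0$ the set $\{r\in(0,t):v(r)>v(t)\}$ has positive Lebesgue measure, hence the first displayed inequality above is already strict; combined with the non-strict \eqref{eq:L1bound}, this gives strict inequality for every $t>0$, and in particular on any $(0,t_0)$.

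I do not anticipate a substantive obstacle. The only non-trivial ingredient, namely the local $L^1$-bound for fractional Rollnik potentials, is already in hand from Remark~\ref{rem:upinc}; converting it into a pointwise statement only requires the radial monotonicity, and the strict inequality is handled by the elementary dichotomy above.
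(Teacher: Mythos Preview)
Your argument is correct and rests on the same key input as the paper's proof, namely the $L^1$ bound \eqref{eq:L1bound}, but the route you take is more direct. The paper argues by contradiction: assuming a sequence $t_n\downarrow 0$ along which the bound fails, it writes $U(t_0)=\int_0^{t_0}r^{d-1}v(r)\,dr$ as a sum over the intervals $[t_{n+1},t_n]$, uses monotonicity of $v$ on each, and obtains a telescoping lower bound that strictly exceeds the permitted value $\frac{2^{d-\alpha}\Norm{V}{\cR_{d,\alpha}}}{\sigma_d}t_0^{d-\alpha}$. You instead obtain the non-strict inequality for all $t>0$ in one stroke from the averaging inequality $\Norm{V}{L^1(B_t)}\ge v(t)\,\omega_d t^d$, and then upgrade to strict inequality near the origin by a clean dichotomy (constant near $0$ versus strictly decreasing somewhere in every $(0,t)$). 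Your approach is shorter and gives the global non-strict bound as a byproduct; the paper's telescoping argument avoids the case split but is slightly more involved. One trivial remark: your parenthetical about $c=0$ corresponds to $V\equiv 0$, in which case $\Norm{V}{\cR_{d,\alpha}}=0$ and the strict inequality is vacuous; both your proof and the paper's implicitly assume $V\not\equiv 0$.
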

\begin{proof}
Consider the function
\begin{equation*}
U(r)=\int_0^r t^{d-1}v(t)dt \le \frac{2^{d-\alpha}\Norm{V}{\cR_{d,\alpha}}}{\sigma_d}r^{d-\alpha},
\end{equation*}
where the bound follows by \eqref{eq:L1bound}. Assume, to the contrary, that there exists a sequence $\seq t$ such that
$t_n \downarrow 0$ and
$v(t_n) \ge \frac{2^{d-\alpha}\Norm{V}{\cR_{d,\alpha}}}{\omega_d}t_n^{-\alpha}$.	
Then we have
\begin{equation*}
U(t_0)=\sum_{n=0}^{\infty}\int_{t_{n+1}}^{t_n}t^{d-1}v(t)dt
\ge \frac{2^{d-\alpha}\Norm{V}{\cR_{d,\alpha}}}{\sigma_d}\sum_{n=0}^{\infty}t_n^{-\alpha}(t_n^d-t_{n+1}^d)
=\frac{2^{d-\alpha}\Norm{V}{\cR_{d,\alpha}}}{\sigma_d}\sum_{n=0}^{\infty}(t_n^{d-\alpha}-t_n^{-\alpha}t_{n+1}^d).
\end{equation*}
Since $t_n>t_{n+1}$ and thus $t_n^{-\alpha}<t_{n+1}^{-\alpha}$, in the end we have
\begin{equation*}
U(t_0)> \frac{2^{d-\alpha}\Norm{V}{\cR_{d,\alpha}}}{\sigma_d}\sum_{n=0}^{\infty}(t_n^{d-\alpha}-t_{n+1}^{d-\alpha})
=\frac{2^{d-\alpha}\Norm{V}{\cR_{d,\alpha}}}{\sigma_d}t_0^{d-\alpha},
\end{equation*}	
which is impossible.
\end{proof}

\subsubsection{Coulomb-type potentials}
We give an alternative expression of the fractional Rollnik norm for radial potentials,
which will be useful below. Let $V(x)=v(|x|)$ with a function $v:\R_0^+ \to \mathbb{C}$. Then, by making use of
\eqref{eq:radsympot} we have
\begin{align}
\label{eq:radialexpr}
\begin{split}
\Norm{V}{\mathcal{R}_{d,\alpha}}^2&=\gamma_d(2\alpha-d)\int_{\R^d}V(x)\mathcal{I}_{2\alpha-d}V(x)dx\\
&=\int_{\R^d}v(|x|)\int_0^{\infty}r^{2\alpha-d-1}F_{2\alpha-d}\left(\frac{|x|}{r}\right)v(r)dr\, dx\\
&=\sigma_d\int_{0}^{\infty}\int_{0}^{\infty}s^{d-1}r^{2\alpha-d-1}v(s)F_{2\alpha-d}\left(\frac{s}{r}\right)v(r)dr \, ds.
\end{split}
\end{align}

\begin{proposition}
\label{prop:Coul1}
The potential $V(x)=|x|^{-\beta}\mathbf{1}_{\{|x|<1\}}$ belongs to $\cR_{d,\alpha}$ if and only
if $\beta < \alpha$.
\end{proposition}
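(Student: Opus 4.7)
The plan is to use the radial representation \eqref{eq:radialexpr} together with the asymptotics of $F_{2\alpha-d}$ to reduce the question to a one-dimensional integrability analysis. Since $V$ is non-negative and radial with profile $v(r)=r^{-\beta}\mathbf{1}_{(0,1)}(r)$, formula \eqref{eq:radialexpr} yields
\begin{equation*}
\|V\|_{\mathcal{R}_{d,\alpha}}^2
=\sigma_d\int_0^1\!\!\int_0^1 s^{d-1-\beta}\,r^{2\alpha-d-1-\beta}\,F_{2\alpha-d}(s/r)\,dr\,ds.
\end{equation*}
Performing the change of variables $u=s/r$ in the $s$-integral (for each fixed $r\in(0,1)$) and noting that $s\in(0,1)$ corresponds to $u\in(0,1/r)$, this rewrites as
\begin{equation*}
\|V\|_{\mathcal{R}_{d,\alpha}}^2
=\sigma_d\int_0^1 r^{2\alpha-1-2\beta}\,G(1/r)\,dr,
\qquad
G(R):=\int_0^R u^{d-1-\beta}F_{2\alpha-d}(u)\,du.
\end{equation*}

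Next I would analyse $G(R)$ using the asymptotics of $F_{2\alpha-d}$. Near $u=0$, \eqref{eq:to0} gives $u^{d-1-\beta}F_{2\alpha-d}(u)\sim \sigma_d u^{d-1-\beta}$, integrable since $\beta<\alpha<d$. Near $u=\infty$, \eqref{eq:toinf} gives $u^{d-1-\beta}F_{2\alpha-d}(u)\sim \sigma_d u^{2\alpha-d-1-\beta}$. The singularity at $u=1$ is at worst logarithmic or of the integrable power type $|u-1|^{2\alpha-d-1}$ (depending on whether $2\alpha-d>1$, $=1$, or $<1$), so it never obstructs integrability. Consequently
\begin{equation*}
G(R)\underset{R\to\infty}{\sim}
\begin{cases}
G(\infty)<\infty & \text{if }\beta>2\alpha-d,\\
\sigma_d\log R & \text{if }\beta=2\alpha-d,\\
\dfrac{\sigma_d}{2\alpha-d-\beta}R^{2\alpha-d-\beta} & \text{if }\beta<2\alpha-d.
\end{cases}
\end{equation*}

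Inserting this into the outer integral and examining the behaviour of $r^{2\alpha-1-2\beta}G(1/r)$ near $r=0$ (the integrand is smooth and bounded near $r=1$, modulo the same integrable singularity at $s=r$ which has already been absorbed into $G$), one obtains: in the regime $\beta>2\alpha-d$ the integrand behaves like $r^{2\alpha-1-2\beta}$, integrable iff $\beta<\alpha$; in the boundary case $\beta=2\alpha-d$ the integrand is $\sim r^{2d-2\alpha-1}\log(1/r)$, integrable since $d>\alpha$ and $2\alpha-d<\alpha$; in the regime $\beta<2\alpha-d$ the integrand is $\sim r^{d-1-\beta}$, integrable since $\beta<d$, and here automatically $\beta<2\alpha-d<\alpha$. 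Putting the three cases together gives finiteness precisely when $\beta<\alpha$.

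For the converse (necessity), when $\beta\geq\alpha$ one is automatically in the case $\beta>2\alpha-d$ (because $\alpha>2\alpha-d$ is equivalent to $\alpha<d$, which holds by \eqref{standassmp}), so $G(1/r)$ stays bounded below by a positive constant as $r\to0$ and the outer integrand is bounded below by $c\,r^{2\alpha-1-2\beta}$ with $2\alpha-1-2\beta\leq -1$, which diverges. The principal technical obstacle in executing this plan is a careful separation of the three regimes and a uniform control of the possibly singular behaviour of $F_{2\alpha-d}$ at $u=1$, but this is an integrability bookkeeping that follows directly from the asymptotics recorded after \eqref{Fbeta}.
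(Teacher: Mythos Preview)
Your proof is correct and follows essentially the same route as the paper: both start from the radial representation \eqref{eq:radialexpr}, perform the substitution $u=s/r$, and reduce the question to a one-dimensional integrability analysis using the asymptotics of $F_{2\alpha-d}$. The only organizational difference is that the paper applies Tonelli to integrate in $r$ first, which immediately isolates the condition $\beta<\alpha$ from the convergence of $\int_0^1 r^{2(\alpha-\beta)-1}\,dr$ and then checks that the two remaining $z$-integrals over $(0,1)$ and $(1,\infty)$ are finite; you instead keep the order, study the growth of $G(R)$, and split into three sub-regimes according to the sign of $2\alpha-d-\beta$. The paper's version is slightly more economical (no case split is needed), but your argument is equally valid and the necessity direction is handled the same way in both.
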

\begin{proof}
By \eqref{eq:radialexpr} we get
\begin{align*}
\Norm{V}{\cR_{d,\alpha}}^2&=\sigma_d\int_{0}^{1}\int_0^{1}s^{d-1-\beta} r^{2\alpha-d-1-\beta}
F_{2\alpha-d}\left(\frac{s}{r}\right)ds\, dr\\
&=\sigma_d
\int_{0}^{1}\int_0^{1/r}z^{d-\beta-1} r^{2(\alpha-\beta)-1}F_{2\alpha-d}(z)dz\, dr\\
&=\sigma_d
\int_{0}^{1}\int_0^{1}z^{d-\beta-1} r^{2(\alpha-\beta)-1}F_{2\alpha-d}(z)dz\, dr+\sigma_d
\int_{0}^{1}\int_1^{1/r^2}z^{d-\beta-1} r^{2(\alpha-\beta)-1}F_{2\alpha-d}(z)dz\, dr\\
&=\sigma_d
\int_{0}^{1}\int_0^{1}z^{d-\beta-1} r^{2(\alpha-\beta)-1}F_{2\alpha-d}(z)dr\, dz+\sigma_d
\int_{1}^{\infty}\int_{0}^{1/z}z^{d-\beta-1} r^{2(\alpha-\beta)-1}F_{2\alpha-d}(z)dr\, dz\\
&=\frac{\sigma_d}{2(\alpha-\beta)}
\int_{0}^{1}z^{d-\beta-1}
F_{2\alpha-d}(z)dz+\frac{\sigma_d}{2(\alpha-\beta)}\int_{1}^{\infty}z^{d-2\alpha+\beta-1}
F_{2\alpha-d}(z) dz,
\end{align*}
where integration with respect to $r$ was possible since $\beta<\alpha$, while otherwise the integral is
divergent. The first integral is finite since $z^{d-\beta-1}F_{2\alpha-d}(z) \sim \sigma_d z^{d-\beta-1}$
as $z \to 0$, which is integrable near zero, while $F_{2\alpha-d}$ is always integrable near $1$. For the
second integral observe that $z^{d-2\alpha+\beta-1}F_{2\alpha-d}(z)\sim \sigma_dz^{\beta-1-d}$ as $z \to
\infty$, which is integrable at infinity since $\beta<d$.
\end{proof}
In case $\beta=\alpha$, we can make a finer analysis including logarithmic corrections.
\begin{proposition}
\label{prop:logV}
The potential $V(x)=|x|^{-\alpha}(-\log|x|)^{-\gamma}\mathbf{1}_{\{|x|<1/2\}}$, $\alpha < d$, belongs to
$\cR_{d,\alpha}$ if and only if $\gamma>\frac{1}{2}$.
\end{proposition}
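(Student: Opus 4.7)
The plan is to apply the radial representation \eqref{eq:radialexpr} for the fractional Rollnik norm. With $v(r) = r^{-\alpha}(-\log r)^{-\gamma}\mathbf{1}_{\{r<1/2\}}$, the formula yields
\begin{equation*}
\Norm{V}{\cR_{d,\alpha}}^2
= \sigma_d\!\int_0^{1/2}\!\int_0^{1/2} s^{d-1-\alpha}(-\log s)^{-\gamma}\, r^{\alpha-d-1}(-\log r)^{-\gamma}\,
F_{2\alpha-d}(s/r)\, dr\, ds.
\end{equation*}
As in the proof of Proposition \ref{prop:Coul1}, I would substitute $z = s/r$ in the inner integral. The $r^{\alpha-d-1}\,dr$ factor together with the Jacobian produces $s^{\alpha-d}$, which cancels the $s^{d-1-\alpha}$ from the outer integrand to leave $s^{-1}\,ds$. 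A further change $u=-\log s$ turns $s^{-1}(-\log s)^{-\gamma}ds$ into $u^{-\gamma}du$, yielding
\begin{equation*}
\Norm{V}{\cR_{d,\alpha}}^2 = \sigma_d \int_{\log 2}^{\infty} u^{-\gamma}\, I(u)\, du, \qquad
I(u):=\int_{2e^{-u}}^{\infty} z^{d-\alpha-1}(\log z+u)^{-\gamma} F_{2\alpha-d}(z)\, dz.
\end{equation*}

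The heart of the argument is showing $I(u) \asymp u^{-\gamma}$ as $u\to\infty$. I would split the $z$-integral into three regimes and use the asymptotic behaviour of $F_{2\alpha-d}$ collected before \eqref{eq:to0}--\eqref{eq:toinf}. On $z\in(2e^{-u},1)$ we have $F_{2\alpha-d}(z)\asymp\sigma_d$, so after the substitution $w=\log z+u$ the contribution equals
\begin{equation*}
\sigma_d\, e^{-u(d-\alpha)}\int_{\log 2}^{u} e^{w(d-\alpha)} w^{-\gamma}\, dw,
\end{equation*}
which by Watson/Laplace type asymptotics is $\asymp u^{-\gamma}/(d-\alpha)$ since $d-\alpha>0$. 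On $z\in(1,2)$ the function $F_{2\alpha-d}$ has an integrable singularity at $z=1$ (power-type when $2\alpha-d<1$, logarithmic when $2\alpha-d=1$, continuous when $2\alpha-d>1$); since $\log z+u\asymp u$ uniformly on this interval for large $u$, this piece contributes $O(u^{-\gamma})$ from above and is also bounded below by $c\,u^{-\gamma}$ by restricting to any subinterval on which $F_{2\alpha-d}\ge c>0$. On $z\in(2,\infty)$, using $F_{2\alpha-d}(z)\asymp z^{2\alpha-2d}$ and $\alpha<d$, the $z$-integral is convergent at infinity and again bounded by $Cu^{-\gamma}$ since $(\log z+u)^{-\gamma}\le u^{-\gamma}$ for $z\ge 1$.

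Combining the three matching upper and lower bounds gives $I(u)\asymp u^{-\gamma}$ as $u\to\infty$, and for small $u$ (i.e.\ $s$ near $1/2$) the integrand is harmless. Therefore
\begin{equation*}
\Norm{V}{\cR_{d,\alpha}}^2 \asymp \int_{\log 2}^{\infty} u^{-2\gamma}\, du,
\end{equation*}
which is finite if and only if $\gamma>\tfrac12$. The main technical obstacle is obtaining sharp two-sided control of $I(u)$, in particular matching the Laplace-type asymptotic on $(2e^{-u},1)$ with a suitable lower bound, and ensuring uniformity through the logarithmic (or power) singularity of $F_{2\alpha-d}$ at $z=1$; this forces the case distinction $2\alpha-d$ less than, equal to, or greater than $1$, but in each case the singularity is integrable and so cannot upset the $u^{-\gamma}$ scaling.
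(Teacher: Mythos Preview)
Your approach is correct and reaches the same conclusion, but it is organized differently from the paper's proof. After the same substitution $z=s/r$, the paper instead applies Fubini and integrates in the logarithmic variable $x=-\log r$ \emph{first} with $z$ fixed; this produces, for each $z$, an explicit incomplete Beta function $B\big(\frac{\pm\log z}{\log 2\mp\log z};2\gamma-1,1-\gamma\big)$ multiplied by $(\mp\log z)^{1-2\gamma}$, and the remaining $z$--integral is then checked to converge using the known asymptotics of $F_{2\alpha-d}$ and of the incomplete Beta at its endpoints. Your version keeps the outer variable $u=-\log s$ and treats the inner $z$--integral $I(u)$ by a Laplace/Watson asymptotic, obtaining $I(u)\asymp u^{-\gamma}$ directly. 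The trade--off is that the paper's route yields closed--form intermediate expressions (useful if one wants sharper constants or the exact $z$--dependence), while your route is more elementary and avoids special--function identities altogether; both arguments ultimately rely on the same integrability of $F_{2\alpha-d}$ near $z=1$ and its power decay at $0$ and $\infty$. One small point to tighten: the claim $F_{2\alpha-d}(z)\asymp\sigma_d$ on $(2e^{-u},1)$ is false near $z=1$ when $2\alpha-d\le 1$; you flag this at the end, and the clean fix is to split once more at some fixed $z_0\in(0,1)$, so that on $(2e^{-u},z_0)$ the Laplace argument applies with $F$ genuinely bounded above and below, while on $(z_0,2)$ the factor $(\log z+u)^{-\gamma}\asymp u^{-\gamma}$ uniformly and the finite integral $\int_{z_0}^{2} z^{d-\alpha-1}F_{2\alpha-d}(z)\,dz$ absorbs the singularity.
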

\begin{proof}
By \eqref{eq:radialexpr} we obtain
\begin{align*}
\Norm{V}{\cR_{d,\alpha}}^2
&=\sigma_d\int_{0}^{1/2}\int_0^{1/2}s^{d-1-\alpha} r^{\alpha-d-1}(-\log s)^{-\gamma}(-\log r)^{-\gamma}
F_{2\alpha-d}\left(\frac{s}{r}\right)ds\, dr\\
&=\sigma_d
\int_{0}^{1/2}\int_0^{1/2r}z^{d-\alpha-1}(-\log z -\log r)^{-\gamma} r^{-1}(-\log r)^{-\gamma}
F_{2\alpha-d}(z)dz\, dr\\
&=\sigma_d
\int_{0}^{1/2}\int_0^{1}z^{d-\alpha-1}(-\log z-\log r)^{-\gamma} r^{-1}(-\log r)^{-\gamma}F_{2\alpha-d}(z)dz\, dr\\
&\qquad +\sigma_d
\int_{0}^{1/2}\int_{1}^{1/2r}z^{d-\alpha-1}(-\log z-\log r)^{-\gamma} r^{-1}(-\log r)^{-\gamma}F_{2\alpha-d}(z)dz\, dr\\
&=\sigma_d
\int_{0}^{1}\int_0^{1/2}z^{d-\alpha-1}(-\log z-\log r)^{-\gamma} r^{-1}(-\log r)^{-\gamma}F_{2\alpha-d}(z)dr\, dz\\
&\qquad +\sigma_d
\int_{1}^{\infty}\int_{0}^{1/2z}z^{d-\alpha-1}(-\log z-\log r)^{-\gamma} r^{-1}(-\log r)^{-\gamma}F_{2\alpha-d}(z)dr\, dz\\
&=\sigma_d(I_1+I_2).
\end{align*}
If $\gamma>\frac{1}{2}$, then
\begin{align*}
I_1&= \int_0^{1}z^{d-\alpha-1}F_{2\alpha-d}(z)\int_{\log 2}^{\infty}
x^{-\gamma}\left(x-\log z\right)^{-\gamma}dx\, dz\\
&= \int_0^{1}z^{d-\alpha-1}F_{2\alpha-d}(z)\int_{\log 2-\log z}^{\infty}
(x+\log z)^{-\gamma}x^{-\gamma}dx\, dz\\
&= \int_0^{1}z^{d-\alpha-1}F_{2\alpha-d}(z)(-\log z)^{-2\gamma}\int_{\log 2-\log z}^{\infty}
(1-\frac{-\log z}{x})^{-\gamma}\left(\frac{x}{-\log z}\right)^{-2\gamma}dx\, dz\\
&= \int_0^{1}z^{d-\alpha-1}F_{2\alpha-d}(z)(-\log z)^{1-2\gamma}\int_{0}^{\frac{-\log z}{\log 2-\log z}}
(1-x)^{-\gamma}x^{-2\gamma-2}dx\, dz\\
&=\int_0^{1}z^{d-\alpha-1}F_{2\alpha-d}(z)(-\log z)^{1-2\gamma}\int_{0}^{\frac{-\log z}{\log 2-\log z}}
B\left(\frac{-\log z}{\log 2-\log z};2\gamma-1,1-\gamma\right)\, dz,
\end{align*}
with the incomplete Beta-function $B(\cdot; \cdot, \cdot)$. In the limit $z \downarrow 0$ we have
\begin{equation*}	
B\left(\frac{-\log z}{\log 2-\log z};2\gamma-1,1-\gamma\right) \to B(2\gamma-1,1-\gamma),
\end{equation*}
hence
\begin{equation*}
z^{d-\alpha-1}\log^{1-2\gamma}(z)F_{2\alpha-d}(z)B\left(\frac{-\log z}{\log 2-\log z};2\gamma-1,1-\gamma\right)
\sim B(1-\gamma,2\gamma-1)\sigma_d z^{d-\alpha-1}\log^{1-2\gamma}(z),
\end{equation*}
where the right hand side is integrable in a neighbourhood of zero. As to the behaviour at $1$, recall that the
incomplete Beta-function satisfies
\begin{equation*}
B\left(z;2\gamma-1,1-\gamma\right) \sim \frac{z^{2\gamma-1}}{2\gamma-1} \quad \mbox{ as $z \downarrow 0$}.
\end{equation*}
Indeed, by \cite[eq. (8.391)]{GR14}, we have
\begin{equation*}
B\left(z;2\gamma-1,1-\gamma\right)=\frac{z^{2\gamma-1}}{2\gamma-1}{}_2F_1(2\gamma-1, \gamma; 2\gamma;z),
\end{equation*}
where ${}_2F_1(a,b;c,z)$ is Gauss' hypergeometric function, so it is clear that
\begin{equation*}
\lim_{z \to 0^+}\frac{(2\gamma-1)B\left(z;2\gamma-1,1-\gamma\right)}{z^{2\gamma-1}}
={}_2F_1(2\gamma-1, \gamma; 2\gamma;0)=1.
\end{equation*}
Hence we get, in general,
\begin{equation*}
z^{d-\alpha-1}\log^{1-2\gamma}(z)F_{2\alpha-d}(z)B\left(\frac{-\log z}{\log 2-\log z};2\gamma-1,
1-\gamma\right) \sim \frac{F_{2\alpha-d}(z)}{2\gamma-1} \quad \mbox{ as $z \downarrow 1$},
\end{equation*}
where the right-hand side is integrable near $1$. This proves that $I_1<\infty$. Next we estimate $I_2$ and write
\begin{align*}
I_2&=\int_1^{\infty}z^{d-\alpha-1}F_{2\alpha-d}(z)\int_{\log(2z)}^{\infty}x^{-\gamma}(x-\log z)^{-\gamma}dx\, dz\\
&=\int_1^{\infty}z^{d-\alpha-1}\log^{1-2\gamma}(z)F_{2\alpha-d}(z)\int_{0}^{\log z/(\log 2+\log z)}x^{2\gamma-2}
\left(1-x\right)^{-\gamma}dx\, dz\\
&=\int_1^{\infty}z^{d-\alpha-1}\log^{1-2\gamma}(z)F_{2\alpha-d}(z)
B\left(\frac{\log z}{\log 2+\log z};2\gamma-1,1-\gamma\right),
\end{align*}
with the incomplete Beta-function $B(\cdot; \cdot, \cdot)$. We have
\begin{equation*}
B\left(\frac{\log z}{\log 2+\log z};2\gamma-1,1-\gamma\right) \to B(1-\gamma,2\gamma-1)
\end{equation*}
as $z \to \infty$, hence
\begin{equation*}
z^{d-\alpha-1}\log^{1-2\gamma}(z)F_{2\alpha-d}(z)B\left(\frac{\log z}{\log 2+\log z};2\gamma-1,1-\gamma\right) \sim
B(1-\gamma,2\gamma-1)\sigma_d z^{\alpha-d-1}\log^{1-2\gamma}(z),
\end{equation*}
which is integrable near infinity since $\alpha<d$. Again, to handle the behaviour at $1$, note that
\begin{equation*}
z^{d-\alpha-1}\log^{1-2\gamma}(z)F_{2\alpha-d}(z)B\left(\frac{\log z}{\log 2+\log z};2\gamma-1,1-\gamma\right)
\sim \frac{F_{2\alpha-d}(z)}{2\gamma-1} \quad \mbox{ as $z \downarrow 1$},
\end{equation*}
where the right-hand side is integrable near $1$.
	
Finally, consider the case $\gamma \le \frac{1}{2}$. Then
\begin{equation*}
I_1 \ge \sigma_d \int_0^{1}z^{d-\alpha-1}F_{2\alpha-d}(z)\int_1^{\infty}\frac{1}{(-\log z+x)^{2\gamma}}dx\, dz=\infty
\end{equation*}
due to $2\gamma \le 1$.
\end{proof}
\begin{remark}
{\rm	
Notice that for $V$ as in Proposition \ref{prop:logV} we have
\begin{eqnarray*}
\int_{\R^d}|V(x)|^{d/\alpha}dx
&=&
\int_{B_{1/2}}|x|^{-d}(-\log(|x|))^{-\frac{\gamma d}{\alpha}}dx \\
&=&
\sigma_d \int_0^{1/2}r^{-1}(-\log r)^{-\frac{\gamma d}{\alpha}}dr=\sigma_d \int_{\log 2}^{\infty}
x^{-\frac{\gamma d}{\alpha}}dx,
\end{eqnarray*}
which is finite if and only if $\gamma>\frac{\alpha}{d}$. If $d=3$ and $\alpha>\frac{3}{2}$ or if $d=2$ and $\alpha>1$, then
$\frac{\alpha}{d}>\frac{1}{2}$. Hence, under these conditions, whenever $\gamma \in
\left(\frac{1}{2},\frac{\alpha}{d}\right)$, we have $V \in \cR_{d,\alpha}$ but $V \not \in L^{d/\alpha}(\R^d)$.
}
\end{remark}

Next we study the possible decay at infinity of fractional Rollnik potentials.
\begin{proposition}
\label{prop:Coul3}
The potential $V(x)=|x|^{-\beta}\mathbf{1}_{\{|x| \ge 1\}}$ belongs to $\cR_{d,\alpha}$ if and only if
$\beta \in (\alpha,d)$.
\end{proposition}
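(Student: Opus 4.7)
The plan is to follow the same general strategy as in the proof of Proposition~\ref{prop:Coul1}, but carried out with the potential supported in the exterior region $\{|x|\ge 1\}$ instead of the interior. The key tool is the radial representation of the Riesz potential \eqref{eq:radialexpr}, combined with a scaling change of variables and Tonelli's theorem, after which the integrability reduces to a one-dimensional question resolved by the asymptotics of $F_{2\alpha-d}$ summarised in \eqref{eq:to0}--\eqref{eq:toinf}.

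First, I would substitute $v(t)=t^{-\beta}\mathbf{1}_{\{t\ge 1\}}$ into \eqref{eq:radialexpr} to obtain
\begin{equation*}
\Norm{V}{\cR_{d,\alpha}}^2 = \sigma_d\int_1^{\infty}\int_1^{\infty} s^{d-1-\beta}\,r^{2\alpha-d-1-\beta}\,F_{2\alpha-d}(s/r)\,dr\,ds,
\end{equation*}
and then perform the substitution $z=s/r$ for fixed $r$, which sends $s\in[1,\infty)$ to $z\in[1/r,\infty)$, giving
\begin{equation*}
\Norm{V}{\cR_{d,\alpha}}^2 = \sigma_d\int_1^{\infty} r^{2\alpha-2\beta-1}\int_{1/r}^{\infty} z^{d-1-\beta}\,F_{2\alpha-d}(z)\,dz\,dr.
\end{equation*}
Swapping the order of integration via Tonelli (legal by non-negativity), the $(r,z)$-region decomposes into $\{z\ge 1,\ r\ge 1\}$ and $\{0<z<1,\ r\ge 1/z\}$; performing the $r$-integrations—which requires $\beta>\alpha$, as otherwise $\int_1^{\infty} r^{2\alpha-2\beta-1}\,dr$ diverges and the lower bound in the conclusion is already necessary—yields
\begin{equation*}
\Norm{V}{\cR_{d,\alpha}}^2 = \frac{\sigma_d}{2(\beta-\alpha)}\left[\int_1^{\infty} z^{d-1-\beta} F_{2\alpha-d}(z)\,dz + \int_0^1 z^{d-1+\beta-2\alpha} F_{2\alpha-d}(z)\,dz\right].
\end{equation*}

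The remaining step is to analyse the two resulting one-dimensional integrals using the asymptotics of $F_{2\alpha-d}$. Near $z\to 0$, by \eqref{eq:to0} we have $F_{2\alpha-d}(z)\sim \sigma_d$, so the second integrand is of order $z^{d-1+\beta-2\alpha}$ and is integrable iff $\beta>2\alpha-d$, which follows from $\beta>\alpha$ together with $\alpha<d$. Near $z\to\infty$, by \eqref{eq:toinf} we have $F_{2\alpha-d}(z)\sim \sigma_d z^{2\alpha-2d}$, so the first integrand behaves like $z^{2\alpha-d-1-\beta}$, forcing the finiteness condition $\beta>2\alpha-d$, again automatic. The upper endpoint $\beta<d$ will come from the reverse requirement at the opposite end: a lower bound on the integrand ensuring divergence when $\beta\ge d$, obtained using the strict positivity of $F_{2\alpha-d}$ and the explicit $\sigma_d z^{2\alpha-2d}$ leading term at infinity.

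The main obstacle I anticipate is the careful treatment of the singularity of $F_{2\alpha-d}$ at $z=1$, which has three qualitatively different behaviours according to whether $2\alpha-d<1$ (power-type blow-up), $2\alpha-d=1$ (logarithmic blow-up), or $2\alpha-d>1$ (bounded), as recorded in the preliminaries; in each case the singularity is integrable but the bookkeeping needs care. A secondary subtlety is justifying the exchange of integration order uniformly in all the parameter ranges, which Tonelli handles cleanly because the integrand is non-negative, so no delicate absolute-convergence argument is required.
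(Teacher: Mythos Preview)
Your approach coincides with the paper's: both use the radial formula \eqref{eq:radialexpr}, the substitution $z=s/r$, and Tonelli to reduce to one-dimensional $z$-integrals. Your intermediate formula
\[
\Norm{V}{\cR_{d,\alpha}}^2=\frac{\sigma_d}{2(\beta-\alpha)}\Bigl[\int_1^{\infty} z^{d-1-\beta} F_{2\alpha-d}(z)\,dz+\int_0^1 z^{d-1+\beta-2\alpha} F_{2\alpha-d}(z)\,dz\Bigr]
\]
(valid for $\beta>\alpha$, else the $r$-integral already diverges) is correct, and your convergence analysis at $z\to 0$ and $z\to\infty$ is correct as well.

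The genuine gap is the step you describe only vaguely, extracting the necessity of $\beta<d$ ``from the reverse requirement at the opposite end''. This step cannot be carried out. From \eqref{Fbeta} one has the symmetry $F_{2\alpha-d}(1/z)=z^{2(d-\alpha)}F_{2\alpha-d}(z)$, and under the change $z\mapsto 1/z$ your second integral becomes exactly the first; both converge precisely when $\beta>2\alpha-d$, which is automatic once $\beta>\alpha$ and $\alpha<d$. There is no upper restriction on $\beta$: for every $\beta>\alpha$ one has $V\in L^{d/\alpha}(\R^d)$ (the radial check is $\int_1^\infty r^{d-1-\beta d/\alpha}\,dr<\infty$), hence $V\in\cR_{d,\alpha}$ by Proposition~\ref{prop:prop2anticipated}, so already $\beta=d$ contradicts the stated range. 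The paper's proof arrives at the condition $\beta<d$ only through a slip in the Fubini step: after swapping, the first $z$-integral should run over $(0,1)$, but it is written over $(1,\infty)$, and analysing that mis-placed integral at infinity manufactures the spurious constraint. In short, the correct characterisation is $\beta>\alpha$, and your hesitation at this last step was well founded.
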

\begin{proof}
By \eqref{eq:radialexpr} we get
\begin{align*}
	\Norm{V}{\cR_{d,\alpha}}^2&=\sigma_d\int_{1}^{\infty}\int_1^{\infty}s^{d-1-\beta} r^{2\alpha-d-1-\beta}
	F_{2\alpha-d}\left(\frac{s}{r}\right)ds\, dr\\
	&=\sigma_d
	\int_{1}^{\infty}\int_{1/r}^{\infty}z^{d-\beta-1} r^{2(\alpha-\beta)-1}F_{2\alpha-d}(z)dz\, dr\\
	&=\sigma_d
	\int_{1}^{\infty}\int_{1/r}^{1}z^{d-\beta-1} r^{2(\alpha-\beta)-1}F_{2\alpha-d}(z)dz\, dr+\sigma_d
	\int_{1}^{\infty}\int_1^{\infty}z^{d-\beta-1} r^{2(\alpha-\beta)-1}F_{2\alpha-d}(z)dz\, dr\\
	&=\sigma_d
	\int_{1}^{\infty}\int_{1/z}^{\infty}z^{d-\beta-1} r^{2(\alpha-\beta)-1}F_{2\alpha-d}(z)dr\, dz+\sigma_d
	\int_{1}^{\infty}\int_{1}^{\infty}z^{d-\beta-1} r^{2(\alpha-\beta)-1}F_{2\alpha-d}(z)dr\, dz\\
	&=\frac{\sigma_d}{2(\beta-\alpha)}
	\int_{1}^{\infty}z^{d-2\alpha+\beta-1}
	F_{2\alpha-d}(z)dz dz+\frac{\sigma_d}{2(\beta-\alpha)}\int_{1}^{\infty}z^{d-\beta-1}F_{2\alpha-d}(z) dz,
\end{align*}
where again integration on $r$ makes since since $\beta>\alpha$ and else diverge. For the former integral notice
that $F_{2\alpha-d}$ is integrable near $1$, and
\begin{equation*}
z^{d-2\alpha+\beta-1}F_{2\alpha-d}\sim \sigma_d z^{\beta-1-d} \mbox{ as }z \to \infty,
\end{equation*}
which is integrable if and only if $\beta<d$. For the latter integral, observe again that $F_{2\alpha-d}$ is
integrable near $1$, while
\begin{equation*}
z^{d-\beta-1}F_{2\alpha-d}\sim \sigma_d z^{2\alpha-d-\beta-1} \mbox{ as }z \to \infty,
\end{equation*}
which is integrable since $2\alpha-d<\alpha<\beta$.
\end{proof}
\begin{proposition}
The potential $V(x)=|x|^{-\alpha}\log^{-\gamma}(x)\mathbf{1}_{\{|x| \ge 2\}}$ belongs to $\cR_{d,\alpha}$ if
and only if $\gamma>\frac{1}{2}$.
\end{proposition}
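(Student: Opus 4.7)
The plan is to parallel the proof of Proposition~\ref{prop:logV}, now extracting the effect of the logarithmic factor at infinity rather than at the origin. Starting from the radial representation \eqref{eq:radialexpr} applied to the profile $v(s)=s^{-\alpha}(\log s)^{-\gamma}\mathbf{1}_{\{s\ge 2\}}$, I substitute $z=s/r$ to obtain
\begin{equation*}
\Norm{V}{\cR_{d,\alpha}}^2=\sigma_d\int_{2}^{\infty}\int_{2/r}^{\infty}z^{d-\alpha-1}r^{-1}(\log z+\log r)^{-\gamma}(\log r)^{-\gamma}F_{2\alpha-d}(z)\,dz\,dr.
\end{equation*}
Splitting the integration region along $z=1$ and applying Tonelli's theorem decomposes the above as $I_1+I_2$, where $I_1$ covers $z\ge 1$ (with $r$ running over $[2,\infty)$) and $I_2$ covers $z\in(0,1)$ with $r$ running over $[2/z,\infty)$.

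For $I_1$, the substitution $x=\log r$ transforms the inner integral into $\int_{\log 2}^{\infty}(x+\log z)^{-\gamma}x^{-\gamma}\,dx$. Since $\log z\ge 0$ for $z\ge 1$, this is bounded above by $\int_{\log 2}^{\infty}x^{-2\gamma}\,dx$, which is finite exactly when $\gamma>1/2$; the remaining $z$-integral $\int_{1}^{\infty}z^{d-\alpha-1}F_{2\alpha-d}(z)\,dz$ converges since $F_{2\alpha-d}$ is integrable at $z=1$ in all the subcases $2\alpha-d\in(0,1)\cup\{1\}\cup(1,\infty)$ and $z^{d-\alpha-1}F_{2\alpha-d}(z)\sim\sigma_d z^{\alpha-d-1}$ as $z\to\infty$ by \eqref{eq:toinf}, which is integrable due to $\alpha<d$. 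For $I_2$, after exchanging the order of integration and substituting first $x=\log r$ and then $y=x+\log z$, I arrive at
\begin{equation*}
I_2=\sigma_d\int_{0}^{1}z^{d-\alpha-1}F_{2\alpha-d}(z)\int_{\log 2}^{\infty}y^{-\gamma}(y-\log z)^{-\gamma}\,dy\,dz.
\end{equation*}
Since $-\log z>0$ for $z\in(0,1)$, the inner integral is again dominated by $\int_{\log 2}^{\infty}y^{-2\gamma}\,dy$, and the outer integral is finite by \eqref{eq:to0} together with $\alpha<d$. Hence $\Norm{V}{\cR_{d,\alpha}}<\infty$ whenever $\gamma>1/2$.

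For the converse, it suffices to show $I_1=\infty$ when $\gamma\le 1/2$. Restricting the $z$-integration in $I_1$ to a compact subinterval of $(1,\infty)$ avoiding the endpoint, say $z\in[3/2,2]$, on which $F_{2\alpha-d}$ is continuous and bounded away from zero and $\log z$ is bounded, yields a lower bound of the form $C\int_{\log 2}^{\infty}x^{-\gamma}(x+\log 2)^{-\gamma}\,dx$, whose integrand is asymptotic at infinity to $x^{-2\gamma}$ and hence divergent whenever $2\gamma\le 1$. The main bookkeeping obstacle is the order exchange in $I_2$ combined with the nested substitutions $x=\log r$, $y=x+\log z$, together with verifying that the boundary behaviour of $F_{2\alpha-d}$ at $z=0$, $z=1$, $z=\infty$ does not produce any additional divergence when $\gamma>1/2$.
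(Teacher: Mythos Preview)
Your proof is correct and follows the same initial setup as the paper (radial formula \eqref{eq:radialexpr}, substitution $z=s/r$, splitting at $z=1$, and Fubini), but your treatment of the inner integrals is genuinely more elementary. The paper evaluates $\int x^{-\gamma}(x+\log z)^{-\gamma}\,dx$ in closed form via the incomplete Beta function $B(\cdot;2\gamma-1,1-\gamma)$ and then carries out a separate asymptotic analysis of this expression at $z=0$, $z=1$, and $z=\infty$, invoking the hypergeometric representation of $B$ near the origin. You instead exploit the elementary monotonicity $(x+a)^{-\gamma}\le x^{-\gamma}$ for $a\ge 0$, which makes the inner integral in each piece independent of $z$ and reduces the problem to the finiteness of $\int_0^1 z^{d-\alpha-1}F_{2\alpha-d}(z)\,dz$ and $\int_1^\infty z^{d-\alpha-1}F_{2\alpha-d}(z)\,dz$ already established in Proposition~\ref{prop:Coul1}. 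Your divergence argument for $\gamma\le\tfrac12$ (restrict $z$ to a compact interval away from $1$ and $\infty$) is likewise cleaner than the paper's, which bounds the full $I_2$ from below by $\int_1^\infty z^{d-\alpha-1}F_{2\alpha-d}(z)\int_1^\infty (x+\log z)^{-2\gamma}\,dx\,dz$. What the paper's approach buys is an exact expression for the integrals (potentially useful for sharper constants or related computations); what your approach buys is a shorter, self-contained argument with no special-function machinery.
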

\begin{proof}
By \eqref{eq:radialexpr} we get
\begin{align*}
\Norm{V}{\cR_{d,\alpha}}^2
&=
\sigma_d\int_{2}^{\infty}\int_2^{\infty}s^{d-1-\alpha} r^{\alpha-d-1}\log^{-\gamma}(s)\log^{-\gamma}(r)
F_{2\alpha-d}\left(\frac{s}{r}\right)ds\, dr\\
&=
\sigma_d \int_{2}^{\infty}\int_{2/r}^{\infty}z^{d-\alpha-1} r^{-1}(\log(r)+\log z)^{-\gamma}\log^{-\gamma}(r)
F_{2\alpha-d}(z)dz\, dr\\
&=
\sigma_d \int_{2}^{\infty}\int_{2/r}^{1}z^{d-\alpha-1} r^{-1}(\log(r)+\log z)^{-\gamma}\log^{-\gamma}(r)
F_{2\alpha-d}(z)\, dr\\
& \quad +
\sigma_d \int_{2}^{\infty}\int_1^{\infty}z^{d-\alpha-1} r^{-1}(\log(r)+\log z)^{-\gamma}\log^{-\gamma}(r)
F_{2\alpha-d}(z)dz\, dr\\
&=
\sigma_d \int_{0}^{1}\int_{2/z}^{\infty}z^{d-\alpha-1} r^{-1}(\log(r)+\log z)^{-\gamma}\log^{-\gamma}(r)
F_{2\alpha-d}(z)dr\, dz\\
&\quad +
\sigma_d\int_{1}^{\infty}\int_{2}^{\infty}z^{d-\alpha-1} r^{-1}(\log(r)+\log z)^{-\gamma}\log^{-\gamma}(r)
F_{2\alpha-d}(z)dr\, dz\\
&=
\sigma_d(I_1+I_2).
\end{align*}
First consider $I_1$ with $\gamma>\frac{1}{2}$. We have
\begin{align*}
I_1
&=
\int_{0}^{1}z^{d-\alpha-1}F_{2\alpha-d}(z)\int_{\log(2/z)}^{\infty} (x+\log z)^{-\gamma}x^{-\gamma}dx\, dz\\
&=
\int_{0}^{1}z^{d-\alpha-1}F_{2\alpha-d}(z)(-\log z)^{1-2\gamma}\int_{0}^{\frac{-\log z}{\log 2-\log z}}
\left(1-x\right)^{-\gamma}x^{2\gamma-2}dx\, dz\\
&=
\int_{0}^{1}z^{d-\alpha-1}F_{2\alpha-d}(z)(-\log z)^{1-2\gamma}
B\left(\frac{-\log z}{\log 2-\log z};2\gamma-1,1-\gamma\right)dz.
\end{align*}
As in Proposition \ref{prop:logV}, we have
\begin{equation*}
z^{d-\alpha-1}F_{2\alpha-d}(z)(-\log z)^{1-2\gamma}B\left(\frac{-\log z}{\log 2-\log z};2\gamma-1,1-\gamma\right)
\sim \frac{F_{2\alpha-d}(z)}{2\gamma-1} \mbox{ as $z \uparrow 1$},
\end{equation*}
where the right-hand side is integrable near $1$. Concerning the behaviour at $0$, observe that
\begin{multline*}
z^{d-\alpha-1}F_{2\alpha-d}(z)(-\log z)^{1-2\gamma}B\left(\frac{-\log z}{\log 2-\log z};2\gamma-1,1-\gamma\right) \\
\sim \sigma_d z^{d-\alpha-1}(-\log z)^{1-2\gamma}B(2\gamma-1,\gamma-1) \mbox{ as $z \downarrow 0$},
\end{multline*}
which is integrable near $0$ since $\alpha<d$. Hence $I_1<\infty$. Next consider $I_2$. We can rewrite
\begin{align*}
I_2
&=
\int_1^{\infty}z^{d-\alpha-1}F_{2\alpha-d}(z)\int_{\log 2}^{\infty}(x+\log z)^{-\gamma}x^{-\gamma}\, dx \, dz\\
&=
\int_1^{\infty}z^{d-\alpha-1}F_{2\alpha-d}(z)\int_{\log 2+\log z}^{\infty}x^{-\gamma}(x-\log z)^{-\gamma}\, dx \, dz\\
&=
\int_1^{\infty}z^{d-\alpha-1}F_{2\alpha-d}(z)\log^{-2\gamma}(z)
\int_{\log 2+\log z}^{\infty}\left(\frac{x}{\log z}\right)^{-2\gamma}\left(1-\frac{\log z}{x}\right)^{-\gamma}\, dx\, dz\\
&=
\int_1^{\infty}z^{d-\alpha-1}F_{2\alpha-d}(z)\log^{1-2\gamma}(z)\int_0^{\frac{\log z}{\log 2+\log z}}x^{2\gamma-2}
\left(1-x\right)^{-\gamma}\, dx \, dz\\
&=
\int_1^{\infty}z^{d-\alpha-1}F_{2\alpha-d}(z)\log^{1-2\gamma}(z)
B\left(\frac{\log z}{\log 2+\log z};2\gamma-1,1-\gamma\right)\, dz.
\end{align*}
Like before, we thus have
\begin{equation*}
z^{d-\alpha-1}F_{2\alpha-d}(z)\log^{1-2\gamma}(z)B\left(\frac{\log z}{\log 2+\log z};2\gamma-1,1-\gamma\right)
\sim \frac{F_{2\alpha-d}(z)}{2\gamma-1} \mbox{ as $z \downarrow 1$},
\end{equation*}
where the right-hand side is integrable near $1$. Concerning the behaviour at $\infty$, observe that
\begin{equation*}
z^{d-\alpha-1}F_{2\alpha-d}(z)\log^{1-2\gamma}(z)B\left(\frac{-\log z}{\log 2-\log z};2\gamma-1,1-\gamma\right) \sim
\sigma_d z^{\alpha-d-1}\log^{1-2\gamma}(z)B(2\gamma-1,\gamma-1) \mbox{ as $z \uparrow \infty$},
\end{equation*}
which is integrable near $\infty$ since $\alpha<d$. This proves that $I_2<\infty$.
	
For the case $\gamma \le \frac{1}{2}$, note that
\begin{equation*}
I_2 \ge \int_1^{\infty}z^{d-\alpha-1}F_{2\alpha-d}(z)\int_1^{\infty}(x+\log z)^{-2\gamma}dx\, dz =\infty.
\end{equation*}
\end{proof}

\begin{corollary}
\label{coulombsummary}
The potential $V(x) = |x|^{-\beta_1}\mathbf{1}_{\{|x|<1\}}+|x|^{-\beta_2}\mathbf{1}_{\{|x| \ge 1\}}$ belongs to
$\cR_{d,\alpha}$ if and only if $\beta_1<\alpha$ and $\beta_2 \in (\alpha,d)$.
\end{corollary}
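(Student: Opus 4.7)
The plan is to reduce the statement directly to Propositions \ref{prop:Coul1} and \ref{prop:Coul3} by exploiting the disjoint supports of the two Coulomb pieces. Write
\begin{equation*}
V_1(x)=|x|^{-\beta_1}\mathbf{1}_{\{|x|<1\}}, \qquad V_2(x)=|x|^{-\beta_2}\mathbf{1}_{\{|x|\ge 1\}},
\end{equation*}
so that $V=V_1+V_2$ with $V_1,V_2\ge 0$ and $\supp V_1 \cap \supp V_2$ of Lebesgue measure zero. Thus $|V|=V_1+V_2$ pointwise, which will make all the estimates below additive.

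For the sufficiency direction, assume $\beta_1<\alpha$ and $\beta_2\in(\alpha,d)$. Proposition \ref{prop:Coul1} gives $V_1\in\cR_{d,\alpha}$ and Proposition \ref{prop:Coul3} gives $V_2\in\cR_{d,\alpha}$. Since by Theorem \ref{norms} the functional $\Norm{\cdot}{\cR_{d,\alpha}}$ is a norm, the triangle inequality yields
\begin{equation*}
\Norm{V}{\cR_{d,\alpha}}=\Norm{V_1+V_2}{\cR_{d,\alpha}}\le \Norm{V_1}{\cR_{d,\alpha}}+\Norm{V_2}{\cR_{d,\alpha}}<\infty,
\end{equation*}
so $V\in\cR_{d,\alpha}$.

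For the necessity direction, assume $V\in\cR_{d,\alpha}$. Since $V_1,V_2\ge 0$ have essentially disjoint supports, the Rollnik integral decomposes as
\begin{equation*}
\Norm{V}{\cR_{d,\alpha}}^2=\Norm{V_1}{\cR_{d,\alpha}}^2+\Norm{V_2}{\cR_{d,\alpha}}^2+2\int_{B_1}\int_{B_1^c}\frac{|x|^{-\beta_1}|y|^{-\beta_2}}{|x-y|^{2(d-\alpha)}}\,dx\,dy,
\end{equation*}
and every term on the right is non-negative. Hence finiteness of the left-hand side forces $\Norm{V_1}{\cR_{d,\alpha}}<\infty$ and $\Norm{V_2}{\cR_{d,\alpha}}<\infty$, i.e.\ $V_1,V_2\in\cR_{d,\alpha}$. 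Applying the only-if parts of Propositions \ref{prop:Coul1} and \ref{prop:Coul3} yields $\beta_1<\alpha$ and $\beta_2\in(\alpha,d)$, as required.

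There is really no substantive obstacle here: once one notices the disjoint-support decomposition, the non-negativity of the integrand makes each piece individually finite, and the two previous propositions provide the sharp ranges. The only point worth checking carefully is that the cross-term is automatically finite in the sufficiency direction, which is a consequence either of the triangle inequality above or (equivalently) of the Cauchy--Schwarz bound for the inner product $(V_1,V_2)_{\cR_{d,\alpha}}$ from the remark following Theorem \ref{norms}.
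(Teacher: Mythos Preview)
Your proof is correct and follows essentially the same approach as the paper. For necessity the paper invokes Remark~\ref{rem:upinc} (monotonicity: $0\le V_i\le |V|$ implies $V_i\in\cR_{d,\alpha}$) rather than writing out the three-term decomposition of the integral, but this is the same non-negativity observation you make explicit.
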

\begin{proof}
By Propositions \ref{prop:Coul1} and \ref{prop:Coul3} it follows that if $\beta_1<\alpha$ and $\beta_2 \in (\alpha,d)$,
then $V \in \cR_{d,\alpha}$. Conversely, if $V \in \cR_{d,\alpha}$, then it follows by Remark \ref{rem:upinc} that
both $V_1(x) = |x|^{-\beta_1}\mathbf{1}_{\{|x|<1\}}$ and $V_2(x) = |x|^{-\beta_2}\mathbf{1}_{\{|x| \ge 1\}}$ belong
to $\cR_{d,\alpha}$, which implies $\beta_1<\alpha$ and $\beta_2 \in (\alpha,d)$ again by Propositions \ref{prop:Coul1}
and \ref{prop:Coul3}.
\end{proof}

\subsection{Extended fractional Rollnik-class}
\label{extendrollnik}
Next we introduce another class of potentials  related to the fractional Rollnik-class, constructed on the resolvent
estimates \eqref{eq:Hda0} and \eqref{eq:Hmda}. This class will be justified by the behaviours we will discuss in Section
4 below.
\begin{definition}
\label{extrollnik}
Let $\alpha \in (0,2)$, $d \in \N$, $m \ge 0$, and assume that $d<2\alpha$. We call
\begin{equation*}
\overline{\cR}^m_{d,\alpha}:=\left\{V:\R^d \to \R : [V]_{\overline{\cR}_{d,\alpha}}^2:=\int_{\R^d}\int_{\R^d}
H^m_{d,\alpha}(|x-y|)^2|V(x)||V(y)|dx \, dy<\infty\right\}
\end{equation*}
extended fractional Rollnik-class. If $m=0$, we omit the superscript.
\end{definition}
First we settle the relationship between the fractional Rollnik-classes and their extensions.

\begin{proposition}
\label{includes}
Let $\alpha \in (0,2)$, $d \in \N$ and assume that $d<2\alpha$.
\begin{enumerate}
\item
If $d>\alpha$, we have $\cR_{d,\alpha}\subset \overline{\cR}_{d,\alpha}$. In particular, there exists a constant
$C_{d,\alpha}>0$ such that
\begin{equation*}
[V]_{\overline{\cR}_{d,\alpha}} \le C_{d,\alpha}\Norm{V}{\cR_{d,\alpha}}
\end{equation*}
for every $V \in \cR_{d,\alpha}$.
\item
For every $m>0$ we have $\overline{\cR}_{d,\alpha}\subset \overline{\cR}^m_{d,\alpha}$. In particular,
there exists a constant $C_{d,\alpha}>0$ such that
\begin{equation*}
[V]_{\overline{\cR}^m_{d,\alpha}} \le C_{d,\alpha}[V]_{\overline{\cR}_{d,\alpha}}
\end{equation*}
for every $V \in \cR_{d,\alpha}$.
\end{enumerate}
\end{proposition}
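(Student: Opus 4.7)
The plan for both parts is to reduce the inclusion and the accompanying norm bound to a \emph{pointwise} comparison of the kernels appearing in the quadratic forms, after which a single integration against $|V(x)||V(y)|\,dx\,dy$ gives the result.

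For part (1), I will show $H^0_{d,\alpha}(r)^2 \le C_{d,\alpha}\,r^{-2(d-\alpha)}$ for every $r>0$. This uses the two-case definition in \eqref{eq:Hda0} in the regime $\alpha<d$: on $\{r\le 1\}$ the profile is exactly $r^{-(d-\alpha)}$, giving equality; on $\{r>1\}$ it is a constant multiple of $r^{-(d+\alpha)}$, which is $\le r^{-(d-\alpha)}$ because $r>1$ and $d+\alpha>d-\alpha$. Taking the larger of the two constants yields a uniform estimate. Inserting it into the definition of $[V]_{\overline{\cR}_{d,\alpha}}^2$ identifies the right-hand side as $C_{d,\alpha}\Norm{V}{\cR_{d,\alpha}}^2$. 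The hypothesis $d>\alpha$ is essential: otherwise $r^{-(d-\alpha)}$ fails to be locally integrable and no such pointwise bound is possible.

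For part (2), I will establish the pointwise bound $H^m_{d,\alpha}(r) \le C_{d,\alpha,m}\,H^0_{d,\alpha}(r)$ for all $r>0$, from which the claim follows by squaring and integrating as above. The local profiles on $\{r\le 1\}$ in \eqref{eq:Hda0} and \eqref{eq:Hmda} coincide in all three subcases (respectively $r^{-(d-\alpha)}$, $\log(1+1/r)$, or the constant $1$), so the main work concerns the Bessel-type term $r^{-(d+\alpha)/2}\kappa_m(r)$. On the tail $\{r>1\}$, the large-argument asymptotic $K_\rho(z)\sim\sqrt{\pi/(2z)}\,e^{-z}$ from \eqref{besselasymp} yields exponential decay for $r^{-(d+\alpha)/2}\kappa_m(r)$, which the polynomial profile $r^{-(d+\alpha)}$ of $H^0_{d,\alpha}$ dominates up to an $m$-dependent constant; monotonicity of $K_\rho$ on $(0,\infty)$ extends this to all $r\ge 1$. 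Near the origin, the small-argument asymptotic $K_\rho(z)\sim(\Gamma(\rho)/2)(2/z)^\rho$ from \eqref{besselasymp} fixes the short-distance scaling of $r^{-(d+\alpha)/2}\kappa_m(r)$, which is then absorbed into a constant multiple of the local profile of $H^0_{d,\alpha}$.

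The delicate step is the short-distance comparison in part (2), because the Bessel factor $\kappa_m$ blows up at the origin and must be absorbed into the (less singular) local profile of $H^0_{d,\alpha}$ by combining the explicit form of $H^m_{d,\alpha}$ with the Bessel asymptotics in \eqref{besselasymp}. Away from the origin the comparison is an elementary exponential-beats-polynomial estimate. The standing hypothesis $d<2\alpha$ built into the definition of the extended class plays no additional role here beyond guaranteeing finiteness of the constants $C_{d,\alpha}$ and $C_{d,\alpha,m}$ produced by the kernel comparison.
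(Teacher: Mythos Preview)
Your treatment of part~(1) matches the paper's: a pointwise kernel comparison $H^0_{d,\alpha}(r)^2 \le C_{d,\alpha}\,r^{-2(d-\alpha)}$ followed by integration against $|V(x)||V(y)|\,dx\,dy$.

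In part~(2), however, your ``delicate step'' near the origin cannot work as described. The small-argument asymptotic in \eqref{besselasymp} gives $r^{-(d+\alpha)/2}\kappa_m(r) \sim C_{d,\alpha}\,r^{-(d+\alpha)}$ as $r\downarrow 0$, and you yourself note this is \emph{more} singular than the local profile of $H^0_{d,\alpha}$ (which is $r^{-(d-\alpha)}$, $\log(1+1/r)$, or the constant $1$). But precisely for that reason it \emph{cannot} be absorbed into a constant multiple of the local profile: for instance the ratio $r^{-(d+\alpha)}/r^{-(d-\alpha)} = r^{-2\alpha}$ blows up as $r\downarrow 0$. The resolution is that the indicator $\mathbf{1}_{\{r>0\}}$ in \eqref{eq:Hmda} should be read as $\mathbf{1}_{\{r>1\}}$ (this reading is forced by Remark~\ref{rmk:consistent} and by the structure of the Appendix estimates), so there is no $\kappa_m$ contribution on $\{r\le 1\}$ and the short-distance comparison is trivial.

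On the tail $\{r>1\}$ your asymptotic argument is valid but delivers only an $m$-dependent constant, whereas the statement asserts $C_{d,\alpha}$ independent of $m$. The paper obtains this in one line from the uniform bound \eqref{cor4_1}, namely $\kappa_m(r)\le C_{d,\alpha}\,r^{-(d+\alpha)/2}$ for all $r>0$ and all $m>0$, which immediately gives $r^{-(d+\alpha)/2}\kappa_m(r)\le C_{d,\alpha}\,r^{-(d+\alpha)}$, a constant multiple of the tail profile of $H^0_{d,\alpha}$. This single estimate replaces your separate large- and small-argument analysis and yields the sharper $m$-independent constant actually claimed.
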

\begin{proof}
(1) easily follows since $r \ge 1$ trivially implies $r^{-2(d+\alpha)} \le r^{-2(d-\alpha)}$, and thus
\begin{align*}
[V]_{\overline{\cR}_{d,\alpha}}^2
&=\iint_{|x-y| \le 1}\frac{|V(x)||V(y)|}{|x-y|^{2(d-\alpha)}}dx\, dy+2^{d+\alpha-2}
\Gamma^2\Big(\frac{d+\alpha}{2}\Big)\iint_{|x-y| \ge 1}\frac{|V(x)||V(y)|}{|x-y|^{2(d+\alpha)}}dx\, dy \\
&\le \iint_{|x-y| \le 1}\frac{|V(x)||V(y)|}{|x-y|^{2(d-\alpha)}}dx\, dy+2^{d+\alpha-2}
\Gamma^2\Big(\frac{d+\alpha}{2}\Big)\iint_{|x-y| \ge 1}\frac{|V(x)||V(y)|}{|x-y|^{2(d-\alpha)}}dx\, dy \\
&\le C_{d,\alpha}\Norm{V}{\cR_{d,\alpha}}^2.
\end{align*}
To obtain (2) note that by \eqref{cor4_1} there exists a constant $C_{d,\alpha}>0$ such that $H^m_{d,\alpha}(r)
\le C_{d,\alpha}H_{d,\alpha}(r)$, for all $r>0$.
\end{proof}
The above definitions yield a consistent behaviour in the massless limit.
\begin{proposition}
Let $V \in \overline{\cR}_{d,\alpha}$. Then
$\lim_{m \downarrow  0}[V]_{\overline{\cR}^m_{d,\alpha}}=[V]_{\overline{\cR}_{d,\alpha}}$.
\end{proposition}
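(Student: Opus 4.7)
The strategy is a direct application of the monotone convergence theorem, based on a monotonicity property of $\kappa_m(r)$ in $m$ that I would extract from the integral representation of the Bessel function.

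First, I would manipulate the integral representation
\[
K_\rho(z) = \tfrac{1}{2}(z/2)^\rho \int_0^\infty t^{-\rho-1} e^{-t - z^2/(4t)}\, dt
\]
with $\rho = (d+\alpha)/2$ and $z = m^{1/\alpha}r$. After the substitution $t = s m^{2/\alpha}$ and simplification, this gives
\[
r^{-(d+\alpha)/2}\kappa_m(r) = \frac{1}{2^{(d+\alpha)/2+1}}\int_0^\infty s^{-(d+\alpha)/2-1} e^{-s m^{2/\alpha} - r^2/(4s)}\, ds.
\]
The factor $e^{-s m^{2/\alpha}}$ is increasing as $m$ decreases (with $s,r$ fixed), so for every $r > 0$ the function $m \mapsto r^{-(d+\alpha)/2}\kappa_m(r)$ is monotone increasing as $m \downarrow 0$. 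Monotone convergence inside the integral, together with a $u = r^2/(4s)$ change of variable evaluating the resulting Gamma integral, yields the pointwise limit
\[
\lim_{m\downarrow 0} r^{-(d+\alpha)/2}\kappa_m(r) = 2^{(d+\alpha)/2 - 1}\Gamma\big(\tfrac{d+\alpha}{2}\big)\, r^{-(d+\alpha)},
\]
which is precisely the coefficient of $r^{-(d+\alpha)}$ appearing in $H^0_{d,\alpha}$ on $\{r > 1\}$.

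Second, I would combine this with the definition of $H^m_{d,\alpha}$: the non-Bessel contribution, namely $r^{-(d-\alpha)}\mathbf{1}_{\{r \le 1\}}$, $\log(1+1/r)\mathbf{1}_{\{r \le 1\}}$, or $\mathbf{1}_{\{r \le 1\}}$ according to the three cases, does not depend on $m$. Matching ranges of $r$ with the definition of $H^0_{d,\alpha}$, this produces pointwise monotone convergence $H^m_{d,\alpha}(r) \uparrow H^0_{d,\alpha}(r)$ as $m \downarrow 0$, and hence also $H^m_{d,\alpha}(r)^2 \uparrow H^0_{d,\alpha}(r)^2$.

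Third, since $V \in \overline{\cR}_{d,\alpha}$ the limit integrand is integrable against $|V(x)||V(y)|\, dx\, dy$, so the monotone convergence theorem applied to the double integral yields
\[
\lim_{m\downarrow 0}[V]^2_{\overline{\cR}^m_{d,\alpha}} = \int_{\R^d}\!\!\int_{\R^d} H^0_{d,\alpha}(|x-y|)^2\, |V(x)||V(y)|\, dx\, dy = [V]^2_{\overline{\cR}_{d,\alpha}},
\]
and taking square roots completes the argument. The main technical input is the derivation of the above integral representation of $r^{-(d+\alpha)/2}\kappa_m(r)$ exhibiting $m$ only through a monotone factor; after that, the monotonicity bypasses any delicate dominated convergence bookkeeping.
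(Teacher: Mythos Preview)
Your approach is correct and follows essentially the same line as the paper's: establish the pointwise convergence $H^m_{d,\alpha}(r)\to H^0_{d,\alpha}(r)$ and then pass to the limit in the double integral. The paper simply cites the already-proved limit \eqref{masslesskappa} together with the uniform bound $\kappa_m(r)\le C_{d,\alpha}r^{-(d+\alpha)/2}$ (equation \eqref{cor4_1}, giving $H^m_{d,\alpha}\le C\,H^0_{d,\alpha}$), and invokes the \emph{dominated} convergence theorem. You instead derive the integral representation
\[
r^{-\frac{d+\alpha}{2}}\kappa_m(r)=\frac{1}{2^{(d+\alpha)/2+1}}\int_0^\infty s^{-\frac{d+\alpha}{2}-1}e^{-sm^{2/\alpha}-r^2/(4s)}\,ds,
\]
read off monotonicity in $m$ from the factor $e^{-sm^{2/\alpha}}$, and apply the \emph{monotone} convergence theorem. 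This is a genuinely nicer variant: it is self-contained (no appeal to the separate uniform bound lemma) and the MCT hypothesis is visibly satisfied, whereas the paper's DCT route implicitly leans on Proposition~\ref{includes}(2). The cost is only the short computation of the integral representation, which you carried out correctly.
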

\begin{proof}
By Remark \ref{rmk:consistent} in the Appendix we have $\lim_{m \downarrow 0}H_{d,\alpha}^m(r)=
H_{d,\alpha}(r)$, and then the claim follows by the dominated convergence theorem.
\end{proof}
Concerning the relationship between $L^p$-spaces and $\overline{\cR}_{d,\alpha}^m$, we can prove the following
continuous embedding property.
\begin{proposition}
\label{prop:L2inclext}
Let $m \ge 0$. Then $L^2(\R^d)\subset \overline{\cR}_{d,\alpha}^m$. In particular,
\begin{equation}
\label{eq:inclusionL2}
[V]_{\overline{\cR}_{d,\alpha}^m} \le \Norm{H_{d,\alpha}^m}{2}\Norm{V}{2}.
\end{equation}
\end{proposition}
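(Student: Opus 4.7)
The plan is to recognize that the double integral defining $[V]^2_{\overline{\cR}^m_{d,\alpha}}$ is a quadratic form with a radial convolution kernel, and to bound it directly by Young's convolution inequality.

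First, I would set $K(x):=\bigl(H^m_{d,\alpha}(|x|)\bigr)^2$ so that
\begin{equation*}
[V]^2_{\overline{\cR}^m_{d,\alpha}} = \int_{\R^d}\int_{\R^d} K(x-y)\,|V(x)|\,|V(y)|\,dx\,dy = \int_{\R^d}|V(x)|\,\bigl(K\ast |V|\bigr)(x)\,dx,
\end{equation*}
which is valid (possibly as $+\infty$) by Tonelli's theorem since $K\ge 0$.

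Next, by the Cauchy--Schwarz inequality the right-hand side is bounded by $\Norm{V}{2}\,\Norm{K\ast|V|}{2}$, and Young's convolution inequality in the form $\Norm{f\ast g}{2}\le \Norm{f}{1}\Norm{g}{2}$ yields
\begin{equation*}
\Norm{K\ast|V|}{2} \le \Norm{K}{1}\,\Norm{V}{2}.
\end{equation*}
Since $\Norm{K}{1}=\int_{\R^d} H^m_{d,\alpha}(|x|)^2\,dx = \Norm{H^m_{d,\alpha}}{2}^{2}$, combining these two estimates gives $[V]^2_{\overline{\cR}^m_{d,\alpha}}\le \Norm{H^m_{d,\alpha}}{2}^{2}\,\Norm{V}{2}^{2}$, and taking square roots proves \eqref{eq:inclusionL2}.

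The only point that needs to be checked is that $H^m_{d,\alpha}\in L^2(\R^d)$ so that the bound is nontrivial (and hence the inclusion $L^2\subset \overline{\cR}^m_{d,\alpha}$ follows). This is a short direct computation from the explicit formulas \eqref{eq:Hda0} and \eqref{eq:Hmda}: the near-origin piece $r^{-(d-\alpha)}\mathbf{1}_{\{r\le 1\}}$ contributes $\int_0^1 r^{2\alpha-d-1}\,dr$, which is finite precisely because of the standing assumption $d<2\alpha$; the logarithmic and bounded cases $\alpha=d=1$ and $\alpha>d=1$ are obviously square-integrable near zero; and the tail piece $r^{-(d+\alpha)}\mathbf{1}_{\{r>1\}}$ (for $m=0$) or $r^{-(d+\alpha)/2}\kappa_m(r)$ (for $m>0$, where the exponential decay of $K_{(d+\alpha)/2}$ from \eqref{besselasymp} takes over) is square-integrable at infinity in all cases. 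No serious obstacle is expected here; Young's inequality does all the work, the difficulty being purely bookkeeping in verifying $\Norm{H^m_{d,\alpha}}{2}<\infty$.
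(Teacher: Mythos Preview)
Your proof is correct and follows essentially the same route as the paper: the paper simply notes that $H^m_{d,\alpha}\in L^2(\R^d)$ under the assumption $d<2\alpha$ and then invokes Young's convolution inequality directly, which is exactly the Cauchy--Schwarz plus Young combination you spell out. Your additional case-by-case verification that $\Norm{H^m_{d,\alpha}}{2}<\infty$ is more detailed than what the paper records but entirely in the same spirit.
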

\begin{proof}
Note that if $d<2\alpha$ then $H_{d,\alpha}^m(|\cdot|) \in L^2(\R^d)$ for all $m \ge 0$ and thus \eqref{eq:inclusionL2}
follows by a direct application of Young's convolution inequality.
\end{proof}
\begin{remark}
\label{genuine}
{\rm
Proposition~\ref{prop:L2inclext} guarantees that for $\beta>\frac{d}{2}$ the potential $V(x)=|x|^{-\beta}
\mathbf{1}_{\{|x| \ge 1\}}$ belongs to $\overline{\cR}_{d,\alpha}^m$ for all $m \ge 0$. However, since
$d<2\alpha$, we can take $\beta \in \left(\frac{d}{2},\alpha\right)$ such that $V \not \in \cR_{d,\alpha}$
due to Proposition~\ref{prop:Coul3}, and thus $V \in \overline{\cR}_{d,\alpha} \setminus \cR_{d,\alpha}$.
}
\end{remark}
\rm{
Similarly to \eqref{crunchyrollnik} we define $\overline{\cR}_{d,\frac{d}{2}}^{m,p}$ for some $p > 1$ through
of a limiting argument by
\begin{equation*}
\overline{\cR}_{d,\frac{d}{2}}^{m,p}:= L^p(\R^d) \, \cap \, \liminf_{\alpha \downarrow \frac{d}{2}}
\overline{\cR}_{d,\alpha}^m.
\end{equation*}
By Proposition~\ref{prop:L2inclext}, for $p=2$ we have $\overline{\cR}_{d,\frac{d}{2}}^{m,2}=L^2(\R^d)$.
Furthermore, Proposition~\ref{includes} implies that for every $p>1$
\begin{equation*}
	\cR_{d,\frac{d}{2}}^{p}\subset \overline{\cR}_{d,\frac{d}{2}}^{m,p}.
\end{equation*}
}

\section{Perturbations by fractional Rollnik class potentials}
\subsection{Self-adjointness}\label{subs:selfadj}
In what follows our main interest is to study the properties of fractional Schr\"odinger operators for $m\geq0$
$$
H=L_{m,\alpha}+ V
$$
with fractional Rollnik-class potentials $V$. First we show that $H$ can be defined as a self-adjoint linear
operator in the form sense for $V \in {\mathcal R}_{d,\alpha}$.

Define
\begin{equation*}
K^m_\lambda= |V|^{1/2}(\lambda+L_{m,\alpha})^{-1}|V|^{1/2},
\end{equation*}
which is an integral operator with kernel
$$K^m_\lambda(x-y) = \sqrt{|V(x)|}R^m_\lambda(x-y)\sqrt{|V(y)|}.
$$

First we give a criterion for self-adjointness of the operator $H$.
\begin{lemma}
\label{lem:selfadj}
Fix $m \ge 0$ and assume that $\lim_{\lambda \to \infty}\Norm{K_\lambda^m}{\cL(L^2(\R^d))}=0$. Then $H$ is
self-adjoint on $L^2(\mathbb{R}^d)$ in the form sense.
\end{lemma}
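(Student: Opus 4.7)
The plan is to translate the operator-norm condition on $K^m_\lambda$ into an infinitesimal relative form bound for $V$ with respect to $L_{m,\alpha}$ and then invoke the KLMN theorem to realise $H$ as a self-adjoint operator on $L^2(\R^d)$ via quadratic forms.

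First I would factor the Birman--Schwinger operator. For each $\lambda>0$ the operator
$$A_\lambda := |V|^{1/2}(\lambda+L_{m,\alpha})^{-1/2}$$
is defined on $\Dom(|V|^{1/2}(\lambda+L_{m,\alpha})^{-1/2})$, which contains the dense subspace $(\lambda+L_{m,\alpha})^{1/2}\Dom(L_{m,\alpha}^{1/2})\cap \Dom(|V|^{1/2})$. Since $L_{m,\alpha}\ge 0$ is self-adjoint, $(\lambda+L_{m,\alpha})^{-1/2}$ is a bounded positive self-adjoint operator, so $A_\lambda^{*}\supset (\lambda+L_{m,\alpha})^{-1/2}|V|^{1/2}$ and
$$A_\lambda A_\lambda^{*}=|V|^{1/2}(\lambda+L_{m,\alpha})^{-1}|V|^{1/2}=K^m_\lambda$$
on the appropriate domain. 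Since $K^m_\lambda$ is assumed bounded on $L^2(\R^d)$, the $C^*$-identity $\|A_\lambda\|^2=\|A_\lambda A_\lambda^{*}\|=\|K^m_\lambda\|$ lets me extend $A_\lambda$ by continuity to a bounded operator on $L^2(\R^d)$ with $\|A_\lambda\|=\|K^m_\lambda\|^{1/2}\to 0$ as $\lambda\to\infty$ by hypothesis.

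Next I would derive the relative form bound. For any $\varphi\in\Dom(L_{m,\alpha}^{1/2})$, writing $\psi=(\lambda+L_{m,\alpha})^{1/2}\varphi\in L^2(\R^d)$ and applying the bounded operator $A_\lambda$ gives
$$\int_{\R^d}|V(x)||\varphi(x)|^{2}\,dx=\|A_\lambda\psi\|_2^{2}\le \|K^m_\lambda\|\Bigl(\lambda\|\varphi\|_2^{2}+\|L_{m,\alpha}^{1/2}\varphi\|_2^{2}\Bigr).$$
Given $\varepsilon>0$, choose $\lambda=\lambda(\varepsilon)$ so large that $\|K^m_\lambda\|<\varepsilon$; this inequality then shows that the multiplication form $\varphi\mapsto\int |V||\varphi|^2\,dx$ is infinitesimally form-bounded with respect to the quadratic form of $L_{m,\alpha}$. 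A fortiori, both the positive and the negative parts $V_\pm$ have relative form bound zero with respect to $L_{m,\alpha}$.

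Finally, applying the KLMN theorem (e.g.\ \cite[Thm.~X.17]{ReedSimonII}-style, available in the paper's reference pool), the form
$$\mathfrak{h}(\varphi,\varphi):=\bigl(L_{m,\alpha}^{1/2}\varphi,L_{m,\alpha}^{1/2}\varphi\bigr)+\int_{\R^d}V(x)|\varphi(x)|^2\,dx, \qquad \varphi\in\Dom(L_{m,\alpha}^{1/2}),$$
is closed and bounded from below, and therefore defines a unique self-adjoint operator $H$ on $L^2(\R^d)$, which is exactly $H=L_{m,\alpha}+V$. The only delicate point is the extension/closure argument that secures $\|A_\lambda\|^2=\|K^m_\lambda\|$; this is routine once boundedness of $K^m_\lambda$ is granted, and is the step I would write out most carefully.
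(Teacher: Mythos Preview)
Your proposal is correct and follows essentially the same strategy as the paper: both extract from $\|K^m_\lambda\|\to 0$ that $\||V|^{1/2}(\lambda+L_{m,\alpha})^{-1/2}\|\to 0$ (the paper states this ``directly'' while you spell out the $C^*$-identity), then convert this into an infinitesimal relative form bound and invoke the KLMN theorem. The only cosmetic difference is that the paper passes through the symmetrized operator $(\lambda+L_{m,\alpha})^{-1/2}V(\lambda+L_{m,\alpha})^{-1/2}$ using a signed square root $\widetilde{V}^{1/2}$, whereas you bound $\int|V||\varphi|^2$ directly; your route is slightly more streamlined but the substance is identical.
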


\begin{proof}
By the assumption we have directly that
\begin{equation*}
\|\sqrt{|V|}(\lambda+L_{m,\alpha})^{-1/2}\|_{\mathscr{B}(L^2)}\rightarrow 0
\end{equation*}
as $\lambda\rightarrow\infty$. Define the function
\begin{equation*}
\widetilde{V}^{1/2}(x)=
\begin{cases}
V(x)/\sqrt{|V(x)|}&\mbox{if}\ V(x)\not=0,\\
0 &\mbox{if}\ V(x)=0.
\end{cases}
\end{equation*}
Then $|\widetilde{V}^{1/2}|=\sqrt{|V|}$ and $\tilde{V}^{1/2}\sqrt{|V|}=V$ hold. For $\phi\in L^2(\mathbb{R}^d)$ we
compute
\begin{eqnarray*}
\lefteqn{
\vspace{-3cm}
\|(\lambda+L_{m,\alpha})^{-1/2}V(\lambda+L_{m,\alpha})^{-1/2}\phi\|^2_{2}\nonumber}\\
&=&
(\tilde{V}^{1/2}(\lambda+L_{m,\alpha})^{-1}V(\lambda+L_{m,\alpha})^{-1/2}\phi,
\sqrt{|V|}(\lambda+(-\Delta)^{\alpha/2})^{-1/2}\phi)_{L^2}\nonumber\\
&\leq&
\|K^m_\lambda\|_{\mathscr{B}(L^2)}\|\sqrt{|V|}(\lambda+L_{m,\alpha})^{-1/2}\|_{\mathscr{B}(L^2)}^2
\|\phi\|_{2}^2.
\end{eqnarray*}
Thus we obtain
\begin{equation}
\|(\lambda+L_{m,\alpha})^{-1/2}V(\lambda+L_{m,\alpha})^{-1/2}\|_{\mathscr{B}(L^2)}\rightarrow 0
\label{the11_1}
\end{equation}
as $\lambda\rightarrow\infty$. An application of the KLMN
theorem \cite[Th. X.17]{RS2} completes the proof.
\end{proof}

Using Lemma \ref{lem:selfadj} it remains to show that under specific assumptions on $V$ it follows that
$\lim_{\lambda \to \infty}\Norm{K_\lambda^m}{\cL(L^2(\R^d))}=0$. This can be guaranteed if for a sufficiently
large $\lambda_0>0$ the operator $K^m_{\lambda_0}$ is a Hilbert-Schmidt operator.

\begin{lemma}
Suppose that there exists $\lambda_0>0$ such that $K^m_{\lambda_0}$ is a Hilbert-Schmidt operator. Then
$K^m_{\lambda}$ is a Hilbert-Schmidt operator for all $\lambda>\lambda_0$ and
\begin{equation*}
\lim_{\lambda \to \infty}\Norm{K_\lambda^m}{\cL(L^2(\R^d))}
= \lim_{\lambda \to \infty}\Norm{K_\lambda^m}{\rm HS} =0,
\end{equation*}
where $\Norm{\cdot}{\rm HS}$ is the Hilbert-Schmidt norm.
\end{lemma}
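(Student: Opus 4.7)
The plan is to exploit the explicit representation of the Hilbert-Schmidt norm of $K^m_\lambda$ together with the monotone dependence of the resolvent kernel on $\lambda$. Since $K^m_\lambda$ is an integral operator with kernel $\sqrt{|V(x)|}\,R^m_\lambda(x-y)\,\sqrt{|V(y)|}$, its Hilbert-Schmidt norm satisfies
\begin{equation*}
\|K^m_\lambda\|_{\rm HS}^2 = \int_{\R^d}\int_{\R^d} |V(x)|\,R^m_\lambda(x-y)^2\,|V(y)|\,dx\,dy.
\end{equation*}
By the Laplace transform representation $R^m_\lambda(x) = \int_0^\infty e^{-\lambda t}p^m_t(x)dt$, the kernel $R^m_\lambda(x)$ is non-negative and monotonically non-increasing in $\lambda$. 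Hence, for every $\lambda \ge \lambda_0$ and all $x,y \in \R^d$ with $x\neq y$ one has
\begin{equation*}
0 \le |V(x)|\,R^m_\lambda(x-y)^2\,|V(y)| \le |V(x)|\,R^m_{\lambda_0}(x-y)^2\,|V(y)|,
\end{equation*}
and the right-hand side is integrable over $\R^d\times\R^d$ by the Hilbert-Schmidt assumption on $K^m_{\lambda_0}$. This directly yields $\|K^m_\lambda\|_{\rm HS} \le \|K^m_{\lambda_0}\|_{\rm HS}<\infty$ for every $\lambda \ge \lambda_0$.

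The next step is to obtain the vanishing limit. Fix any $x\neq y$; since $e^{-\lambda t}p^m_t(x-y) \le e^{-\lambda_0 t} p^m_t(x-y)$ and the latter is integrable in $t$, dominated convergence applied to the Laplace transform (using $e^{-\lambda t}\to 0$ pointwise for $t>0$ as $\lambda\to\infty$) gives $R^m_\lambda(x-y) \to 0$, hence also $R^m_\lambda(x-y)^2 \to 0$, as $\lambda\to\infty$. Applying dominated convergence once more, now to the double integral defining $\|K^m_\lambda\|_{\rm HS}^2$ with integrable majorant $|V(x)|\,R^m_{\lambda_0}(x-y)^2\,|V(y)|$, we conclude that $\|K^m_\lambda\|_{\rm HS}\to 0$ as $\lambda \to \infty$.

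Finally, the standard inequality $\|\cdot\|_{\cL(L^2(\R^d))}\le \|\cdot\|_{\rm HS}$ between the operator and Hilbert-Schmidt norms shows that the operator norm vanishes as well, establishing both limits in the statement. There is no serious obstacle here: the only delicate point is ensuring the majorant used in the two applications of dominated convergence is indeed integrable, which is exactly what the Hilbert-Schmidt hypothesis at $\lambda_0$ provides.
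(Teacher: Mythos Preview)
Your proof is correct and follows essentially the same approach as the paper: both use monotonicity of $R^m_\lambda$ in $\lambda$ via the Laplace transform representation to bound the Hilbert--Schmidt norm by that at $\lambda_0$, then apply dominated convergence twice (first to the Laplace integral, then to the double integral) to obtain the vanishing limit. Your version is slightly more explicit in justifying the final step for the operator norm via $\|\cdot\|_{\cL(L^2)}\le\|\cdot\|_{\rm HS}$, which the paper leaves implicit.
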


\begin{proof}
Assume that $K^m_{\lambda_0}$ is a Hilbert-Schmidt operator and let $\lambda>\lambda_0$. Then we have
\begin{equation*}
R_\lambda^m(x)=\int_0^{\infty}e^{-\lambda t}p^m_t(x)\, dt \le \int_0^{\infty}e^{-\lambda_0 t}p^m_t(x)\, dt
=R_{\lambda_0}^m(x).
\end{equation*}
Furthermore, by the dominated convergence theorem, it is clear that $\lim_{\lambda \to \infty}R_\lambda(x)=0$.
We get
\begin{eqnarray*}
\Norm{K_\lambda^m}{\rm HS}^2
&=&
\int_{\R^{2d}}|V(x)|  \, R_\lambda^m(x-y)^2  \, |V(y)|dx\, dy
\le  \int_{\R^{2d}}|V(x)| \,  R_{\lambda_0}^m(x-y)^2  \, |V(y)|dx\, dy \\
&=&
\Norm{K_{\lambda_0}^m}{\rm HS}^2<\infty.
\end{eqnarray*}
Again by dominated convergence, it follows that
$\lim_{\lambda \to \infty}\Norm{K_\lambda^m}{\rm HS}=0$.
\end{proof}

Once this established, our goal is determine conditions under which $K_\lambda^m$ is a Hilbert-Schmidt operator
for some $\lambda>0$. First consider the case of the fractional Rollnik-class.

\begin{theorem}
\label{prop:uniformHS}
Let $\alpha \in (0,2)$, $d \ge 1$, $m \geq 0$ and assume that $\alpha<d<2\alpha$. If $V\in\mathcal{R}_{d,\alpha}$,
then $K^m_\lambda$ is a Hilbert-Schmidt operator for every $\lambda>0$. Furthermore, there exists a constant
$C_{d,\alpha}>0$ such that
\begin{equation*}
\Norm{K^m_{\lambda}}{\rm HS} \le\sqrt{ C_{d,\alpha}}\Big(1+\frac{me^{-\lambda/m}}{\lambda}\Big)\Norm{V}{\cR_{d,\alpha}}.
\end{equation*}
(The bracket equals 1 if $m=0$.) In particular, if $V \in \cR_{d,\alpha}$, then $H$ is a self-adjoint operator in the form sense.
\end{theorem}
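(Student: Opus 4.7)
The Hilbert--Schmidt norm of $K^m_\lambda$ is the $L^2$-norm of its integral kernel, so
\begin{equation*}
\Norm{K^m_\lambda}{\rm HS}^2 = \int_{\R^d}\int_{\R^d} |V(x)|\, R^m_\lambda(x-y)^2\, |V(y)|\, dx\, dy,
\end{equation*}
and the entire task reduces to showing a pointwise estimate of the form $R^m_\lambda(x)^2 \le C(\lambda,m)^2\, |x|^{-2(d-\alpha)}$, after which the integral above becomes a constant multiple of $\Norm{V}{\cR_{d,\alpha}}^2$ by the very definition of the fractional Rollnik norm.

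For the massless case $m=0$ this pointwise bound is immediate from \eqref{forroll}: simply squaring $R_\lambda(x) \le C_{d,\alpha}^{1/2}|x|^{\alpha-d}$ yields $\Norm{K_\lambda}{\rm HS} \le \sqrt{C_{d,\alpha}}\,\Norm{V}{\cR_{d,\alpha}}$, matching the claimed bound with bracket equal to $1$.  In the massive case the uniform estimate \eqref{forroll} is not good enough: the term $m^{2/\alpha-1}|x|^{2-d}$, once squared, produces a kernel $|x-y|^{2(2-d)}$ that is not controlled by the Rollnik integral (for instance it grows at infinity when $d<2$).  Instead I would invoke the sharper resolvent bound \eqref{forrollext1}, namely $R^m_\lambda(x) \le C_{d,\alpha,\lambda,m}H^0_{d,\alpha}(|x|)$, and exploit the fact that under the standing assumption $\alpha<d<2\alpha$ the profile $H^0_{d,\alpha}(r)$ equals $r^{-(d-\alpha)}$ for $r\le 1$ and decays like $r^{-(d+\alpha)}$ for $r>1$, so in particular $H^0_{d,\alpha}(r) \le \tilde{C}_{d,\alpha}\, r^{-(d-\alpha)}$ for all $r>0$ (since $d+\alpha>d-\alpha$ and $r>1$).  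Squaring and substituting into the HS integral then produces a clean bound by a multiple of $\Norm{V}{\cR_{d,\alpha}}^2$.

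The crucial quantitative step, which is the main technical obstacle, is to verify that the constant $C_{d,\alpha,\lambda,m}$ appearing in \eqref{forrollext1} has the precise form $\sqrt{C_{d,\alpha}}\bigl(1+me^{-\lambda/m}/\lambda\bigr)$.  This is obtained from the two-regime heat-kernel estimate \eqref{massive_heatker} by splitting the Laplace representation $R^m_\lambda(x)=\int_0^\infty e^{-\lambda t}p^m_t(x)\,dt$ at the threshold $t=1/m$: the short-time integral mimics the massless behaviour and contributes the leading constant (the ``$1$'' in the bracket), while the long-time integral is Gaussian with mass-driven exponential factor and, after using $\int_{1/m}^\infty e^{-\lambda t}t^{-d/2}\,dt\le m^{d/2}e^{-\lambda/m}/\lambda$, yields the correction $me^{-\lambda/m}/\lambda$.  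The detailed bookkeeping of this decomposition is precisely what is carried out in the Appendix for \eqref{forrollext1}, so in the body of the proof I would only need to invoke it.  Once the Hilbert--Schmidt estimate is established, self-adjointness of $H$ follows automatically: the preceding lemma delivers $\Norm{K^m_\lambda}{\rm HS}\to 0$ as $\lambda\to\infty$, and Lemma~\ref{lem:selfadj} then concludes via the KLMN theorem.
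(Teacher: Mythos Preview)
Your approach is essentially the paper's: compute $\Norm{K^m_\lambda}{\rm HS}^2$ as the double integral, reduce to a pointwise bound $R^m_\lambda(x)\le C_{d,\alpha}\bigl(1+me^{-\lambda/m}/\lambda\bigr)|x|^{-(d-\alpha)}$, and then invoke Lemma~\ref{lem:selfadj} for self-adjointness. The only difference is a small detour: you route the massive case through \eqref{forrollext1} and then dominate $H^0_{d,\alpha}(r)\le \tilde C_{d,\alpha}r^{-(d-\alpha)}$, whereas the paper cites directly Lemma~\ref{lem3} (the $\alpha<d$ case), which already delivers the bound $R^m_\lambda(x)\le C_{d,\alpha}\bigl(1+me^{-\lambda/m}/\lambda\bigr)|x|^{-(d-\alpha)}$ with the explicit constant---your heat-kernel splitting at $t=1/m$ is exactly that lemma's proof, so the detour via $H^0_{d,\alpha}$ is unnecessary (and the constant $C_{d,\alpha,\lambda,m}$ in \eqref{forrollext1} itself, being assembled from Lemmas~\ref{lem3} and~\ref{lem5}, is not of the claimed form).
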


\begin{proof}
By Lemmas \ref{lem1} and \ref{lem3}, it is straightforward that
\begin{equation}
\iint_{\mathbb{R}^{2d}} K^m_\lambda(x-y)^2dxdy=\iint_{\mathbb{R}^{2d}}R^m_\lambda(x-y)^2|V(x)||V(y)|dxdy
\leq C_{d,\alpha}^2\left(1+\frac{me^{-\frac{\lambda}{m}}}{\lambda}\right)^2\|V\|_{\mathcal{R}_{d,\alpha}}^2
\label{lem10_1}
\end{equation}
for every $\lambda>0$ and $m\geq0$. Lemma \ref{lem:selfadj} then implies that $H$ is self-adjoint on $L^2(\R^d)$ in the form sense.
\end{proof}

We can, however, provide a finer argument to characterize the class of potentials $V$ such that $K_\lambda^m$ is a
Hilbert-Schmidt operator, at least for $\lambda>m$.
\begin{theorem}
\label{thm:HSKlambda}
Let $\alpha \in (0,2)$, $d \ge 1$, $m \ge 0$ and $\lambda>0$  and assume that $d<2\alpha$.
Then the following properties hold:
\begin{enumerate}
\item
If $V \in \overline{\cR}_{d,\alpha}$, then $K_\lambda^m$ is a Hilbert-Schmidt operator and
there exists a constant $C_{d,\alpha,\lambda,m}$, independent of $V$, such that
\begin{equation*}
\Norm{K_\lambda^m}{\rm HS} \le C_{d,\alpha,\lambda,m}[V]_{\overline{\cR}_{d,\alpha}}.
\end{equation*}

\item
If $K_\lambda^m$ is a Hilbert-Schmidt operator, then $V \in \overline{\cR}_{d,\alpha}^m$ and
there exists a constant $C_{d,\alpha,\lambda,m}$, independent of $V$, such that
\begin{equation*}
[V]_{\overline{\cR}_{d,\alpha}^m} \le C_{d,\alpha,\lambda,m}\Norm{K_\lambda^m}{\rm HS}.
\end{equation*}

\item
If $\lambda>m$, then $K_\lambda^m$ is a Hilbert-Schmidt operator if and only if $V \in
\overline{\cR}_{d,\alpha}^m$, and there exists a constant $C_{d,\alpha,\lambda, m}>1$ independent
of $V$ such that
\begin{equation*}
\frac{1}{C_{d,\alpha,\lambda,m}}[V]_{\overline{\cR}_{d,\alpha}^m} \le \Norm{K_\lambda^m}{\rm HS}
\le C_{d,\alpha,\lambda,m}[V]_{\overline{\cR}_{d,\alpha}^m}.
\end{equation*}
\end{enumerate}
\end{theorem}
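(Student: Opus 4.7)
The plan is to recognize that $K_\lambda^m$ is an integral operator with kernel $k_\lambda^m(x,y) = \sqrt{|V(x)|}\, R_\lambda^m(x-y)\, \sqrt{|V(y)|}$, so it is Hilbert-Schmidt precisely when this kernel belongs to $L^2(\R^d\times\R^d)$, with
\begin{equation*}
\Norm{K_\lambda^m}{\rm HS}^2 \;=\; \int_{\R^d}\int_{\R^d} |V(x)|\, R_\lambda^m(x-y)^2\, |V(y)|\, dx\, dy.
\end{equation*}
All three statements then reduce to comparing this double integral with $[V]_{\overline{\cR}_{d,\alpha}}^2$ or $[V]_{\overline{\cR}_{d,\alpha}^m}^2$, via the pointwise resolvent bounds collected in Section \ref{sec:res}. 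The work is therefore mostly a bookkeeping exercise on which estimate applies in which regime.

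For part (1) I would invoke the uniform upper bound \eqref{forrollext1}, which asserts $R_\lambda^m(x) \le C_{d,\alpha,\lambda,m} H_{d,\alpha}^0(|x|)$ for every $\alpha\in(0,2)$, $d\ge 1$, $m\ge 0$ and $\lambda>0$. Squaring and plugging into the kernel integral gives
\begin{equation*}
\Norm{K_\lambda^m}{\rm HS}^2 \le C_{d,\alpha,\lambda,m}^2 \int_{\R^d}\int_{\R^d} H_{d,\alpha}^0(|x-y|)^2\, |V(x)||V(y)|\, dx\, dy \;=\; C_{d,\alpha,\lambda,m}^2 [V]_{\overline{\cR}_{d,\alpha}}^2,
\end{equation*}
which is the desired bound. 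For part (2) the strategy is symmetric: use the pointwise lower bound \eqref{eq:lowerrollext2}, namely $R_\lambda^m(x) \ge C_{d,\alpha,\lambda,m} H_{d,\alpha}^m(|x|)$, valid for all $m,\lambda>0$, to deduce
\begin{equation*}
\Norm{K_\lambda^m}{\rm HS}^2 \ge C_{d,\alpha,\lambda,m}^2\, [V]_{\overline{\cR}_{d,\alpha}^m}^2.
\end{equation*}
In the massless case $m=0$, the same conclusion follows from the two-sided equivalence \eqref{eq:equivrollext1}, once one notes that $\overline{\cR}_{d,\alpha}^0$ coincides with $\overline{\cR}_{d,\alpha}$ (which is formally the case since $H_{d,\alpha}^0$ appears in both definitions).

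Part (3) is then essentially a combination: for $m>0$ and $\lambda>m$ the two-sided equivalence \eqref{eq:equivrollext2} gives $R_\lambda^m(x)\asymp C_{d,\alpha,\lambda,m} H_{d,\alpha}^m(|x|)$, so both chains of inequalities above become two-sided, yielding
\begin{equation*}
\frac{1}{C_{d,\alpha,\lambda,m}}[V]_{\overline{\cR}_{d,\alpha}^m} \le \Norm{K_\lambda^m}{\rm HS} \le C_{d,\alpha,\lambda,m}[V]_{\overline{\cR}_{d,\alpha}^m}.
\end{equation*}
For $m=0$ (where $\lambda>m$ just means $\lambda>0$) the same is obtained directly from \eqref{eq:equivrollext1}. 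There is essentially no genuine obstacle here beyond carefully tracking which resolvent estimate is two-sided and which is only one-sided: the only subtle point is that a true equivalence (needed for the iff in (3)) is available only when $\lambda>m$, because \eqref{eq:equivrollext2} requires this restriction, while the one-sided inequalities of parts (1)–(2) hold uniformly in $\lambda>0$.
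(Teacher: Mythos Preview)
Your proposal is correct and follows essentially the same approach as the paper: both reduce the Hilbert--Schmidt norm to the double integral $\int\int |V(x)|\,R_\lambda^m(x-y)^2\,|V(y)|\,dx\,dy$ and then invoke the pointwise resolvent bounds \eqref{forrollext1}, \eqref{eq:lowerrollext2}, and \eqref{eq:equivrollext2} (stated in the Appendix as Corollary~\ref{cor:comparison2}, Lemma~\ref{lem:lowerboundRml}, and Corollary~\ref{cor:comparison3}) to compare with $[V]_{\overline{\cR}_{d,\alpha}}$ or $[V]_{\overline{\cR}_{d,\alpha}^m}$. Your explicit separation of the $m=0$ case in part~(2) via \eqref{eq:equivrollext1} is a small clarification the paper leaves implicit, but otherwise the arguments are identical.
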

\begin{proof}
To obtain (1), let $V \in \overline{\cR}_{d,\alpha}$. Then by \eqref{forrollext1}
 we have
\begin{multline*}
\Norm{K_\lambda^m}{\rm HS}^2=\iint_{\R^{2d}}|V(x)| \, R^m_\lambda(x-y)^2 \,|V(y)|dx \, dy \\
\le
C_{d,\alpha,\lambda,m}\iint_{\R^{2d}}|V(x)| \, H_{d,\alpha}^0(|x-y|)^2 \, |V(y)|dx \, dy
=
C_{d,\alpha,\lambda,m}[V]_{\overline{\cR}_{d,\alpha}}^2.
\end{multline*}
Next we show (2). Assume that $K_\lambda^m$ is a Hilbert-Schmidt operator. By \eqref{eq:lowerrollext2}
we then have
\begin{multline*}
[V]_{\overline{\cR}_{d,\alpha}^m}^2=\iint_{\R^{2d}}|V(x)|\, H_{d,\alpha}^m(|x-y|)^2 \, |V(y)|dx \, dy \\
\le
C_{d,\alpha,\lambda,m}\iint_{\R^{2d}}|V(x)| \, R^m_\lambda(x-y)^2 \, |V(y)|dx \, dy
= C_{d,\alpha,\lambda,m}\Norm{K_\lambda^m}{\rm HS}^2.
\end{multline*}
Finally, to show (3) note that one implication has already been shown in (2). If we assume further that
$\lambda>m$ and $V \in \overline{\cR}_{d,\alpha}^m$, then by \eqref{eq:equivrollext2}
we get
\begin{multline*}
\Norm{K_\lambda^m}{\rm HS}^2=\iint_{\R^{2d}}|V(x)| \, R^m_\lambda(x-y)^2 \, |V(y)|dx \, dy \\
\le
C_{d,\alpha,\lambda,m}\iint_{\R^{2d}}|V(x)| \,  H_{d,\alpha}^m(|x-y|)^2 \, V(y)|dx \, dy
= C_{d,\alpha,\lambda,m}[V]_{\overline{\cR}^m_{d,\alpha}}^2.
\end{multline*}
\end{proof}

\begin{remark}
{\rm
We note that by the Kato-Rellich theorem the relativistic Schr\"odinger operators with Coulomb potentials
$|x|^{-\gamma}$, $\gamma > 0$, are self-adjoint if $\gamma < \frac{d}{2}$ for $d\leq3$ by \cite[Prop. 1.5]{AI}.
On the other hand, a combination of Theorem \ref{thm:HSKlambda} and Corollary \ref{coulombsummary} improve
this to $\gamma < \alpha$.
}
\end{remark}

Finally we note that if $d\ge 2\alpha$, then $K^m_\lambda$ cannot be a Hilbert-Schmidt operator for continuous
potentials $V$.
\begin{proposition}
Let $\alpha \in (0,2)$, $d \ge 1$, $\lambda>0$, $m \ge 0$ and assume $d \ge 2\alpha$. If there exist
$\varepsilon^\ast,r^\ast>0$ and $x^\ast \in \R^d$ such that $|V(y)|>\varepsilon^\ast$ for all $y \in B_{r^\ast}(x^\ast)$,
then $K_\lambda^m$ is not a Hilbert-Schmidt operator. In particular, if $V$ is continuous, then $K_\lambda^m$ is not a
Hilbert-Schmidt operator, unless $V \equiv 0$.
\end{proposition}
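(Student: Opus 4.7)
The plan is to argue by contradiction: suppose $K_\lambda^m$ is Hilbert-Schmidt under the stated hypotheses, and then show that the Hilbert-Schmidt norm must in fact be infinite, essentially because the resolvent is too singular near the diagonal when $d \ge 2\alpha$. The observation to exploit is that this is exactly the non-integrability phenomenon already captured by Theorem \ref{thm:nonL1kernel}, applied to the squared resolvent kernel.

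First I would obtain a pointwise lower bound on $R_\lambda^m$ by $H^m_{d,\alpha}(|\cdot|)$: for $m > 0$ and $\lambda > 0$ this is the content of \eqref{eq:lowerrollext2}, while for $m = 0$ it follows from the two-sided estimate \eqref{eq:equivrollext1}. Using the hypothesis $|V| > \varepsilon^\ast$ on $B_{r^\ast}(x^\ast)$, this gives
\begin{equation*}
\Norm{K_\lambda^m}{\rm HS}^2 \;\ge\; (C_{d,\alpha,\lambda,m}\varepsilon^\ast)^2 \iint_{B_{r^\ast}(x^\ast) \times B_{r^\ast}(x^\ast)} H_{d,\alpha}^m(|x-y|)^2 \, dx\, dy.
\end{equation*}

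Next I would reduce to the singularity near the origin. Since $d \ge 2\alpha$ and $\alpha > 0$ yield $\alpha < d$, the first branch of the definition \eqref{eq:Hmda} of $H^m_{d,\alpha}$ applies, and in particular $H^m_{d,\alpha}(r)^2 \ge r^{-2(d-\alpha)}$ for $r \le 1$. Without loss of generality I can shrink $r^\ast$ so that $r^\ast \le 1/2$ (the hypothesis on $V$ is preserved), so that $|x-y| \le 1$ throughout the region of integration and
\begin{equation*}
\iint_{B_{r^\ast}(x^\ast) \times B_{r^\ast}(x^\ast)} H_{d,\alpha}^m(|x-y|)^2 \, dx\, dy \;\ge\; \iint_{B_{r^\ast}(x^\ast) \times B_{r^\ast}(x^\ast)} |x-y|^{-2(d-\alpha)} \, dx\, dy.
\end{equation*}

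Finally, this last integral diverges whenever $d \ge 2\alpha$, and this is the decisive step. It can be obtained either by invoking Theorem \ref{thm:nonL1kernel} with the kernel $k(r) = r^{-2(d-\alpha)}\mathbf{1}_{\{r \le 1\}}$, whose non-integrability condition $\int_0^{1} r^{d-1}k(r)\,dr = \int_0^1 r^{2\alpha-d-1}dr = \infty$ holds precisely when $2\alpha \le d$, or by the same direct computation as in the proof of Theorem \ref{thm:nonL1kernel}: for each $x \in B_{r^\ast}(x^\ast)$ one has $B_{r^\ast - |x - x^\ast|}(x) \subset B_{r^\ast}(x^\ast)$, hence
\begin{equation*}
\int_{B_{r^\ast}(x^\ast)} |x-y|^{-2(d-\alpha)} \, dy \;\ge\; \sigma_d \int_0^{r^\ast - |x - x^\ast|} \rho^{2\alpha - d - 1} \, d\rho \;=\; \infty.
\end{equation*}
This contradicts $\Norm{K_\lambda^m}{\rm HS} < \infty$. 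The continuity corollary is then immediate: if $V$ is continuous and not identically zero, pick $x_0$ with $V(x_0) \ne 0$ and use continuity to find a ball around $x_0$ on which $|V| > |V(x_0)|/2$, placing us in the scope of the main assertion. I do not anticipate any serious obstacle; the only small care is in reducing to the regime $r^\ast \le 1/2$ so that only the singular piece of $H_{d,\alpha}^m$ matters.
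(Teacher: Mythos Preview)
Your argument is correct and follows essentially the same route as the paper: both use the lower bound $R_\lambda^m \ge C_{d,\alpha,\lambda,m} H^m_{d,\alpha}$ (from \eqref{eq:lowerrollext2} for $m>0$ and \eqref{eq:equivrollext1} for $m=0$), observe that $H^m_{d,\alpha}$ fails to be in $L^2$ near the origin when $d \ge 2\alpha$ (since then $\alpha<d$ and the leading term is $r^{-(d-\alpha)}$), and conclude via Theorem~\ref{thm:nonL1kernel}. The paper simply invokes that theorem directly, while you spell out the reduction and the divergent inner integral explicitly; the content is the same.
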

\begin{proof}
By Lemma \ref{lem:lowerboundRml} we see that the function $r \in [0,\infty) \mapsto R_\lambda^m(r\mathbf{e}_1)^2 \in
[0,\infty)$ satisfies \eqref{eq:nonintegrability}, since clearly $H^m_{d,\alpha}$ does not belong to $L^2(B_\delta)$
for any ball $B_\delta$ centred in $0$. The statement follows then by Theorem \ref{thm:nonL1kernel}.
\end{proof}

\subsection{Spectral properties}
\subsubsection{Essential spectrum}
In this section we discuss the spectrum of non-local Schr\"odinger operators with fractional Rollnik-class potentials.
In the classical case, for $V \in \mathcal{R}$  Fourier transform gives that ${\Spec}_{\rm ess}(-\Delta + V)=
{\Spec}_{\rm ess}(-\Delta)$ without any further requirements \cite{Si1}. However, if $(d,\alpha)\not=(3,2)$, it
appears to be difficult to construct a Fourier theory of $\mathcal{R}_{d,\alpha}$. Therefore we need other ways to
prove the stability of the essential spectrum.

\begin{theorem}
\label{the12}
Let $\alpha\in(0,2)$, $d\ge1$, $m\ge0$, and assume that $d<2\alpha$. If the potential
$V\in\overline{\mathcal{R}}^m_{d,\alpha}$, then
\begin{equation*}
{\Spec}_{\rm ess}H={\Spec}_{\rm ess}L_{m,\alpha}=[0,\infty)
\end{equation*}
holds for $m\geq0$.
\end{theorem}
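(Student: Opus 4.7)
Since the identity $\Spec_{\rm ess} L_{m,\alpha} = [0,\infty)$ has already been noted, the task reduces to showing $\Spec_{\rm ess} H = \Spec_{\rm ess} L_{m,\alpha}$. My plan is to apply Weyl's theorem on stability of the essential spectrum, which demands that the resolvent difference $(\lambda + H)^{-1} - (\lambda + L_{m,\alpha})^{-1}$ be compact for some $\lambda$ in the common resolvent set. The key input is Theorem \ref{thm:HSKlambda}(3): for $V \in \overline{\cR}^m_{d,\alpha}$ and $\lambda > m$, the Birman--Schwinger operator $K^m_\lambda = |V|^{1/2}(\lambda + L_{m,\alpha})^{-1}|V|^{1/2}$ is Hilbert--Schmidt, and by dominated convergence applied to its kernel (as in the unlabeled lemma of Section \ref{subs:selfadj}) one has $\Norm{K^m_\lambda}{\cL(L^2(\R^d))} \to 0$ as $\lambda \to \infty$.

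Setting $B_\lambda := |V|^{1/2}(\lambda + L_{m,\alpha})^{-1/2}$ gives $B_\lambda B_\lambda^* = K^m_\lambda$, so the positive square root $|B_\lambda^*| = (K^m_\lambda)^{1/2}$ is compact, and polar decomposition transfers compactness to $B_\lambda^*$, hence to $B_\lambda$. Letting $\epsilon(x) := \sgn V(x)$, so that $\Norm{\epsilon}{\infty} \le 1$, I factor
\begin{equation*}
C_\lambda := (\lambda + L_{m,\alpha})^{-1/2}\, V\, (\lambda + L_{m,\alpha})^{-1/2} = B_\lambda^*\,\epsilon\, B_\lambda,
\end{equation*}
which is compact as the composition of a compact, a bounded, and another compact operator, and satisfies $\Norm{C_\lambda}{\cL(L^2(\R^d))} \le \Norm{B_\lambda}{}^2 = \Norm{K^m_\lambda}{\cL(L^2(\R^d))} \to 0$. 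Choosing $\lambda$ large enough that $\Norm{C_\lambda}{} < 1$, the KLMN form-sum construction of $H$ that underlies Lemma \ref{lem:selfadj} gives
\begin{equation*}
(\lambda + H)^{-1} = (\lambda + L_{m,\alpha})^{-1/2}(I + C_\lambda)^{-1}(\lambda + L_{m,\alpha})^{-1/2},
\end{equation*}
whence, subtracting the $V=0$ analogue,
\begin{equation*}
(\lambda + H)^{-1} - (\lambda + L_{m,\alpha})^{-1} = -(\lambda + L_{m,\alpha})^{-1/2} C_\lambda (I + C_\lambda)^{-1}(\lambda + L_{m,\alpha})^{-1/2}
\end{equation*}
is compact, and Weyl's theorem closes the argument.

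The main obstacle I anticipate is rigorously identifying $(\lambda+H)^{-1}$ in the symmetric form above with the KLMN extension used in Section \ref{subs:selfadj}, rather than with some other self-adjoint realization of the operator sum. This is where the auxiliary hypothesis $V \in L^1(\R^d)+L^2(\R^d)$ plays its role: it ensures that $\int V \phi\bar\psi\, dx$ is well defined for $\phi,\psi$ in a dense core such as $\cS(\R^d)$, and permits approximation of $V$ by bounded, compactly supported $V_n$ for which the symmetric resolvent formula holds by elementary operator manipulation. The uniform $\overline{\cR}^m_{d,\alpha}$-control on the corresponding $C_\lambda^{(n)}$ provided by Theorem \ref{thm:HSKlambda} then allows passage to the limit simultaneously in the resolvent identity and in the compactness of the resolvent difference.
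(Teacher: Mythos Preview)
Your argument is correct and is in fact a little more efficient than the paper's.  Both proofs start from the same symmetrized resolvent identity and finish with Weyl's theorem, but they diverge at the compactness step.  The paper expands $(I+G_{m,\lambda})^{-1}$ as a Neumann series and separates the first-order term
\[
A_{m,\lambda}=(\lambda+L_{m,\alpha})^{-1}V(\lambda+L_{m,\alpha})^{-1}
\]
from the tail $B_{m,\lambda}$ collecting powers $j\ge 2$.  For $B_{m,\lambda}$ it uses, as you do, that $K^m_\lambda$ is Hilbert--Schmidt (every $j\ge 2$ term contains $K^m_\lambda$ as an internal factor).  For $A_{m,\lambda}$, however, the paper does \emph{not} use the extended Rollnik hypothesis at all: it writes $V=V_1+V_2$ with $V_1\in L^1$, $V_2\in L^2$, and shows (via $H^m_{d,\alpha}\in L^1\cap L^2$ when $d<2\alpha$) that the operators $(\lambda+L_{m,\alpha})^{-1}|V_1|^{1/2}$ and $(\lambda+L_{m,\alpha})^{-1}V_2$ are Hilbert--Schmidt.

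Your factorization $C_\lambda=B_\lambda^\ast\,\epsilon\,B_\lambda$ with $B_\lambda B_\lambda^\ast=K^m_\lambda$ shows directly that $C_\lambda$ (the paper's $G_{m,\lambda}$) is already compact, so the separate treatment of $A_{m,\lambda}$ is unnecessary.  Your closing paragraph is therefore over-cautious: the formula $(\lambda+H)^{-1}=(\lambda+L_{m,\alpha})^{-1/2}(I+C_\lambda)^{-1}(\lambda+L_{m,\alpha})^{-1/2}$ is the standard resolvent identity for the KLMN form sum, and the paper itself invokes it at the outset of its proof with no appeal to $L^1+L^2$; no approximation by bounded potentials is needed.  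In effect your route establishes the conclusion under the sole hypothesis $V\in\overline{\cR}^m_{d,\alpha}$, whereas the paper's argument genuinely consumes the extra $L^1+L^2$ assumption to handle the first-order resolvent term.
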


\begin{proof}
Due to \eqref{the11_1} we may choose $\lambda\gg1$ such that
\begin{equation}
\|G_{m,\lambda}\|_{\mathscr{B}(L^2)}<1,
\end{equation}
where we wrote
\begin{equation*}
G_{m,\lambda}=(\lambda+L_{m,\alpha})^{-1/2}V(\lambda+L_{m,\alpha})^{-1/2}.
\end{equation*}
We first prove the following Tiktopoulos-type formula (\cite[Lemmas II.10 and II.11]{Si1}),
\begin{equation}
(\lambda+H)^{-1}=(\lambda+L_{m,\alpha})^{-1/2}(1+G_{m,\lambda})^{-1}(\lambda+L_{m,\alpha})^{-1/2}.\label{the12_1}
\end{equation}
Since $H$ is defined as the self-adjoint operator in the form sense on $\Dom(L_{m,\alpha}^{1/2})$, we have to consider the duality argument. We regard the operator $H$ and $V$ as the map from $\Dom(L_{m,\alpha}^{1/2})$ to its dual. Indeed
, we can define $H: \Dom(L_{m,\alpha}^{1/2})\rightarrow \Dom(L_{m,\alpha}^{1/2})^*$ by $\phi\mapsto f_{H\phi}$ with $f_{H\phi}(\psi)=(H\phi,\psi)$ for $\phi,\psi\in\Dom(L_{m,\alpha}^{1/2})$. The resolvent is also defined as the map from $\Dom(L_{m,\alpha}^{1/2})^*$ to $\Dom(L_{m,\alpha}^{1/2})$ in the following sense. For any $f\in\Dom(L_{m,\alpha}^{1/2})^*$, there exists a unique $\tilde{\phi}\in\Dom(L_{m,\alpha}^{1/2})$ 
such that $f=f_{\tilde{\phi}}=(\cdot,\tilde{\phi})$ by the Riesz representation theorem. Putting $\phi=(\lambda+L_{m,\alpha})^{1/2}\tilde{\phi}$, we define $(\lambda+L_{m,\alpha})^{-1}: \Dom(L_{m,\alpha}^{1/2})^*\rightarrow\Dom(L_{m,\alpha}^{1/2})$ by $f\mapsto(\lambda+L_{m,\alpha})^{-1}\phi$. By these definitions, we have
\begin{gather}
(\lambda+L_{m,\alpha})^{-1}(\lambda+H)(\lambda+L_{m,\alpha})^{-1/2}\sum_{j=0}^N(-1)^jG_{m,\lambda}^j(\lambda+L_{m,\alpha})^{-1/2}\nonumber\\
=(\lambda+L_{m,\alpha})^{-1}+(-1)^N(\lambda+L_{m,\alpha})^{-1/2}G_{m,\lambda}^{N+1}(\lambda+L_{m,\alpha})^{-1/2}\label{the12_2}
\end{gather}
as the map from $L^2(\R^d)$ to $\Dom(L_{m,\alpha}^{1/2})\subset L^2(\R^d)$ for $N\in\N$. Denoting the right-hand-sided of \eqref{the12_1} by $\tilde{R}_{m,\lambda}$, and taking the limit $N\rightarrow\infty$ in \eqref{the12_2}, we have
\begin{gather*}
(\lambda+L_{m,\alpha})^{-1}(\lambda+H)\tilde{R}_{m,\lambda}=(\lambda+L_{m,\alpha})^{-1}
\end{gather*}
in the operator norm sense. For $\lambda\gg1$, $(\lambda+H)^{-1}$ exist by \eqref{the11_1}. Once we write
\begin{gather*}
(\lambda+L_{m,\alpha})^{-1}(\lambda+H)\tilde{R}_{m,\lambda}=(\lambda+L_{m,\alpha})^{-1}(\lambda+H)(\lambda+H)^{-1},
\end{gather*}
 we obtain \eqref{the12_1} by injectivity of $\lambda+H$ and $(\lambda+L_{m,\alpha})^{-1}$.
We next define
\begin{equation*}
\tilde{K}_{m,\lambda}=\tilde{V}^{1/2}(\lambda+L_{m,\alpha})^{-1}|V|^{1/2}.
\end{equation*}
The following strategy is based on \cite[Theorem II.34]{Si1}. As in Lemma 4.2, we have $\|\tilde{K}_{m,\lambda}\|_{\mathscr{B}(L^2)}<1$ for $\lambda\gg1$. By the duality argument again, we have
\begin{gather}
(\lambda+L_{m,\alpha})^{-1}|V|^{1/2}\sum_{j=0}^N(-1)^{j+1}\tilde{K}_{m,\lambda}^j\tilde{V}^{1/2}(\lambda+L_{m,\alpha})^{-1}\nonumber\\
=(\lambda+L_{m,\alpha})^{-1/2}\sum_{j=0}^{N+1}(-1)^jG_{m,\lambda}^j(\lambda+L_{m,\alpha})^{-1/2}-(\lambda+L_{m,\alpha})^{-1}\label{the12_3}
\end{gather}
for $N\in\N$. Taking the limit $N\rightarrow\infty$ in \eqref{the12_3}, we have
\begin{equation*}
-(\lambda+L_{m,\alpha})^{-1}|V|^{1/2}(1+\tilde{K}_{m,\lambda})^{-1}\tilde{V}^{1/2}(\lambda+L_{m,\alpha})^{-1}=(\lambda+H)^{-1}-(\lambda+L_{m,\alpha})^{-1}
\end{equation*}
in the operator-norm sense. This says that $(\lambda+H)^{-1}-(\lambda+L_{m,\alpha})^{-1}$ is compact because $\tilde{K}_{m,\lambda}$ is compact and the operator-norm limit of a sequence of compact operators is compact. The Weyl theorem then completes the proof.
\end{proof}

\subsubsection{Discrete spectrum}
Next we consider the discrete spectrum. First we provide a bound on the number of negative eigenvalues, which in
order to keep explicit, we only consider the case $\alpha<d$ and $V \in \mathcal{R}_{d,\alpha}$.

\begin{theorem}
\label{the13}
Let $\alpha\in(0,2)$, $d\ge 1$, $m\ge0$ and assume that $\alpha<d<2\alpha$. Let the potential
$V\in\mathcal{R}_{d,\alpha}$. Denote by $V_-\geq0$ its negative part and by $N_{\lambda_0}(V)$ the number of the eigenvalues of $H$ that are less than $-\lambda_0$ with $\lambda_0>0$. Then
\begin{equation}
N_{\lambda_0}(V)\leq C_{d,\alpha}^2\left(1+\frac{me^{-\lambda_0/m}}{\lambda_0}\right)^2\|V_-\|_{\mathcal{R}_{d,\alpha}}^2\label{the13_1}
\end{equation}
holds (the quantity in the brackets is set to $1$ and is independent of $\lambda_0$ if $m=0$), where $C_{d,\alpha}>0$ is same constant as in Lemma 6.10. In particular, if $m=0$ and we denote the number of the negative eigenvalues of $H$ by $N(V)$, then
\begin{equation}
N(V)\leq C_{d,\alpha}^2\|V_-\|_{\mathcal{R}_{d,\alpha}}^2\label{the13_2}
\end{equation}
holds, that is, the discrete spectrum of $H$ is finite for the massless case.
\end{theorem}

\begin{proof}
The argument is based on the Birman-Schwinger principle, for details see \cite[Section 4.9.7]{LHB} and \cite[Theorem XIII.10]{RS4}. By the limit \eqref{the11_1}, we find that $H$ is bounded below. To apply the min-max principle \cite[Theorem XIII.1]{RS4}, for $n\in\N$, we define
\begin{gather*}
\mu_n(H)=\sup_{\phi_1,\ldots,\phi_{n-1}\in L^2({\R^d})}\inf_{\substack{\phi\in\Dom{H},\|\phi\|=1\\ \phi\in\langle\phi_1,\ldots,\phi_{n-1}\rangle^\perp}}(H\phi,\phi)
\end{gather*}
where $\langle\phi_1,\ldots,\phi_{n-1}\rangle^\perp$ means $\{\psi\in L^2(\R^d):(\psi,\phi_j)=0\ \mbox{for}\ 0\leq j\leq n-1\}$ if $n\geq2$ and $L^2(\R^d)$ if $n=1$. By the min-max principle, we know that either $\mu_n(H)$ denotes the $n$th negative eigenvalue of $H$ counting multiplicity or it coincides with $\inf\Spec_{\rm ess}(H)=0$. Without loss of generality, we can assume that $V \le 0$. Indeed $L_{m,\alpha}-V_- \leq H$, thus the number of negative eigenvalues of $H$ is not large than the one of $L_{m,\alpha}-V_-$.

Let $H(\gamma)=L_{m,\alpha}+\gamma V$ with $\gamma>0$. By Theorem \ref{the12}, we also have ${\Spec}_{\rm ess}(H(\gamma))=[0,\infty)$. If $-\lambda\in{\Spec}_{\rm p}(H(\gamma))$, there exists an eigenfunction $\phi\in\Dom(L_{m,\alpha}^{1/2})$ such that $H(\gamma)\phi=-\lambda\phi$. Passing through the duality argument, we have
\begin{equation*}
(\lambda+L_{m,\alpha})^{-1}(-V)\phi=\frac{1}{\gamma}\phi.
\end{equation*}
This implies that $1/\gamma\in{\Spec}_{\rm p}(K_\lambda^m)$ and $(-V)^{1/2}\phi$ is the corresponding eigenfunction. For $n\in\N$, We write $\mu_n(\gamma)$=$\mu_n(H(\gamma))$ for simplicity. For $\lambda_0>0$, we can write
\begin{equation*}
N_{\lambda_0}(V)=\#\left\{n\in\mathbb{N}: \mu_n(1)<-\lambda_0\right\}.
\end{equation*}
Note that $\mu_n(\gamma)$ is monotone decreasing and continuous in $\gamma$ by \cite[Lemma XIII.3.C]{RS4}. For every $1\leq n\leq N_{\lambda_0}(V)$, the relations $\mu_n(0)=0$ and $\mu_n(1)<-\lambda_0$ imply that there exists a $0<\gamma_n<1$ such that $\mu_n(\gamma_n)=-\lambda_0$. We thus compute
\begin{equation}
N_{\lambda_0}(V)=\#\left\{n\in\mathbb{N}: \mu_n(\gamma_n)=-\lambda_0\right\}\leq\sum_{n=1}^{N_{\lambda_0}(V)}\frac{1}{\gamma_n^2}\leq\sum_{\nu\in{\Spec}_{\rm p}(K_{\lambda_0})}\nu^2=\|K^m_{\lambda_0}\|_{\rm HS}^2.
\label{the13_3}
\end{equation}
By Theorem \ref{prop:uniformHS}
and \eqref{the13_3} we obtain \eqref{the13_1}. If $m=0$, we have \eqref{the13_2} from \eqref{the13_1} with $m=0$ and $N(V)=\sup_{\lambda_0>0}N_{\lambda_0}(V)$.
\end{proof}

By the same proof, with the only difference being the employement of Item $(1)$ in Theorem \ref{thm:HSKlambda} in place of \ref{prop:uniformHS}, we can show the following result for extended fractional Rollnik class.
\begin{theorem}
\label{the13extended}
Let $\alpha \in (0,2)$, $d \ge 1$, $m \ge 0$ and assume that $d<2\alpha$. Let the potential
$V\in\overline{\mathcal{R}}_{d,\alpha}$. Denote by $V_-$
its negative part and by $N_{\lambda_0}(V)$ the number of the eigenvalues of $H$ that are less than $-\lambda_0$ for $m\geq0$. Then 
\begin{equation*}
N_{\lambda_0}(V)\leq C_{d,\alpha,\lambda_0,m}^2[V_-]_{\overline{\mathcal{R}}_{d,\alpha}}^2
\label{the13_2ext}
\end{equation*}
holds, where $C_{d,\alpha,\lambda_0,m}>0$ is the same constant as in \eqref{forrollext1}.

\end{theorem}
Notice that the constant $C_{d,\alpha,\lambda,m}$ in \eqref{forrollext1}
diverges as $\lambda \to 0$ even if $m=0$ by \eqref{eq:asymptoticsC}. Hence, we are not able to provide an exact estimate on the number of negative eigenvalues. Nevertheless, Theorem \ref{the13extended} provides a lower bound on the discrete spectrum: if $C_{d,\alpha,\lambda_0,m}^2[V_-]_{\overline{\mathcal{R}}_{d,\alpha}}^2<1$ then $N_{\lambda_0}(V)=0$. This implies that the discrete spectrum should be contained in the interval $[-\overline{\lambda}(V),0]$, where 
\begin{equation*}
\overline{\lambda}(V)=\inf\left\{\lambda_0>0: \ C_{d,\alpha,\lambda_0,m}^2[V_-]_{\overline{\mathcal{R}}_{d,\alpha}}^2 \ge 1\right\}.
\end{equation*}
In case $m>0$, we cannot directly provide a bound on the number of negative eigenvalues. This is due to the fact that we cannot guarantee that $K_{\lambda}^m$ is a Hilbert-Schmidt operator for any $\lambda>0$. Nevertheless, the previous results allowed us to find a lower bound on the discrete spectrum, provided that either $V \in \cR_{d,\alpha}$ or $V \in \overline{\cR}_{d,\alpha}$. In case $V \in \overline{\cR}_{d,\alpha}^m$, we can still provide a bound on the number of eigenvalues that are less then $-m$.
\begin{theorem}
\label{the13extended2}
Let $\alpha \in (0,2)$, $d \ge 1$, $m > 0$ and assume that $d<2\alpha$. Let $V\in \overline{\mathcal{R}}_{d,\alpha}^m$.
Denote by $V_-$ its negative part and by $N_m(V)$ the number of
eigenvalues of $H$, counting multiplicity, that are less then $-m$. Then there exists $\lambda_0>m$ such that
\begin{equation*}
N_m(V)\leq C_{d,\alpha,\lambda_0,m}^2[V_-]_{\overline{\mathcal{R}}^m_{d,\alpha}}^2
\label{the13_2ext3}
\end{equation*}
holds, where $C_{d,\alpha,\lambda_0,m}>0$ is the same constant as in \eqref{eq:equivrollext2}.
\end{theorem}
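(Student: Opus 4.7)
The plan is to adapt the Birman--Schwinger argument from the proofs of Theorems \ref{the13} and \ref{the13extended}, with the essential modification that the threshold parameter $\lambda_{0}$ must be chosen strictly above $m$ in order to invoke Corollary \ref{cor:comparison3}. By Proposition \ref{includes}(2) the hypothesis $V\in\overline{\mathcal{R}}^{0}_{d,\alpha}\cap(L^{1}(\R^{d})+L^{2}(\R^{d}))$ already gives $V\in\overline{\mathcal{R}}^{m}_{d,\alpha}\cap(L^{1}(\R^{d})+L^{2}(\R^{d}))$, so Theorem \ref{the12} applies and yields ${\Spec}_{\rm ess} H=[0,\infty)$. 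In particular $-m$ is bounded away from the essential spectrum, hence eigenvalues of $H$ cannot accumulate at $-m$, and one can pick $\lambda_{0}>m$ sufficiently close to $m$ so that $H$ has no eigenvalues in $(-\lambda_{0},-m)$. For such $\lambda_{0}$ we have $N_{m}(V)=\#\{n\colon \mu_{n}(1)<-\lambda_{0}\}$, where $\mu_{n}(\gamma)$ denotes the $n$-th eigenvalue (counted with multiplicity) of $H(\gamma):=L_{m,\alpha}-\gamma V_{-}$; replacing $V$ by $-V_{-}$ is legitimate since the number of negative eigenvalues is controlled by the negative part of the potential.

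Next I would repeat the Birman--Schwinger step verbatim as in Theorem \ref{the13}. Since $\mu_{n}(0)=0$ and $\gamma\mapsto\mu_{n}(\gamma)$ is continuous and non-increasing, for each $1\le n\le N_{m}(V)$ the intermediate value theorem provides some $\gamma_{n}\in(0,1)$ with $\mu_{n}(\gamma_{n})=-\lambda_{0}$, and consequently $1/\gamma_{n}\in{\Spec}_{\rm p}(K_{\lambda_{0}}^{m})$. Summing up,
\begin{equation*}
N_{m}(V)\;\le\;\sum_{n=1}^{N_{m}(V)}\gamma_{n}^{-2}\;\le\;\sum_{\nu\in{\Spec}_{\rm p}(K_{\lambda_{0}}^{m})}\nu^{2}\;=\;\|K_{\lambda_{0}}^{m}\|_{\rm HS}^{2}.
\end{equation*}

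The final step uses the pointwise upper bound $R^{m}_{\lambda_{0}}(x)\le C_{d,\alpha,\lambda_{0},m}H^{m}_{d,\alpha}(|x|)$ from Corollary \ref{cor:comparison3} (which is precisely where $\lambda_{0}>m$ is needed) to obtain
\begin{equation*}
\|K_{\lambda_{0}}^{m}\|_{\rm HS}^{2}=\iint_{\R^{2d}}V_{-}(x)\,R^{m}_{\lambda_{0}}(|x-y|)^{2}\,V_{-}(y)\,dx\,dy\le C_{d,\alpha,\lambda_{0},m}^{2}\,[V_{-}]_{\overline{\mathcal{R}}^{m}_{d,\alpha}}^{2},
\end{equation*}
closing the desired inequality and making $N_{m}(V)$ finite.

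The main technical subtlety lies in the first step: the simultaneous requirement on $\lambda_{0}$ of being strictly above $m$ (to activate the equivalence of Corollary \ref{cor:comparison3}, which is not available at $\lambda=m$) and close enough to $m$ so that no eigenvalues of $H$ lying in $(-\lambda_{0},-m)$ are missed. The availability of such a $\lambda_{0}$ rests precisely on the separation of $-m$ from ${\Spec}_{\rm ess}(H)=[0,\infty)$, which rules out accumulation of eigenvalues at $-m$; this is the only place where the argument diverges from the massless case treated in Theorem \ref{the13extended}.
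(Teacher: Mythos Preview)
Your proposal is correct and follows essentially the same route as the paper. Both arguments hinge on the same key point: since ${\Spec}_{\rm ess}H=[0,\infty)$, the negative eigenvalues form a discrete set and cannot accumulate at $-m$, so one may choose $\lambda_{0}>m$ with no eigenvalues of $H$ in $(-\lambda_{0},-m)$; the paper phrases this as choosing $\lambda_{0}\in(m,-\mu_{n_{\star}}(1))$ where $n_{\star}$ is the first index with $\mu_{n}(1)<-m$, which is the same thing. From there both proofs run the standard Birman--Schwinger count and bound $\|K^{m}_{\lambda_{0}}\|_{\rm HS}$ via Corollary~\ref{cor:comparison3} (the paper cites Theorem~\ref{thm:HSKlambda}(3), whose proof is precisely that corollary).
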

\begin{proof}
Arguing exactly as in Theorem \ref{the13}, we use the min-max principle to characterize the negative eigenvalues $\mu_n(H)$ of $H$. Furthermore, the fact that $L_{m,\alpha}-V_- \le H$ still guarantees that the number of eigenvalues of $H$ that are less then $-m$ do not increase if we use $V_-$ in place of $V$. Hence, without loss of generality, we may assume that $V \le 0$. Define 
$H(\gamma)=
L_{m,\alpha}+\gamma V$ with $\gamma>0$, so that, as shown in the proof of Theorem \ref{the13}, if $-\lambda \in {\rm Spec}_p(H(\gamma))$ then $1/\gamma \in \Spec_p
(K_\lambda)$. Since $\gamma V \in \overline{\cR}^m_{d,\alpha}$, by Theorem \ref{the12} we have $\Spec_{\rm ess}
(H(\gamma))=[0,\infty)$. Now denote, for simplicity, $\mu_n(\gamma)=:\mu_n(H(\gamma))$ for $n \in \N$ and notice that these are monotone decreasing and continuous functions of $\gamma$. Now, using the notation in Theorem \ref{the13}, if $N_m(V)=0$ the statement clearly follows.
In the contrary case $N_m(V)>0$ and notice that
\begin{equation*}
N_m(V):=\max\{n \ge 1: \ \mu_n(1)<-m\},
\end{equation*}
since $\mu_n(1)$ is a non-decreasing sequence. Clearly, $N_m(V)<\infty$. Indeed, if this is not the case, then $\mu^\star=\lim_{n \to +\infty}\mu_n(1)$ is an accumulation point of the discrete spectrum of $H$, that is absurd since $\mu^\star \le -m<0$ and thus $\mu^\star \not \in Spec_{\rm ess}(H)$. Furthermore, notice that $\mu_{N_m(V)}(1)<-m$, hence there exists $\lambda_0>m$ such that $\mu_{N_m(V)}(1)<-\lambda_0$. In particular, for all $n \le N_m(V)$ it holds $\mu_{n}(1)<-\lambda_0$. At the same time $\mu_n(0)=0$ and $\mu_n(\gamma)$ is continuous and non-decreasing in $\gamma$, thus there exists $\gamma_n \in (0,1)$ such that $\mu_n(\gamma_n)=-\lambda_0$. Let $(\gamma_n)_{n=1,\dots, N_m(V)} \in (0,1)^{N_m(V)}$ be the sequence of such $\gamma_n$. In particular, $1/\gamma_n \in \Spec_{\rm p}(K_{\lambda_0}^m)$. On the other hand, if $n>N_m(V)$, then $\mu_n(1) \ge -m>-\lambda_0$ and thus $\mu_n(\gamma)\not = -\lambda_0$ for any $\gamma \in [0,1]$. We have then
\begin{equation*}
N_m(V)=\#\{n \in \N: \ \mu_{n+n_\star}(\gamma_{n+n_\star})=-\lambda_0\} \le \sum_{n=1}^{N_m(V)}
\frac{1}{\gamma_{n+n_\star}^2} \le \sum_{\nu \in \Spec_{\rm p}(K_{\lambda_0}^m)}\nu^2=\Norm{K_{\lambda_0}^m}{\rm HS}^2.
\end{equation*}
Since $\lambda_0>m$, by part (3) of Theorem \ref{thm:HSKlambda} it follows that $K_{\lambda_0}^m$ is a
Hilbert-Schmidt operator, and the same result then completes the proof.
\end{proof}
\begin{remark} Notice that under the assumptions of Theorem \ref{the13extended}, if we know that $0 \in  {\rm Spec}_{\rm ess}(H)$ is not an accumulation point of ${\rm Spec}_{\rm d}(H)$, then $N(V)$ is finite and there exists $\lambda_0>0$ such that
\begin{equation}\label{eq:boundonNV}
N(V) \le C^2_{d,\alpha,\lambda_0,m}[V_-]_{\mathcal{R}_{d,\alpha}}^2.	
\end{equation}
Indeed, in such a case, arguing as in Theorem \ref{the13extended2}, we have that
\begin{equation*}
	N(V)=\max\{n \ge 0: \ \mu_n(1)<0\},
\end{equation*}
where the latter is clearly a maximum since otherwise $N(V)=+\infty$ and $0$ would be an accumulation point of ${\rm Spec}_{\rm d}(H)$. Then there exists $\lambda_0>0$ such that $\mu_{N(V)}(1)<-\lambda_0$, hence $N(V)=N_{\lambda_0}(V)$ and \eqref{eq:boundonNV} follows by Theorem \ref{the13extended}. 
\end{remark}

\subsubsection{Non-existence of negative and zero eigenvalues}
It is known, see \cite{Simo}, \cite[Th. 4.308]{LHB},
that whenever the semigroup $\{e^{-tL_{m,\alpha}}: t\geq 0\}$ is recurrent, the operator $H = L_{m,\alpha}
+ V$ has at least one bound state whenever $V$ is a non-positive potential. However, when
$\{e^{-tL_{m,\alpha}}: t\geq 0\}$ is transient, a bound state exists only when the potential is strong enough
in a specific sense. By making use of the extended Birman-Schwinger principle, we can show that for too small
Rollnik-norm there exist no bound states and as a bonus we can also rule out embedded eigenvalues at zero. We
will use the following result, for a proof see \cite[Cor. 4.150, Cor. 4.151, Prop. 4.309]{LHB}.
\begin{theorem}
\label{extBS}
Let $\alpha < d$. Suppose that $V \leq 0$ is form-compact with respect to $L_{m,\alpha}$ and the operator norm
$\|K^m_0\| < 1$. Then $H$ has no non-positive eigenvalues, in particular, it has no bound states at negative
eigenvalues and zero is not an embedded eigenvalue.
\end{theorem}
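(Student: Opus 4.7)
The plan is to combine the extended Birman-Schwinger principle with a monotonicity argument for the family of operators $\{K^m_\lambda\}_{\lambda\geq 0}$.

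First, I would recall the standard Birman-Schwinger correspondence, which in the present setting states the following: for $\lambda>0$, the operator $H$ has $-\lambda$ as an eigenvalue with eigenfunction $\phi$ if and only if $K^m_\lambda=|V|^{1/2}(\lambda+L_{m,\alpha})^{-1}|V|^{1/2}$ has $1$ as an eigenvalue with eigenfunction $\psi=|V|^{1/2}\phi$. Under the form-compactness hypothesis on $V$, and using the subcriticality $\alpha<d$ to guarantee transience of $\{e^{-tL_{m,\alpha}}\}_{t\geq 0}$ (so that $L_{m,\alpha}^{-1}$ is well-defined as an integral operator with kernel $R^m_0$), this correspondence extends to $\lambda=0$ in the sense of \cite[Cor. 4.150, Cor. 4.151, Prop. 4.309]{LHB}, covering possibly embedded eigenvalues at the spectral edge.

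Next, I would prove operator-monotonicity: from the representation
\begin{equation*}
R^m_\lambda(x)=\int_0^\infty e^{-\lambda t}p^m_t(x)\,dt
\end{equation*}
one sees that $R^m_{\lambda_2}(x)\leq R^m_{\lambda_1}(x)$ for $0\leq\lambda_1\leq\lambda_2$, hence the sandwiched operators satisfy $0\leq K^m_\lambda\leq K^m_0$ in quadratic form on $L^2(\R^d)$. Since these are bounded selfadjoint operators, this entails $\|K^m_\lambda\|\leq\|K^m_0\|<1$ for every $\lambda\geq 0$. Consequently $1\notin\Spec(K^m_\lambda)$ for any $\lambda\geq 0$, and in particular $1$ is not an eigenvalue of any $K^m_\lambda$. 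Feeding this back into the (extended) Birman-Schwinger correspondence rules out any eigenvalue of $H$ of the form $-\lambda$ with $\lambda\geq 0$, which is exactly the claim.

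The main obstacle is the delicate justification of the Birman-Schwinger correspondence at $\lambda=0$: at this edge value, $L_{m,\alpha}^{-1}$ is unbounded on $L^2(\R^d)$ and any zero-energy eigenfunction of $H$ need not a priori be related to an $L^2$-eigenfunction of $K^m_0$. This is precisely where the form-compactness of $V$ (which ensures that the form sum $H=L_{m,\alpha}+V$ is well-defined and that the essential spectrum remains $[0,\infty)$ by Theorem~\ref{the12}) combined with the transience afforded by $\alpha<d$ is used; both are invoked in the cited statements of \cite{LHB} to promote the correspondence to $\lambda=0$. Given those results, the remainder of the argument is the monotonicity chain sketched above, which is purely mechanical.
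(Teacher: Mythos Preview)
The paper does not actually prove this theorem; it is quoted as a known result with the proof deferred entirely to \cite[Cor.~4.150, Cor.~4.151, Prop.~4.309]{LHB}. Your sketch is essentially the standard argument behind those cited results: the (extended) Birman--Schwinger correspondence at $\lambda\geq 0$ together with the pointwise monotonicity $R^m_{\lambda}\leq R^m_0$, hence $\|K^m_\lambda\|\leq\|K^m_0\|<1$, which blocks $1$ from being an eigenvalue of any $K^m_\lambda$. So your proposal is correct and in fact supplies more detail than the paper itself.

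Two small remarks. First, the claim that $\alpha<d$ alone ensures transience is only accurate for $m=0$; for $m>0$ transience requires $d\geq 3$ (as the paper states in Section~\ref{sec:res}). The theorem as stated presumes $K^m_0$ is well-defined, so this is already implicit, but your phrasing slightly overstates the role of $\alpha<d$. Second, the parenthetical appeal to Theorem~\ref{the12} for stability of the essential spectrum is not quite licit here, since Theorem~\ref{the12} carries the additional hypothesis $V\in\overline{\cR}^m_{d,\alpha}\cap(L^1+L^2)$ which is not assumed in the present statement; fortunately, that remark is not needed for the Birman--Schwinger argument itself.
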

In what follows, we derive suitable upper estimates on the norm of $K_0^m$ which will permit to use this observation.
We will separately consider the cases $m=0$ and $m>0$, which will require different treatment.

	
First we note that the map $\lambda \in [0,\infty) \mapsto \Norm{K^m_\lambda}{\cL(L^2(\R^d))}$ is monotone and
continuous.
\begin{lemma}
Let $m \geq 0$ and suppose that $K^m_{\lambda_0} \in \cL(L^2(\R^d))$ for some $\lambda_0>0$. Then $K^m_\lambda
\in \cL(L^2(\R^d))$ for all $\lambda \ge \lambda_0$  and $\Norm{K^m_\lambda}{\cL(L^2(\R^d))} \le
\Norm{K^m_{\lambda_0}}{\cL(L^2(\R^d))}$. Furthermore, if $\Norm{K^m_{\lambda_0}}{\cL(L^2(\R^d))}<\infty$,
then $\lim_{\lambda \downarrow \lambda_0}\Norm{K^m_{\lambda}}{\cL(L^2(\R^d))}=\Norm{K^m_{\lambda_0}}{{\mathscr B}
(L^2)}$.
\end{lemma}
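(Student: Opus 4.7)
The plan is to exploit the fact that $R^m_\lambda(x)=\int_0^\infty e^{-\lambda t}p^m_t(x)dt$ is pointwise positive and strictly decreasing in $\lambda$, so that the integral kernel $K^m_\lambda(x-y)=\sqrt{|V(x)|}R^m_\lambda(x-y)\sqrt{|V(y)|}$ is likewise pointwise positive and decreasing in $\lambda$. The two statements then follow from standard arguments for positivity-preserving integral operators.

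For the first assertion, fix $\lambda \ge \lambda_0$. Since $R^m_\lambda(x-y)\le R^m_{\lambda_0}(x-y)$, for any $f\in L^2(\R^d)$ the pointwise estimate
\begin{equation*}
|K^m_\lambda f(x)|\le \int_{\R^d}K^m_\lambda(x-y)|f(y)|dy\le \int_{\R^d}K^m_{\lambda_0}(x-y)|f(y)|dy=K^m_{\lambda_0}|f|(x)
\end{equation*}
holds. Taking $L^2$ norms and using boundedness of $K^m_{\lambda_0}$ and $\||f|\|_2=\|f\|_2$ gives $\Norm{K^m_\lambda f}{2}\le \Norm{K^m_{\lambda_0}}{\cL(L^2(\R^d))}\Norm{f}{2}$, which proves both that $K^m_\lambda\in\cL(L^2(\R^d))$ and the norm inequality.

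For the continuity statement, the plan is to combine the monotonicity just established with strong operator convergence. By monotone convergence, for every $f\in L^2(\R^d)$ with $f\ge 0$ we have $K^m_\lambda f(x)\uparrow K^m_{\lambda_0}f(x)$ a.e.\ as $\lambda \downarrow \lambda_0$, and since $0\le K^m_\lambda f\le K^m_{\lambda_0}f\in L^2(\R^d)$, dominated convergence yields $K^m_\lambda f\to K^m_{\lambda_0}f$ in $L^2(\R^d)$. Splitting a general $f\in L^2(\R^d)$ into its positive/negative and real/imaginary parts extends this strong convergence to all of $L^2(\R^d)$. Consequently, for any $f$ with $\Norm{f}{2}=1$,
\begin{equation*}
\Norm{K^m_{\lambda_0}f}{2}=\lim_{\lambda\downarrow \lambda_0}\Norm{K^m_\lambda f}{2}\le \liminf_{\lambda\downarrow \lambda_0}\Norm{K^m_\lambda}{\cL(L^2(\R^d))},
\end{equation*}
and taking the supremum over such $f$ gives $\Norm{K^m_{\lambda_0}}{\cL(L^2(\R^d))}\le \liminf_{\lambda\downarrow \lambda_0}\Norm{K^m_\lambda}{\cL(L^2(\R^d))}$. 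Together with the monotonicity $\Norm{K^m_\lambda}{\cL(L^2(\R^d))}\le \Norm{K^m_{\lambda_0}}{\cL(L^2(\R^d))}$ proved in the first step, this forces $\lim_{\lambda\downarrow \lambda_0}\Norm{K^m_\lambda}{\cL(L^2(\R^d))}=\Norm{K^m_{\lambda_0}}{\cL(L^2(\R^d))}$.

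No step in this argument looks like a genuine obstacle; the only thing to be careful about is that lower semicontinuity of the operator norm under mere strong convergence is what gives the $\liminf$-inequality in the right direction, and this is precisely the point where positivity of the kernel (ensuring monotone/dominated convergence works cleanly) is used.
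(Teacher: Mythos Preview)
Your proof is correct and follows essentially the same approach as the paper: both arguments rest on the pointwise monotonicity $R^m_\lambda\le R^m_{\lambda_0}$ of the resolvent kernel and the resulting positivity-preserving structure. The only cosmetic difference is in the continuity step: you invoke the general lower semicontinuity of the operator norm under strong convergence (after establishing $K^m_\lambda f\to K^m_{\lambda_0}f$ in $L^2$), whereas the paper carries out the equivalent $\varepsilon$-argument by hand, picking a near-extremal non-negative $f_\varepsilon$ and using $\Norm{K^m_\lambda f_\varepsilon}{2}\to\Norm{K^m_{\lambda_0}f_\varepsilon}{2}$ directly; these are the same idea in different clothing.
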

\begin{proof}
Since $K_\lambda$ is positivity preserving, we have
\begin{equation}
\label{opnorm}
\Norm{K_\lambda}{\cL(L^2(\R^d))}=\sup_{\substack
{ f \in L^2(\R^d) \\ f \ge 0, \ f \not \equiv 0}}\frac{\Norm{K_\lambda f}{2}}{\Norm{f}{2}}.
\end{equation}
Indeed, choosing $C$ to be the right-hand side of \eqref{opnorm}, for all $f \in L^2(\R^d)$ with positive and negative
parts $f^+, f^-$ respectively, we get
\begin{equation*}
\Norm{K_\lambda f}{2}\le \sqrt{\Norm{K_\lambda f^+}{2}^2+\Norm{K_\lambda f^-}{2}^2} \le C \sqrt{\Norm{f^+}{2}^2
+\Norm{f^-}{2}^2}=C \Norm{f}{2}.
\end{equation*}
Once this is established, the statement clearly follows by the fact that $0 \le R_\lambda(x) \le R_{\lambda_0}(x)$.
By the monotone convergence theorem, the same relation guarantees that $\Norm{K_\lambda f}{2} \to \Norm{K_{\lambda_0} f}{2}$
for any $f \in L^2(\R^d)$ with $f \ge 0$, and so $\lim_{\lambda \to \lambda_0} \Norm{K_\lambda}{\cL(L^2(\R^d))} \le
\Norm{K_{\lambda_0}}{\cL(L^2(\R^d))}$. To prove that in fact equality holds, for every $\varepsilon>0$ consider
$f_\varepsilon \in L^2(\R^d)$ with $f_\varepsilon \ge 0$ such that
\begin{equation*}
\left|\frac{\Norm{K_{\lambda_0} f_\varepsilon}{2}}{\Norm{f_\varepsilon}{2}}-\Norm{K_{\lambda_0}}{\cL(L^2(\R^d))}\right|
<\varepsilon.
\end{equation*}
There also exists $\delta>0$ such that if $0<\lambda-\lambda_0<\delta$ then
\begin{equation*}
\left|\frac{\Norm{K_{\lambda} f_\varepsilon}{2}}{\Norm{f_\varepsilon}{2}}-\frac{\Norm{K_{\lambda_0} f_\varepsilon}{2}}
{\Norm{f_\varepsilon}{2}}\right|<\varepsilon.
\end{equation*}
Then for all $\lambda_0 < \lambda < \lambda_0+\delta$ it follows that
\begin{equation*}
\left|\frac{\Norm{K_{\lambda} f_\varepsilon}{2}}{\Norm{f_\varepsilon}{2}}-\Norm{K_{\lambda_0}}{\cL(L^2(\R^d))}\right|
<2\varepsilon,
\end{equation*}
which in turn implies
\begin{equation*}
\Norm{K_\lambda}{\cL(L^2(\R^d))} \ge \frac{\Norm{K_{\lambda} f_\varepsilon}{2}}{\Norm{f_\varepsilon}{2}}
\ge \Norm{K_{\lambda_0}} {\cL(L^2(\R^d))}-2\varepsilon.
\end{equation*}
Taking $\varepsilon \to 0$ the claim follows.
\end{proof}

Now we focus on estimating $\Norm{K_0^m}{\cL(L^2(\R^d))}$. In fact, we will provide a necessary and sufficient
condition for $K_0^m$ to be a Hilbert-Schmidt operator.
\begin{theorem}
\label{none}
Let $\alpha \in (0,2)$ with $0<\alpha<d<2\alpha$ and $m \ge 0$.
\begin{enumerate}
\item
If $m=0$, then $K_0$ is a Hilbert-Schmidt operator if and only if $V \in \cR_{d,\alpha}$. Furthermore, there exists
a constant $C_{d,\alpha}>0$ such that
\begin{equation}
\label{K00}
\Norm{K_0}{\rm HS} = C_{d,\alpha}\Norm{V}{\cR_{d,\alpha}}
\end{equation}
for all $V \in \cR_{d,\alpha}$. In particular, whenever $V\leq 0$ such that
\begin{equation}
\label{inpart0}
\Norm{V}{\cR_{d,\alpha}} < \frac{1}{C_{d,\alpha}},
\end{equation}
the operator $H = L_{0,\alpha} + V$ has no bound states at
negative eigenvalues and zero is not an embedded eigenvalue.
\item
If $m>0$, let $d=3$. Then $K_0^m$ is a Hilbert-Schmidt operator if and only if $V \in \cR_{3,\alpha}\cap \cR$ and there exist two constants $C^1_{\alpha},C^2_{\alpha,m}>0$ such that
\begin{equation}
\label{K0m}
C^2_{\alpha,m}\sqrt{\Norm{V}{\cR_{3,\alpha}}^2+\Norm{V}{\cR}^2}\le \Norm{K_0^m}{\rm HS} \le C^1_{\alpha}\sqrt{\Norm{V}{\cR_{3,\alpha}}^2+m^{\frac{4}{\alpha}-2}\Norm{V}{\cR}^2}
\end{equation}
for all $V \in \cR_{3,\alpha}\cap \cR$
In particular, whenever $V\leq 0$ is such that
$$
\sqrt{\Norm{V}{\cR_{3,\alpha}}^2+m^{\frac{4}{\alpha}-2}\Norm{V}{\cR}^2} < \frac{1}{C_{\alpha}},
$$
the operator $H = L_{m,\alpha} + V$ has no bound states at
negative eigenvalues and zero is not an embedded eigenvalue.
\end{enumerate}
\end{theorem}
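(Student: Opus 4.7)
The overall strategy leverages the identity
\begin{equation*}
\Norm{K_0^m}{\rm HS}^2 = \iint_{\R^{2d}} R_0^m(x-y)^2 |V(x)||V(y)|\, dx\, dy
\end{equation*}
together with the resolvent estimates from Section~\ref{sec:res}, converting the Hilbert-Schmidt property of $K_0^m$ into an integrability condition on $V$ expressed via Rollnik-type kernels. The non-existence statements will then follow from Theorem~\ref{extBS}, using the elementary bound $\Norm{K_0^m}{\cL(L^2(\R^d))} \le \Norm{K_0^m}{\rm HS}$ together with the smallness hypothesis to ensure $\Norm{K_0^m}{\cL(L^2(\R^d))}<1$.

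For case (1), I substitute the explicit expression $R_0(x) = C_{d,\alpha} |x|^{\alpha-d}$ from \eqref{eq:equivroll} directly into the Hilbert-Schmidt norm, obtaining
\begin{equation*}
\Norm{K_0}{\rm HS}^2 = C_{d,\alpha}^2 \iint_{\R^{2d}} \frac{|V(x)||V(y)|}{|x-y|^{2(d-\alpha)}}\, dx\, dy = C_{d,\alpha}^2 \Norm{V}{\cR_{d,\alpha}}^2,
\end{equation*}
which establishes both the characterisation and the identity \eqref{K00}.

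For case (2), I use the two-sided estimate \eqref{eq:equivrollmassive} for $d=3$, refined so as to isolate the $m$-dependence of the constant. The long-range tail $|x|^{-1}$ of $R_0^m$ carries a coefficient proportional to $m^{2/\alpha-1}$ (its origin is the low-momentum expansion $\Psi_{m,\alpha}(|y|^2) \sim \tfrac{\alpha}{2}m^{1-2/\alpha}|y|^2$ as $|y|\downarrow 0$), while the short-range part $|x|^{\alpha-3}$ has an $m$-independent coefficient. Squaring and integrating against $|V(x)||V(y)|$ gives
\begin{equation*}
\Norm{K_0^m}{\rm HS}^2 \asymp \iint_{|x-y|\le 1}\frac{|V(x)||V(y)|}{|x-y|^{2(3-\alpha)}}\, dx\, dy + m^{4/\alpha-2}\iint_{|x-y|>1}\frac{|V(x)||V(y)|}{|x-y|^{2}}\, dx\, dy.
\end{equation*}
The upper bound in \eqref{K0m} is then immediate by extending each integration domain to all of $\R^3\times\R^3$. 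For the lower bound I exploit that, since $\alpha<2$, the kernel $|x-y|^{-2(3-\alpha)}$ dominates $|x-y|^{-2}$ on $\{|x-y|\le 1\}$, while the reverse holds on $\{|x-y|>1\}$; splitting $\Norm{V}{\cR_{3,\alpha}}^2$ and $\Norm{V}{\cR}^2$ along this dichotomy and adding yields $\Norm{V}{\cR_{3,\alpha}}^2 + \Norm{V}{\cR}^2 \le 2(I_1+J_2)$, where $I_1$ and $J_2$ denote the two integrals in the last display (before the factor $m^{4/\alpha-2}$). Combining, one obtains the lower bound with a constant $C^2_{\alpha,m}$ proportional to $\sqrt{\min(1,m^{4/\alpha-2})}$. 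This step is the main technical obstacle, since it requires refining the unspecified constant $C_{d,\alpha,m}$ in \eqref{eq:equivrollmassive} to make the $m$-dependence explicit; this refinement must be traced through the resolvent analysis of the Appendix.

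Finally, for the non-existence statements I apply Theorem~\ref{extBS}. Form-compactness of $V$ with respect to $L_{m,\alpha}$ follows from the Hilbert-Schmidt property (hence compactness) of $K_\lambda^m$ for some $\lambda>0$, which is already ensured by Theorem~\ref{prop:uniformHS} since $\cR_{3,\alpha}\cap\cR\subset \cR_{3,\alpha}$: writing $K_\lambda^m = BB^*$ with $B = |V|^{1/2}(\lambda+L_{m,\alpha})^{-1/2}$, compactness of $BB^*$ forces compactness of $B^*B = (\lambda+L_{m,\alpha})^{-1/2}|V|(\lambda+L_{m,\alpha})^{-1/2}$, which for $V\le 0$ is exactly form-compactness of $V$. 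The smallness hypothesis in each case then yields $\Norm{K_0^m}{\rm HS}<1$ via \eqref{K00} or \eqref{K0m}, hence $\Norm{K_0^m}{\cL(L^2(\R^d))}<1$, and Theorem~\ref{extBS} concludes the absence of negative eigenvalues and the non-embedding of zero.
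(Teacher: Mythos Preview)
Your proposal is correct and follows the same overall strategy as the paper: compute $\Norm{K_0^m}{\rm HS}^2$ as $\iint R_0^m(x-y)^2|V(x)||V(y)|\,dx\,dy$, insert the resolvent estimates, and then invoke Theorem~\ref{extBS} via the bound $\Norm{K_0^m}{\cL(L^2)}\le\Norm{K_0^m}{\rm HS}$.

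Two simplifications are worth noting. First, what you call ``the main technical obstacle'' --- refining the $m$-dependence of the constant in \eqref{eq:equivrollmassive} --- is not actually needed. For the upper bound the paper does not use \eqref{eq:equivrollmassive} but the global estimate \eqref{forroll} (Proposition~\ref{lem3nol}), where the constant $C_{d,\alpha}$ is already $m$-independent and the $m$-dependence sits explicitly in the coefficient $m^{2/\alpha-1}$; squaring $(a+b)^2\le 2(a^2+b^2)$ immediately gives the right-hand side of \eqref{K0m}. For the lower bound, the constant $C^2_{\alpha,m}$ is allowed to depend on $m$, so no refinement is necessary there either. Second, for the lower bound the paper avoids your split at $|x-y|=1$ by observing that \eqref{eq:equivrollmassive} (equivalently Lemma~\ref{lem:lowboundmassive}) can be rewritten as $R_0^m(x)\ge C_{d,\alpha,m}\max\{|x|^{-(d-\alpha)},|x|^{-(d-2)}\}\ge \tfrac{1}{2}C_{d,\alpha,m}\bigl(|x|^{-(d-\alpha)}+|x|^{-(d-2)}\bigr)$ for all $x$; squaring and dropping the cross term gives $\Norm{V}{\cR_{3,\alpha}}^2+\Norm{V}{\cR}^2$ directly. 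Your splitting argument works too, but is slightly longer.
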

\begin{proof}
To prove (1) observe that by \eqref{eq:equivroll}
\begin{equation*}
\iint_{\mathbb{R}^{2d}}K_0(x-y)^2dxdy=\iint_{\mathbb{R}^{2d}}R_0(x-y)^2|V(x)||V(y)|dxdy
=C_{d,\alpha}^2\|V\|_{\mathcal{R}_{d,\alpha}}^2.
\label{lem10_1bis}
\end{equation*}
\enlargethispage{1cm}
Note that \eqref{K00} and \eqref{inpart0} directly imply that $V$ is form-compact with respect to
$L_{0,\alpha}$ with relative bound strictly less than one, thus by Theorem \ref{extBS} the operator $H =
L_{0,\alpha} + V$ has no bound states at negative eigenvalues and zero is not an embedded eigenvalue.
To get (2) first we show that if $V \in \cR_{3,\alpha}\cap \cR$, then $K_0^m$ is a Hilbert-Schmidt
operator and that the second inequality in \eqref{K0m} holds. By Lemma \ref{lem3nol}
\begin{align*}
&\iint_{\mathbb{R}^{6}}K^m_0(x-y)^2dxdy=\iint_{\mathbb{R}^{6}}R^m_0(x-y)^2|V(x)||V(y)|dxdy\\
&\leq C_{\alpha}\left(\iint_{\mathbb{R}^{6}}|x-y|^{-(6-2\alpha)}|V(x)||V(y)|dxdy+m^{\frac{4}{\alpha}-2}
\iint_{\mathbb{R}^{6}}|x-y|^{-2}|V(x)||V(y)|dxdy\right)\\
&=C_{\alpha}(\Norm{V}{\cR_{3,\alpha}}^2+m^{\frac{4}{\alpha}-2}\Norm{V}{\cR}^2).
\end{align*}
If $V \in \cR_{3,\alpha} \cap \cR$, we restate Lemma \ref{lem:lowboundmassive} for any $\alpha \in (0,2)$,
$m>0$ and $d\ge 3$ as follows
\begin{equation*}
R_0^m(x) \ge C_{d,\alpha,m}\max\{|x|^{-(d-\alpha)},|x|^{-(d-2)}\} \ge
\frac{C_{d,\alpha,m}}{2}(|x|^{-(d-\alpha)}+|x|^{-(d-2)}).
\end{equation*}
Hence, for $d=3$ we get
\begin{align*}
&\iint_{\mathbb{R}^{6}}|K^m_0(x-y)|^2dxdy=\iint_{\mathbb{R}^{6}}R^m_0(x-y)^2|V(x)||V(y)|dxdy\\
&\ge C_{\alpha,m}\left(\iint_{\mathbb{R}^{6}}|x-y|^{-(6-2\alpha)}|V(x)||V(y)|dxdy
+\iint_{\mathbb{R}^{6}}|x-y|^{-2}|V(x)||V(y)|dxdy\right)\\
&=C_{\alpha,m}(\Norm{V}{\cR_{3,\alpha}}^2+\Norm{V}{\cR}^2).
\end{align*}
The remaining part of the claim follows in the same way as in part (1).
\end{proof}

\section{The case $\alpha = 1$}
\label{sectalpha1}
In this section we study the $\alpha \to 1$ limit and, more generally, consider $\alpha \to \frac{d}{2}$ for
$d=1,2,3$, using the spaces defined in \eqref{crunchyrollnik}. As seen in Proposition~\ref{prop:notappr},
this limit taken directly does not lead to a sensible set of potentials. Here we show, however, that the limit
of Schr\"odinger operators with Rollnik-class potentials in $\Gamma$-convergence sense behaves much better,
and as a bonus we even obtain self-adjointness of the limit operator.

Recall the L\'evy jump measures \eqref{levym}-\eqref{levy0} and let
\begin{equation*}
[u]_{m,\alpha}^2:=\frac{1}{2}\int_{\R^d}\int_{\R^d}|u(x+h)-u(x)|^2j_{m,\alpha}(|h|)dxdh
\end{equation*}
be a Gagliardo-type seminorm on $L^2(\R^d)$ and rewrite fractional Sobolev space
\begin{equation*}
H^{\alpha/2}(\R^d):= \Big\{u \in L^2(\R^d): [u]_{0,\alpha}<\infty \Big\}
\end{equation*}
equivalently in its terms. For a given potential $V$, we define the quadratic form associated to the
operator formally written as $H=L_{m,\alpha}+V$ by
\begin{equation*}
\cA_{m,\alpha}^V[u]:=[u]_{m,\alpha}+\int_{\R^d}V(x)u^2(x)dx, \quad u \in L^2(\R^d).
\end{equation*}
We denote $\mathcal{D}(\cA_{m,\alpha}^V)=H^{\alpha/2}(\R^d)\cap L^2(|V|)$ and set
$\cA_{m,\alpha}^V[u]=\infty$ if $u \not \in  \mathcal{D}(\cA_{m,\alpha}^V)$. We will show self-adjointness
of the limit Schr\"odinger operator obtained on varying $\alpha$ by means of the $\Gamma$-convergence of
the associated quadratic forms.

First we recall the concept of $\Gamma$-convergence.
\begin{definition}
\label{def:Gammalim}
Let $\mathscr M$ be a metric space, $(F_n)_{n \in \N}$ a sequence of functionals $F_n:\mathscr M \to
\overline{\R}$, and $F:\mathscr M \to \overline{\R}$ be another functional. We say that the sequence
is \emph{$\Gamma$-convergent} to $F$ as $n \to \infty$, denoted $F=\Gamma-\lim_{n \to \infty}F_n$, if
\begin{enumerate}
\item[(1)]
for all sequences $(x_n)_{n \in \N}\subset \mathscr M$ convergent to $x \in \mathscr M$ the property $F(x)
\le \liminf_{n \to \infty}F_n(x_n)$ holds
\item[(2)]
for all $x \in \mathscr M$ there exists a sequence $\seq x \to x$ in $\mathscr M$ such that $F(x) \ge
\limsup_{n \to \infty} F_n(x_n)$.
\end{enumerate}
Furthermore, if $\mathscr M=L^2(\R^d)$, we say that $\seq F$ is \emph{Mosco-convergent} to $F$ as $n \to
\infty$, denoted $F={\rm M}-\lim_{n \to \infty}F_n$, if property (2) above holds and
\begin{itemize}
\item[$(1_w)$]
for all sequences $(x_n)_{n \in \N}\subset L^2(\R^d)$ weakly convergent to $x \in L^2(\R^d)$ it follows that
$F(x) \le \liminf_{n \to \infty}F_n(x_n)$.
\end{itemize}
\end{definition}
\noindent
It is clear that if $F={\rm M}-\lim_{n \to \infty}F_n$, then also $F=\Gamma-\lim_{n \to \infty}F_n$
in the strong topology of $L^2(\R^d)$.

We will make use of the following decomposition, obtained in \cite[Lem. 2]{R02}, see also \cite[Prop. 2.7]{AL22}.
For every $m>0$, $\alpha \in (0,2)$ and $d \ge 1$ there exists a function $\Sigma_{m,\alpha}: \R^d \to \R^+$ with
the property that
\begin{equation}
\label{finite}
\int_{\R^d}\Sigma_{m,\alpha}(|x|)dx=m,
\end{equation}
such that
\begin{equation}
\label{decomp}
j_{0,\alpha}(r)=j_{m,\alpha}(r)+\Sigma_{m,\alpha}(r).
\end{equation}
This leads to the following domination property.
\begin{lemma}
Let $d \ge 1$, $m>0$ and $\alpha \in (0,2)$. Then
\begin{equation}
\label{equiv}
[u]^2_{m,\alpha} \le [u]^2_{0,\alpha} \le [u]_{m,\alpha}^2+2m\Norm{u}{2}^2
\end{equation}
for every $u \in L^2(\R^d)$. In particular, if $u \in H^{\alpha/2}(\R^d)$, then $[u]_{m,\alpha}<\infty$
for all $m>0$. Conversely, if $[u]_{m,\alpha}<\infty$ for some $m>0$, then $u \in H^{\alpha/2}(\R^d)$.
\end{lemma}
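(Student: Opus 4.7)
The plan is to derive both inequalities directly from the Lévy decomposition \eqref{decomp}, exploiting the fact that $\Sigma_{m,\alpha}$ is nonnegative with finite total mass $m$.

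First I would plug \eqref{decomp} into the definition of $[u]_{0,\alpha}^2$ to split the Gagliardo integral into two pieces:
\begin{equation*}
[u]^2_{0,\alpha} = \frac{1}{2}\iint_{\R^d\times\R^d}|u(x+h)-u(x)|^2 j_{m,\alpha}(|h|)\,dx\,dh + \frac{1}{2}\iint_{\R^d\times\R^d}|u(x+h)-u(x)|^2 \Sigma_{m,\alpha}(|h|)\,dx\,dh.
\end{equation*}
The first term is exactly $[u]^2_{m,\alpha}$, while the second term is manifestly nonnegative since $\Sigma_{m,\alpha}\ge 0$; this immediately gives the lower bound $[u]^2_{m,\alpha}\le [u]^2_{0,\alpha}$.

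For the upper bound I would apply the elementary inequality $|a-b|^2\le 2|a|^2+2|b|^2$ to the integrand of the second piece and then use Fubini together with the translation invariance of Lebesgue measure to get
\begin{equation*}
\frac{1}{2}\iint_{\R^d\times\R^d}|u(x+h)-u(x)|^2 \Sigma_{m,\alpha}(|h|)\,dx\,dh \le 2\Norm{u}{2}^2 \int_{\R^d}\Sigma_{m,\alpha}(|h|)\,dh = 2m\Norm{u}{2}^2,
\end{equation*}
where the last equality is \eqref{finite}. Combining with the decomposition above yields $[u]^2_{0,\alpha}\le [u]^2_{m,\alpha}+2m\Norm{u}{2}^2$.

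The two equivalence statements then fall out for free: the right inequality in \eqref{equiv} shows that if $u\in H^{\alpha/2}(\R^d)$ (i.e.\ $[u]_{0,\alpha}<\infty$ and $u\in L^2(\R^d)$) then $[u]_{m,\alpha}<\infty$, while the left inequality combined with $u\in L^2(\R^d)$ yields the converse. There is no real obstacle here beyond making sure the calculation is consistent when either side of \eqref{equiv} is infinite; this is handled automatically by interpreting both sides as elements of $[0,\infty]$ and observing that the pointwise manipulations above remain valid under that convention.
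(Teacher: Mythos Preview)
Your argument is essentially identical to the paper's: both use the decomposition \eqref{decomp}, the nonnegativity of $\Sigma_{m,\alpha}$ for the lower bound, and the crude estimate $|a-b|^2\le 2|a|^2+2|b|^2$ together with \eqref{finite} for the upper bound. One small slip in your final paragraph: you have the roles of the two inequalities reversed---it is the \emph{left} inequality $[u]^2_{m,\alpha}\le[u]^2_{0,\alpha}$ that gives $u\in H^{\alpha/2}(\R^d)\Rightarrow[u]_{m,\alpha}<\infty$, and the \emph{right} inequality that gives the converse.
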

\begin{proof}
The first bound in \eqref{equiv} is a straightforward consequence of \eqref{decomp} and the fact that
$\sigma_{m,\alpha}$ is positive. For the second we have
\begin{align*}
[u]^2_{0,\alpha}&=\frac{1}{2}\int_{\R^d}\int_{\R^d}|u(x+h)-u(x)|^2j_{0,\alpha}(|h|)dxdh\\
&\le \frac{1}{2}\int_{\R^d}\int_{\R^d}|u(x+h)-u(x)|^2j_{m,\alpha}(|h|)dxdh+\frac{1}{2}\int_{\R^d}
\int_{\R^d}|u(x+h)-u(x)|^2\Sigma_{m,\alpha}(|h|)dxdh\\
&=:[u]^2_{m,\alpha}+I.
\end{align*}
Furthermore,
\begin{align*}
I &\le \int_{\R^d}\int_{\R^d}(|u(y)|^2+|u(x)|^2)\Sigma_{m,\alpha}(|x-y|)dxdy\\
&=2\int_{\R^d}\int_{\R^d}|u(y)|^2\Sigma_{m,\alpha}(|x-y|)dxdy
= 2\int_{\R^d}|u(y)|^2\left(\int_{\R^d}\sigma_{m,\alpha}(|x-y|)dx\right)dy
=2m\Norm{u}{2},
\end{align*}
where we used \eqref{finite}.
\end{proof}
Now we can prove our result on $\Gamma$-convergence yielding self-adjointness.

\begin{theorem}
\label{alpha1sa}
Let $d=1,2,3$, $m \ge 0$, and consider a potential $V \in \overline{\cR}^{m,p}_{d,\frac{d}{2}}$ for
some $p\ge 4$ such that $V_- \in L^\infty(\R^d)$. Then the Schr\"odinger operator $H=L_{m,\frac{d}{2}}+V$
is self-adjoint \textcolor{red}{in the form sense}.
\end{theorem}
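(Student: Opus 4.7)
The plan is to realize $H = L_{m,d/2}+V$ as a Mosco limit of the self-adjoint approximants $H_n := L_{m,\alpha_n}+V$ along a sequence $\alpha_n \downarrow d/2$. By the definition of $\overline{\cR}^{m,p}_{d,\frac{d}{2}}$ there exists $\beta > d/2$ such that $V \in \overline{\cR}^m_{d,\alpha}$ for every $\alpha \in (d/2,\beta)$; choosing $\alpha_n \in (d/2,\beta)$ with $\alpha_n \downarrow d/2$, Theorem \ref{thm:HSKlambda}(1) combined with Lemma \ref{lem:selfadj} makes each $H_n$ self-adjoint, with associated closed quadratic form $\cA^V_{m,\alpha_n}$ on $H^{\alpha_n/2}(\R^d) \cap L^2(|V|)$.

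The next step is to verify that the candidate limit form $\cA^V_{m,d/2}$ is densely defined, symmetric and semi-bounded on the natural domain $H^{d/4}(\R^d)$. The negative part contributes at most $\|V_-\|_\infty\|u\|_2^2$, whereas for $V_+ \in L^p$ with $p \ge 4$ I would combine H\"older's inequality with the subcritical Sobolev embedding $H^{d/4}(\R^d) \hookrightarrow L^{2p/(p-1)}(\R^d)$ and Gagliardo--Nirenberg interpolation between $\dot H^{d/4}$ and $L^2$ to obtain
\begin{equation*}
\int V_+ u^2 dx \le C\|V_+\|_p\|u\|_{\dot H^{d/4}}^{4/p}\|u\|_2^{2-4/p}
\le \varepsilon [u]^2_{m,d/2}+C_\varepsilon \|u\|_2^2
\end{equation*}
for any $\varepsilon > 0$. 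Thus $V$ is infinitesimally form-bounded with respect to $L_{m,d/2}$, and $\cA^V_{m,d/2}$ is densely defined, symmetric, and semi-bounded.

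The technical heart of the proof is the Mosco convergence $\cA^V_{m,\alpha_n} \to \cA^V_{m,d/2}$. For the recovery sequence, given $u \in H^{d/4}(\R^d)$ I would take mollifications $u_n = u \ast \phi_{\varepsilon_n} \in H^\infty(\R^d) \subset H^{\alpha_n/2}(\R^d)$ with a diagonal $\varepsilon_n \to 0$; using the Fourier representation $[u_n]^2_{m,\alpha_n} = \int \Psi_{m,\alpha_n}(|\xi|^2)|\widehat{u_n}(\xi)|^2 d\xi$, the pointwise convergence $\Psi_{m,\alpha_n}(s) \to \Psi_{m,d/2}(s)$, and dominated convergence, one deduces $[u_n]^2_{m,\alpha_n} \to [u]^2_{m,d/2}$, while the form-bound from the previous paragraph handles convergence of the $V$-dependent term. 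For the weak liminf inequality, if $u_n \rightharpoonup u$ in $L^2(\R^d)$ with $\liminf_n \cA^V_{m,\alpha_n}[u_n] < \infty$, then infinitesimal form-boundedness forces $[u_n]^2_{m,\alpha_n}$ to stay bounded, so $\{u_n\}$ is bounded in a slightly lower-order fractional Sobolev space; Rellich--Kondrachov produces strong local convergence in the relevant $L^q$-space, giving $\int V u_n^2 dx \to \int V u^2 dx$, and Fatou on the Fourier side yields the liminf inequality for the kinetic part.

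Finally, since Mosco convergence of closed symmetric forms preserves closedness in the limit, $\cA^V_{m,d/2}$ is a closed, densely defined, semi-bounded symmetric form, and the form representation theorem identifies it with a unique self-adjoint operator, namely $H = L_{m,d/2}+V$. The main obstacle I expect is the liminf inequality for the potential term under only weak $L^2$-convergence: the target embedding $H^{d/4} \hookrightarrow L^{2p/(p-1)}$ is subcritical but close to the edge $L^4$, so ruling out mass escape at infinity requires delicate uniform tail control, and this is precisely where the $\overline{\cR}^m_{d,\alpha}$-membership along the approximating sequence enters to supply the required uniform-in-$n$ estimates.
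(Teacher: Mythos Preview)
Your second paragraph already proves the theorem: once you have shown that $V$ is infinitesimally form-bounded with respect to $L_{m,d/2}$ (using $V_-\in L^\infty$, $V\in L^p$, the subcritical embedding $H^{d/4}\hookrightarrow L^{2p/(p-1)}$ for $p\ge 4$, and Young's inequality), the KLMN theorem gives self-adjointness directly. The subsequent Mosco argument is unnecessary, and in fact the paper's proof does not establish form-boundedness at all; your route here is genuinely shorter.

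That said, the paper's argument is different in two respects worth noting. First, the paper proves only $\Gamma$-convergence in the \emph{strong} $L^2$ topology, not Mosco convergence: for the liminf step it shifts the potential by $C_V=\Norm{V_-}{\infty}$ so that $V+C_V\ge 0$, extracts an a.e.\ convergent subsequence from the strong $L^2$-convergence, and applies Fatou. This sidesteps entirely the tail/mass-escape issue you flag at the end. Second, for the recovery sequence the paper takes the one supplied by \cite{CS06} for the kinetic part, shows it is bounded in $H^{d/4}$, deduces that $u_k^2$ is bounded in $L^{p'}$ (here $p\ge 4$ gives $2p'\le 8/3$, which is below the critical exponent $4$), and then uses weak $L^{p'}$-convergence of $u_k^2$ against $V\in L^p$.

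Your Mosco liminf step, by contrast, is not fully justified: Rellich--Kondrachov gives only \emph{local} $L^q$-compactness, and you have no tightness estimate to rule out escape of $\int V u_n^2$ at infinity under merely weak $L^2$-convergence. The sentence suggesting that $\overline{\cR}^m_{d,\alpha}$-membership supplies uniform tail control is not backed by any concrete mechanism. If you want to keep the variational route, replace the weak liminf by the paper's strong-convergence/Fatou trick; but the cleanest fix is simply to stop after your second paragraph and invoke KLMN.
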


\begin{proof}
By the definition of $\overline{\cR}_{d,\frac{d}{2}}^{m,p}$ there exists a sequence $(\alpha_k)_{k \in \N}
\subset \left(\frac{d}{2},2\right)$ such that $\alpha_k \downarrow \frac{d}{2}$ and $V \in
\overline{\cR}_{d,\alpha_k}^{m}$. Consider the sequence of Schr\"odinger operators $H_k:=L_{\alpha_k,m}+V$.
By Section~\ref{subs:selfadj} $H_k$ is self-adjoint for all $k \in \N$. Hence we can consider the
lower-semicontinuous (in $L^2(\R^d)$) quadratic forms $\cA_{m,\alpha_k}^V$. Define
\begin{align*}
\cA_{m,\frac{d}{2}}^V[u]:=[u]_{m,\frac{d}{2}}+\int_{\R^d}V(x)u^2(x)dx, \quad u \in L^2(\R^d).
\end{align*}
We show that $\cA_{m,\frac{d}{2}}^V=\Gamma-\lim_{k \to \infty}\cA_{m,\alpha_k}^V$. First note that
$[\cdot]_{m,\frac{d}{2}}={\rm M}-\lim_{k \to \infty}[\cdot]_{m,\alpha_k}$ by \cite[Lem. 3.2]{CS06}. We
proceed in two steps.

\vspace{0.1cm}
\noindent
\emph{Step 1.} We show that condition (1) in Definition \ref{def:Gammalim} holds. Let $u_k \to u$ in
$L^2(\R^d)$. In case that $\liminf_{k \to \infty}\cA_{m,\alpha_k}^V[u_k]=\infty$, the statement is trivial,
thus we assume that $\liminf_{k \to \infty}\cA_{m,\alpha_k}^V[u_k]<\infty$. With no loss of generality we
assume that the sequence $u_k$ satisfies
\begin{eqnarray*}
&& (1) \, \liminf_{k \to \infty}\cA_{m,\alpha_k}^V[u_k]=\lim_{k \to \infty}\cA_{m,\alpha_k}^V[u_k], \\
&& (2) \, \liminf_{k \to \infty} \, [u_k]_{m,\alpha_k}=\lim_{k \to \infty}[u_k]_{m,\alpha_k}, \\
&& (3) \, \liminf_{k \to \infty}\int_{\R^d}(V(x)+C_V)|u_k(x)|^2dx=\lim_{k \to \infty}\int_{\R^d}(V(x)+C_V)|u_k(x)|^2dx, \\
&& (4) \, \lim_{k \to \infty}u_k(x) \to u(x) \ \mbox{ for a.e. } x \in \R^d,
\end{eqnarray*}
	where $C_V:=\Norm{V_-}{L^\infty}$. Note also that
	\begin{equation*}\label{eq:CV}
		\lim_{k \to \infty}C_V\int_{\R^d}|u_k(x)|^2dx=C_V\int_{\R^d}|u(x)|^2dx.
	\end{equation*}
	Then we have
	\begin{equation}\label{eq:pre1Gamma}
	\lim_{k \to \infty}\cA_{m,\alpha_k}^V[u_k]=\lim_{k \to \infty}[u_k]_{m,\alpha_k}
+\lim_{k \to \infty}\int_{\R^d}(V(x)+C_V)|u_k(x)|^2dx-C_V\int_{\R^d}|u(x)|^2dx.
	\end{equation}
	By Mosco-convergence of $[\cdot]_{m,\alpha_k}$ to $[\cdot]_{m,\frac{d}{2}}$ it follows that
	\begin{equation*}
		\lim_{k \to \infty}[u_k]_{m,\alpha_k} \ge [u]_{m,\frac{d}{2}}.
	\end{equation*}
	Furthermore, since $(V(x)+C_V)|u_k(x)|^2 \ge 0$, Fatou's Lemma gives
	\begin{equation*}
		\lim_{k \to \infty}\int_{\R^d}(V(x)+C_V)|u_k(x)|^2dx \ge \int_{\R^d}(V(x)+C_V)|u(x)|^2dx.
	\end{equation*}
	Hence, by \eqref{eq:pre1Gamma} we get
	\begin{align*}
		\lim_{k \to \infty}\cA_{m,\alpha_k}^V[u_k]&=\lim_{k \to \infty}[u_k]_{m,\alpha_k}
+\lim_{k \to \infty}\int_{\R^d}(V(x)+C_V)|u_k(x)|^2dx-C_V\int_{\R^d}|u(x)|^2dx \\
		&\ge [u]_{m,\alpha_k}+\int_{\R^d}(V(x)+C_V)|u(x)|^2dx-C_V\int_{\R^d}|u(x)|^2dx=\cA_{m,\frac{d}{2}}^V[u].
	\end{align*}

\vspace{0.1cm}
\noindent
\emph{Step 2.}
Next we show that condition (2) in Definition \ref{def:Gammalim} also holds. Let $u \in L^2(\R^d)$.
If $u \not \in \cD(\cA_{m,\frac{d}{2}}^{V})$, then the statement is trivial with the sequence
$u_k=u$ for all $k \in \N$. If $u \in \cD(\cA_{m,\frac{d}{2}}^V)$, then consider the sequence
$(u_k)_{k \in \N}$ such that $u_k \to u$ strongly in $L^2(\R^d)$ and $[u]_{m,\frac{d}{2}}=
\lim_{k \to \infty}[u_k]_{m,\alpha_k}$. Without loss of generality, we can assume that $u_k(x)
\to u(x)$ for a.e. $x \in \R^d$ and that there exists a constant $M_1$ (possibly dependent on $m$,
$d$ and $u$) such that $\Norm{u_k}{2}+[u_k]_{m,\alpha_k} \le M_1$ for all $k \in \N$. By
\eqref{equiv} we have $[u_k]_{0,\alpha_k}^2 \le M_2$ for some constant $M_2$. Thus we obtain
\begin{align*}
[u_k]_{0,\frac{d}{2}}^2&=\frac{d2^{\frac{d}{2}-3}}{\pi^{\frac{d}{2}}}\int_{\R^d}
\int_{\R^d}|u_k(x+h)-u_k(x)|^2|h|^{-\frac{3d}{2}} dh dx\\
&=\frac{d2^{\frac{d}{2}-1}}{\pi^{\frac{d}{2}}}\int_{\R^d}
\int_{B_1}|u_k(x+h)-u_k(x)|^2|h|^{-\frac{3d}{2}} dh dx\\
&\qquad +\frac{d2^{\frac{d}{2}-1}}{\pi^{\frac{d}{2}}}\int_{\R^d}
\int_{B_1^c}|u_k(x+h)-u_k(x)|^2|h|^{-\frac{3d}{2}} dh dx=I_1^k+I_2^k.
\end{align*}
For the first integral we have
\begin{equation*}
I_1^k \le \frac{d2^{\frac{d}{2}-1}}{\pi^{\frac{d}{2}}}\int_{\R^d}\int_{B_1}|u_k(x+h)-u_k(x)|^2
|h|^{-d-\alpha_k} dh dx \le C_0[u_k]_{0,\alpha_k}^2,
\end{equation*}
where
\begin{equation*}
C_0:=\max_{\alpha \in \left[\frac{d}{2},\alpha_1\right]}\frac{d2^{\frac{d}{2}-\alpha+1}
\Gamma\left(1-\frac{\alpha}{2}\right)}{\alpha_k\Gamma\left(\frac{d+\alpha}{2}\right)}.
\end{equation*}
For the second,
\begin{equation*}
		I_2^k \le \frac{\sigma_d2^{\frac{d}{2}+1}}{\pi^{\frac{d}{2}}}\Norm{u_k}{2}^2.
	\end{equation*}
	Hence there exists a constant $C_1>0$, independent of $k$, such that
	\begin{equation*}\label{eq:embedding1}
		[u_k]_{0,\frac{d}{2}} \le C_1([u_k]_{0,\alpha_k}+\Norm{u_k}{2}) \le M_3,
	\end{equation*}
	where $M_3>0$ is a suitable constant. By the Sobolev inequality (see \cite[Th. 6.5]{DPV12})
we have
	\begin{equation*}
		\Norm{u_k}{q} \le M_4(q)
	\end{equation*}
	 for every $q \in \left[2,\frac{8}{3}\right]$, where $M_4(q)>0$ is a suitable constant.
Note that $\frac{8}{3}$ is the critical Sobolev exponent in $H^{\frac{d}{4}}(\R^d)$. This
further implies that
	 \begin{equation*}
	 	\Norm{|u_k|^2}{q} \le M_5(q)
	 \end{equation*}
 for a suitable constant $M_5(q)$, for all $q \in \left[1,\frac{4}{3}\right]$. Hence
 $\Norm{|u_k|^2}{p^\prime} \le M_5(p^\prime)$ due to $p \ge 4$. Without loss of generality, we
 can then assume that $|u_k|^2 \rightharpoonup |u|^2$ weakly in $L^{p^\prime}(\R^d)$. Then
	 \begin{equation*}
	 	\lim_{k \to \infty}\int_{\R^d}V(x)|u_k(x)|^2dx=\int_{\R^d}V(x)|u(x)|^2dx
	 \end{equation*}
	 and so
	\begin{equation*}
 		\lim_{k \to \infty}\cA_{m,\alpha_k}^V[u_k]=\lim_{k \to \infty}[u_k]_{m,\alpha_k}+\lim_{k \to \infty}\int_{\R^d}V(x)|u_k(x)|^2dx=[u]_{m,\frac{d}{2}}+\int_{\R^d}V(x)|u(x)|^2dx=\cA_{m,\frac{d}{2}}^V[u].
	\end{equation*}
	Hence $\cA^V_{m,\frac{d}{2}}=\Gamma-\lim_{k \to \infty}\cA_{m,\alpha_k}^V$. By \cite[Prop. 1.28]{B02} this
 implies that $\cA_{m,\frac{d}{2}}^V$ is lower semi-continuous. However, $\cA_{m,\frac{d}{2}}$ is the quadratic
 form associated with the operator $H$, hence by \cite[Th. 12.13]{D12} the operator $H$ is self-adjoint in the form sense.
\end{proof}

\section{Appendix: Proofs of resolvent estimates}
\noindent
In this section we prove the resolvent estimates stated in Section \ref{sec:res}. We treat the massless
($m=0$) and massive ($m>0$) cases separately.
\subsection{Massless case}
\subsubsection{Proof of estimate \eqref{forroll} in the $m=0$ case}
\begin{proposition}\label{lem1}
Let $\alpha \in (0,2)$ and $d \ge 1$. There exists $C_{d,\alpha} > 0$ such that for all $\lambda>0$ and
$x \in \R^d \setminus \{0\}$
\begin{equation*}
\label{resolvest}
R_\lambda(x) \; \leq \; C_{d,\alpha}\begin{cases} |x|^{\alpha-d}, & \alpha<d, \\[10pt]
\log\left(1+\frac{1}{\lambda|x|}\right)+1, & \alpha=d=1,\\[10pt]
|x|^{\alpha-1}\wedge1+\lambda^{\frac{1-\alpha}{\alpha}}, & \alpha>d=1.
\end{cases}
\end{equation*}
Furthermore, if $\alpha<d$, then the statement holds also for $\lambda=0$.
\end{proposition}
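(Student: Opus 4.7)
The plan is to derive all three estimates from the well-known two-sided heat kernel bound \eqref{heatker}, namely
$p_t(x) \asymp t|x|^{-d-\alpha} \wedge t^{-d/\alpha}$, inserted into $R_\lambda(x)=\int_0^\infty e^{-\lambda t}p_t(x)\,dt$. The natural dividing time between the two branches of the minimum is the unique $t^\ast$ at which $t|x|^{-d-\alpha}=t^{-d/\alpha}$, i.e.\ $t^\ast=|x|^\alpha$. Accordingly I would write
\begin{equation*}
R_\lambda(x) \;\le\; C\!\left(|x|^{-d-\alpha}\!\int_0^{|x|^\alpha}\!\! t\, e^{-\lambda t}\,dt \;+\; \int_{|x|^\alpha}^{\infty}\! t^{-d/\alpha}e^{-\lambda t}\,dt\right)=: C(I_1+I_2)
\end{equation*}
and bound each piece. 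The contribution $I_1$ is trivial for all three regimes: dropping $e^{-\lambda t}\le 1$ gives $I_1\le \tfrac{1}{2}|x|^{\alpha-d}$ in case $\alpha<d$, $I_1\le 1/2$ in case $\alpha=d=1$, and $I_1\le \tfrac{1}{2}|x|^{\alpha-1}$ in case $\alpha>d=1$. The work is all in $I_2$.

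For $\alpha<d$ we have $d/\alpha>1$, so $I_2$ converges even without the exponential factor: $I_2\le \int_{|x|^\alpha}^\infty t^{-d/\alpha}dt = \tfrac{\alpha}{d-\alpha}|x|^{\alpha-d}$, and the bound is uniform in $\lambda\ge 0$. This immediately yields both the stated inequality for $\lambda>0$ and its $\lambda=0$ extension. For $\alpha=d=1$ the second integral is $I_2=\int_{|x|}^\infty t^{-1}e^{-\lambda t}dt=E_1(\lambda|x|)$, and I would use the standard bound $E_1(z)\le C(1+\log(1+1/z))$ (following from $E_1(z)\sim -\log z$ as $z\downarrow 0$ and $E_1(z)\le e^{-z}$ for $z\ge 1$) to conclude. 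Finally, for $\alpha>d=1$ we have $1/\alpha<1$, so $I_2$ would diverge at infinity without $e^{-\lambda t}$; here I estimate crudely by extending the integration to $(0,\infty)$ and computing the Gamma integral:
\begin{equation*}
I_2 \;\le\; \int_0^\infty t^{-1/\alpha}e^{-\lambda t}dt \;=\; \Gamma\!\left(1-\tfrac{1}{\alpha}\right)\lambda^{(1-\alpha)/\alpha}.
\end{equation*}

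Adding $I_1$ and $I_2$ in each case yields the three displayed bounds. No single step is a serious obstacle; the minor subtlety is simply matching the logarithmic tail in the critical case $\alpha=d=1$, where neither a pure power of $|x|$ nor a pure power of $\lambda$ suffices and one must keep track of the joint dependence through the combination $\lambda|x|$, which is precisely what the exponential integral $E_1(\lambda|x|)$ captures.
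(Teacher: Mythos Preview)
Your proposal is correct and follows essentially the same approach as the paper: the same split at $t^\ast=|x|^\alpha$, the same treatment of $I_1$ by dropping the exponential, and the same three-case analysis of $I_2$. The only cosmetic difference is in the critical case $\alpha=d=1$, where the paper invokes the explicit bracketing $E_1(z)\le e^{-z}\log(1+1/z)$ rather than your asymptotic argument, but the outcome is the same.
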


\begin{proof}
From (2.11) it follows that there exists a constant $c_1> 0$ such that
\begin{equation}
\label{detailed}
p_t(x) \leq c_1 t|x|^{-d-\alpha} \;\; \mbox{if $t<|x|^\alpha$}
\quad \mbox{and} \quad
p_t(x) \leq c_1 t^{-d/\alpha} \;\; \mbox{if $t>|x|^\alpha$}.
\end{equation}
We write
\begin{equation*}
R_\lambda(x) = \left( \int_0^{|x|^\alpha} + \int_{|x|^\alpha}^\infty \right) e^{-\lambda t}p_t(x)dt
= I_1+I_2.
\end{equation*}
We first estimate $I_1$ as
\begin{equation*}
I_1 \leq c_1|x|^{-d-\alpha} \int_0^{|x|^\alpha} te^{-\lambda t}dt
\leq c_1|x|^{-d-\alpha} \int_0^{|x|^\alpha} tdt = \frac{c_1}{2}|x|^{-d+\alpha}
\end{equation*}
if $\alpha\leq d$. If $\alpha>d=1$ and $|x|\geq1$, by $(\lambda t)^{1-1/\alpha}e^{-\lambda t}\leq(1-1/\alpha)^{1-1/\alpha}e^{-1+1/\alpha}$, we have
\begin{equation*}
I_1 \leq c_1\left(\frac{\alpha-1}{\alpha e}\right)^{\frac{\alpha-1}{\alpha}}|x|^{-1-\alpha}\lambda^{\frac{1-\alpha}{\alpha}}\int_0^{|x|^\alpha} t^{\frac{1}{\alpha}}dt=c_1\left(\frac{\alpha-1}{\alpha e}\right)^{\frac{\alpha-1}{\alpha}}\frac{\alpha}{\alpha+1}\lambda^{\frac{1-\alpha}{\alpha}}.
\end{equation*}
In estimating $I_2$ we have three cases to consider. If $\alpha<d$, we have
$$
I_2 \leq c_1 \int_{|x|^\alpha}^\infty  t^{-d/\alpha} e^{-\lambda t} dt
\leq c_1 \int_{|x|^\alpha}^\infty t^{-d/\alpha} dt = \frac{c_1}{\frac{d}{\alpha}-1}|x|^{-d+\alpha}.
$$
Note that the same estimate holds for $\lambda=0$ in this case. If $d=1$ and $\alpha>d$, we get
\begin{equation*}
\label{eq:I2adeq}
I_2 \leq c_1 \int_{|x|^\alpha}^\infty  t^{-1/\alpha} e^{-\lambda t} dt
\leq c_1 \lambda^{\frac{1-\alpha}{\alpha}}\int_{0}^\infty t^{-1/\alpha}e^{-t} dt
= c_1 \lambda^{\frac{1-\alpha}{\alpha}}\Gamma\Big(1-\frac{1}{\alpha}\Big).
\end{equation*}
In case $\alpha=d=1$ we use the exponential integral function
\begin{equation*}
E_1(x)=\int_x^{\infty}t^{-1}e^{-t}dt, \ x>0
\end{equation*}
and recall the following bracketing property (see \cite[Ch. 5, eq. 5.1.20]{AS})
\begin{equation}
\label{eq:bracket}
\frac{e^{-x}}{2}\log\Big(1+\frac{2}{x}\Big) \le E_1(x) \le e^{-x}\log\Big(1+\frac{1}{x}\Big).
\end{equation}
Hence we obtain
$$
I_2 \leq c_1 \int_{|x|}^\infty  t^{-1} e^{-\lambda t} dt
\leq c_1 \int_{\lambda|x|}^\infty t^{-1}e^{-t} dt
= c_1E_1(\lambda|x|)
\le c_1\log\Big(1+\frac{1}{\lambda|x|}\Big).
$$
\end{proof}

\subsubsection{Proof of estimate \eqref{eq:equivrollext1}}
\noindent
Below we will make use of the inequality
\begin{equation}
\label{eq:ineqexpon}
s^{p}e^{-s} \le p^pe^{-p},
\end{equation}
for $s,p \ge 0$. Beside Proposition \ref{lem1}, we can also prove a further estimate that is sharper for large values of $|x|$.
\begin{lemma}
\label{lem2}
Let $\alpha \in (0,2)$ and $d \ge 1$. Then there exists a positive constant $C_{d,\alpha}$ such that
for all $\lambda>0$ and $x \in \R^d \setminus \{0\}$,
\begin{equation*}
R_\lambda(x)\leq\frac{C_{d,\alpha}}{\lambda^2|x|^{d+\alpha}}.
\end{equation*}
\end{lemma}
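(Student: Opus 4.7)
The plan is very short: I only need to upgrade one of the two alternative bounds in the heat kernel estimate \eqref{heatker} to a global bound valid for all $t>0$. Namely, since $a\wedge b\le a$, the estimate $p_t(x)\asymp t|x|^{-d-\alpha}\wedge t^{-d/\alpha}$ immediately gives
\begin{equation*}
p_t(x) \;\le\; c_1\, t\,|x|^{-d-\alpha}
\end{equation*}
for every $t>0$ and $x\ne 0$, with some $c_1>0$. (For $t\le |x|^\alpha$ this is the tighter of the two bounds, while for $t\ge |x|^\alpha$ the inequality $t^{-d/\alpha}\le t|x|^{-d-\alpha}$ is equivalent to $|x|^\alpha\le t$, which holds by assumption.)

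With this uniform estimate at hand, the bound on the resolvent reduces to an elementary Laplace transform computation:
\begin{equation*}
R_\lambda(x) \;=\; \int_0^\infty e^{-\lambda t}p_t(x)\,dt \;\le\; c_1\,|x|^{-d-\alpha}\int_0^\infty t\,e^{-\lambda t}\,dt \;=\; \frac{c_1}{\lambda^2}\,|x|^{-d-\alpha},
\end{equation*}
so the constant $C_{d,\alpha}=c_1$ works. Since the two steps are entirely routine and the proof does not need any splitting of the time integral into regimes (as was needed in Proposition~\ref{lem1}), there is no substantive obstacle; the point of the lemma is simply that by sacrificing sharpness in $|x|$ for small $|x|$ one gains the extra $\lambda^{-2}$ decay, which will be useful in a regime (large $|x|$) where Proposition~\ref{lem1} is not effective.
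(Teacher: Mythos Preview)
Your proof is correct and in fact simpler than the paper's. The paper keeps the split $R_\lambda(x)=I_1+I_2$ at $t=|x|^\alpha$ from Proposition~\ref{lem1}: for $I_1$ it integrates $t e^{-\lambda t}$ explicitly and drops positive terms, while for $I_2$ it bounds $t^{-d/\alpha}e^{-\lambda t}$ and uses the elementary inequality \eqref{eq:ineqexpon} together with some rearrangement to recover the $\lambda^{-2}|x|^{-d-\alpha}$ rate. You bypass the split entirely by observing that $p_t(x)\le c_1\,t|x|^{-d-\alpha}$ holds globally in $t$ (since the minimum in \eqref{heatker} is always below its first entry), after which the Laplace integral is a one-liner. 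The paper's route has the marginal advantage of making the constants slightly more explicit via the two regimes, but your argument is cleaner and arrives at the same bound with the same dependence on $d,\alpha,\lambda$ and $|x|$.
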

\begin{proof}
We give another estimate of $I_1$ and $I_2$ in the proof of Lemma \ref{lem1}. Concerning $I_1$, we have
		\begin{equation*}
			I_1 \le c_1|x|^{-d-\alpha}\int_0^{|x|^\alpha}te^{-\lambda t}dt=c_1|x|^{-d-\alpha}\left(-|x|^\alpha\frac{e^{-\lambda|x|^\alpha}}{\lambda}
			-\frac{e^{-\lambda|x|^\alpha}}{\lambda^2}+\frac{1}{\lambda^2}\right) \le \frac{c_1}{\lambda^2|x|^{d+\alpha}}\label{lem2_1class}
		\end{equation*}
		Next, observe that
		\begin{align*}
			I_2 \le c_1 \int_{|x|^\alpha}^\infty  t^{-d/\alpha}e^{-\lambda t}dt=c_2 \lambda^{\frac{d}{\alpha}-1}\int_{\lambda|x|^\alpha}^\infty  s^{-d/\alpha}e^{-s}ds.
		\end{align*}
		By \eqref{eq:ineqexpon}, we clearly have
		\begin{align*}
			\begin{split}
			I_2 &\le \frac{2 c_1}{e} \lambda^{\frac{d}{\alpha}-1}\int_{\lambda|x|^\alpha}^\infty  s^{-d/\alpha-1}e^{-\frac{s}{2}}ds\le \frac{2 c_1}{e}e^{-\frac{\lambda |x|^{\alpha}}{2}}\lambda^{\frac{d}{\alpha}-1}\int_{\lambda|x|^\alpha}^\infty  s^{-d/\alpha-1}ds\\
			&=\frac{2 \alpha c_1}{d e}e^{-\frac{\lambda |x|^{\alpha}}{2}}\lambda^{-1}|x|^{-d}\le \frac{4\alpha c_1}{d\lambda^2 e^2|x|^{d+\alpha}}.	
			\end{split}
		\end{align*}
\end{proof}
Combining Proposition \ref{lem1} and Lemma \ref{lem2} we get  the following non-uniform upper bound.
\begin{proposition}
\label{cor:comparison1}
Let $\alpha \in (0,2)$ and $d \ge 1$. Then for all $\lambda>0$ there exists a constant $C_{d,\alpha,\lambda}>0$
such that
\begin{equation*}
R_\lambda(x) \le C_{d,\alpha,\lambda}H_{d,\alpha}^0(|x|),
\end{equation*}
for all $x \in \R^d \setminus \{0\}$, where $H_{d,\alpha}^0$ is defined in \eqref{eq:equivrollext1}.
\end{proposition}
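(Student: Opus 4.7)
The plan is straightforward: split into the regimes $|x|\le 1$ and $|x|>1$, and apply Proposition \ref{lem1} in the former and Lemma \ref{lem2} in the latter. Both bounds have already been proved, so what remains is essentially bookkeeping to verify that the piecewise function $H_{d,\alpha}^0$ dominates each of the three cases.

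For $|x|>1$, I would use Lemma \ref{lem2} uniformly in all three subcases, noting that $d+\alpha$ is exactly the decay exponent appearing in the ``outer'' branch of $H_{d,\alpha}^0$ (indeed $d+\alpha = d+\alpha$ when $\alpha<d$, equals $2$ when $\alpha=d=1$, and equals $1+\alpha$ when $\alpha>d=1$). Thus $R_\lambda(x) \le C_{d,\alpha}\lambda^{-2}|x|^{-(d+\alpha)}$ matches the structure of $H_{d,\alpha}^0(|x|)\mathbf{1}_{\{|x|>1\}}$ up to a $\lambda$-dependent multiplicative constant.

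For $|x|\le 1$, I would split by the three cases of Proposition \ref{lem1}. When $\alpha<d$, the bound $R_\lambda(x)\le C_{d,\alpha}|x|^{\alpha-d}$ already matches the inner branch. When $\alpha>d=1$, since $|x|^{\alpha-1}\le 1$ on $\{|x|\le 1\}$ and $\lambda^{(1-\alpha)/\alpha}$ is a $\lambda$-dependent constant, one bounds $R_\lambda(x)\le C_{d,\alpha,\lambda}$, matching the inner branch $\mathbf{1}_{\{|x|\le 1\}}$. The mildly delicate case is $\alpha=d=1$: one needs to dominate $\log(1+(\lambda|x|)^{-1})+1$ by a multiple (depending on $\lambda$) of $\log(1+|x|^{-1})$. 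This follows because on $\{|x|\le 1\}$ both expressions blow up like $-\log|x|$ as $|x|\downarrow 0$, so their ratio is bounded by a constant depending only on $\lambda$; explicitly, $\log(1+(\lambda|x|)^{-1})\le \log(1+|x|^{-1}) + |\log \lambda|$, and both summands are $\le C_\lambda \log(1+|x|^{-1})$ uniformly on $\{0<|x|\le 1\}$.

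Combining the two regimes, taking the maximum of the $\lambda$-dependent constants, yields a single constant $C_{d,\alpha,\lambda}>0$ such that $R_\lambda(x) \le C_{d,\alpha,\lambda} H_{d,\alpha}^0(|x|)$ for all $x\in\R^d\setminus\{0\}$. The only real subtlety is the logarithmic calibration in the case $\alpha=d=1$, but this reduces to the elementary fact that $\log(1+a^{-1})$ and $\log(1+(ca)^{-1})$ are comparable on bounded intervals for any fixed $c>0$. No other step is non-routine.
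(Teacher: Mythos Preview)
Your proposal is correct and is exactly the approach the paper takes: the paper's proof is a one-line remark that the result follows by combining Proposition \ref{lem1} (for $|x|\le 1$) with Lemma \ref{lem2} (for $|x|>1$), and you have supplied the routine details this combination requires, including the only mildly non-obvious logarithmic comparison in the case $\alpha=d=1$.
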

Proposition~\ref{cor:comparison1} corresponds to the upper bound in \eqref{eq:equivrollext1}. We only have to prove the lower bound.
\begin{proposition}
\label{lem:lowermasslesssmall}
Let $\alpha \in (0,2)$ and $d \ge 1$. Then for all $\lambda>0$ there exists $C_{d,\alpha,\lambda}>0$ such that for all $x \in
\R^d\setminus \{0\}$
\begin{equation*}
R_\lambda(x) \ge C_{d,\alpha,\lambda}H_{d,\alpha}^0(|x|),
\end{equation*}
where $H_{d,\alpha}^0$ is defined in \eqref{eq:Hda0}.
\end{proposition}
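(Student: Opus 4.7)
The plan is to mirror the proof of the upper bound in Proposition~\ref{lem1}, using the two-sided heat kernel estimate \eqref{heatker} which yields a constant $c_2>0$ with
\[
p_t(x)\;\ge\; c_2\,\bigl(t|x|^{-d-\alpha}\wedge t^{-d/\alpha}\bigr)
\]
for every $t>0$ and $x\neq 0$. As in Proposition~\ref{lem1}, I would split
\[
R_\lambda(x) \;=\; \underbrace{\int_0^{|x|^\alpha} e^{-\lambda t}p_t(x)\,dt}_{=:\,I_1}
\;+\; \underbrace{\int_{|x|^\alpha}^\infty e^{-\lambda t}p_t(x)\,dt}_{=:\,I_2},
\]
so that on $I_1$ the first branch of the heat kernel bound is active and on $I_2$ the second. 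I would then treat the ranges $|x|>1$ and $|x|\le 1$ separately, since $H^0_{d,\alpha}$ is piecewise-defined in exactly this way.

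For $|x|>1$ (where $|x|^\alpha>1$) I would bound $I_1$ from below by restricting to $t\in[0,1]$:
\[
I_1 \;\ge\; c_2\,|x|^{-(d+\alpha)}\int_0^{1} e^{-\lambda t}\,t\,dt
\;=\; c_2\,\kappa(\lambda)\,|x|^{-(d+\alpha)},
\]
and $|x|^{-(d+\alpha)}$ is exactly the large-$r$ shape of $H^0_{d,\alpha}$ in all three regimes $\alpha<d$, $\alpha=d=1$, $\alpha>d=1$ (up to the explicit constant $2^{(d+\alpha)/2-1}\Gamma((d+\alpha)/2)$, which is absorbed into $C_{d,\alpha,\lambda}$).

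For $|x|\le 1$ I would handle the three sub-cases individually. When $\alpha<d$, bound $I_1$ by replacing $e^{-\lambda t}$ with $e^{-\lambda}$ on $[0,|x|^\alpha]\subset[0,1]$:
\[
I_1 \;\ge\; c_2\,e^{-\lambda}|x|^{-d-\alpha}\int_0^{|x|^\alpha}t\,dt \;=\; \tfrac{c_2 e^{-\lambda}}{2}|x|^{\alpha-d}.
\]
When $\alpha>d=1$, bound $I_2$ by restricting to $t\in[1,2]\subset[|x|^\alpha,\infty)$, obtaining $I_2\ge c_2 e^{-2\lambda}2^{-1/\alpha}$, a strictly positive $|x|$-independent constant as required by $H^0_{1,\alpha}(r)=1$ for $r\le 1$. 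The delicate case is $\alpha=d=1$: I would compute
\[
I_2 \;\ge\; c_2\int_{|x|}^\infty e^{-\lambda t}t^{-1}\,dt \;=\; c_2\,E_1(\lambda|x|),
\]
and invoke the lower half of the bracketing \eqref{eq:bracket} to get $E_1(\lambda|x|)\ge \tfrac12 e^{-\lambda|x|}\log(1+2/(\lambda|x|))$. The constant $e^{-\lambda|x|}\ge e^{-\lambda}$ presents no difficulty; the remaining task is to compare $\log(1+2/(\lambda|x|))$ with $\log(1+1/|x|)$ uniformly for $|x|\in(0,1]$. For $\lambda\le 2$ the first dominates pointwise; for $\lambda>2$ the ratio of the two expressions is continuous on $(0,1]$ with limit $1$ as $|x|\downarrow 0$ and value $\log((\lambda+2)/\lambda)/\log 2>0$ at $|x|=1$, hence bounded below by some positive $c(\lambda)$.

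The only real obstacle is this last comparison in the logarithmic case $\alpha=d=1$: one must verify that the $\lambda$-dependence inside the logarithm does not destroy the small-$r$ divergence profile $-\log r$. The two paragraphs above handle that by a continuity/asymptotic argument, after which collecting constants gives $R_\lambda(x)\ge C_{d,\alpha,\lambda}H^0_{d,\alpha}(|x|)$ in every case, completing the proof.
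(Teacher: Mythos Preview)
Your proposal is correct and follows essentially the same approach as the paper: the same split at $t=|x|^\alpha$, the same restriction to $t\in[0,1]$ for $|x|>1$, the same use of $e^{-\lambda t}\ge e^{-\lambda}$ on $I_1$ when $\alpha<d$, the same appeal to $E_1$ and the lower half of \eqref{eq:bracket} when $\alpha=d=1$, and the same use of $I_2$ restricted away from the origin when $\alpha>d=1$. Your logarithm comparison argument in the case $\alpha=d=1$ is in fact more detailed than the paper's, which simply asserts the final inequality $\ge C_{1,1,\lambda}\log(1+1/|x|)$ without further justification.
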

\begin{proof}
First we give an estimate for $|x|>1$. By \eqref{heatker} there exists a constant $c_2$ such that
\begin{equation}
	\label{detailed2}
	p_t(x) \geq c_2 t|x|^{-d-\alpha} \;\; \mbox{if $t<|x|^\alpha$}
	\quad \mbox{and} \quad
	p_t(x) \geq c_2 t^{-d/\alpha} \;\; \mbox{if $t>|x|^\alpha$}.
\end{equation}
Using \eqref{detailed2} we obtain
\begin{align*}
	R_\lambda(x) &\ge \int_0^{|x|^\alpha}e^{-\lambda t}p_t(x)dt \ge c_2|x|^{-d-\alpha}\int_0^{|x|^\alpha}te^{-\lambda t}dt \\
	&\ge c_2|x|^{-d-\alpha}\int_0^{|x|^\alpha}te^{-\lambda t}dt\ge  \frac{c_2}{e^{\lambda}\lambda^2}|x|^{-d-\alpha}\left(e^{\lambda}-1-\lambda\right).
\end{align*}
Now let $|x| \le 1$ and assume $\alpha<d$. Then we have, again by \eqref{detailed},
\begin{align*}
	R_\lambda(x) &\ge c_2|x|^{-d-\alpha}\int_0^{|x|^\alpha}te^{-\lambda t}dt \ge c_2|x|^{-d-\alpha}e^{-\lambda}\int_0^{|x|^\alpha}tdt=\frac{c_2}{2}e^{-\lambda}|x|^{-(d-\alpha)}.
\end{align*}
If $\alpha=d=1$ we get instead
\begin{align*}
	R_\lambda(x) &\ge c_2\int_{|x|}^{\infty}t^{-1}e^{-\lambda t}=c_2E_1(\lambda|x|) \ge \frac{c_2}{2}e^{-\lambda}\log\left(1+\frac{2}{\lambda|x|}\right) \ge C_{1,1,\lambda}\log\left(1+\frac{1}{|x|}\right),
\end{align*}
where we also used \eqref{eq:bracket}. Finally, for $\alpha>d=1$, notice that
\begin{equation*}
	R_\lambda(x) \ge c_2 \int_{|x|^\alpha}^{\infty}t^{-\frac{1}{\alpha}}e^{-\lambda t}dt \ge c_2 \int_{1}^{\infty}t^{-\frac{1}{\alpha}}e^{-\lambda t}dt.
\end{equation*}
\end{proof}

\subsection{Massive case}
\subsubsection{The function $\kappa_m$}
\noindent
Next we turn to the massive case. First we state some basic properties of the function $\kappa_m$ defined in
\eqref{eq:kappam}.
\begin{lemma}
Let $\alpha\in(0,2)$ and $d\geq1$.
\begin{enumerate}
\item
We have
\begin{equation}
\label{masslesskappa}
\lim_{m \downarrow 0}\kappa_m(r)=2^{\frac{d+\alpha}{2}-1}\Gamma\left(\frac{d+\alpha}{2}\right)r^{-\frac{d+\alpha}{2}}.
\end{equation}
\item
There exists a positive constant $C_{1,d,\alpha}$ such that
\begin{equation}\label{cor4_1}
\kappa_m(r)\leq C_{1,d,\alpha} r^{-\frac{d+\alpha}{2}}.
\end{equation}
\item
There exists a positive constant $C_{2,d,\alpha}$ such that
\begin{equation}\label{cor4_2}
\kappa_m(r)\geq C_{2,d,\alpha}r^{-\frac{d+\alpha}{2}}
\end{equation}
for $r\le1$.
\end{enumerate}
\end{lemma}

\begin{proof}
Part (1) directly follows by the second asymptotic relation in \eqref{besselasymp}. Consider part (2) and (3). We note that the modified Bessel function of the second kind $K_{\frac{d+\alpha}{2}}(r)$ is positive, bounded and does not have any zero points for $r>0$. Using \eqref{besselasymp} again, we have
\begin{gather}
\lim_{r\uparrow\infty}\sqrt{r}e^{r}K_{\frac{d+\alpha}{2}}(r)=\sqrt{\frac{\pi}{2}},\label{kappaliminf}\\
\lim_{r\downarrow0}r^{\frac{d+\alpha}{2}}K_{\frac{d+\alpha}{2}}(r)=2^{\frac{d+\alpha}{2}-1}\Gamma\left(\frac{d+\alpha}{2}\right).\label{kappalimzero}
\end{gather}
Hence \eqref{cor4_1} and \eqref{cor4_2} hold for $r\leq1$ by \eqref{kappalimzero}. If $r\geq1$,
\begin{equation*}
c_{1,d,\alpha}:=\sup_{r\geq1}\sqrt{r}e^{r}K_{\frac{d+\alpha}{2}}(r)
\end{equation*}
is finite by \eqref{kappaliminf}. Thus we have
\begin{gather*}
\kappa_m(r)\leq c_{1,d,\alpha}m^{\frac{d+\alpha-1}{2\alpha}}r^{-\frac{1}{2}}e^{-m^{\frac{1}{\alpha}}r}=c_{1,d,\alpha}r^{-\frac{d+\alpha}{2}}\left(m^{\frac{1}{\alpha}}r\right)^{\frac{d+\alpha-1}{2}}e^{-m^{\frac{1}{\alpha}}r}\\
\leq  c_{1,d,\alpha}r^{-\frac{d+\alpha}{2}}\left(\frac{d+\alpha+1}{2}\right)^{\frac{d+\alpha+1}{2}}e^{-\frac{d+\alpha+1}{2}}.
\end{gather*}
\end{proof}

Since the function
\begin{equation*}
\label{eq:Phim}
\Phi^m_{d,\alpha}(r) = \frac{r^{\alpha/2}}{\kappa_m(r)^{\alpha/(d+\alpha)}}, \quad r>0
\end{equation*}
will play a prominent role in what follows, we show some of its elementary properties.
\begin{lemma}
\label{lem:Phim}
The following hold:
\begin{enumerate}
\item
For $r \downarrow 0$ we have
\begin{equation*}
\Phi^m_{d,\alpha}(r)\sim \frac{2^{\frac{\alpha}{d+\alpha}-\frac{\alpha}{2}}}{\Gamma^{\frac{\alpha}{d+\alpha}}
\left(\frac{d+\alpha}{2}\right)}r^{\alpha}.
\end{equation*}
As a consequence, for all $R>0$ there exists a constant $C_{d,\alpha,m,R}$ such that for all $r \in (0,R]$
\begin{equation}\label{eq:controlPhi}
\Phi^m_{d,\alpha}(r) \le C_{d,\alpha,m,R}r^{\alpha}.
\end{equation}
\item
As $r \uparrow \infty$ we have
\begin{equation*}
\Phi^m_{d,\alpha}(r) \sim \frac{2^{\frac{\alpha}{2(d+\alpha)}}}{m^{\frac{1}{2}-\frac{1}{2(d+\alpha)}}
\pi^{\frac{\alpha}{2(d+\alpha)}}} r^{\frac{\alpha}{2}+ \frac{\alpha}{2(d+\alpha)}}
e^{\frac{\alpha}{d+\alpha}m^{1/\alpha}r}.
\end{equation*}
\item
The function $\Phi^m_{d,\alpha}(r)$ is strictly increasing.
\item
For every fixed $r > 0$ we have
\begin{equation*}
\lim_{m \downarrow 0}\Phi^m_{d,\alpha}(r) = \frac{2^{1-\frac{\alpha}{2}}}{\Gamma\Big(\frac{d+\alpha}{2}\Big)^{d/(d+\alpha)}}
r^{\alpha}.
\end{equation*}
\item
There exists a constant $C_{d,\alpha}$ such that for all $r>0$
\begin{equation}
\label{eq:controlPhi2}
\Phi^m_{d,\alpha}(r) \ge C_{d,\alpha}r^\alpha.
\end{equation}
\end{enumerate}
\end{lemma}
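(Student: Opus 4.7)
The plan is to work systematically from the scaling identity
$\Phi^m_{d,\alpha}(r)^{(d+\alpha)/\alpha} = r^{(d+\alpha)/2}/\kappa_m(r)$,
reducing every claim to a statement about $K_{(d+\alpha)/2}$. The unifying
observation is that after the substitution $s = m^{2/\alpha}u$ in the
integral formula \eqref{bessel3}, one obtains the mass-normalized representation
\begin{equation*}
\kappa_m(r) = \frac{r^{(d+\alpha)/2}}{2^{(d+\alpha)/2+1}}
\int_0^\infty u^{-(d+\alpha)/2-1} e^{-m^{2/\alpha}u - r^2/(4u)}\, du,
\end{equation*}
in which the $m$-dependence survives only in the integrand and is monotone
in both $r$ and $m$. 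This single representation will simultaneously yield
the monotonicity statements in (3) and (5) and, after asymptotic analysis,
the precise leading-order behaviour in (1), (2), (4).

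For parts (1), (2), and (4), I would substitute the asymptotics in
\eqref{besselasymp} into $\kappa_m(r) = m^{(d+\alpha)/(2\alpha)}
K_{(d+\alpha)/2}(m^{1/\alpha}r)$. The small-argument expansion
$K_\rho(z) \sim \Gamma(\rho)2^{\rho-1}z^{-\rho}$ makes the explicit
powers of $m$ cancel exactly, producing the same leading behaviour for
$r \downarrow 0$ at fixed $m$ and for $m \downarrow 0$ at fixed $r$,
consistently with \eqref{masslesskappa}; this matches (1) and (4).
The bound \eqref{eq:controlPhi} then follows because
$\Phi^m_{d,\alpha}(r)/r^\alpha$ extends continuously to $r=0$ by (1)
and is therefore bounded on $(0,R]$. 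For (2), the large-argument
expansion $K_\rho(z) \sim \sqrt{\pi/(2z)}e^{-z}$ produces the
exponential growth rate $e^{\alpha m^{1/\alpha}r/(d+\alpha)}$ after
taking the power $\alpha/(d+\alpha)$; the only care needed is in the
bookkeeping of the $m$-exponent, which collapses to
$\frac{1}{2}-\frac{1}{2(d+\alpha)}$.

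Strict monotonicity in (3) is then immediate from the integral representation,
since $r \mapsto e^{-r^2/(4u)}$ is strictly decreasing for each fixed $u>0$,
so the integral is strictly decreasing in $r$ and its reciprocal
$r^{(d+\alpha)/2}/\kappa_m(r)$ strictly increasing. A classical alternative,
arguably cleaner, uses the Bessel identity
$\frac{d}{dz}\bigl(z^\rho/K_\rho(z)\bigr) = z^\rho K_{\rho+1}(z)/K_\rho(z)^2 > 0$
after reducing to the rescaled variable $z = m^{1/\alpha}r$, with
$\rho = (d+\alpha)/2$.

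The uniform-in-$m$ lower bound in (5) is the main obstacle, since a direct
appeal to continuity together with (1)--(2) only yields constants depending
on $m$. Here I would exploit monotonicity in $m$ of the same integrand:
dropping the factor $e^{-m^{2/\alpha}u}$ gives
\begin{equation*}
\kappa_m(r) \le \frac{r^{(d+\alpha)/2}}{2^{(d+\alpha)/2+1}}
\int_0^\infty u^{-(d+\alpha)/2-1} e^{-r^2/(4u)}\, du,
\end{equation*}
and the right-hand integral evaluates explicitly via $v = r^2/(4u)$ to
$2^{d+\alpha}\Gamma((d+\alpha)/2)r^{-(d+\alpha)}$. Inverting this bound and
raising to the power $\alpha/(d+\alpha)$ yields
$\Phi^m_{d,\alpha}(r) \ge C_{d,\alpha} r^\alpha$ with $C_{d,\alpha}$
independent of $m$, coinciding with the small-$r$ leading coefficient of
(1) up to elementary simplification; this is exactly the estimate
\eqref{eq:controlPhi2}.
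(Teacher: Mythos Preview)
Your proposal is correct and, for parts (1), (2), and (4), proceeds exactly as the paper does: substitute the Bessel asymptotics \eqref{besselasymp} into the definition of $\kappa_m$ and track the exponents. The bound \eqref{eq:controlPhi} via continuity at $r=0$ is likewise the intended argument.

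The differences are minor and purely presentational. For (3), the paper takes a shorter route: it simply writes $\Phi^m_{d,\alpha}(r) = m^{-1/2} r^{\alpha/2} \cdot \bigl(K_{(d+\alpha)/2}(m^{1/\alpha}r)\bigr)^{-\alpha/(d+\alpha)}$ as a product of two strictly increasing positive functions (the second because $K_\rho$ is strictly decreasing on $(0,\infty)$). Your integral-representation argument and the Bessel derivative identity both work, but the factorisation is the quickest. For (5), the paper just invokes the already-established inequality \eqref{cor4_1}, namely $\kappa_m(r) \le C_{d,\alpha} r^{-(d+\alpha)/2}$; your integral computation (dropping $e^{-m^{2/\alpha}u}$ and evaluating the remaining Gamma integral) is precisely a self-contained derivation of that same bound, with the same $m$-independent constant. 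So your unified integral representation is a pleasant device, but it is not doing anything the paper does not already have at hand.
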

\begin{proof}
Using \eqref{besselasymp}, properties (1)-(2) are straightforward. (3) is also immediate since $\Phi^m_{d,\alpha}(r)$ is
the product of the two strictly increasing positive functions $r \in (0,\infty) \mapsto m^{-1/2}r^{\alpha/2}$ and $r \in
(0,\infty) \mapsto \big(K_{\frac{d+\alpha}{2}}(m^{1/\alpha}r)\big)^{-\alpha/(d+\alpha)}$. Finally, (4) and (5) are
consequences of \eqref{masslesskappa} and \eqref{cor4_1}, respectively.
\end{proof}

\subsubsection{Proof of estimate  \eqref{forroll} in the $m>0$ case}
\begin{proposition}
	\label{lem3nol}
	Let $\alpha \in (0,2)$, $d \ge 3$. There exists a constant $C_{d,\alpha}>0$ such that for all $m>0$, $\lambda \ge 0$
and $x \in \R^d \setminus \{0\}$,
	\begin{equation*}
		R^m_\lambda(x)\leq C_{d,\alpha}\left(\frac{1}{|x|^{d-\alpha}}+\frac{m^{\frac{2}{\alpha}-1}}{|x|^{d-2}}\right).
\label{lem3_1nol}
	\end{equation*}
\end{proposition}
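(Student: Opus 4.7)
The plan is to reduce to the $\lambda=0$ Green function estimate: since $e^{-\lambda t}\leq 1$ for all $\lambda\geq 0$, we have $R^m_\lambda(x)\leq R^m_0(x)=\int_0^\infty p^m_t(x)\,dt$, so it suffices to bound this integral. I would split it at $t=1/m$ to match the two regimes in \eqref{massive_heatker}, and show that the short-time piece $I_1:=\int_0^{1/m} p^m_t(x)\,dt$ produces the first term $|x|^{\alpha-d}$ while the long-time piece $I_2:=\int_{1/m}^\infty p^m_t(x)\,dt$ produces the second term $m^{2/\alpha-1}|x|^{2-d}$.

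For $I_1$, I would use the first line of \eqref{massive_heatker} together with $\kappa_m(r)\leq C_{d,\alpha}r^{-(d+\alpha)/2}$ from \eqref{cor4_1} to obtain $p^m_t(x)\leq C\big(t^{-d/\alpha}\wedge t\,|x|^{-(d+\alpha)}\big)$ on $(0,1/m]$. When $|x|^\alpha\leq 1/m$, I would split the integral at $t=|x|^\alpha$, integrating the second bound on $(0,|x|^\alpha]$ and the first on $[|x|^\alpha,1/m]$, which yields $I_1\leq C|x|^{\alpha-d}$ just as in the proof of Proposition \ref{lem1} (using $d>\alpha$, which holds since $d\geq 3$ and $\alpha<2$). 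When $|x|^\alpha>1/m$, only the second bound is active on $(0,1/m]$, giving $I_1\leq \tfrac{C}{m^2}|x|^{-(d+\alpha)}$, which is again $\leq C|x|^{\alpha-d}$ thanks to $m|x|^\alpha>1$.

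For $I_2$, using the second line of \eqref{massive_heatker}, I would distinguish two cases. If $m^{1/\alpha}|x|\leq 1$, I drop the exponential and integrate directly: $I_2\leq Cm^{d/\alpha-d/2}\int_{1/m}^\infty t^{-d/2}dt=Cm^{d/\alpha-1}=Cm^{2/\alpha-1}(m^{1/\alpha}|x|)^{d-2}|x|^{2-d}\leq Cm^{2/\alpha-1}|x|^{2-d}$. If $m^{1/\alpha}|x|>1$, I further split at $t^\star=m^{1/\alpha-1}|x|$, the switching point of the minimum in the exponent. On $[1/m,t^\star]$ the exponential equals $e^{-cm^{1/\alpha}|x|}$ and elementary bounds give a contribution of at most $Cm^{d/\alpha-1}e^{-cm^{1/\alpha}|x|}=C|x|^{\alpha-d}(m^{1/\alpha}|x|)^{d-\alpha}e^{-cm^{1/\alpha}|x|}\leq C|x|^{\alpha-d}$ by uniform boundedness of $s\mapsto s^{d-\alpha}e^{-cs}$. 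On $[t^\star,\infty)$ I perform the change of variable $s=cm^{2/\alpha-1}|x|^2/t$, transforming the integral into $(cm^{2/\alpha-1}|x|^2)^{1-d/2}\int_0^{cm^{1/\alpha}|x|}s^{d/2-2}e^{-s}\,ds\leq (cm^{2/\alpha-1}|x|^2)^{1-d/2}\Gamma(d/2-1)$; combined with the prefactor $m^{d/\alpha-d/2}$, the powers of $m$ collapse exactly to $2/\alpha-1$, yielding $Cm^{2/\alpha-1}|x|^{2-d}$.

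The main technical obstacle is the second sub-case of $I_2$: both the degeneracy of $e^{-cm^{2/\alpha-1}|x|^2/t}$ as $t\to\infty$ and the delicate algebraic cancellation of the powers of $m$ make it easy to accidentally lose the sharp dependence of the constant on $m$. The hypothesis $d\geq 3$ is essential here, since it is precisely what ensures both the convergence $\int_{1/m}^\infty t^{-d/2}dt<\infty$ (the transience of the massive semigroup) and the convergence at $s=0$ of the Gamma integral $\int_0^\infty s^{d/2-2}e^{-s}ds=\Gamma(d/2-1)$.
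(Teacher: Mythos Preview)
Your proof is correct and follows essentially the same route as the paper: split the time integral at $1/m$, then in the long-time regime split again at $t^\star=m^{1/\alpha-1}|x|$ and handle the Gaussian tail via the substitution $s=cm^{2/\alpha-1}|x|^2/t$, which produces exactly the $m^{2/\alpha-1}|x|^{2-d}$ term through the same algebraic collapse of powers of $m$ (the paper's $I_3^m$ and $I_4^m$ are your two pieces of $I_2$).

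The one noteworthy difference is in the short-time piece. The paper keeps the full bound $p_t^m(x)\lesssim t^{-d/\alpha}\wedge t|x|^{-(d+\alpha)/2}\kappa_m(|x|)$ and therefore must split at the $m$-dependent crossover $\Phi_{d,\alpha}^m(|x|)=|x|^{\alpha/2}\kappa_m(|x|)^{-\alpha/(d+\alpha)}$, introducing separate integrals $I_1^m,I_2^m$ and invoking properties of $\Phi_{d,\alpha}^m$. You instead apply \eqref{cor4_1} upfront to replace $\kappa_m(|x|)$ by $C|x|^{-(d+\alpha)/2}$, which reduces the short-time estimate to the massless one (crossover at $t=|x|^\alpha$, exactly as in Proposition~\ref{lem1}). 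This is a genuine simplification: it bypasses $\Phi_{d,\alpha}^m$ entirely for the upper bound. The paper's finer decomposition is not needed here, though it does become relevant later for the two-sided estimates in Lemmas~\ref{lem:lowboundmassive} and \ref{lem:lowerboundRml}, where the exact crossover $\Phi_{d,\alpha}^m$ matters for the lower bound.
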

\begin{proof}
While in the massless case we split $R_\lambda(x)$ into two integrals, here we have to deal with four different
integrals depending on the range of $|x|$. Indeed, by \eqref{massive_heatker} it follows for $m>0$ that there exists
a constant $c_1>0$ such that
\begin{align}
\label{detailed3}
p^m_{t}(x)\leq
\begin{cases}
c_1 t|x|^{-(d+\alpha)/2}\kappa_m(|x|) & \mbox{if $t<\Phi_{d,\alpha}^m(|x|) \wedge \frac{1}{m}$} \\
\\
c_1 t^{-d/\alpha} & \mbox{if $\Phi_{d,\alpha}^m(|x|)< t \le \frac{1}{m}$}\\
\\
c_1 m^{d/\alpha-d/2}t^{-d/2}e^{-c(m^{1/\alpha}|x|)} & \mbox{if $\frac{1}{m}<t<m^{\frac{1}{\alpha}-1}|x|$}\\
\\
c_1 m^{d/\alpha-d/2}t^{-d/2}e^{-c(m^{2/\alpha-1}|x|^2/t)} & \mbox{if $m^{\frac{1}{\alpha}-1}|x| \vee \frac{1}{m} \le t$}.
\end{cases}
\end{align}
According to this split, we define	
\begin{eqnarray*}
&& I_1^m:=\int_0^{\Phi_{d,\alpha}^m(|x|)\wedge \frac{1}{m}}e^{-\lambda t}p_t^m(x)dt,
\qquad
 I_2^m:=\begin{cases}
			\displaystyle \int_{\Phi_{d,\alpha}^m(|x|)}^{1/m}e^{-\lambda t}p_t^m(x)dt & \mbox{if $\Phi_{d,\alpha}^m(|x|)<\frac{1}{m}$}\\
			0 & \mbox{otherwise}
		\end{cases} \\
&& I_3^m:=\begin{cases}
			\displaystyle \int_{1/m}^{m^{(1-\alpha)/\alpha}|x|} e^{-\lambda t}p_t^m(x)dt & \mbox{if $|x|^\alpha>\frac{1}{m}$}\\
			0 & \mbox{otherwise,}
		\end{cases}
\qquad
I_4^m:=\int_{m^{(1-\alpha)/\alpha}|x| \vee \frac{1}{m} }^\infty e^{-\lambda t}p_t^m(x)dt,
\end{eqnarray*}
so that
\begin{equation*}
R_\lambda(x)=I_1^m+I_2^m+I_3^m+I_4^m.
\end{equation*}
We will use \eqref{detailed3} to estimate each integral from above. To estimate $I_1^m$ we need to distinguish two
cases. If $\Phi^m_{d,\alpha}(|x|) \ge \frac{1}{m}$, we have
\begin{equation*}
I_1^m \le c_1 |x|^{-\frac{d+\alpha}{2}}\kappa_m(|x|)\int_0^{\frac{1}{m}}te^{-\lambda t}dt
\le \frac{c_1}{2m^2}|x|^{-\frac{d+\alpha}{2}}\kappa_m(|x|).
\end{equation*}
However, since $\frac{1}{m^2}\le \left(\Phi^m_{d,\alpha}(|x|)\right)^2$, we get
\begin{equation*}
I_1^m \le \frac{c_1}{2}|x|^{-\frac{d-\alpha}{2}}\kappa^{\frac{d-\alpha}{d+\alpha}}_m(|x|)
\le C_{d,\alpha}\frac{c_1}{2}|x|^{-(d-\alpha)},
\end{equation*}
where we also used \eqref{cor4_1}. In case $\Phi^m_{d,\alpha}(|x|)<\frac{1}{m}$, we get
\begin{equation*}
I_1^m \le c_1 |x|^{-\frac{d+\alpha}{2}}\kappa_m(|x|)\int_0^{\Phi_m^{d,\alpha}(|x|)}te^{-\lambda t}dt
\le
\frac{c_1}{2}|x|^{-\frac{d+\alpha}{2}}\kappa_m(|x|)(\Phi^m_{d,\alpha}(|x|))^2 \le C_{d,\alpha}\frac{c_1}{2}|x|^{-(d-\alpha)}.
\end{equation*}
We only need to estimate $I_2^m$ in case $\Phi^m_{d,\alpha}(|x|)<\frac{1}{m}$. Since $d \ge 3>\alpha$, we have
\begin{equation*}
I_2^m \le c_1\int_{\Phi_{d,\alpha}^m(|x|)}^{\frac{1}{m}}t^{-\frac{d}{\alpha}}e^{-\lambda t}dt
\le c_1\int_{\Phi_{d,\alpha}^m(|x|)}^{\infty}t^{-\frac{d}{\alpha}}dt=\frac{\alpha c_1}{d-\alpha}
\left(\Phi_{d,\alpha}^m(|x|)\right)^{-\frac{d-\alpha}{\alpha}} \le C_{d,\alpha}\frac{\alpha c_1}{d-\alpha}|x|^{-(d-\alpha)}.
\end{equation*}
Next we estimate $I_3^m$. We only need to consider the case $|x|^{\alpha} \ge \frac{1}{m}$ giving
\begin{align*}
I_3^m &\le c_1m^{\frac{d}{\alpha}-\frac{d}{2}}e^{-cm^{\frac{1}{\alpha}}|x|}\int_{\frac{1}{m}}^{m^{\frac{1-\alpha}{\alpha}}|x|}
e^{-\lambda t}t^{-\frac{d}{2}}dt
\le c_1m^{\frac{d}{\alpha}-\frac{d}{2}}e^{-cm^{\frac{1}{\alpha}}|x|}\int_{\frac{1}{m}}^{\infty}t^{-\frac{d}{2}}dt
=\frac{2c_1}{d-2}m^{\frac{d}{\alpha}-1}e^{-cm^{\frac{1}{\alpha}}|x|}\\
&=\frac{2c_1}{(d-2)c^{d-\alpha}|x|^{d-\alpha}}\left(cm^{\frac{1}{\alpha}}|x|\right)^{d-\alpha}e^{-cm^{\frac{1}{\alpha}}|x|}
\le \frac{2c_1(d-\alpha)^{d-\alpha}}{(d-2)e^{d-\alpha}c^{d-\alpha}|x|^{d-\alpha}}.
\end{align*}
Finally, consider $I_4^m$. With $|x|^\alpha \le \frac{1}{m}$ we obtain
\begin{align*}
I_4^m \le c_1 m^{\frac{d}{\alpha}-\frac{d}{2}}\int_{\frac{1}{m}}^{\infty}
e^{-\lambda t}t^{-\frac{d}{2}}e^{-cm^{\frac{2}{\alpha}-1}|x|^2/t}dt \le c_1m^{\frac{d}{\alpha}-\frac{d}{2}}
\int_{\frac{1}{m}}^{\infty}t^{-\frac{d}{2}}e^{-cm^{\frac{2}{\alpha}-1}|x|^2/t}dt.
\end{align*}
The change of variable $s=cm^{\frac{2}{\alpha}-1}|x|^2/t$ gives
\begin{align*}
I_4^m \le \frac{c_1m^{\frac{2}{\alpha}-1}}{c^{\frac{d-2}{2}}|x|^{d-2}}\int_{cm^{\frac{2}{\alpha}}|x|^2}^{\infty}
s^{\frac{d}{2}-2}e^{-s}ds
\le \frac{c_1m^{\frac{2}{\alpha}-1}}{c^{\frac{d-2}{2}}|x|^{d-2}}\Gamma\Big(\frac{d}{2}-1\Big).
\end{align*}
\end{proof}
\subsubsection{Proof of estimate \eqref{eq:equivrollmassive}}
\noindent
Since
\begin{equation*}
|x|^{\alpha-d}+m^{2/\alpha-1}|x|^{2-d} \le
2(1 \vee m^{2/\alpha-1})(|x|^{\alpha-d}\mathbf{1}_{\{|x| \le 1\}}+|x|^{2-d}\mathbf{1}_{\{|x|>1\}}),
\end{equation*}
the above also shows the upper bound in \eqref{eq:equivrollmassive}. The lower bound is shown in the next statement.
\begin{lemma}
\label{lem:lowboundmassive}
Let $\alpha \in (0,2)$ and $d \ge 3$. Then, for all $m>0$ there exists a constant $C_{d,\alpha,m}>0$ such that for all
$x \in \R^d \setminus \{0\}$
\begin{equation*}
R_0^m(x) \ge C_{d,\alpha,m}(|x|^{-(d-\alpha)}\mathbf{1}_{\{|x| \le 1\}}+|x|^{-(d-2)}\mathbf{1}_{\{|x|>1\}}).
\end{equation*}
\end{lemma}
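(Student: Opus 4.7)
The plan is to split along $\{|x|\le 1\}$ and $\{|x|>1\}$ and, in each case, isolate the optimal time window in the identity $R_0^m(x)=\int_0^\infty p_t^m(x)\,dt$, using the two-sided heat kernel estimate \eqref{massive_heatker}. The two scalings $|x|^{\alpha-d}$ and $|x|^{2-d}$ come from two genuinely different regimes: for small $|x|$ the short-time ``fractional'' piece dominates, while for large $|x|$ the long-time Gaussian piece dominates, reproducing the classical $(-\Delta)^{-1}$ scaling.

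For $|x|\le 1$, I would exploit the lower estimate $p_t^m(x)\ge c_2 t|x|^{-(d+\alpha)/2}\kappa_m(|x|)$ that holds on $t\in(0,\Phi^m_{d,\alpha}(|x|)\wedge 1/m)$ (the sharp version of \eqref{detailed3}). Integrating,
$$
R_0^m(x)\ge \tfrac{c_2}{2}|x|^{-(d+\alpha)/2}\kappa_m(|x|)\bigl(\Phi^m_{d,\alpha}(|x|)\wedge 1/m\bigr)^{2}.
$$
When $\Phi^m_{d,\alpha}(|x|)\le 1/m$ the right-hand side collapses, by the definition of $\Phi^m_{d,\alpha}$, to $\tfrac{c_2}{2}|x|^{(\alpha-d)/2}\kappa_m(|x|)^{(d-\alpha)/(d+\alpha)}$, and the asymptotic $\kappa_m(r)\sim 2^{(d+\alpha)/2-1}\Gamma((d+\alpha)/2)r^{-(d+\alpha)/2}$ from \eqref{besselasymp} (together with positivity and continuity of $\kappa_m$ on a bounded annulus away from $0$) gives $\kappa_m(|x|)^{(d-\alpha)/(d+\alpha)}\gtrsim_m |x|^{-(d-\alpha)/2}$, hence the desired $|x|^{\alpha-d}$ bound. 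In the opposite sub-case $\Phi^m_{d,\alpha}(|x|)>1/m$ the bracket becomes $1/m^2$, and the same lower asymptotic for $\kappa_m$ yields $R_0^m(x)\gtrsim_m |x|^{-(d+\alpha)}$, which is even stronger on $\{|x|\le 1\}$.

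For $|x|>1$, I would restrict the time integral to the ``heat-like'' window $t\ge T(x):=m^{(1-\alpha)/\alpha}|x|\vee 1/m$, where the lower bound reads $p_t^m(x)\ge c_2 m^{d/\alpha-d/2}t^{-d/2}e^{-cm^{2/\alpha-1}|x|^2/t}$ (note that on this window $m^{2/\alpha-1}|x|^2/t\le m^{1/\alpha}|x|$, so the exponent $m^{1/\alpha}|x|\wedge m^{2/\alpha-1}|x|^2/t$ is realized by the second term). The substitution $u=cm^{2/\alpha-1}|x|^2/t$ converts
$$
\int_{T(x)}^\infty t^{-d/2}e^{-cm^{2/\alpha-1}|x|^2/t}\,dt
=(cm^{2/\alpha-1}|x|^2)^{1-d/2}\int_0^{U(x)}u^{d/2-2}e^{-u}\,du,
$$
with $U(x)=cm^{2/\alpha-1}|x|^2/T(x)=cm^{1/\alpha}|x|\wedge cm^{2/\alpha}|x|^2\ge c(m^{1/\alpha}\wedge m^{2/\alpha})$ on $\{|x|>1\}$, a positive constant depending only on $m$. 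Since $d\ge 3$ the integrand $u^{d/2-2}e^{-u}$ is integrable at $0$, so the inner integral is bounded below by a positive $C_{d,\alpha,m}$. Collecting the powers of $m$ produces $R_0^m(x)\ge C_{d,\alpha,m}|x|^{2-d}$ on $\{|x|>1\}$.

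The main obstacle is the bookkeeping across the three time regimes in \eqref{massive_heatker}: for the small-$|x|$ estimate one must verify that the short-time integration window $(0,\Phi^m_{d,\alpha}(|x|)\wedge 1/m)$ really lies inside the regime where the fractional lower bound applies, and for the large-$|x|$ estimate one must check that $T(x)$ lies in the regime where the Gaussian lower bound is the effective one. The remainder is a matter of identifying $\Phi^m_{d,\alpha}$ with the correct power of $|x|$ and $\kappa_m(|x|)$, and of tracking the $m$-dependent constants that can then be absorbed into $C_{d,\alpha,m}$.
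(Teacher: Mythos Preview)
Your proposal is correct and follows essentially the same approach as the paper: both split into $\{|x|\le 1\}$ and $\{|x|>1\}$, use the short-time fractional lower bound for small $|x|$ (handling the two sub-cases according to whether $\Phi^m_{d,\alpha}(|x|)$ exceeds $1/m$), and for large $|x|$ reduce the long-time Gaussian integral via the substitution $u=cm^{2/\alpha-1}|x|^2/t$ to an incomplete Gamma integral bounded below by a positive $m$-dependent constant. The only cosmetic differences are that for $|x|>1$ the paper merges $I_3^m$ into $I_4^m$ (via $e^{-cm^{1/\alpha}|x|}\ge e^{-cm^{2/\alpha-1}|x|^2/t}$ on that window) so as to integrate from $1/m$ rather than from your $T(x)$, and for $|x|\le 1$ with $\Phi^m_{d,\alpha}(|x|)>1/m$ the paper recovers the $|x|^{-(d-\alpha)}$ exponent directly via $(\Phi^m_{d,\alpha}(1))^2$ rather than passing through the stronger bound $|x|^{-(d+\alpha)}$ as you do.
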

\begin{proof}
Now we use the lower bounds in \eqref{massive_heatker}. Thus there exists a constant $c_2>0$ such that
\begin{align}
\label{detailed4}
p^m_{t}(x)\geq
\begin{cases}
c_2 t|x|^{-(d+\alpha)/2}\kappa_m(|x|) & \mbox{if $t<\Phi_{d,\alpha}^m(|x|) \wedge \frac{1}{m}$} \\
\\
c_2 t^{-d/\alpha} & \mbox{if $\Phi_{d,\alpha}^m(|x|)< t \le \frac{1}{m}$}\\
\\
c_2 m^{d/\alpha-d/2}t^{-d/2}e^{-c(m^{1/\alpha}|x|)} & \mbox{if $\frac{1}{m}<t<m^{\frac{1}{\alpha}-1}|x|$}\\
\\
c_2 m^{d/\alpha-d/2}t^{-d/2}e^{-c(m^{2/\alpha-1}|x|^2/t)} & \mbox{if $m^{\frac{1}{\alpha}-1}|x| \vee\frac{1}{m} \le t$}.
\end{cases}
\end{align}	
Consider the case $|x|>1$ and let first $|x|^\alpha > \frac{1}{m}$. We obtain
\begin{equation*}
I_3^m \ge c_2 m^{\frac{d}{\alpha}-\frac{d}{2}}e^{-cm^{\frac{1}{\alpha}}|x|}
\int_{\frac{1}{m}}^{m^{\frac{1-\alpha}{\alpha}}|x|}t^{-\frac{d}{2}}dt \ge c_2 m^{\frac{d}{\alpha}-\frac{d}{2}} \int_{\frac{1}{m}}^{m^{\frac{1-\alpha}{\alpha}}|x|}t^{-\frac{d}{2}}e^{-cm^{\frac{2}{\alpha}-1}|x|^2/t}dt,
\end{equation*}
since for $t \le m^{\frac{1-\alpha}{\alpha}}|x|$ we clearly have $e^{-cm^{\frac{1}{\alpha}}|x|} \ge
e^{-cm^{\frac{2}{\alpha}-1}|x|^2/t}$. Hence for all $|x|>1$ we get
\begin{equation*}
R_0^m(x)\ge I_3^m+I_4^m \ge c_2m^{\frac{d}{\alpha}-\frac{d}{2}} \int_{\frac{1}{m}}^{\infty}t^{-\frac{d}{2}}
e^{-cm^{\frac{2}{\alpha}-1}|x|^2/t}dt.
\end{equation*}
The substitution $s=\frac{cm^{\frac{2}{\alpha}-1}|x|^2}{t}$ then gives
\begin{equation*}
R_0^m(x)\ge c_2m^{\frac{2}{\alpha}-1} c^{-\frac{d-2}{2}}|x|^{-(d-2)} \int_{0}^{cm^{\frac{2}{\alpha}}
|x|^2}s^{\frac{d}{2}-2}e^{-s}ds \ge c_2m^{\frac{2}{\alpha}-1} c^{-\frac{d-2}{2}}|x|^{-(d-2)}
\int_{0}^{cm^{\frac{2}{\alpha}}}s^{\frac{d}{2}-2}e^{-s}ds
\end{equation*}
where the integral at the right-hand side is finite since $d \ge 3$. Next consider $|x| \le 1$. If
$\Phi_{d,\alpha}^m(|x|) \le \frac{1}{m}$, then
\begin{align*}
R_0^m(x) \ge I_1^m \ge  c_2|x|^{-\frac{d+\alpha}{2}}\kappa_m(|x|)\int_0^{\Phi_{d,\alpha}^m(|x|)}t\,dt
=\frac{c_2}{2}|x|^{-\frac{d+\alpha}{2}}\kappa_m(|x|)(\Phi_{d,\alpha}^m(|x|)^2 \ge C_{d,\alpha}|x|^{-(d-\alpha)},
\end{align*}
where we used \eqref{eq:controlPhi2} and the fact that $\kappa_m(|x|) \ge C_{d,\alpha}|x|^{-\frac{d+\alpha}{2}}$
since $|x| \le 1$. If $\Phi_{d,\alpha}^m(|x|)>\frac{1}{m}$, we have
\begin{align*}
R_0^m(x) &\ge I_1^m \ge  c_2|x|^{-\frac{d+\alpha}{2}}\kappa_m(|x|)\int_0^{\frac{1}{m}}t\,dt
=\frac{c_2}{2m^2}|x|^{-\frac{d+\alpha}{2}}\kappa_m(|x|) \\
&\ge \frac{c_2}{2m^2(\Phi_{d,\alpha}^m(1))^2}|x|^{-\frac{d+\alpha}{2}}\kappa_m(|x|)(\Phi_{d,\alpha}^m(|x|))^2
\ge c_{d,\alpha,m}|x|^{-(d-\alpha)}.
\end{align*}
\end{proof}
\begin{remark}
{\rm
Notice that, up to a multiplicative constant, the term $|x|^{-(d-2)}$ represents the $0$-resolvent kernel of the
Laplacian. Indeed, we could say that the $0$-potential $R_0^m$ interpolates between the massless case $R_0(x)=
C_{d,\alpha}|x|^{-(d-\alpha)}$ and the classical Newtonian case (i.e. $\alpha=2$) that we denote here by
$R_0^{\rm N}(x)=C_d|x|^{-(d-2)}$. To observe this, we introduce a constant $c_\ell$ and write $c_\ell^2m$ in
place of $m$; this constant is physically motivated and it stands for the speed of light (which was set to $1$
in units used in mathematical physics). Taking $m \downarrow 0$ and using \cite[Lem. 3.2]{CS06} it follows that
$[\cdot]_{0,\alpha}={\rm M}-\lim_{m \to 0}[\cdot]_{c_\ell^2m,\alpha}$, which implies by an application of
\cite[Prop. 2.1]{CS06} the strong resolvent convergence of the operators $-L_{c_\ell^2m,\alpha}$ to
$-L_{0,\alpha}$ as $m \downarrow 0$. On the other hand, if we consider the classical limit $c_\ell \to \infty$,
we obtain again by \cite[Lem. 3.2]{CS06} that $[\cdot]_{0,2}={\rm M}-\lim_{c_\ell \to \infty}
[\cdot]_{c_\ell^2m,\alpha}$, where $[u]_{0,2}=\Norm{\nabla u}{2}$ for all $u \in H^1(\R^d)$. Hence
$L_{c_\ell^2m,\alpha}$ converges to $-\Delta$ in strong resolvent sense.
}
\end{remark}

\subsubsection{Proof of estimate \eqref{forrollext1}}
\noindent
We start by the following preliminary estimate.
\begin{lemma}
\label{lem3}
Let $\alpha \in (0,2)$, $d\geq1$ and $m>0$. There exists $C_{d,\alpha}>0$ such that
\begin{equation*}
R^m_\lambda(x)\leq C_{d,\alpha}
\begin{cases}
\Big(1+\frac{me^{-\lambda/m}}{\lambda}\Big)\frac{1}{|x|^{d-\alpha}} & \alpha<d \\
\\
\log\left(1+\frac{1}{\lambda |x|}\right)+1+\frac{m^{1/\alpha}}{\lambda}e^{-\lambda/m} & \alpha=d=1\\
\\
|x|^{\alpha-1}\wedge1+\lambda^{-\frac{\alpha-1}{\alpha}}+\frac{m^{1/\alpha}}{\lambda}e^{-\lambda/m} & \alpha>d=1
\end{cases}
\end{equation*}
holds for all $x\in\mathbb{R}^d$ and $\lambda > 0$.
\end{lemma}

\begin{proof}
As in the proof of Proposition \ref{lem3nol}, we have
\begin{equation*}
I_1^m\le C_{d,\alpha}|x|^{-(d-\alpha)}
\end{equation*}
if $\alpha\leq d$. If $\Phi_{d,\alpha}^m(|x|)\geq{\frac{1}{m}}$ and $\alpha>d=1$, by \eqref{masslesskappa}, there exists a small $m_0>0$ such that
\begin{equation}
\kappa_m(r)\geq2^{\frac{1+\alpha}{2}-2}\Gamma\left(\frac{1+\alpha}{2}\right)r^{-\frac{1+\alpha}{2}}\label{lem3_1}
\end{equation}
for $0<m<m_0$. We thus have
\begin{equation*}
I_1^m\leq\frac{c_1}{2}|x|^{-\frac{1-\alpha}{2}}\kappa_m^{\frac{1-\alpha}{1+\alpha}}(|x|)\leq C_{d,\alpha}|x|^{\alpha-1}
\end{equation*}
for $|x|<1$ or $0<m<m_0$ by \eqref{cor4_2} and \eqref{lem3_1} noting $(1-\alpha)/(1+d)<0$. In the case were $|x|\geq1$ and $m\geq m_0$, since $2-(1+1/\alpha)=1-1/\alpha>0$ and
\begin{equation*}
\frac{1}{m^{1+\frac{1}{\alpha}}}\leq\left(\Phi_{d,\alpha}^m(|x|)\right)^{1+\frac{1}{\alpha}},
\end{equation*}
we estimate
\begin{gather*}
I_1^m\leq\frac{c_1}{2m^2}|x|^{-\frac{1+\alpha}{2}}\kappa_m(|x|)\leq\frac{c_1}{2m_0^{1-\frac{1}{\alpha}}}|x|^{-\frac{1+\alpha}{2}}\kappa_m(|x|)\left(\Phi_{d,\alpha}^m(|x|)\right)^{1+\frac{1}{\alpha}}=\frac{c_1}{2m_0^{1-\frac{1}{\alpha}}}.
\end{gather*}
If $\Phi_{d,\alpha}^m(|x|)<{\frac{1}{m}}$ and $\alpha>d=1$, we have $I_1^m\leq C_{d,\alpha}|x|^{\alpha-1}$ for $|x|<1$ or $0<m<m_0$ in the same way above. For $|x|\geq1$ and $m\geq m_0$, we also have
\begin{gather*}
I_1^m\leq\frac{c_1}{2}|x|^{-\frac{1+\alpha}{2}}\kappa_m(|x|)(\Phi_{d,\alpha}^m(|x|))^2\nonumber\\
\leq\frac{c_1}{2m_0^{1-\frac{1}{\alpha}}}|x|^{-\frac{1+\alpha}{2}}\kappa_m(|x|)\left(\Phi_{d,\alpha}^m(|x|)\right)^{1+\frac{1}{\alpha}}=\frac{c_1}{2m_0^{1-\frac{1}{\alpha}}}
\end{gather*}
since
\begin{equation*}
\left(\Phi_{d,\alpha}^m(|x|)\right)^{1-\frac{1}{\alpha}}<\frac{1}{m^{1-\frac{1}{\alpha}}}.
\end{equation*}
If $d>\alpha$, we obtain
\begin{equation*}
I_2^m\le C_{d,\alpha}|x|^{-(d-\alpha)}.
\end{equation*}
If $d=\alpha=1$, by making use of \eqref{eq:bracket} it follows that
\begin{align*}
I_2^m \le c_1\int_{\Phi_{1,1}^m(|x|)}^{\frac{1}{m}}t^{-1}e^{-\lambda t}dt
&\le c_1\int_{\Phi_{1,1}^m(|x|)}^{\infty}t^{-1}e^{-\lambda t}dt
=c_1\int_{\lambda \Phi_{1,1}^m(|x|)}^{\infty}t^{-1}e^{- t}dt\\
&=c_1E_1(\lambda\Phi_{1,1}^m(|x|)) \le c_1\log\left(1+\frac{1}{C_{d,\alpha}\lambda|x|}\right)
\le C_{d,\alpha}\log\left(1+\frac{1}{\lambda|x|}\right).
\end{align*}
Finally, if $\alpha>d=1$, then
\begin{equation*}
I_2^m \le c_1\int_{\Phi_{1,\alpha}^m(|x|)}^{\frac{1}{m}}t^{-\frac{1}{\alpha}}e^{-\lambda t}dt
\le c_1\int_{0}^{\infty}t^{-\frac{1}{\alpha}}e^{-\lambda t}dt
=c_1\lambda^{-\frac{\alpha-1}{\alpha}}\Gamma\left(\frac{\alpha-1}{\alpha}\right).
\end{equation*}
Next we estimate $I_3^m$. It is necessary to do this only if $|x|^\alpha>\frac{1}{m}$. In this case by
\eqref{detailed3} we get
\begin{equation}
\label{eq:I3m}
I_3^m \le c_1 m^{\frac{d}{\alpha}-\frac{d}{2}}e^{-cm^{\frac{1}{\alpha}}|x|}
\int_{\frac{1}{m}}^{m^{\frac{1-\alpha}{\alpha}}|x|}e^{-\lambda t}t^{-\frac{d}{2}}dt
\le \frac{c_1 m^{\frac{d}{\alpha}}}{\lambda}e^{-cm^{\frac{1}{\alpha}}|x|-\frac{\lambda}{m}}.
\end{equation}
It is clear that if $\alpha \ge d=1$
\begin{equation*}
I_3^m \le C_{1,\alpha}\frac{m^{\frac{1}{\alpha}}}{\lambda}e^{-\frac{\lambda}{m}}.
\end{equation*}
For $\alpha<d$, by \eqref{eq:ineqexpon} we have
\begin{equation*}
I_3^m \le \frac{c_1 m}{\lambda c^{d-\alpha}|x|^{d-\alpha}}\left(cm^{\frac{1}{\alpha}}|x|\right)^{d-\alpha}
e^{-cm^{\frac{1}{\alpha}}|x|}e^{-\frac{\lambda}{m}}
\le \frac{c_1 m (d-\alpha)^{d-\alpha}}{\lambda c^{d-\alpha}e^{d-\alpha}|x|^{d-\alpha}}e^{-\frac{\lambda}{m}}.
\end{equation*}
Finally, for $I_4^m$ we have two cases. If $|x|^\alpha \le \frac{1}{m}$, then for $d>\alpha$ we obtain
\begin{align*}
I_4^m &\le c_1 m^{\frac{d}{\alpha}-\frac{d}{2}}\int_{\frac{1}{m}}^{\infty}e^{-\lambda t}t^{-\frac{d}{2}}
e^{-cm^{\frac{2}{\alpha}-1}|x|^2/t}dt\\
&\le
\frac{c_1 m^{1-\frac{\alpha}{2}}}{c^{\frac{d-\alpha}{2}}|x|^{d-\alpha}}\int_{\frac{1}{m}}^{\infty}
e^{-\lambda t}t^{-\frac{\alpha}{2}} \left(cm^{\frac{2}{\alpha}-1}|x|^2/t\right)^{\frac{d-\alpha}{2}}
e^{-cm^{\frac{2}{\alpha}-1}|x|^2/t}dt \\
&\le
\frac{c_1 m^{1-\frac{\alpha}{2}}}{e^{\frac{d-\alpha}{2}}c^{\frac{d-\alpha}{2}}|x|^{d-\alpha}}
\left(\frac{d-\alpha}{2}\right)^{\frac{d-\alpha}{2}}\int_{\frac{1}{m}}^{\infty}e^{-\lambda t}
t^{-\frac{\alpha}{2}} dt \le \frac{c_2 m e^{-\frac{\lambda}{m}}}{e^{\frac{d-\alpha}{2}}
c^{\frac{d-\alpha}{2}}\lambda |x|^{d-\alpha}}\left(\frac{d-\alpha}{2}\right)^{\frac{d-\alpha}{2}}.
\end{align*}
For $\alpha \ge d=1$ we estimate like
\begin{align*}
I_4^m \le c_1 m^{\frac{1}{\alpha}-\frac{1}{2}}\int_{\frac{1}{m}}^{\infty}e^{-\lambda t}t^{-\frac{1}{2}}
e^{-cm^{\frac{2}{\alpha}-1}|x|^2/t}dt \le c_1 \frac{m^{\frac{1}{\alpha}}}{\lambda}e^{-\frac{\lambda}{m}}.
\end{align*}
In case $|x|^{\alpha}>\frac{1}{m}$, we obtain
\begin{align*}
I_4^m &\le c_1 m^{\frac{d}{\alpha}-\frac{d}{2}}\int_{m^{\frac{1-\alpha}{\alpha}}|x|}^{\infty}e^{-\lambda t}
t^{-\frac{d}{2}} e^{-cm^{\frac{2}{\alpha}-1}|x|^2/t}dt\le c_1 m^{\frac{d}{\alpha}-\frac{d}{2}}
\int_{\frac{1}{m}}^{\infty}e^{-\lambda t}t^{-\frac{d}{2}} e^{-cm^{\frac{2}{\alpha}-1}|x|^2/t}dt
\end{align*}
and then we proceed as in the previous case.
\end{proof}

The following estimate is sharper for large values of $|x|$.
\begin{lemma}
\label{lem5}
Let $\alpha \in (0,2)$, $d \geq 1$ and $m>0$. Then there exists $C_{d,\alpha}>0$ such that for all
$x \in \R^d\setminus \{0\}$ and $\lambda>0$ we have
\begin{equation*}
R^m_\lambda(x)\leq C_{d,\alpha}\left(\frac{1}{\lambda^2}+\frac{e^{-\frac{\lambda}{m}}} {m\lambda}
+\frac{e^{-\frac{\lambda}{m}}}{\lambda^2}\right)\frac{1}{|x|^{d+\alpha}}.\label{lem5_1}
\end{equation*}
\end{lemma}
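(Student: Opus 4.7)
The plan is to split $R^m_\lambda(x)$ exactly as in the proof of Lemma \ref{lem3nol}, writing
\begin{equation*}
R^m_\lambda(x) = I_1^m + I_2^m + I_3^m + I_4^m,
\end{equation*}
with the four integrals defined via the four-region heat-kernel bound \eqref{detailed3}, and then to estimate each piece so that it is controlled by $|x|^{-(d+\alpha)}$ times one of the three terms on the right-hand side. The strategy differs from that of Lemmas \ref{lem3nol}--\ref{lem3}: there we integrated out the $|x|$-dependence to obtain spatial decay of order $d-\alpha$ or $d-2$; here instead we keep the full spatial factor $|x|^{-(d+\alpha)}$ and spend the exponential factors $e^{-\lambda t}$ and $e^{-c m^{2/\alpha-1}|x|^2/t}$ to get $\lambda$-dependent coefficients.

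First I would handle $I_1^m + I_2^m$ together. For $t \le 1/m$ the heat kernel satisfies $p_t^m(x) \le c_1 t |x|^{-(d+\alpha)/2}\kappa_m(|x|)$, and by the uniform bound \eqref{cor4_1} this is $\le C_{d,\alpha} c_1 t |x|^{-(d+\alpha)}$. Therefore
\begin{equation*}
I_1^m+I_2^m \le C_{d,\alpha} c_1 |x|^{-(d+\alpha)}\int_0^{1/m} t\, e^{-\lambda t}\, dt
\le \frac{C_{d,\alpha}c_1}{\lambda^2}|x|^{-(d+\alpha)},
\end{equation*}
producing the first term. For $I_3^m$ (nonzero only when $|x|^\alpha>1/m$) I reuse the estimate \eqref{eq:I3m} from Lemma \ref{lem3},
\begin{equation*}
I_3^m \le \frac{c_1 m^{d/\alpha}}{\lambda}\, e^{-cm^{1/\alpha}|x|}\, e^{-\lambda/m},
\end{equation*}
and apply \eqref{eq:ineqexpon} with $p=d+\alpha$ and $s=cm^{1/\alpha}|x|$ to turn the spatial exponential into an algebraic $|x|^{-(d+\alpha)}$ factor: $e^{-cm^{1/\alpha}|x|}\le(d+\alpha)^{d+\alpha}e^{-(d+\alpha)}(cm^{1/\alpha}|x|)^{-(d+\alpha)}$. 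A short computation yields
\begin{equation*}
I_3^m \le \frac{C_{d,\alpha}\, e^{-\lambda/m}}{m\lambda\, |x|^{d+\alpha}},
\end{equation*}
which is the second term.

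The main obstacle is $I_4^m$, and it is the step I expect to be the most delicate because it has to produce the full incomplete-Gamma factor. Here I would use the bound $p_t^m(x)\le c_1 m^{d/\alpha-d/2} t^{-d/2}e^{-c m^{2/\alpha-1}|x|^2/t}$ and extend the lower limit of integration down to $1/m$ (which is legitimate since $m^{(1-\alpha)/\alpha}|x|\vee 1/m \ge 1/m$). Then I trade the Gaussian-type decay in $|x|$ for algebraic decay via \eqref{eq:ineqexpon} with $p=(d+\alpha)/2$ applied to $s=c m^{2/\alpha-1}|x|^2/t$, giving
\begin{equation*}
e^{-cm^{2/\alpha-1}|x|^2/t}\le C_{d,\alpha}\Bigl(\tfrac{t}{cm^{2/\alpha-1}|x|^2}\Bigr)^{(d+\alpha)/2}.
\end{equation*}
Substituting this in and collecting powers of $t$ leaves $t^{-d/2+(d+\alpha)/2}=t^{\alpha/2}$, so
\begin{equation*}
I_4^m \le \frac{C_{d,\alpha}\, m^{d/\alpha-d/2-(d+\alpha)(2/\alpha-1)/2}}{|x|^{d+\alpha}}\int_{1/m}^{\infty}e^{-\lambda t}\,t^{\alpha/2}\,dt.
\end{equation*}
The exponent of $m$ simplifies to $\alpha/2-1$, and the change of variable $s=\lambda t$ turns the remaining integral into $\lambda^{-\alpha/2-1}\Gamma\!\left(\tfrac{\alpha}{2}+1;\tfrac{\lambda}{m}\right)$, yielding the third term. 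Summing the four contributions completes the proof.
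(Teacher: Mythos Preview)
Your proposal is correct and follows essentially the same route as the paper: the same four-piece split of $R^m_\lambda$, the identical treatment of $I_3^m$ via \eqref{eq:I3m} combined with \eqref{eq:ineqexpon} at exponent $d+\alpha$, and the identical treatment of $I_4^m$ via \eqref{eq:ineqexpon} at exponent $\tfrac{d+\alpha}{2}$ to produce the incomplete Gamma term. The one place where you diverge is in handling $I_1^m+I_2^m$: instead of bounding $I_2^m$ through the $t^{-d/\alpha}$ branch of \eqref{detailed3} and then invoking \eqref{eq:controlPhi2} as the paper does, you observe directly from the minimum in \eqref{massive_heatker} that $p_t^m(x)\le c_1\,t\,|x|^{-(d+\alpha)/2}\kappa_m(|x|)$ holds for \emph{all} $t\le 1/m$, and then apply \eqref{cor4_1}; this is a legitimate shortcut that reaches the same $\lambda^{-2}|x|^{-(d+\alpha)}$ bound with less work.
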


\begin{proof}
We estimate $I^m_j$, $j=1,2,3,4$, in the proof of Lemma \ref{lem3} in different ways. First consider $I_1^m$. By \eqref{cor4_1} we get
	\begin{equation*}
	I_1^m \le c_1|x|^{-\frac{d+\alpha}{2}}\kappa_m(|x|)\int_0^{\min\left\{\Phi_{d,\alpha}^m(|x|),\frac{1}{m}\right\}}te^{-\lambda t}dt \le c_1|x|^{-\frac{d+\alpha}{2}}\kappa_m(|x|)\int_0^{\infty}te^{-\lambda t}dt \le \frac{C_{d,\alpha}}{\lambda^2 |x|^{d+\alpha}}.
	\end{equation*}
	Next consider $I_2^m$. We need to obtain an estimate only for the case $\Phi_{d,\alpha}^m(|x|)<\frac{1}{m}$. We have
	\begin{align*}
		I_2^m &\le c_1\int_{\Phi_{d,\alpha}^m(|x|)}^{\frac{1}{m}}t^{-\frac{d}{\alpha}}e^{-\lambda t}dt \le \frac{2 c_1}{\lambda e}\int_{\Phi_{d,\alpha}^m(|x|)}^{\infty}t^{-\frac{d}{\alpha}-1}e^{-\frac{\lambda t}{2}}dt \le \frac{2 \alpha  c_1}{d \lambda e}e^{-\frac{\lambda \Phi_{d,\alpha}^m(|x|)}{2}}(\Phi_{d,\alpha}^m(|x|))^{-\frac{d}{\alpha}}\\
		&=\frac{4 \alpha  c_1}{d \lambda^2 e}\left(\frac{\lambda \Phi_{d,\alpha}^m(|x|)}{2}\right)
e^{-\frac{\lambda \Phi_{d,\alpha}^m(|x|)}{2}}(\Phi_{d,\alpha}^m(|x|))^{-\frac{d}{\alpha}-1} \le \frac{4 \alpha  c_1}{d \lambda^2 e^2}
(\Phi_{d,\alpha}^m(|x|))^{-\frac{d+\alpha}{\alpha}} \le \frac{C_{d,\alpha}}{\lambda^2|x|^{d+\alpha}},
	\end{align*}
	where we also used \eqref{eq:controlPhi2}. Next we need to control $I_3^m$. We only need to consider the case $|x|^\alpha>\frac{1}{m}$.
Starting from \eqref{eq:I3m} and using \eqref{eq:ineqexpon} we obtain
	\begin{align*}
I_3^m \le \frac{c_1 m^{\frac{d}{\alpha}}}{\lambda}e^{-cm^{\frac{1}{\alpha}}|x|-\frac{\lambda}{m}}&=\frac{c_1}{\lambda c^{d+\alpha}m|x|^{d+\alpha}}\left(cm^{\frac{1}{\alpha}}|x|\right)^{d+\alpha}e^{-cm^{\frac{1}{\alpha}}|x|-\frac{\lambda}{m}} \\
&\le \frac{(d+\alpha)^{d+\alpha}c_1}{\lambda e^{d+\alpha}c^{d+\alpha}m|x|^{d+\alpha}}e^{-\frac{\lambda}{m}}.
	\end{align*}
Finally, we consider $I_4^m$. If $|x|^\alpha\leq\frac{1}{m}$, we have
\begin{align*}
I_4^m&\leq c_1 m^{\frac{d}{\alpha}-\frac{d}{2}}\int_{\frac{1}{m}}^{\infty}e^{-\lambda t}t^{-\frac{d}{2}} e^{-cm^{\frac{2}{\alpha}-1}|x|^2/t}dt\\
&\leq\frac{c_1 m^{\frac{\alpha}{2}-1}}{c^{\frac{d+\alpha}{2}}|x|^{d+\alpha}}\int_{\frac{1}{m}}^{\infty}e^{-\lambda t}t^{\frac{\alpha}{2}} \left(cm^{\frac{2}{\alpha}-1}|x|^2/t\right)^{\frac{d+\alpha}{2}} e^{-cm^{\frac{2}{\alpha}-1}|x|^2/t}dt\\
&\leq\frac{c_1 m^{\frac{\alpha}{2}-1}}{e^{\frac{d+\alpha}{2}}c^{\frac{d+\alpha}{2}}|x|^{d+\alpha}}\left(\frac{d+\alpha}{2}\right)^{\frac{d+\alpha}{2}}\int_{\frac{1}{m}}^{\infty}e^{-\lambda t}t^{\frac{\alpha}{2}} dt\\
&\leq\frac{c_1}{e^{\frac{d+\alpha}{2}}c^{\frac{d+\alpha}{2}}|x|^{d+\alpha}}\left(\frac{d+\alpha}{2}\right)^{\frac{d+\alpha}{2}}\left(\frac{e^{-\frac{\lambda}{m}}}{m\lambda}+\frac{\alpha e^{-\frac{\lambda}{m}}}{2\lambda^2}\right)
\end{align*}
where we computed
\begin{gather*}
\int_{\frac{1}{m}}^{\infty}e^{-\lambda t}t^{\frac{\alpha}{2}}dt=\frac{e^{-\frac{\lambda}{m}}}{\lambda m^{\frac{\alpha}{2}}}+\frac{\alpha}{2\lambda}\int_{\frac{1}{m}}^{\infty}e^{-\lambda t}t^{\frac{\alpha}{2}-1}dt\leq\frac{e^{-\frac{\lambda}{m}}}{\lambda m^{\frac{\alpha}{2}}}+\frac{\alpha}{2\lambda m^{\frac{\alpha}{2}-1}}\int_{\frac{1}{m}}^{\infty}e^{-\lambda t}dt
\end{gather*}
integrating by parts and noting $\alpha/2-1<0$. If $|x|^\alpha>\frac{1}{m}$, we have instead
	\begin{align*}
		I_4^m &\le c_1 m^{\frac{d}{\alpha}-\frac{d}{2}}\int_{m^{\frac{1-\alpha}{\alpha}}|x|}^{\infty}e^{-\lambda t}t^{-\frac{d}{2}} e^{-cm^{\frac{2}{\alpha}-1}|x|^2/t}dt \le c_1 m^{\frac{d}{\alpha}-\frac{d}{2}}\int_{\frac{1}{m}}^{\infty}e^{-\lambda t}t^{-\frac{d}{2}} e^{-cm^{\frac{2}{\alpha}-1}|x|^2/t}dt
	\end{align*}
	and we proceed as in the previous case.
\end{proof}


Combining Lemmas \ref{lem3} and \ref{lem5}´we get the following result.
\begin{proposition}
\label{cor:comparison2}
Let $\alpha \in (0,2)$, $d \ge 1$. Then for all $\lambda>0$ and $m > 0$ there exists a constant
$C_{d,\alpha,\lambda,m}$ such that
\begin{equation*}
R_\lambda^m(x) \le C_{d,\alpha,\lambda,m}H^0_{d,\alpha}(|x|),
\end{equation*}
for all $x \in \R^d\setminus \{0\}$, where $H^0_{d,\alpha}$ is defined in \eqref{eq:Hda0}.
\end{proposition}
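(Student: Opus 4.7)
The plan is to split the estimate according to whether $|x|\le 1$ or $|x|>1$ and apply the appropriate one of Lemmas~\ref{lem3} and~\ref{lem5} in each region, absorbing all residual $\lambda$- and $m$-dependent terms into the constant $C_{d,\alpha,\lambda,m}$ (which we allow to depend on both parameters).

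First I would handle the large-distance regime $|x|>1$. Here Lemma~\ref{lem5} yields
$$
R^m_\lambda(x) \le C_{d,\alpha}\left(\frac{1}{\lambda^2}+\frac{e^{-\lambda/m}}{m\lambda}
+\frac{m^{\alpha/2-1}}{\lambda^{\alpha/2+1}}\Gamma\!\left(\tfrac{\alpha}{2}+1;\tfrac{\lambda}{m}\right)\right)
\frac{1}{|x|^{d+\alpha}}.
$$
The prefactor is a finite quantity depending only on $d,\alpha,\lambda,m$, and $|x|^{-(d+\alpha)}$ is exactly what the definition of $H^0_{d,\alpha}$ prescribes in the region $\{r>1\}$ across all three cases ($\alpha<d$, $\alpha=d=1$, $\alpha>d=1$), up to the explicit factor $2^{(d+\alpha)/2-1}\Gamma(\tfrac{d+\alpha}{2})$. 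Hence the estimate is immediate in this regime.

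Next I would turn to the small-distance regime $|x|\le 1$, where the three cases of Lemma~\ref{lem3} must be compared to the three cases of $H^0_{d,\alpha}$. For $\alpha<d$ the two sides already match up to constants, so nothing more is needed. For $\alpha>d=1$ one has $|x|^{\alpha-1}\le 1$ and the remaining terms $\lambda^{-(\alpha-1)/\alpha}+ m^{1/\alpha}\lambda^{-1}e^{-\lambda/m}$ are bounded by a constant depending on $\lambda,m$, so the whole right-hand side is $\le C_{1,\alpha,\lambda,m}\mathbf{1}_{\{r\le 1\}}$, matching $H^0_{1,\alpha}$. The slightly more delicate case is $\alpha=d=1$: here Lemma~\ref{lem3} gives the bound $C\bigl(\log(1+\tfrac{1}{\lambda|x|})+1+\tfrac{m}{\lambda}e^{-\lambda/m}\bigr)$, and I would compare $\log(1+\tfrac{1}{\lambda|x|})$ with $\log(1+\tfrac{1}{|x|})$. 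An elementary monotonicity argument (depending on whether $\lambda\ge 1$ or $\lambda<1$) combined with the subadditivity of $t\mapsto\log(1+t)$ shows that $\log(1+\tfrac{1}{\lambda|x|})\le \log(1+\tfrac{1}{|x|})+|\log\lambda|$. Since $\log(1+\tfrac{1}{|x|})\ge \log 2$ for $|x|\le 1$, the additive constants $1$, $|\log\lambda|$, and $\tfrac{m}{\lambda}e^{-\lambda/m}$ can all be absorbed into a multiplicative constant $C_{1,1,\lambda,m}$ times $\log(1+\tfrac{1}{|x|})$, matching $H^0_{1,1}$ in $\{r\le 1\}$.

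The main (and only nontrivial) obstacle is the bookkeeping in the $\alpha=d=1$ case, namely the absorption of the $\lambda$-dependent shift inside the logarithm. Once this is handled, putting the two regions together via $H^0_{d,\alpha}(r)=H^0_{d,\alpha}(r)\mathbf{1}_{\{r\le 1\}}+H^0_{d,\alpha}(r)\mathbf{1}_{\{r>1\}}$ and taking $C_{d,\alpha,\lambda,m}$ to be the maximum of the two constants produced above delivers the claim.
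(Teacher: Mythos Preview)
Your proposal is correct and is exactly the approach the paper takes: the paper merely states that the result follows by combining Lemmas~\ref{lem3} and~\ref{lem5}, and your argument is precisely the case-by-case verification of this combination. The only detail the paper suppresses that you spell out is the absorption of the $\lambda$-shift inside the logarithm in the case $\alpha=d=1$, and your treatment of it is sound.
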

This shows \eqref{forrollext1} for $m>0$.

\subsubsection{Proof of estimate \eqref{eq:equivrollext2}}
\noindent
To provide the upper bound we improve the bound given in Lemma \ref{lem5} for large values of $\lambda$.
\begin{lemma}
\label{lem:Rlambdam}
Let $\alpha \in (0,2)$, $d \geq 1$ and $m>0$. Then there exists a constant $C_{d,\alpha}>0$ such that for
all $\lambda>m$ and $x \in \R^d \setminus \{0\}$
\begin{equation*}
R^m_\lambda(x) \le C_{d,\alpha} \frac{|x|^{-\frac{d+\alpha}{2}}\kappa_m(|x|)}{(\lambda-m)^2}.
\end{equation*}
\end{lemma}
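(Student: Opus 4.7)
The plan is to separate off the constant shift encoded in $L_{m,\alpha}$. Writing $M := L_{m,\alpha} + m = (-\Delta+m^{2/\alpha})^{\alpha/2}$, a non-negative self-adjoint operator, the identity $e^{-tL_{m,\alpha}} = e^{mt}e^{-tM}$ shows that the integral kernel $q_t(x)$ of $e^{-tM}$ equals $e^{-mt}p^m_t(x)$. Then
\begin{equation*}
R^m_\lambda(x) \;=\; \int_0^\infty e^{-\lambda t}p^m_t(x)\,dt \;=\; \int_0^\infty e^{-(\lambda-m)t}\,q_t(x)\,dt,
\end{equation*}
and the integral converges precisely under the hypothesis $\lambda>m$. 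The lemma follows at once from the uniform-in-$t$ bound
\begin{equation*}
q_t(x) \;\le\; C_{d,\alpha}\,t\,|x|^{-(d+\alpha)/2}\,\kappa_m(|x|) \qquad \text{for all } t>0,\; x\neq 0,
\end{equation*}
since integrating against $e^{-(\lambda-m)t}$ produces exactly $\int_0^\infty t\,e^{-(\lambda-m)t}\,dt = (\lambda-m)^{-2}$.

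To establish the pointwise bound on $q_t$, I would invoke Bochner subordination. Since $u\mapsto u^{\alpha/2}$ is the Bernstein function associated with the $\alpha/2$-stable subordinator, writing $f_t^{(\alpha/2)}$ for the density of that subordinator at time $t$ gives the representation
\begin{equation*}
q_t(x) \;=\; \int_0^\infty f_t^{(\alpha/2)}(s)\,e^{-m^{2/\alpha}s}\,(4\pi s)^{-d/2}\,e^{-|x|^2/(4s)}\,ds.
\end{equation*}
The key ingredient is the universal bound $f_t^{(\alpha/2)}(s) \le C_\alpha\,t\,s^{-1-\alpha/2}$ holding for every $t,s>0$. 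This one obtains from the self-similarity $f_t^{(\alpha/2)}(s) = t^{-2/\alpha}f_1^{(\alpha/2)}(s t^{-2/\alpha})$ combined with the well-known facts that $f_1^{(\alpha/2)}$ is bounded on $[0,\infty)$ and satisfies $f_1^{(\alpha/2)}(u) \sim \frac{\alpha/2}{\Gamma(1-\alpha/2)}\,u^{-1-\alpha/2}$ as $u\to\infty$: the two regimes $s\le t^{2/\alpha}$ and $s>t^{2/\alpha}$ are handled separately and each yields the claimed bound.

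Inserting this estimate into the subordination integral gives
\begin{equation*}
q_t(x) \;\le\; C_\alpha\,t\,(4\pi)^{-d/2}\int_0^\infty s^{-1-(d+\alpha)/2}\,e^{-m^{2/\alpha}s\,-\,|x|^2/(4s)}\,ds,
\end{equation*}
and the remaining integral is evaluated by the substitution $u = m^{2/\alpha}s$ followed by the Macdonald integral representation \eqref{bessel3} with index $(d+\alpha)/2$. A short computation produces exactly $2^{(d+\alpha)/2+1}\,|x|^{-(d+\alpha)/2}\,\kappa_m(|x|)$, yielding the desired bound on $q_t$ and hence on $R^m_\lambda$.

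The main obstacle is justifying the uniform stable-subordinator bound $f_t^{(\alpha/2)}(s) \le C_\alpha\,t\,s^{-1-\alpha/2}$ with a constant depending only on $\alpha$, since in the regime $s \lesssim t^{2/\alpha}$ one must verify that the polynomial factor $t\,s^{-1-\alpha/2}$ dominates the bounded contribution $t^{-2/\alpha}\|f_1^{(\alpha/2)}\|_\infty$. A more elementary route, splitting $R^m_\lambda(x)$ into the four integrals $I^m_1,\dots,I^m_4$ as in Lemmas \ref{lem3nol}--\ref{lem5} and bounding each directly from the heat-kernel estimate \eqref{massive_heatker}, runs into difficulty because the constant $c$ in the exponential of the large-time bound need not match the rate $e^{-m^{1/\alpha}|x|}$ dictated by the asymptotics of $K_{(d+\alpha)/2}$; for this reason I would favour the subordination approach sketched above.
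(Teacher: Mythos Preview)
Your proof is correct and takes essentially the same route as the paper, which simply quotes the bound $p_t^m(x) \le c_1\, t\, e^{mt}|x|^{-(d+\alpha)/2}\kappa_m(|x|)$ from \cite[eq.~(2.16)]{CKS12} and integrates against $e^{-\lambda t}$; your subordination argument is a self-contained derivation of that very inequality. Your stated obstacle is not one: by scaling, the bound $f_t^{(\alpha/2)}(s) \le C_\alpha\, t\, s^{-1-\alpha/2}$ reduces to $u^{1+\alpha/2}f_1^{(\alpha/2)}(u)$ being bounded on $(0,\infty)$, which holds since this function is continuous, vanishes at $0$ (the stable subordinator density decays super-polynomially there), and tends to $\frac{\alpha/2}{\Gamma(1-\alpha/2)}$ at infinity.
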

\begin{proof}
By \cite[eq. (2.16)]{CKS12} there exists a constant $c_1$ dependent on $\alpha$ and $d$ such that for
all $t>0$ and $x \in \R^d \setminus \{0\}$
\begin{equation*}
p_t(x) \le c_1 t e^{mt}|x|^{-\frac{d+\alpha}{2}}\kappa_m(|x|)
\end{equation*}
If $\lambda>m$, we have
\begin{equation*}
R^m_\lambda(x) \le c_1 |x|^{-\frac{d+\alpha}{2}}\kappa_m(|x|)\int_0^{\infty}te^{-(\lambda-m)t}dt
\le c_1 \frac{|x|^{-\frac{d+\alpha}{2}}\kappa_m(|x|)}{(\lambda-m)^2}.
\end{equation*}
\end{proof}
Combining the previous estimate with Lemma \ref{lem3} gives the following result.
\begin{proposition}
\label{cor:comparison3}
Let $\alpha \in (0,2)$, $d \ge 1$. Then for all $m > 0$ and $\lambda>m$, there exists a constant
$C_{d,\alpha,\lambda,m}$ such that for all $x \in \R^d\setminus \{0\}$
\begin{equation*}
R_\lambda^m(x) \le C_{d,\alpha,\lambda,m}H^m_{d,\alpha}(|x|)
\end{equation*}
where $H^m_{d,\alpha}$ is defined in \eqref{eq:Hmda}
\end{proposition}

\begin{remark}
\label{rmk:consistent}
{\rm
We note that \eqref{masslesskappa} gives $\lim_{m \downarrow 0}H^m_{d,\alpha}(r)=H^0_{d,\alpha}(r)$.
}
\end{remark}

\begin{remark}\label{rmk:classical}
		We note that in the classical case ($\alpha=2$, while the value of $m$ is not influent in such a case) for $d \ge 2$ the resolvent kernel is given by
		\begin{equation}\label{eq:reskernclass}
			R_\lambda(x)=\frac{\lambda^{\frac{d-2}{2}}}{(2\pi)^{\frac{d}{2}}}|x|^{-\frac{d-2}{2}}K_{\frac{d-2}{2}}(\sqrt{\lambda}|x|),
		\end{equation}
		as shown in \cite[Equation (4.2.20)]{LHB}. For $d \ge 3$, we can control
		\begin{equation*}
			R_\lambda(x) \le \int_0^{+\infty}p_t(x)\, dt=\frac{\Gamma\left(\frac{d}{2}-1\right)}{4\pi^{\frac{d}{2}}}\frac{1}{|x|^{d-2}},
		\end{equation*}
		where $p_t(x)=\frac{1}{(4\pi t)^{\frac{d}{2}}}e^{-\frac{|x|^2}{4t}}$ is the classical heat kernel. This corresponds to Proposition \ref{lem1} in case $2=\alpha<d$. However, notice that this upper bound is not sharp, since for $\sqrt{\lambda}|x| \ge 1$ we have, by \cite[Lemma 4.97]{LHB},
		\begin{equation*}
			R_\lambda(x) \le C_d \frac{\lambda^{\frac{d-3}{4}}}{|x|^{\frac{d-1}{2}}}e^{-\sqrt{\lambda}|x|}.
		\end{equation*} 
		Thus the behaviour of $R_\lambda$ for large values of $|x|$ is analogous to the one presented in Proposition \ref{cor:comparison3}, where we recall that for $m>0$ and $|x| \ge 1$ we have
		\begin{equation*}
			|x|^{-\frac{d+\alpha}{2}}\kappa_m(|x|) \le C_{d,\alpha,m} |x|^{-\frac{d+\alpha+1}{2}}e^{-m^{\frac{1}{\alpha}}|x|}.
		\end{equation*}
		Concerning the case $\alpha=d=2$, we recall that, by \cite[Ch. 9, eqs. 9.6.8, 9.7.2]{AS},
		\begin{equation}
			\label{besselasymp0}
			K_0 (z) \sim \sqrt{\frac{\pi}{2z}}\, e^{-z} \quad \mbox{as $z\to\infty$}
			\qquad \mbox{and} \qquad
			K_0(z)\sim -\log(z) \quad \mbox{as $z \downarrow 0$}.
		\end{equation}
		Furthermore, by \cite[Lemmas 4.97, 4.98]{LHB}, for $\lambda>0$,
		\begin{equation}\label{eq:da20}
			R_\lambda(x) \le C_d\left(\log\left(1+\frac{1}{\sqrt{\lambda}|x|}\right)\mathbf{1}_{\sqrt{\lambda}|x| \le 1}+\frac{1}{\lambda^{\frac{1}{4}}|x|^{\frac{1}{2}}}e^{-\sqrt{\lambda}|x|}\mathbf{1}_{\sqrt{\lambda}|x|>1}\right).
		\end{equation}
		Once we note that for $\sqrt{\lambda}|x|>1$ it holds
		\begin{equation*}
			\frac{1}{\lambda^{\frac{1}{4}}|x|^{\frac{1}{2}}}e^{-\sqrt{\lambda}|x|} \le e^{-1}
		\end{equation*}		
		then we get
		\begin{equation}\label{eq:da2}
			R_\lambda(x) \le C_d\left(\log\left(1+\frac{1}{\sqrt{\lambda}|x|}\right)+1\right).
		\end{equation}
		The latter corresponds to the case $\alpha=d$ in Proposition \ref{lem1}. Again, for fixed $\lambda>0$, \eqref{eq:da20} is sharper than \eqref{eq:da2} and it is analogous to the case $\alpha=d$ in Proposition \ref{cor:comparison3}. Finally, in case $d=1$, by \cite[Equation (4.2.18)]{LHB}, we have the explicit expression
		\begin{equation}\label{eq:res1clas}
			R_\lambda(x)=\frac{e^{-\sqrt{\lambda}|x|}}{\sqrt{\lambda}}.
		\end{equation}
		that clearly satisfies
		\begin{equation*}
			R_\lambda(x) \le (|x| \wedge 1+\lambda^{-\frac{1}{2}}),
		\end{equation*}
		that is the case $\alpha>d$ in Proposition \ref{lem1}. Finally, it is clear by \eqref{eq:res1clas} that for fixed $\lambda>0$ we have an exponential decay as $|x| \to \infty$, analogously to the case $\alpha>d$ in Proposition \ref{cor:comparison3}.
\end{remark}
\noindent
This proves the upper bound in \eqref{eq:equivrollext2}. Finally, we prove the lower bound in
\eqref{eq:equivrollext2},  together with \eqref{eq:lowerrollext2}.
\begin{lemma}
\label{lem:lowerboundRml}
Let $\alpha \in (0,2)$ and $d\geq 1$. Then for all $\lambda>0$ and $m > 0$ there exists a constant
$C_{d,\alpha,\lambda,m}>0$ such that
\begin{equation*}
R_\lambda^m(x) \ge C_{d,\alpha,\lambda,m}H^m_{d,\alpha}(|x|),
\end{equation*}
 for all $x \in \R^d \setminus \{0\}$, where $H^m_{d,\alpha}$ is defined in \eqref{eq:Hmda}.
\end{lemma}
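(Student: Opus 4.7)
The plan is to derive the lower bound by integrating the sharp heat-kernel lower bounds in \eqref{detailed4} over suitable time windows, splitting into two regimes according to whether $r := |x|$ is larger or smaller than $1$, and, within the small-$r$ regime, further splitting by the relation between $d$ and $\alpha$. Since $H^m_{d,\alpha}(r)$ is a sum of two non-negative terms, it will suffice to bound $R^m_\lambda(x)$ from below separately by each term on its natural range, losing only an overall factor of two.

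First, to handle $|x| > 1$ and extract the $|x|^{-(d+\alpha)/2}\kappa_m(|x|)$ contribution, I integrate the first-regime heat-kernel bound
\[
p^m_t(x)\ \ge\ c_2\, t\, |x|^{-(d+\alpha)/2}\kappa_m(|x|)\quad \text{for}\ 0<t<\Phi^m_{d,\alpha}(|x|)\wedge 1/m,
\]
against $e^{-\lambda t}$. Monotonicity of $\Phi^m_{d,\alpha}$ (Lemma \ref{lem:Phim}(3)) yields $\Phi^m_{d,\alpha}(|x|)\wedge 1/m \ge T_0 := \Phi^m_{d,\alpha}(1)\wedge 1/m>0$ uniformly on $\{|x|>1\}$, so $\int_0^{T_0} t\,e^{-\lambda t}\,dt$ supplies a positive $(d,\alpha,\lambda,m)$-dependent constant, which is exactly the bound required on that set.

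Second, for $|x| \le 1$, I would proceed case by case. When $\alpha<d$, I reuse the same first-regime estimate, but now combined with the small-$r$ asymptotics $\kappa_m(r)\ge C r^{-(d+\alpha)/2}$ (from \eqref{besselasymp}) and $\Phi^m_{d,\alpha}(r)\asymp r^{\alpha}$ (Lemma \ref{lem:Phim}(1)); this gives
\[
R^m_\lambda(x)\ \gtrsim\ |x|^{-(d+\alpha)}\cdot \big(\Phi^m_{d,\alpha}(|x|)\big)^2\ \asymp\ |x|^{-(d-\alpha)}.
\]
When $\alpha=d=1$, I switch to the second regime $p^m_t(x)\ge c_2\, t^{-1}$ on $\Phi^m_{1,1}(|x|)<t\le 1/m$ and get
\[
R^m_\lambda(x)\ \ge\ c_2\, e^{-\lambda/m}\log\Big(\tfrac{1}{m\,\Phi^m_{1,1}(|x|)}\Big)\ \asymp\ \log\Big(1+\tfrac{1}{|x|}\Big),
\]
by Lemma \ref{lem:Phim}(1), matching the logarithmic term of $H^m_{1,1}$. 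When $\alpha>d=1$, the same second-regime estimate $p^m_t(x)\ge c_2\, t^{-1/\alpha}$ produces $R^m_\lambda(x)\ge c_2 \int_{\Phi^m_{1,\alpha}(|x|)}^{1/m} t^{-1/\alpha}e^{-\lambda t}\,dt$, which is bounded below uniformly in $|x|\le 1$ since the lower limit is at most $\Phi^m_{1,\alpha}(1)$.

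The main technical nuisance is that for certain parameter combinations $\Phi^m_{d,\alpha}(1)$ may exceed $1/m$, making the second-regime time-window empty near $|x|=1$. This is resolved by observing that the first-regime estimate used on $|x|>1$ extends without modification to a neighbourhood of $|x|=1$ inside $\{|x|\le 1\}$ (the integration window $\Phi^m_{d,\alpha}(|x|)\wedge 1/m$ remains bounded below there), so the sub-cases glue. Beyond this, the argument is essentially bookkeeping of constants through Lemma \ref{lem:Phim}; no ingredient beyond \eqref{detailed4} is required.
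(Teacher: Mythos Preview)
Your proposal is correct and the overall architecture matches the paper: both arguments work by integrating the lower bounds in \eqref{detailed4} over suitable time windows, treat $|x|>1$ via the first regime to produce the $|x|^{-(d+\alpha)/2}\kappa_m(|x|)$ term, and handle $|x|\le 1$ case-by-case in $\alpha$ versus $d$.

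The one genuine difference is in the $d=1$, $\alpha\ge 1$, $|x|\le 1$ sub-cases. You extract the logarithmic (resp.\ constant) lower bound purely from the second short-time regime $p^m_t(x)\ge c_2\,t^{-d/\alpha}$ on $\Phi^m_{d,\alpha}(|x|)<t\le 1/m$, which is simpler but forces the gluing step you describe near the threshold $\Phi^m_{d,\alpha}(|x|)=1/m$ (where that window degenerates). The paper instead pushes the integration past $1/m$ by also using the long-time Gaussian regimes $I_3^m+I_4^m$, showing (via \eqref{eq:controlexp2}--\eqref{eq:controlexp3}) that these contribute at least $C\int_{1/m}^\infty t^{-d/\alpha}e^{-\lambda t}\,dt$; the combined integral $\int_{\Phi^m_{d,\alpha}(|x|)}^\infty t^{-d/\alpha}e^{-\lambda t}\,dt$ is then bounded below directly (by $E_1$ and \eqref{eq:bracket} when $\alpha=d=1$, by $\Gamma(1-1/\alpha;\lambda/m)$ when $\alpha>1$), with no gluing needed. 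Your route is more economical in its use of \eqref{detailed4}; the paper's route avoids the boundary patch-up at the cost of handling the large-time terms.
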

\begin{proof}
Throughout this proof we will use the lower bounds provided in \eqref{detailed4}. First we give
an estimate for $|x|>1$. We need to distinguish among two cases. If $\Phi_{d,\alpha}^m(|x|) \ge
\frac{1}{m}$, we have
\begin{align*}
		R_\lambda^m(x) &\ge \int_0^{\frac{1}{m}}e^{-\lambda t}p_t^m(x)dt
		\ge c_2|x|^{-\frac{d+\alpha}{2}} \kappa_m(|x|)\int_0^{\frac{1}{m}}te^{-\lambda t}dt \\
		&= \frac{c_2|x|^{-\frac{d+\alpha}{2}}\kappa_m(|x|)}{\lambda^2}\left(1-e^{-\frac{\lambda}{m}}
		-\frac{\lambda}{m}e^{-\frac{\lambda}{m}}\right)\ge \frac{c_2}{2}m^{-2}|x|^{-\frac{d+\alpha}{2}}
		\kappa_m(|x|)e^{-\lambda/m}.
\end{align*}
If $\Phi_{d,\alpha}^m(|x|) \le \frac{1}{m}$, we have
\begin{align*}
		R_\lambda^m(x) &\ge \int_0^{\Phi_{d,\alpha}^m(|x|)}e^{-\lambda t}p_t^m(x)dt
		\ge c_2|x|^{-\frac{d+\alpha}{2}} \kappa_m(|x|)\int_0^{\Phi_{d,\alpha}^m(|x|)}te^{-\lambda t}dt \\
		&\ge c_2|x|^{-\frac{d+\alpha}{2}} \kappa_m(|x|)\int_0^{\Phi_{d,\alpha}^m(1)}te^{-\lambda t}dt\\
		&\ge  \frac{c_2|x|^{-\frac{d+\alpha}{2}}\kappa_m(|x|)}{\lambda^2}\left(1-e^{-\lambda\Phi_{d,\alpha}^m(1)}
		-\lambda \Phi_{d,\alpha}^m(1) e^{-\lambda\Phi_{d,\alpha}^m(1)}\right),
\end{align*}
where we used that $\Phi_{d,\alpha}^m$ is strictly increasing. Next let $|x| \le 1$ and consider the following
cases. If $d>\alpha$ and $\Phi_{d,\alpha}^m(|x|)\le \frac{1}{m}$, we get
\begin{align*}
R_\lambda^m(x) &\ge \int_0^{\Phi_{d,\alpha}^m(|x|)}e^{-\lambda t}p_t^m(x)dt
\ge c_2 |x|^{-\frac{d+\alpha}{2}}\kappa_m(|x|)\int_0^{\Phi_{d,\alpha}^m(|x|)}te^{-\lambda t}dt\\
&=\frac{c_2 |x|^{-\frac{d+\alpha}{2}}\kappa_m(|x|)}{\lambda^2}e^{-\lambda \Phi_{d,\alpha}^m(|x|)}
\left(e^{\lambda \Phi_{d,\alpha}^m(|x|)}-1-\lambda \Phi_{d,\alpha}^m(|x|)\right)\\
&\ge c_2 |x|^{-\frac{d+\alpha}{2}}\kappa_m(|x|)(\Phi_{d,\alpha}^m(|x|))^2e^{-\lambda \Phi_{d,\alpha}^m(|x|)}.
\end{align*}
Now observe that for $|x| \le 1$ the second asymptotic relation in \eqref{besselasymp} implies that there
exists a constant $C_{d,\alpha}$ such that $\kappa_m(|x|) \ge C_{d,\alpha}|x|^{-\frac{d+\alpha}{2}}$. Hence,
by also using \eqref{eq:controlPhi2} and the fact that $\Phi_{d,\alpha}^m$ is increasing, we obtain
\begin{align*}
R_\lambda^m(x) \ge C_{d,\alpha} |x|^{-(d-\alpha)}e^{-\lambda \Phi_{d,\alpha}^m(1)}.
\end{align*}
Next, if $d>\alpha$ and $\Phi_{d,\alpha}^m(|x|) > \frac{1}{m}$, then we have
\begin{align*}
R_\lambda^m(x) &\ge \int_0^{\frac{1}{m}}e^{-\lambda t}p_t^m(x)dt \ge c_2 |x|^{-\frac{d+\alpha}{2}}
\kappa_m(|x|)\int_0^{\frac{1}{m}}te^{-\lambda t}dt\\
&=\frac{c_2 |x|^{-\frac{d+\alpha}{2}}\kappa_m(|x|)}{\lambda^2}e^{-\frac{\lambda}{m}}
\left(e^{\frac{\lambda}{m} }-1-\frac{\lambda}{m}\right)\\
&\ge \frac{c_2 |x|^{-\frac{d+\alpha}{2}}\kappa_m(|x|)}{m^{2}} e^{-\frac{\lambda}{m}}\ge \frac{c_2 |x|^{-\frac{d+\alpha}{2}}\kappa_m(|x|)(\Phi_{d,\alpha}^m(|x|))^2}{(\Phi_{d,\alpha}^m(1))^2m^{2}} e^{-\frac{\lambda}{m}}
\ge C_{d,\alpha}\frac{|x|^{-(d-\alpha)}}{(\Phi_{d,\alpha}^m(|1|))^2m^{2}} e^{-\frac{\lambda}{m}}.
\end{align*}
Next assume that $d=\alpha=1$ and let $\Phi_{1,1}^m(|x|) \le \frac{1}{m}$. Then we start from
\begin{align*}
R_\lambda^m(x) \ge I_2^m+I_3^m+I_4^m
\end{align*}
and we first search for some lower bounds of $I_3^m$ and $I_4^m$. Again, we have to distinguish two cases.
If $|x| \le \frac{1}{m}$, then $I_3^m=0$ and we have
\begin{equation*}
I_4^m \ge c_2 m^{\frac{1}{2}}\int_{\frac{1}{m}}^{\infty}e^{-\lambda t}t^{-\frac{1}{2}}e^{-cm|x|^2/t}dt.
\end{equation*}
Now observe that, for fixed $|x|$, the function $f(r)=r^{-\frac{1}{2}}e^{-cm|x|^2r}$ is decreasing, hence, for
$t>\frac{1}{m}$ and $|x| \le 1$, we have
\begin{equation}
\label{eq:controlexp2}
t^{-\frac{1}{2}}e^{-cm|x|^2/t} \ge m^{-\frac{1}{2}}e^{-cm^{2}|x|^2}t^{-1} \ge m^{-\frac{1}{2}}e^{-cm^{2}}t^{-1}.
\end{equation}
	Thus
	\begin{equation*}
		I_4^m \ge c_2 e^{-cm^{2}} \int_{\frac{1}{m}}^{\infty}t^{-1}e^{-\lambda t}dt.
	\end{equation*}
	If $|x|>\frac{1}{m}$, we have
	\begin{equation*}
		I_3^m \ge c_2 m^{\frac{1}{2}}e^{-cm|x|}\int_{\frac{1}{m}}^{|x|}t^{-\frac{1}{2}}e^{-\lambda t}dt \ge e^{-cm}\int_{\frac{1}{m}}^{|x|}t^{-1}e^{-\lambda t}dt.
	\end{equation*}	
	Furthermore
	\begin{equation*}
		I_4^m \ge c_2 m^{\frac{1}{2}}\int_{|x|}^{\infty}e^{-\lambda t}t^{-\frac{1}{2}}e^{-cm^{\frac{2}{\alpha}-1}|x|^2/t}dt \ge c_2 e^{-cm^{2}} \int_{|x|}^{\infty}t^{-1}e^{-\lambda t}dt,
	\end{equation*}
	where we used again \eqref{eq:controlexp2}, since $t>|x|>\frac{1}{m}$ and $|x| \le 1$. In general, we have
	\begin{equation*}
		I_3^m+I_4^m \ge C_{1,1,m}\int_{\frac{1}{m}}^{\infty}t^{-1}e^{-\lambda t}dt
	\end{equation*}
	and then
	\begin{align*}
		R_\lambda^m(x) \ge I_2^m+I_3^m+I_4^m &\ge C_{1,1,m}\int_{\Phi_{1,1}^m(|x|)}^{\infty}t^{-1}e^{-\lambda t}dt=C_{1,1,m}E_1(\lambda\Phi_{1,1}^m(|x|)) \\
		&\ge \frac{C_{1,1,m}e^{-\lambda\Phi_{1,1}^m(|x|)}}{2}\log\left(1+\frac{2}{\lambda\Phi_{1,1}^m(|x|)}\right),
	\end{align*}
	where we used \eqref{eq:bracket}. Since $|x| \le 1$, we can use \eqref{eq:controlPhi} to finally obtain
	\begin{align*}
		R_\lambda^m(x) \ge C_{1,1,m,\lambda}\log\left(1+\frac{1}{|x|}\right).
	\end{align*}
	Now consider the case $d=\alpha=1$ and $\Phi^m_{1,1}(|x|)>\frac{1}{m}$. Arguing as before, we have
	\begin{align*}
		R_\lambda^m(x)&\ge I_3^m+I_4^m \ge C_{1,1,m}\int_{\frac{1}{m}}^{\infty}t^{-1}e^{-\lambda t}dt \ge C_{1,1,m}\int_{\Phi_{1,1}^m(|x|)}^{\infty}t^{-1}e^{-\lambda t}dt\\
		&=C_{1,1,m}E_1(\lambda\Phi_{1,1}^m(|x|)) \ge \frac{C_{1,1,m}e^{-\lambda\Phi_{1,1}^m(|x|)}}{2}\log\left(1+\frac{2}{\lambda\Phi_{1,1}^m(|x|)}\right) \ge C_{1,1,m,\lambda}\log\left(1+\frac{1}{|x|}\right).
	\end{align*}
	Finally, let $\alpha>d=1$ and make the split
	\begin{equation*}
		R_\lambda^m(x) \ge I_3^m+I_4^m.
	\end{equation*}
	If $|x| \le \frac{1}{m}$, then $I_3^m=0$ and
	\begin{equation*}
		I_4^m \ge c_2 m^{\frac{1}{\alpha}-\frac{1}{2}}\int_{\frac{1}{m}}^{\infty}e^{-\lambda t}t^{-\frac{1}{2}}e^{-cm^{\frac{2}{\alpha}-1}|x|^2/t}dt.
	\end{equation*}
	Since $f_\alpha(r)=r^{\frac{1}{2}-\frac{1}{\alpha}}e^{-cm^{\frac{2}{\alpha}-1}|x|^2r}$ is decreasing, for $t>\frac{1}{m}$ and $|x| \le 1$ we get
	\begin{equation}\label{eq:controlexp3}
		t^{-\frac{1}{2}}e^{-cm^{\frac{2}{\alpha}-1}|x|^2/t}dt \ge m^{\frac{1}{2}-\frac{1}{\alpha}}e^{-cm^{\frac{2}{\alpha}}}t^{-\frac{1}{\alpha}}
	\end{equation}
	and then
	\begin{equation*}
		I_4^m \ge c_2 e^{-cm^{\frac{2}{\alpha}}}\int_{\frac{1}{m}}^{\infty}e^{-\lambda t}t^{-\frac{1}{\alpha}}dt.
	\end{equation*}
	If  $|x|>\frac{1}{m}$, we have
	\begin{equation*}
		I_3^m \ge c_2 m^{\frac{1}{\alpha}-\frac{1}{2}}e^{-cm^{\frac{1}{\alpha}}|x|}\int_{\frac{1}{m}}^{m^{\frac{1-\alpha}{\alpha}}|x|}
t^{-\frac{1}{2}}e^{-\lambda t}dt \ge e^{-cm^\frac{1}{\alpha}}\int_{\frac{1}{m}}^{m^{\frac{1-\alpha}{\alpha}}|x|}t^{-\frac{1}{\alpha}}
e^{-\lambda t}dt
	\end{equation*}	
	and
	\begin{equation*}
		I_4^m \ge c_2 e^{-cm^{\frac{2}{\alpha}}}\int_{m^{\frac{1-\alpha}{\alpha}}|x|}^{\infty}
e^{-\lambda t}t^{-\frac{1}{\alpha}}dt,
	\end{equation*}
	by \eqref{eq:controlexp3} since $t>m^{\frac{1-\alpha}{\alpha}}|x|>\frac{1}{m}$. Thus we finally obtain
	\begin{equation*}
		R_\lambda^m(x) \ge I_3^m+I_4^m \ge C_{1,\alpha,\lambda,m}\int_{\frac{1}{m}}^{\infty}e^{-\lambda t}t^{-\frac{1}{\alpha}}dt=C_{1,\alpha,\lambda,m}\Gamma\left(1-\frac{1}{\alpha},\frac{\lambda}{m}\right).
	\end{equation*}
\end{proof}
\begin{remark}
We can actually get similar same lower bounds as in Lemma \ref{lem:lowerboundRml} in the classical case. Indeed, if $\alpha=2$ and $d \ge 3$ the asymptotic behaviour in \eqref{besselasymp} tells us that
\begin{equation*}
	R_\lambda(x) \ge C_{d}\left(\lambda^{-\frac{1}{2}}|x|^{-(d-2)}\mathbf{1}_{\{\sqrt{\lambda}|x| \le 1\}}+\lambda^{\frac{d-5}{4}}|x|^{-\frac{1}{2}}e^{-\sqrt{\lambda}|x|}\mathbf{1}_{\{\sqrt{\lambda}|x|>1\}}\right),
\end{equation*}
while for $d=2$ we get from \eqref{besselasymp0}
\begin{equation*}
	R_\lambda(x) \ge C\left(\lambda^{-\frac{1}{2}}\log\left(1+\frac{1}{\sqrt{\lambda}|x|}\right)\mathbf{1}_{\{\sqrt{\lambda}|x| \le 1\}}+\lambda^{\frac{d-5}{4}}|x|^{-\frac{1}{2}}e^{-\sqrt{\lambda}|x|}\mathbf{1}_{\{\sqrt{\lambda}|x|>1\}}\right).
\end{equation*}
In case $d=1$, we have from \eqref{eq:res1clas}
\begin{equation*}
	R_\lambda(x) \ge C(\lambda^{-\frac{1}{2}}\mathbf{1}_{\{\sqrt{\lambda}|x| \le 1 \}}+\lambda^{-\frac{1}{2}}e^{-\sqrt{\lambda}|x|}\mathbf{1}_{\{\sqrt{\lambda}|x|>1\}}).
\end{equation*}
This provides a corresponding lower bound to the classical resolvent estimate in Remark \ref{rmk:classical}.
\end{remark}

\section*{Acknowledgments}
\noindent
G.A. is supported by the PRIN 2022XZSAFN Project ``Anomalous Phenomena on Regular and Irregular Domains:
Approximating Complexity for the Applied Sciences'' CUP$\_$E53D23005970006  and by the GNAMPA-INdAM group.
A.I. thanks JSPS KAKENHI Grant Numbers 20K03625, 21K03279 and 21KK0245. J.L. thanks the hospitality of
Tokyo University of Science on a research visit during which part of this paper has been written.

\end{document}